\newcommand{\g}{\mathfrak{g}}
\newcommand{\gh}{\hat{\mathfrak{g}}}
\newcommand{\gb}{\bar{\mathfrak{g}}}
\newcommand{\rr}{\mathfrak{r}}
\newcommand{\F}{\mathcal{F}}
\def\Z{\mathbb Z}
\newcommand{\kk}{{\bf k}}
\newcommand{\J}{{J}}
\newcommand{\cL}{{\mathcal{L}}}
\newcommand{\Jhalf}{\J\text{-mod}_{\frac 12}}
\newcommand{\JJ}{\J\text{-mod}_{1}}
\newcommand{\gm}{\hat\g\text{-mod}_{1}}
\newcommand{\ggm}{\g\text{-mod}_{1}}
\newcommand{\gmh}{\g\text{-mod}_{\frac 12}}
\newcommand{\so}{\mathfrak{so}}
\newcommand{\sll}{\mathfrak{sl}}
\newtheorem{thm}{Theorem}[section]
\newtheorem{prop}[thm]{Proposition}
\newtheorem{lem}[thm]{Lemma}
\newtheorem{rem}[thm]{Remark}
\newtheorem{Cor}[thm]{Corollary}
\newtheorem{ex}[thm]{Example}
\begin{document}

\title{On the Tits-Kantor-Koecher construction of unital Jordan bimodules} 

\author{Iryna Kashuba and Vera Serganova} 

\maketitle
\begin{abstract}
In this paper we explore relationship between representations of a Jordan algebra $\J$
and the Lie algebra $\g$ obtained from $\J$ by the Tits-Kantor-Koecher construction.
More precisely, we construct two adjoint functors $Lie :\JJ\to \ggm$ and $Jor:\ggm\to\JJ$, where $\JJ$ is the category of unital $\J$-bimodules and
$\ggm$ is the category of $\g$-modules admitting a short grading. Using these functors we classify $\J$ such that  its semisimple part is of Clifford type and 
the category $\JJ$ is tame.
\end{abstract}

\section{Introduction}

The famous Tits-Kantor-Koecher construction relates a unital Jordan (super)algebra $J$ with a Lie (super)algebra $\g$ equipped 
with a short $\mathbb Z$-grading. It was introduced independently in \cite{Tits}, \cite{Kantor} and \cite{Koecher} and one of most prominent  
applications was a classification of simple Jordan superalgebras in \cite{Kac1}, \cite{Kantor2}, \cite{Kac-Zelmanov}.

The TKK construction has been proven to be quite efficient and useful in the study of Jordan superalgebras, Jordan superpairs and their superbimodules.
Several application of TKK construction in representation theory of semisimple Jordan superalgebras and Jordan superpairs can be found in 
\cite{Shtern}, \cite{Shtern2}, \cite{Mart-Zel}, \cite{Krut} and \cite{Krut2}. 
The goal of this paper is to further study and apply this construction to non-semisimple 
Jordan algebras and their representations. 


Recall that a representation of a Jordan algebra $\J$ in a vector space $M$ is a linear mapping $\rho$ of $\J$ into $\operatorname{End}_{\kk}(M)$ such that 
\begin{equation}\label{relations-unital-module}
[\rho(a),\rho(a^2)]=0, \quad 2\rho(a)\rho(b)\rho(a)+\rho((a\circ a)\circ b)
=2\rho(a)\rho(a\circ b)+\rho(b)\rho(a\circ a).
\end{equation}
The category of finite dimensional $\J$-modules will be denoted by $\J$-mod.



If $\J$ is a unital algebra the category of $J$-modules has a decomposition into the direct sum of three subcategories
$$J\text{-mod}=\JJ\oplus \Jhalf\oplus J\text{-mod}_0$$
according to the action of the identity element of $J$. The subcategory $ J\text{-mod}_0$ is not interesting since all its 
objects are trivial modules.
The subcategory $\Jhalf$ consists of modules on which the identity element acts as $\frac12$, such modules are called special.
The objects of $\JJ$ are called unital modules. One can introduce associative enveloping algebras for $\J$-mod, $\JJ$ and $\Jhalf$,
such that each of these categories is equivalent to the category  of modules over  the corresponding enveloping algebra.

Recall the classification of simple Jordan algebras over an algebraically closed field $\kk$ of characteristic zero. 
With the exception of the case
$\J=\kk$, simple Jordan algebras are divided in two groups: Jordan algebras of quadratic form $\J(E,q)$, see Section 5 
for details, and Jordan algebras of matrix type, see \cite{Jac}. The latter are called sometimes Hermitian Jordan algebras.
 
In \cite{Jac2} Jacobson constructed the associative enveloping algebras for $\JJ$ and $\Jhalf$, when $\J$ is finite-dimensional 
simple,
and proved that both categories are semisimple with finitely many simple objects.

The next step is to study non-semisimple Jordan algebras. In this case it is important to classify tame 
categories $\Jhalf$ and $\JJ$  
(for basics on tame and wild categories see \cite{drozd}).
In \cite{KOSh} the enveloping algebra of $\Jhalf$ was studied in the case when the semisimple part of $\J$ is of matrix type 
and $rad^2\J=0$.
Using the coordinalization theorem for Jordan algebras of matrix type the authors  proved that the enveloping 
algebra and 
consequently the 
Ext quiver algebra of $\Jhalf$ have radical squared equal to zero. Hence they could employ the representation theory of 
quivers to classify tame $\Jhalf$. 

In all other cases the above method is not applicable. But it seems likely that we can later deal with the remaining cases using the TKK 
construction. 
The main advantage of this approach is the existence of a tensor structure on the category of $\g$-modules and a well 
developed theory of weights. 

In this paper we focus on Jordan algebras,
whose semisimple part is a sum of Jordan algebras of quadratic forms.
We classify all such algebras with tame $\JJ$ without any additional assumptions on the radical, see Theorem \ref{thm-general-case}. 
For this purpose we avoid cases of small dimensions: we start with simple Jordan algebras of dimension greater than $4$.
It follows from our classification that all such tame Jordan algebras $\J$ satisfy the condition $rad^2J=0$.
On the other hand, in contrast with  \cite{KOSh},  the square of the radical of the universal enveloping algebra is not 
necessarily zero for tame categories.
The category $\Jhalf$ is studied in a forthcoming paper \cite{KS}.

In Section 3 we define and study two adjoint functors $Jor$ and $Lie$ between the category $\JJ$ and the category 
$\ggm$ of $\g$-modules admitting a short grading.
The  definition of $Jor$ is straightforward. However, not every $J$-module can be obtained from a $\g$-module by application of $Jor$. 
To fix this flaw one has to consider 
the universal central extension $\gh$ 
of $\g$. This problem does not appear in the semisimple case since $\gh=\g$ but it is already essential for simple Jordan and 
Lie superalgebras, see \cite{Mart-Zel}.
Although algebras with non-zero central extensions do not appear in our classification, we formulate statements in full generality 
for future applications.
The second problem worth mentioning here is caused by the fact that the splitting $\JJ\oplus J\text{-mod}_0$ can not be 
lifted to the Lie algebra $\gh$, since 
some modules can have non-trivial extensions with trivial modules. That implies, in particular, that left and right adjoint of the 
functor $Jor$ are not isomorphic 
and the categories $\gm$ and $\JJ$ are not equivalent. Still they are close enough and one can describe projective modules, 
quivers and relations of $\JJ$ in terms of $\gm$. 

In Section 4 we explain how to construct the Ext quivers of $\gm$ and $\JJ$ and compute the radical filtration of projective 
indecomposable modules.
 
In Sections 5--9 we classify Jordan algebras with tame categories of unital representations satisfying above mentioned conditions. 
Our main tool is the representation theory of quivers.
All the quiver results we use are collected in Appendix. 
Although our algebras do not satisfy the condition $rad^2=0$, we use a lot Theorem \ref{Viktor} 
which could be considered as a generalization of this property.
Finally, let us mention that all tame associative algebras $A$ arising from our classification are quadratic and satisfy 
the conditions 
$rad^3A=0$ and $A\simeq A^{op}$.

\section{Tits-Kantor-Koecher construction for Jordan algebras.}
\subsection {Jordan algebras and bimodules} Let $\kk$ be a field, char $\kk\neq 2$.
A Jordan {\bf k}-algebra is a commutative algebra $\J$ such that any $\,a,b\in
\J$ satisfy the Jordan identity:
\begin{eqnarray}\label{definition-jordan-algebra}
 a\circ b &=&b \circ a\\
((a\circ a)\circ b)\circ a&=&(a\circ a)\circ (b\circ a).
\end{eqnarray}
For any associative algebra $\,A\,$ one can construct the Jordan
algebra $\,A^+\,$ by introducing on a vector space $\,A\,$ a new
multiplication $a_{1}\circ a_{2}=\frac12(a_{1}a_{2}+a_{2}a_{1})$.
If a Jordan algebra is isomorphic to a subalgebra of the algebra
$A^+$ for a certain associative algebra $A$ then it is called {\em
special}, otherwise it is {\em exceptional}.

Let $\J$ be a Jordan algebra over ${\bf k}$ and $M$  be 
a $\kk$-vector space endowed with a pair of linear mappings
$l:\J\otimes_{{\bf k}}M\to M $, $\,(a\otimes m)\mapsto a\cdot m$,
$r:M\otimes_{\bf k}\J\to M $, $\,(m\otimes a)\mapsto m\cdot a$,
$\,a\in \J$, $m\in M$. Then $M$ is called a {\it Jordan bimodule} over $\J$ if
the algebra
$\,Z=(\J\oplus M, \ast)$,  where  $\ast$ is a ${\bf k}-$bilinear product
$$ (a_1+m_1)\ast(a_2+m_2)=a_1\circ
a_2+a_1\cdot  m_2+m_1\cdot a_2,
$$
for $\,a_1,a_2\in \J,\,$ $\,m_1,m_2\in M$, is a Jordan  algebra. Observe that
$\J$ is a subalgebra in $Z$ and $M$ is an
ideal with $\,M^2=0$.  In this case
$Z$ is called the {\em null split extension} of $\,\J\,$
by the bimodule $M$. 
It follows from the Jordan identity \eqref{definition-jordan-algebra} that if $M$ is a Jordan bimodule over $\J$
the corresponding representation $\rho: \J \to\operatorname{End}_{\kk}M$
satisfies \eqref{relations-unital-module}.


\subsection{TKK construction}
A {\em short grading} of an algebra $\g$ is a $\Z$-grading of the
form $\g=\g_{-1}\oplus\g_0\oplus\g_1$. Let $P$ be the commutative
bilinear map on $\J$ defined by $\,P(x,y)=x\circ y$. Then we
associate to $\J$ a Lie algebra $\g=Lie(\J)$ with short grading
$\g=\g_{-1}\oplus\g_0\oplus\g_1$ in the following way, see \cite{Kac}. We 
set $$\g_{-1}=\J, \quad\g_0=\langle L_a,[L_a,L_b]\,|\,a,b\in\J\rangle\subset \operatorname{End}_\kk(J),$$ 
where $L_a$ denotes the operator of left multiplication in $\J$,
and $$\g_1=\langle P,[L_a,P]\,|\,a\in\J\rangle\subset\operatorname{Hom}_\kk(S^2J,J)$$ with the following
bracket
\begin{itemize}
\item $[x,y]=0$ for $x,\,y\in \g_{-1}$ or $x,\,y\in \g_1$; \item
$[L,x]=L(x)$ for $x\in\g_{-1}$, $L\in\g_0$; \item
$[B,x](y)=B(x,y)$ for $B\in\g_1$ and $x,y\in\g_{-1}$; \item
$[L,B](x,y)=L(B(x,y))-B(L(x),y)+B(x,L(y))$ for any $B\in\g_1$,
$L\in\g_0$ and $x,y\in\g_{-1}.$
\end{itemize}
$Lie(\J)$ is a Lie algebra.

Note that by construction $Lie(\J)$ is generated as a Lie algebra
by $Lie(\J)_1\oplus Lie(\J)_{-1}$.

A {\em short subalgebra} of a Lie algebra $\g$ is an $\sll_2$
subalgebra spanned by elements $e,h,f$, satisfying $[e,f]=h, [h,e]=-e, [h,f]=f$, such that the eigenspace
decomposition of $ad\,h$ defines a short grading on $\g$. Consider a
Jordan algebra $\J$ with a unit element $e$. Then  $e$, $h_{\J}=-L_e$ and $f_{\J}=P$ span
a short subalgebra $\alpha_J\subset Lie(\J)$. A
$\Z$-graded Lie algebra $\g=\g_{-1}\oplus\g_0\oplus\g_1$ is called
{\em minimal} if any non-trivial ideal $I$ of $\g$ intersects
$\g_{-1}$ non-trivially, i.e. $I\cap \g_{-1}$ is neither $0$ nor
$\g_{-1}$.

\begin{lem}\cite{Kac}
A Lie algebra $\g=\g_{-1}\oplus\g_0\oplus\g_1$ is minimal if and
only if the following conditions hold: \begin{enumerate}\item if
$[a,\g_{-1}]=0$ for some $a\in\g_0\oplus\g_1$, then $a=0$; \item
$[\g_{i},\g_0]=\g_{i},  i=\pm 1$.\end{enumerate}
\end{lem}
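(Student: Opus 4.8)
The plan is to verify each direction by unwinding the definitions of the structure maps in $\g=\g_{-1}\oplus\g_0\oplus\g_1$, keeping in mind that by the remark after the construction $\g$ is generated by $\g_{-1}\oplus\g_1$. First I would treat the ``only if'' direction. Suppose $\g$ is minimal. For (1), let $a\in\g_0\oplus\g_1$ with $[a,\g_{-1}]=0$. I would show that the graded subspace $I$ generated by $a$ is an ideal with $I\cap\g_{-1}=0$: since $a$ kills $\g_{-1}$, and $\g_1$ also kills $\g_1$ and $\g_{-1}$ kills $\g_{-1}$, one checks degree by degree that brackets of $a$ with generators land in $\g_0\oplus\g_1$ and again annihilate $\g_{-1}$ (using the Jacobi identity, e.g. $[[a,x],y]=[[a,y],x]$ for $x,y\in\g_{-1}$ when $a\in\g_0$, and similarly for $a\in\g_1$). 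By minimality $I\cap\g_{-1}$ must be $0$ or $\g_{-1}$; it is not $\g_{-1}$, so $I=0$, hence $a=0$. For (2), note $[\g_i,\g_0]\subseteq\g_i$ is automatic; I would argue that $\g_{-1}\big/[\g_0,\g_{-1}]$ being nonzero would force, via the grading, an ideal meeting $\g_{-1}$ in a proper nonzero subspace, contradicting minimality — more precisely, the subspace $[\g_0,\g_{-1}]\oplus(\text{stuff in }\g_0,\g_1)$ generated appropriately is an ideal, and its intersection with $\g_{-1}$ is exactly $[\g_0,\g_{-1}]$, which must therefore be $0$ or all of $\g_{-1}$; ruling out $0$ uses that $\g$ is generated by $\g_{\pm1}$ together with condition (1), and then $[\g_1,\g_0]=\g_1$ follows symmetrically, or from the fact that $\g_1=[\g_1,[\g_{-1},\g_1]]\subseteq[\g_1,\g_0]$ using the Jacobi identity.

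Next I would treat the ``if'' direction. Assume (1) and (2) hold, and let $I\subseteq\g$ be a nonzero ideal; write $I=I_{-1}\oplus I_0\oplus I_1$ for its graded pieces (an ideal in a graded Lie algebra is graded). I must show $I\cap\g_{-1}=I_{-1}$ is neither $0$ nor $\g_{-1}$. For the ``not $\g_{-1}$'' part: if $I_{-1}=\g_{-1}$, then $I\supseteq[\g_1,\g_{-1}]=[\g_1,[\g_{-1},\g_1]]$ and, combined with $[\g_1,\g_0]=\g_1$ from (2) and the generation of $\g_0$ by brackets $[\g_0,\g_{-1}]$-type relations, I would deduce $I=\g$, which is excluded since ideals here are understood to be proper (a minimal graded Lie algebra with $\g_{-1}=I\cap\g_{-1}$ allowed would be degenerate; I will state the convention that ``non-trivial ideal'' means proper nonzero, matching the definition given). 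For the ``not $0$'' part: suppose $I_{-1}=0$. Then for any $a\in I_0\oplus I_1$ and any $x\in\g_{-1}$ we have $[a,x]\in I\cap(\g_{-1}\oplus\g_0)$; its $\g_{-1}$-component lies in $I_{-1}=0$, so $[a,x]\in I_0$, and iterating, $[[a,x],y]\in I_{-1}=0$ for $x,y\in\g_{-1}$ when $a\in\g_1$, while for $a\in\g_0$ we get $[a,x]\in\g_{-1}\cap I=0$ directly. Thus $a$ annihilates $\g_{-1}$, so by (1) $a=0$; hence $I_0=I_1=0$ and $I=0$, a contradiction.

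The main obstacle I anticipate is the careful bookkeeping of which brackets vanish because of the short grading ($[\g_1,\g_1]=[\g_{-1},\g_{-1}]=0$, and brackets raise/lower degree by at most one), so that the ``ideal generated by $a$'' in the ``only if'' direction genuinely stays inside $\g_0\oplus\g_1$ and genuinely continues to annihilate $\g_{-1}$ — this requires a short induction on bracket length using the Jacobi identity to swap an incoming $\g_{-1}$-element past $a$. The other delicate point is fixing the convention for ``non-trivial ideal'' so that the case $I\cap\g_{-1}=\g_{-1}$ is correctly identified with $I=\g$; I would simply remark that $I\cap\g_{-1}=\g_{-1}$ forces $I=\g$ under (2), so the two formulations of the definition agree. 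Everything else is a direct degree-wise computation with no hidden difficulty.
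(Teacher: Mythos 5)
The paper itself gives no proof of this lemma (it is quoted from \cite{Kac}), so I am judging your plan on its own merits. Parts of it are sound: your ``if'' direction argument that $I\cap\g_{-1}=0$ forces $I=0$ is correct (degree reasons plus two applications of (1)), and you are right that promoting $\g_{-1}\subseteq I$ to $I=\g$ must use the standing fact that $\g$ is generated by $\g_{\pm 1}$, i.e.\ $\g_0=[\g_{-1},\g_1]$ — without it the implication is genuinely false (adjoin to $Lie(\kk[t]/(t^2))$ the grading-preserving outer derivation $t\,d/dt$: conditions (1),(2) survive, yet $Lie(\kk[t]/(t^2))$ is a proper ideal containing all of $\g_{-1}$). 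However, in ``minimal $\Rightarrow$ (1)'' your inductive claim that every bracket of $a$ with generators again annihilates $\g_{-1}$ is wrong: for $a\in\g_0$ with $[a,\g_{-1}]=0$, $y\in\g_1$ and $x\in\g_{-1}$ one has $[[y,a],x]=-[a,[y,x]]$ with $[y,x]\in\g_0$, and nothing forces $a$ to annihilate $[\g_1,\g_{-1}]$. What is true, and what the induction must carry, is the weaker invariant: writing $C_i$ for the annihilator of $\g_{-1}$ in $\g_i$, the subspace $C_0\oplus C_1\oplus[\g_1,C_0]$ is an ideal contained in $\g_0\oplus\g_1$ whose degree-zero part kills $\g_{-1}$ while its degree-one part need not; that containment alone yields the contradiction with minimality. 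Also, your assertion that an ideal of a graded Lie algebra is automatically graded is false in general; here one either restricts to the paper's situation (the grading is by $\operatorname{ad}h$ with $h$ in the short subalgebra, so ideals are graded) or argues on graded components of elements of $I$, as your ``not $0$'' step in fact allows.

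The more serious gap is ``minimal $\Rightarrow$ (2)'', where your logic is inverted relative to the stated definition: exhibiting an ideal whose intersection with $\g_{-1}$ is a proper nonzero subspace does not contradict minimality — that is exactly what minimality demands of a non-trivial ideal — so from the ideal $[\g_0,\g_{-1}]\oplus\g_0\oplus\g_1$ you cannot conclude that $[\g_0,\g_{-1}]$ is $0$ or $\g_{-1}$. The device that minimality does furnish is the ideal generated by $\g_{-1}$, namely $\g_{-1}\oplus[\g_{-1},\g_1]\oplus[\g_1,[\g_{-1},\g_1]]$: if it were proper it would be a non-trivial ideal meeting $\g_{-1}$ in all of $\g_{-1}$, so it equals $\g$, which gives $[\g_{-1},\g_1]=\g_0$ and $[\g_0,\g_1]=\g_1$ (this also repairs your unproved ``fact'' $\g_1=[\g_1,[\g_{-1},\g_1]]$). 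But the remaining equality $[\g_0,\g_{-1}]=\g_{-1}$ amounts to excluding a trivial quotient of $\g$ concentrated in degree $-1$, and it does not follow from the definition as literally stated: take $\sll_2$ with its short grading and add a central line in degree $-1$; this algebra is generated by $\g_{\pm1}$, satisfies (1), every non-trivial ideal meets $\g_{-1}$ in a proper nonzero subspace, yet $[\g_0,\g_{-1}]\neq\g_{-1}$. So this step needs an input beyond your sketch (in the intended TKK setting it is immediate, since $L_e\in\g_0$ acts as the identity on $\g_{-1}=\J$), and as written your proof of (2) does not go through.
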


 Let $\mathcal{J}$ denote the category of unital Jordan algebras in
which morphisms are Jordan epimorphisms and let $\cL$ denote the
category of minimal pairs $(\g,\alpha)$, where $\g$ is a Lie
algebra, $\alpha$ a short subalgebra of $\g$, and a morphism
$\phi$ from pair $(\g,\alpha)$ to $(\g',\alpha')$ is a Lie algebra
epimorphism $\phi:\g\to\g'$ such that $\phi(\alpha)=\alpha'$. We
construct a functor $\F:\mathcal{J}\to\cL$ by associating to
every unital Jordan algebra $\J$ the pair $(Lie(\J),\alpha_{\J})$
and to every epimorphism $\phi:\J\to\J'$ of unital Jordan algebras
the map $\phi_{\F}: Lie(\J)\to Lie(\J')$ defined as follows:
\begin{equation}\label{multiplication_in_Lie(J)}
x\mapsto \phi(x),\qquad L_{x}\mapsto L_{\phi(x)}, \ {\rm for\ }
x\in\J; \qquad P\to P',\quad P'(x,y)=x\circ_{\J'} y.
\end{equation}
Let $\g$ be a Lie
algebra containing an $\sll_2$-subalgebra $\alpha$ which induces a short grading
$\g=\g_{-1}\oplus\g_0\oplus\g_1$. Then $\J=(\g_{-1}, \circ_{\J})$ with
$x\circ_{\J} y=[[P,x],y]$ is a Jordan algebra. Moreover, any
epimorphism $\phi: (\g,\alpha)\to(\g',\alpha')$ in $\F$ defines
Jordan algebra epimorphism  $\phi_{|\g_{-1}}$. Thus, we have defined
a functor $\cL\to{\mathcal J}$ which we denote by $Jor$.
The functors $\F$ and $Jor$ define  an equivalence of categories  $\cL$ and ${\mathcal J}$, see Theorem 5.15, \cite{Kac}.

\section{Functors $Lie$ and $Jor$ for unital modules}

Let $\J$ be a unital Jordan algebra and $\g=Lie(\J)$. By $\hat{\g}$ we denote the universal central extension of $\g$.
Note that $\hat{\g}$ contains the $\sll_2$-subalgebra $\alpha=\langle e,h,f\rangle$, hence the center of $\hat{\g}$ is in $\hat{\g}_0$.
It implies that
$$\g_{\pm 1}=\hat\g_{\pm 1}.$$
Let $\gm$ denote the category of $\hat{\g}$-modules $N$  such that
the action of $h\in\alpha$ induces a grading of length $3$ on $N$.
We will construct two functors
$$Jor:\gm\to\JJ,\quad Lie:\JJ\to\gm$$
and show that $Lie$ is left adjoint to $Jor$.

To define $Jor$ let $N\in\gm$. Then $N$ has a short grading $N=N_{1}\oplus N_{0}\oplus N_{-1}$.
We set $Jor(N):=N_{-1}$ with action of $\J=\g_{-1}=\hat\g_{-1}$ defined by
$$x(m)=[f,x]m,\quad x\in\J=\g_{-1}, m\in N_{-1}.$$ 
It is clear that $Jor$ is an exact functor.

Our next step is to define $Lie:\JJ\to\gm$. Let $M\in\JJ$. Consider the associated null split extension
$\J\oplus M$. Let $\mathcal A=Lie (\J\oplus M)$. Then by \cite{Kac} we have an exact sequence of Lie algebras
\begin{equation}\label{exact1}
0\to N\to\mathcal A\xrightarrow{\pi} \g\to 0,
\end{equation}
where $N$ is an abelian Lie algebra and $N_{-1}=M$.

\begin{lem}\label{centext} Let $\gamma: \hat\g\to \g$ be the canonical projection. There exists $s:\hat\g\to\mathcal A$
such that $\pi\circ s=\gamma$.
\end{lem}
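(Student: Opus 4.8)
The plan is to use the universal property of $\hat{\g}$. Recall that $\mathcal A = Lie(\J\oplus M)$ sits in the exact sequence \eqref{exact1} with kernel $N$ an abelian ideal. Since $M$ is a unital $\J$-bimodule, the short subalgebra $\alpha = \langle e, h, f\rangle \subset \g$ lifts canonically to $\mathcal A$: indeed the construction of $Lie$ on the null split extension $\J\oplus M$ puts $e = e_{\J}\oplus 0$, $h = h_{\J}\oplus 0 = -L_{e_{\J}}$ (note $L_{e_\J}$ acts as $0$ on $M$ because $M$ is unital, i.e. the identity acts as the identity, so there is no ambiguity in which lift to take), and $f = P_{\J\oplus M}$. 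Thus $\alpha \subset \mathcal A$ and $\pi|_{\alpha}$ is an isomorphism onto $\alpha\subset\g$. First I would record that $\mathcal A$ is a Lie algebra generated by $\mathcal A_{1}\oplus\mathcal A_{-1}$ and that $\pi$ is surjective with abelian kernel $N$, so $\mathcal A$ is a (generally non-central, since $N$ is only abelian as an ideal, not necessarily central) extension of $\g$; the point of the lemma is to repair this by passing through $\hat\g$.

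Next I would pass to the subalgebra $\mathcal A' \subset \mathcal A$ generated by a chosen set of lifts of a generating set of $\g$ together with $\alpha$; more efficiently, one observes $\g$ is generated by $\g_{1}\oplus\g_{-1}$, and since $\mathcal A_{-1} = \J\oplus M$, $\mathcal A_{1} = Lie(\J\oplus M)_1$ both surject onto $\g_{\mp 1}$, the subalgebra of $\mathcal A$ generated by $\mathcal A_{1}\oplus\mathcal A_{-1}$ is all of $\mathcal A$. So $\mathcal A\twoheadrightarrow\g$ is a surjection of Lie algebras whose kernel $N$ is abelian. Now invoke the universal central extension: $\hat\g\to\g$ is the universal central extension, so for any surjection $\mathcal A\twoheadrightarrow\g$ with central kernel there is a unique lift $\hat\g\to\mathcal A$ over $\g$. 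The subtlety is that $N$ need not be central in $\mathcal A$. The standard fix, which I would carry out, is: a perfect Lie algebra has a universal central extension, and $\g=Lie(\J)$ is perfect because it is generated by $\g_{\pm1}$ and $[\g_0,\g_{\pm1}]=\g_{\pm1}$, $[\g_1,\g_{-1}]=\g_0$ (minimality, Lemma of Kac). For perfect $\g$, any extension $0\to N\to\mathcal A\to\g\to 0$ — even with $N$ merely abelian — admits a lift from the universal central extension provided we first replace $\mathcal A$ by its derived subalgebra $\mathcal A^{(1)} = [\mathcal A,\mathcal A]$, which is perfect, surjects onto $\g$ (since $\g$ is perfect), and has kernel $N\cap\mathcal A^{(1)}$; moreover one checks $N\cap\mathcal A^{(1)}$ is central in $\mathcal A^{(1)}$ because $[\mathcal A,N]\subset N$ and the induced action of $\g$ on $N$ factors through... — here one uses that $N = N_1\oplus N_0\oplus N_{-1}$ with $N_{-1}=M$, and the $\g$-action, hence already the relevant bracket relations, make $N\cap\mathcal A^{(1)}$ a central submodule. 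Then the universal property yields $\hat\g\to\mathcal A^{(1)}\hookrightarrow\mathcal A$, and composing with the inclusion gives $s:\hat\g\to\mathcal A$ with $\pi\circ s=\gamma$.

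Alternatively — and this is cleaner, so it is the route I would actually write — I would argue directly: lift the generators. The Lie algebra $\hat\g$ is generated by $\hat\g_{1}\oplus\hat\g_{-1}$ (since $\hat\g_{\pm1}=\g_{\pm1}$ and these together with the center, which lies in $\hat\g_0$, generate; but in fact $[\hat\g_1,\hat\g_{-1}]$ maps onto $\g_0$ and the center is in the image of the bracket because $\hat\g$ is perfect). Choose the identity sections $\hat\g_{\pm1}=\g_{\pm1}\xrightarrow{\sim}\mathcal A_{\pm1}$ (recall $\mathcal A_{-1}=\J\oplus M$ has the subspace $\J\oplus0$ — no, rather we must use that $\mathcal A_{-1}\twoheadrightarrow\g_{-1}$ splits; since $\g_{-1}=\J$ and $\mathcal A_{-1}=\J\oplus M$, the obvious inclusion $\J\hookrightarrow\J\oplus M$ is a section, and similarly on degree $+1$). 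These sections define a linear map $\hat\g_1\oplus\hat\g_{-1}\to\mathcal A$; extend it to a Lie algebra homomorphism by checking it respects the defining relations of $\hat\g$ restricted to degrees $\pm1$ — this works because the relations among elements of $\g_{1}\oplus\g_{-1}$ holding in $\g$ also hold, up to elements of $N_0$, in $\mathcal A$, and by the universal (central) extension property these lift uniquely. This gives $s$ on generators; it is then automatically a homomorphism on all of $\hat\g$ and satisfies $\pi\circ s = \gamma$ on generators, hence everywhere since both sides are homomorphisms agreeing on a generating set.

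The main obstacle, and the reason the universal central extension is needed rather than just working with $\g$ itself, is precisely that $N$ fails to be central in $\mathcal A$ — the $\g$-module structure on $N$ is genuinely nontrivial (indeed $N_{-1}=M$ is the given nontrivial bimodule). So one cannot split \eqref{exact1} as Lie algebras, and one cannot directly apply the universal property of $\hat\g\to\g$ to $\mathcal A\to\g$. The content of the lemma is that one can nonetheless find a Lie-algebra section after base-changing along $\gamma:\hat\g\to\g$; what makes this possible is that the obstruction to splitting lives in the part of $H^2(\g; N)$ detected by the center of $\hat\g$, and the map $s$ need not be injective — its image is the perfect subalgebra $[\mathcal A,\mathcal A]$, which is a \emph{central} extension of $\g$ and hence receives a map from $\hat\g$. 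I would make sure to note that $s$ is not claimed to be injective (its kernel lies in the center of $\hat\g$), which is consistent with the later assertion that $\gm$ and $\JJ$ need not be equivalent.
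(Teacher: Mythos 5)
Your overall strategy -- locate inside $\mathcal A$ a subalgebra that is a \emph{central} extension of $\g$ and then invoke the universal property of $\hat\g$ -- is exactly the paper's strategy, but both of your concrete implementations break at the same point: the subalgebra you actually name is not a central extension of $\g$. In your first route you pass to $\mathcal A^{(1)}=[\mathcal A,\mathcal A]$ and assert that $N\cap\mathcal A^{(1)}$ is central in $\mathcal A^{(1)}$; this is false. Since $M$ is unital, $[L_e,m]=e\cdot m=m$, so $M=N_{-1}\subset[\mathcal A,\mathcal A]$, and $L_a=[P,a]\in[\mathcal A_1,\mathcal A_{-1}]\subset\mathcal A^{(1)}$ acts on $M$ by the given (nontrivial) module action; hence the kernel of $\mathcal A^{(1)}\to\g$ contains $M$ and is not central whenever $M$ is nontrivial. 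The same error resurfaces in your closing paragraph, where you claim the image of $s$ is ``the perfect subalgebra $[\mathcal A,\mathcal A]$, which is a central extension of $\g$'' -- it is neither the image nor central over $\g$.

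The correct subalgebra, and the actual content of the lemma, is the one the paper takes: let $\tilde\g\subset\mathcal A$ be generated by the \emph{canonical copies} $\g_{\pm1}\subset\mathcal A_{\pm1}$ coming from the canonical splitting $\mathcal A_{\pm1}=\g_{\pm1}\oplus N_{\pm1}$. Then $\operatorname{Ker}(\tilde\g\to\g)\subset\tilde\g_0$, and centrality of this kernel is checked directly: for $z$ in the kernel, $[z,\tilde\g_{\pm1}]\subset\tilde\g_{\pm1}\cap\operatorname{Ker}=0$, and since $\tilde\g_0=[\tilde\g_{-1},\tilde\g_1]$ also $[z,\tilde\g_0]=0$; universality of $\hat\g$ then yields $s:\hat\g\to\tilde\g\subset\mathcal A$ over $\gamma$. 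Your second route does start from the right sections in degrees $\pm1$, but the step ``extend to a homomorphism by checking the defining relations of $\hat\g$, since relations hold up to elements of $N_0$ and these lift by the universal central extension property'' is precisely where the work lies and is left unproved: $\hat\g$ is not presented by generators in degrees $\pm1$ with explicit relations, and the universal property only applies once you know the error terms in $N_0$ are central in the subalgebra they generate together with $\g_{\pm1}$ -- which is the centrality claim above, nowhere established in your write-up (and contradicted by your identification of the image with $[\mathcal A,\mathcal A]$). So the proposal as written has a genuine gap; filling it leads you back to the paper's argument.
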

\begin{proof} Observe that the splitting $\mathcal A_{\pm 1}=\g_{\pm 1}\oplus N_{\pm 1}$ is canonical.
Let $\tilde\g$ be the Lie subalgebra in $\mathcal A$ generated by $\g_{\pm 1}$. 
Then we have a surjective homomorphism $\varphi:\tilde\g\to\g$ and $\operatorname{Ker}\varphi\subset \tilde\g_0$.
We claim that $\operatorname{Ker}\varphi$ lies in the center of $\tilde\g$.
Indeed, $z\in\operatorname{Ker}\varphi$ implies $[z,\tilde{\g}_{\pm 1}]\subset \tilde{\g}_{\pm 1}\cap\operatorname{Ker}\varphi=0$ and from
$[\tilde{\g}_{-1},\tilde{\g}_1]=\tilde{\g}_0$ it follows that
$[z,\tilde{\g}_0]=0.$
Therefore the map $s:\hat\g\to\tilde\g\subset\mathcal A$ is as required.
\end{proof}

\begin{rem} For the illustration that $\gh$ is essential, see Example \ref{centexs}. 
\end{rem}

The above Lemma implies that  $N$ is a $\hat\g$-module. Thus, in particular, we have defined a $\hat\g_0$-module structure on $N_{-1}=M$.
Now let $\mathcal P=\hat\g_0\oplus\g_{-1}$ and we extend the above  $\hat\g_0$-module structure on $M$ to a $\mathcal P$-module structure
by setting $\g_{-1} M=0$. Let
$$\Gamma (M)=U(\hat\g)\otimes_{U(\mathcal P)}M.$$
We define $Lie(M)$ to be the maximal quotient in $\Gamma(M)$ which belongs to $\gm$. More precisely
$Lie(M):=\Gamma(M)/T$, where $T$ is the submodule in $\Gamma(M)$ generated $\displaystyle \bigoplus_{i\geq 2}\Gamma(M)_i$.

Note that Frobenius reciprocity implies that for any $K\in\gm$ and any $M\in\JJ$ 
\begin{equation}\label{frobenius}
\operatorname{Hom}_{\hat\g}(Lie(M),K)\simeq\operatorname{Hom}_{\hat\g}(\Gamma(M),K)\simeq \operatorname{Hom}_{\mathcal P}(M,K).
\end{equation}
On the other hand, we have
\begin{equation}\label{frobenius2} 
\operatorname{Hom}_{\mathcal P}(M,K)=\operatorname{Hom}_{\gh_0}(M,K_{-1})= \operatorname{Hom}_{\J}(M,Jor(K)).
\end{equation}
\begin{lem}\label{adjoint} We have a canonical isomorphism
$$\operatorname{Hom}_{\hat\g}(Lie(M),K)\simeq \operatorname{Hom}_{\J}(M,Jor(K)),$$
\end{lem}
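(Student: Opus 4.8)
The plan is to combine the two displayed isomorphisms \eqref{frobenius} and \eqref{frobenius2} that immediately precede the statement. Reading \eqref{frobenius} from left to right gives a canonical isomorphism $\operatorname{Hom}_{\hat\g}(Lie(M),K)\simeq\operatorname{Hom}_{\mathcal P}(M,K)$, obtained by first using Frobenius reciprocity for the induction $\Gamma(M)=U(\hat\g)\otimes_{U(\mathcal P)}M$ and then observing that since $K$ already lies in $\gm$, every $\hat\g$-homomorphism $\Gamma(M)\to K$ kills the submodule $T$ generated by $\bigoplus_{i\geq 2}\Gamma(M)_i$, hence factors uniquely through $Lie(M)=\Gamma(M)/T$. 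Then \eqref{frobenius2} identifies $\operatorname{Hom}_{\mathcal P}(M,K)$ with $\operatorname{Hom}_{\J}(M,Jor(K))$. Composing the two yields the claimed canonical isomorphism, so the bulk of the proof is simply to assert that these two steps compose.

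The one point that genuinely needs argument — and which I expect to be the main (though modest) obstacle — is the passage $\operatorname{Hom}_{\mathcal P}(M,K)=\operatorname{Hom}_{\gh_0}(M,K_{-1})$ in \eqref{frobenius2}, i.e. checking that a $\mathcal P$-module map out of $M$ is the same thing as a $\hat\g_0$-module map $M\to K_{-1}$. Here I would unwind the definitions: $\mathcal P=\hat\g_0\oplus\g_{-1}$ with $\g_{-1}$ acting by zero on $M$, so $M$ is really just a $\hat\g_0$-module viewed as a $\mathcal P$-module. Given a $\mathcal P$-homomorphism $\varphi:M\to K$, since $M$ is concentrated in $h$-degree $-1$ (it sits in $\Gamma(M)_{-1}$ and $K\in\gm$ is $h$-graded), the image lands in $K_{-1}$; and $\mathcal P$-linearity forces $\g_{-1}$ to act as zero on $\varphi(M)$, which is automatic in $K_{-1}$ since $\g_{-1}K_{-1}\subset K_{-2}=0$. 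Conversely any $\hat\g_0$-map $M\to K_{-1}$ is trivially a $\mathcal P$-map. Then the final identification $\operatorname{Hom}_{\gh_0}(M,K_{-1})=\operatorname{Hom}_{\J}(M,Jor(K))$ is exactly the statement that the $\J=\g_{-1}$-action on $Jor(K)=K_{-1}$ defined by $x(m)=[f,x]m$ coincides (as far as intertwiners from $M$ are concerned) with the $\hat\g_0$-action on $K_{-1}$ restricted along the embedding $\J\hookrightarrow\hat\g_0$, $x\mapsto [f,x]$ — this is how the Jordan-module structure on $M\in\JJ$ was built from its $\mathcal A$- and hence $\hat\g_0$-structure in the construction of $Lie$, via Lemma \ref{centext}.

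Finally I would note naturality: the isomorphism is canonical in both $M$ and $K$ because each of the two intermediate isomorphisms is (Frobenius reciprocity for an induced module is natural, and the adjustments on the $K$-side are the functorial restriction $K\mapsto K_{-1}=Jor(K)$). So the composite is a natural isomorphism of bifunctors, establishing that $Lie$ is left adjoint to $Jor$ as claimed.
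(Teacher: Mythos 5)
Your proposal is correct and follows exactly the paper's route: the published proof is precisely the composition of the two displayed isomorphisms \eqref{frobenius} and \eqref{frobenius2}, which you moreover justify in more detail (factoring through $Lie(M)=\Gamma(M)/T$ and unwinding the $\mathcal P$-module structure) than the paper bothers to. No gaps; the extra naturality remark matches the paper's intent that $Lie$ is left adjoint to $Jor$.
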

\begin{proof} Indeed,
$$\operatorname{Hom}_{\hat\g}(Lie(M),K)\simeq \operatorname{Hom}_{\mathcal P}(M,K)\simeq \operatorname{Hom}_{\J}(M,Jor(K)).$$
where the first isomorphism is a consequence of (\ref{frobenius}) and the second follows from (\ref{frobenius2}).
\end{proof}

\begin{Cor} If $P$ is a projective module in $\JJ$, then $Lie(P)$ is a projective module in $\gm$.
\end{Cor}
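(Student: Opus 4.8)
The plan is to deduce this from the adjunction of Lemma \ref{adjoint} together with the exactness of $Jor$, using the standard fact that a left adjoint of an exact functor sends projective objects to projective objects.

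Recall that an object $Q$ of an abelian category is projective precisely when the functor $\operatorname{Hom}(Q,-)$ is exact. So it suffices to show that $\operatorname{Hom}_{\hat\g}(Lie(P),-)$ is exact on $\gm$. First I would take a short exact sequence
$$0\to K'\to K\to K''\to 0$$
in $\gm$ and apply $Jor$. Since $Jor$ is exact (this was observed right after its definition), we obtain a short exact sequence
$$0\to Jor(K')\to Jor(K)\to Jor(K'')\to 0$$
in $\JJ$. As $P$ is projective in $\JJ$, the functor $\operatorname{Hom}_{\J}(P,-)$ is exact, so applying it gives a short exact sequence of $\kk$-vector spaces. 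Finally I would invoke the natural isomorphism $\operatorname{Hom}_{\hat\g}(Lie(P),K)\simeq\operatorname{Hom}_{\J}(P,Jor(K))$ of Lemma \ref{adjoint}, which is functorial in $K$ because the isomorphisms in \eqref{frobenius} and \eqref{frobenius2} are natural; under it the previous exact sequence is carried to
$$0\to\operatorname{Hom}_{\hat\g}(Lie(P),K')\to\operatorname{Hom}_{\hat\g}(Lie(P),K)\to\operatorname{Hom}_{\hat\g}(Lie(P),K'')\to 0,$$
which is therefore exact. Hence $Lie(P)$ is projective in $\gm$.

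There is essentially no serious obstacle here; the two points that deserve a moment's care are that $Jor$ genuinely sends short exact sequences in $\gm$ to short exact sequences in $\JJ$ (which holds because passing to the $(-1)$-graded component is exact and the $\J$-action is transported along it) and that the adjunction isomorphism of Lemma \ref{adjoint} is natural in the target variable $K$, both of which are already built into the constructions recalled above.
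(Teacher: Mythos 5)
Your argument is correct and is exactly the one the paper intends: the corollary's proof in the text is precisely "Lemma \ref{adjoint} plus exactness of $Jor$," i.e.\ the standard fact that a left adjoint of an exact functor preserves projectives, which you have simply spelled out in detail.
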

\begin{proof} Follows from Lemma \ref{adjoint} and exacteness of $Jor$.
\end{proof}

\begin{lem}\label{identity} $Jor\circ Lie$ is isomorphic to the identity functor in $\JJ$.
\end{lem}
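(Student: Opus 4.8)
The plan is to unwind the two constructions and check that, for $M\in\JJ$, the Jordan module $Jor(Lie(M))$ is canonically isomorphic to $M$ as a $\J$-module. Write $L=Lie(M)=\Gamma(M)/T$ with $\Gamma(M)=U(\hat\g)\otimes_{U(\mathcal P)}M$; since $\mathcal P=\hat\g_0\oplus\g_{-1}$ and $\g_{-1}$ kills $M$ while $T$ is generated by the components $\Gamma(M)_i$ with $i\geq 2$, the grading element $h$ acts on the induced module with degrees bounded above by $1$, and the lowest degree piece $\Gamma(M)_{-1}$ injects into $L$; more precisely I would identify the three graded pieces $L_{-1},L_0,L_1$ and note $L_{-1}\cong M$ as a vector space (and even as a $\hat\g_0$-module, since $\mathcal P$ acts on $M\subset\Gamma(M)$ through $\hat\g_0$). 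By definition $Jor(L)=L_{-1}$, so there is a canonical $\kk$-linear isomorphism $\eta_M\colon M\xrightarrow{\ \sim\ }Jor(Lie(M))$.

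The substance is that $\eta_M$ intertwines the $\J$-actions. On the right-hand side the action of $x\in\J=\g_{-1}=\hat\g_{-1}$ on $m\in L_{-1}$ is $x\cdot m=[f,x]\,m$, where $f=P$ spans the lower part of the short subalgebra $\alpha$. So I must show $[f,x]$ acts on $M\subset\Gamma(M)$ exactly as the original Jordan operator $\rho(x)$, i.e.\ as the multiplication $x\cdot m$ coming from the null split extension $\J\oplus M$. I would trace this through Lemma \ref{centext}: the $\hat\g$-module structure on $N=\ker(\mathcal A\to\g)$, with $N_{-1}=M$, comes from the embedding $s\colon\hat\g\to\mathcal A=Lie(\J\oplus M)$, and in $\mathcal A$ the element $[f,x]=[P_{\J\oplus M},\,x]$ lands in $\mathcal A_0$ and acts on $N_{-1}=M$ precisely as the left-multiplication operator $L_x$ of $\J\oplus M$ restricted to $M$, which by the TKK recipe $x\circ_{\J} y=[[P,x],y]$ is the defining Jordan representation of $\J$ on $M$. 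Hence the $\hat\g_0$-module $M$ we fed into $\Gamma(M)$ already carries the correct $\J$-action via $x\mapsto[f,x]$, and this is inherited by the quotient $L_{-1}$. Therefore $\eta_M$ is an isomorphism of $\J$-modules.

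Finally I would observe naturality: a morphism $M\to M'$ in $\JJ$ induces a $\mathcal P$-linear map $M\to M'$, hence a $\hat\g$-linear map $\Gamma(M)\to\Gamma(M')$ descending to $Lie(M)\to Lie(M')$ and, on degree $-1$ pieces, to $Jor(Lie(M))\to Jor(Lie(M'))$; compatibility with $\eta_M,\eta_{M'}$ is immediate from the constructions. This gives a natural isomorphism $Jor\circ Lie\cong\mathrm{id}_{\JJ}$. The main obstacle is the bookkeeping in the previous paragraph: one has to be careful that passing to the universal central extension $\hat\g$ and then to the maximal short-graded quotient does not alter the degree $-1$ component, and that the identification of $[f,x]$ with the Jordan multiplication operator is done inside $\mathcal A$ (where it is transparent) and then transported along $s$; once this is set up correctly the rest is formal.
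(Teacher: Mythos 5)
Your proposal is correct and takes essentially the same approach as the paper, whose entire proof is the one line ``by construction $Jor\circ Lie(M)=(Lie(M))_{-1}\simeq M$'': your tracing of the $\hat\g_0$-action through Lemma~\ref{centext}, identifying $[f,x]$ with the left multiplication $L_x$ of the null split extension on $M\subset\mathcal A_{-1}$ (the discrepancy lying in $N_1$, hence acting trivially on $N_{-1}$ since $N$ is abelian), is exactly the content the paper leaves implicit. One small correction: the degrees of $\Gamma(M)\simeq S(\g_1)\otimes M$ are bounded \emph{below} by $-1$, not above by $1$; and the injectivity of $M=\Gamma(M)_{-1}\to Lie(M)_{-1}$, i.e.\ $T_{-1}=0$, which you assert, is most cleanly seen from the evaluation map $\Gamma(M)\to N$ furnished by Frobenius reciprocity, which kills $T$ and restricts to the identity on $M$.
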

\begin{proof} By construction we have $Jor\circ Lie(M)=(Lie(M))_{-1}\simeq M$.
\end{proof}

\begin{lem}\label{induction}  Let $N\in\gm$. We have an exact sequence of $\hat\g$-modules
\begin{equation}\label{exact2}
0\to C\to Lie(Jor(N))\to N\to C'\to 0,
\end{equation}
where $C,\,C'$ are some trivial $\hat \g$-modules.
\end{lem}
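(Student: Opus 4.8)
The plan is to use the adjunction from Lemma \ref{adjoint} together with the concrete construction of $Lie$ and $Jor$ to analyze the counit map $\varepsilon_N\colon Lie(Jor(N))\to N$ and show that both its kernel and cokernel are trivial modules.

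First I would set $M=Jor(N)=N_{-1}$ and observe that the counit $\varepsilon_N$ is the $\hat\g$-module map corresponding, under the isomorphism $\operatorname{Hom}_{\hat\g}(Lie(M),N)\simeq\operatorname{Hom}_{\J}(M,Jor(N))$, to the identity of $M$. Concretely, $\varepsilon_N$ is induced by the $\mathcal P$-module map $M\to N$ which is simply the inclusion $N_{-1}\hookrightarrow N$; one has to check this inclusion is a $\mathcal P=\hat\g_0\oplus\g_{-1}$-map, which holds because $\g_{-1}$ annihilates $N_{-1}$ inside $\Gamma(M)$ by construction while in $N$ one has $\g_{-1}N_{-1}=0$ by the grading. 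Then $\varepsilon_N$ extends this to all of $\hat\g$ via $U(\hat\g)\otimes_{U(\mathcal P)}M\twoheadrightarrow \Gamma(M)/T=Lie(M)$ and the $\hat\g$-action on $N$.

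Next I would analyze the image and cokernel. Since $\hat\g$ is generated by $\g_{-1}\oplus\g_1$, and $N_{-1}\subset\operatorname{Im}\varepsilon_N$, the image contains $\g_1 N_{-1}\subset N_0$ and $\g_{-1}\g_1 N_{-1}$. By minimality (Lemma of \cite{Kac}, condition (2): $[\g_i,\g_0]=\g_i$) and the fact that $\hat\g_0=[\g_{-1},\g_1]\oplus(\text{center})$, the only thing possibly missing from the image is a part of $N_0$ on which $\g_{\pm1}$ acts trivially — but such a piece is a trivial $\hat\g$-submodule of $N$, and modulo it the quotient is generated by its $(-1)$-component, hence is a genuine quotient controlled by the image. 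A short diagram chase shows $C'=\operatorname{coker}\varepsilon_N$ is a trivial module concentrated in degree $0$. For the kernel $C=\ker\varepsilon_N$: from the exactness of $Jor$ and Lemma \ref{identity} ($Jor\circ Lie\simeq\mathrm{id}$), applying $Jor$ to $\varepsilon_N$ gives an isomorphism $Jor(Lie(Jor(N)))\xrightarrow{\sim}Jor(N)$, so $C_{-1}=0$ and $C'_{-1}=0$; thus $C$ and $C'$ live in degrees $0$ (and a priori $1$, but see below). It remains to see $\hat\g$ acts trivially on $C$. Here I would use that $C\subset Lie(M)$ and $Lie(M)$ has a $(-1)$-component equal to $M$ which injects into $N$; any element of $\g_{-1}C$ lies in $C\cap Lie(M)_{-1}=C_{-1}=0$, and then condition (1) of the minimality lemma (if $[a,\g_{-1}]=0$ for $a\in\g_0\oplus\g_1$ then $a=0$, applied in $\g$, together with centrality of $\ker\gamma$ in $\hat\g$) forces the $\hat\g_0\oplus\hat\g_1$-action on $C$ to be trivial as well. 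One also needs $C_1=0=C'_1$: since $Lie(M)$ is a quotient of $\Gamma(M)$ by the submodule generated by degrees $\geq2$, its degree-$1$ part is $\g_1\cdot M$, which maps onto $\g_1\cdot N_{-1}\subset N_1$; a dimension/generation argument shows this map is injective, giving $C_1=0$, and $C'_1=0$ follows from surjectivity in degree $1$.

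The main obstacle I anticipate is the careful bookkeeping around the universal central extension: because the splitting $\JJ\oplus J\text{-mod}_0$ does not lift to $\hat\g$ (as emphasized in the introduction), one cannot simply discard trivial summands, and one must argue that the discrepancy between $Lie(Jor(N))$ and $N$ is \emph{exactly} a pair of trivial modules rather than something with a nontrivial extension class that survives in degrees $\pm1$. The cleanest way around this is to work degree by degree using that both sides have the same $(-1)$-component (by $Jor\circ Lie\simeq\mathrm{id}$) and that $\hat\g_{\pm1}=\g_{\pm1}$ act the same way there, reducing everything to the degree-$0$ part where triviality is detected by the vanishing of the $\g_{\pm1}$-action, which in turn follows from minimality. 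I would close by noting the sequence \eqref{exact2} is the splicing of $0\to C\to Lie(Jor(N))\to\operatorname{Im}\varepsilon_N\to0$ with $0\to\operatorname{Im}\varepsilon_N\to N\to C'\to0$.
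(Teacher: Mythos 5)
Your skeleton is exactly the paper's argument: take the counit $Lie(Jor(N))\to N$ furnished by Lemma \ref{adjoint}, let $C$ and $C'$ be its kernel and cokernel, and use exactness of $Jor$ together with $Jor\circ Lie\simeq\mathrm{id}$ (Lemma \ref{identity}) to conclude $Jor(C)=Jor(C')=0$, i.e.\ $C_{-1}=C'_{-1}=0$. Up to that point you are correct, and this is all the paper uses.

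The trouble is in how you try to upgrade $C_{-1}=C'_{-1}=0$ to triviality: the two sub-arguments you offer there do not work as stated. First, the asserted injectivity of $Lie(M)_1\to N_1$ (``a dimension/generation argument'') is not an argument, and I do not see how to get it directly from the construction of $\Gamma(M)/T$; likewise ``$C'_1=0$ follows from surjectivity in degree $1$'' presupposes exactly what has to be proved. Second, the minimality lemma of \cite{Kac} is a statement about elements $a\in\g_0\oplus\g_1$ with $[a,\g_{-1}]=0$; it says nothing about module actions, so it cannot ``force the $\hat\g_0\oplus\hat\g_1$-action on $C$ to be trivial.'' The missing step is much simpler and purely $\sll_2$-theoretic: $C$ and $C'$ are again objects of $\gm$, so $e,f\in\alpha$ shift the $h$-grading of length $3$ and act nilpotently; hence $C$ is a direct sum of finite-dimensional irreducible $\alpha$-modules, whose possible $h$-spectra are $\{0\}$ or $\{-1,0,1\}$. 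Therefore $C_{-1}=0$ already forces $C_1=0$, and the same for $C'$. Once $C=C_0$, the elements of $\g_{\pm1}=\hat\g_{\pm1}$ act by zero for degree reasons, and since $\hat\g$ is generated by $\g_{\pm1}$ (it is perfect and equals the subalgebra generated by $\g_{\pm1}$ plus the central kernel of $\gamma$, so $\hat\g_0=[\g_{-1},\g_1]$), all of $\hat\g$ acts by zero on $C$ and $C'$. With this replacement your proof collapses to the paper's short argument; as written, the degree-$\pm1$ vanishing and the triviality of the $\hat\g_0$-action are genuine gaps.
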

\begin{proof} The identity morphism $Jor(N)\to Jor (N)$ 
induces a homomorphism of $\gm$-modules $ Lie(Jor(N))\to N$ by Lemma \ref{adjoint}. Let $C$ and $C'$ denote the kernel and cokernel of this homomorphism.
Then we obtain the sequence (\ref{exact2}). Apply $Jor$ to this sequence. Since $Jor(Lie(Jor(N)))\simeq  Jor(N)$, exactenes of $Jor$ implies $Jor(C)=Jor(C')=0$. 
Therefore
$C$ and $C'$ are trivial $\hat\g$-modules.
\end{proof}

\begin{Cor}\label{aux1}
(a) Let $N\in \gm$ and $\hat\g N=N$, then the canonical map $Lie(Jor(N))\to N$ is surjective.

(b) Let $N\in \gm$ and $N^{\hat\g}:=\{x\in N\,|\, \hat\g x=x\}=0$, then the canonical map $N\to Lie(Jor(N))$ is injective.

(c) If $M\to L\to 0$ is exact in $\JJ$, then $Lie(M)\to Lie(L)\to 0$ is exact in $\gm$;
\end{Cor}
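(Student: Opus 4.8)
The plan is to derive all three parts of Corollary~\ref{aux1} from the exact sequence \eqref{exact2} in Lemma~\ref{induction} together with the adjunction in Lemma~\ref{adjoint} and the identity $Jor\circ Lie\simeq\mathrm{id}$ from Lemma~\ref{identity}. The key observation I would use throughout is that the ``error terms'' $C$ and $C'$ in \eqref{exact2} are trivial $\hat\g$-modules, so they are detected by the functors $N\mapsto N^{\hat\g}$ and $N\mapsto N/\hat\g N$: a trivial module $C$ satisfies $C=C^{\hat\g}$ and $\hat\g C=0$.

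For part (a), I would split \eqref{exact2} into $0\to C\to Lie(Jor(N))\xrightarrow{\psi} \mathrm{im}\,\psi\to 0$ and $0\to\mathrm{im}\,\psi\to N\to C'\to 0$. Surjectivity of the canonical map $\psi$ is equivalent to $C'=0$. Since $C'$ is a trivial module, it is a quotient of $N$ on which $\hat\g$ acts by zero, i.e.\ a quotient of $N/\hat\g N$. But the hypothesis $\hat\g N=N$ forces $N/\hat\g N=0$, hence $C'=0$ and $\psi$ is onto. For part (b), dually, the canonical injection $N\to Lie(Jor(N))$ that I have in mind is the unit of the adjunction; more precisely, applying $Lie$ is left adjoint, so there is a natural map going the other way, but for the statement as phrased I would use that $\ker$ of the relevant map is a trivial submodule of $N$, hence contained in $N^{\hat\g}=0$. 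Concretely: the map $N\to Lie(Jor(N))$ in question fits into \eqref{exact2} read in the appropriate direction, its kernel is a trivial $\hat\g$-module (being annihilated by $\hat\g$, as it maps to $C$), so it lies in $N^{\hat\g}$, which vanishes by hypothesis; therefore the map is injective.

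For part (c), given a surjection $M\to L\to 0$ in $\JJ$, I would apply the right-exact functor $U(\hat\g)\otimes_{U(\mathcal P)}(-)$ to get $\Gamma(M)\to\Gamma(L)\to 0$ exact, and then observe that the quotient construction $Lie(-)=\Gamma(-)/T$ is compatible with this surjection: the submodule $T$ generated by $\bigoplus_{i\geq 2}\Gamma(-)_i$ is functorial, and a surjection $\Gamma(M)\to\Gamma(L)$ carries the degree-$\geq 2$ part onto the degree-$\geq 2$ part, hence $T_M$ onto $T_L$, inducing a surjection $Lie(M)\to Lie(L)$. Alternatively, and more cleanly, $Lie$ is a left adjoint by Lemma~\ref{adjoint} and left adjoints are right exact, so in particular they preserve epimorphisms; this gives (c) immediately.

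The main obstacle I anticipate is pinning down exactly which natural maps $Lie(Jor(N))\to N$ and $N\to Lie(Jor(N))$ are meant in parts (a) and (b) and checking that their kernels/cokernels really are the trivial modules $C,C'$ of \eqref{exact2} rather than some other extension terms — in other words, making the bookkeeping of ``trivial module $\Leftrightarrow$ killed by $\hat\g$ $\Leftrightarrow$ invisible to $Jor$'' fully precise, especially since $Jor$ only sees the degree $-1$ part and one must rule out hidden contributions in degrees $0$ and $1$. Once that identification is secured, each part is a one-line consequence of the hypothesis ($\hat\g N=N$ kills cokernels, $N^{\hat\g}=0$ kills kernels, right-exactness of a left adjoint handles (c)).
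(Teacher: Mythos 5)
Parts (a) and (c) of your proposal are correct. For (a) you argue exactly as the paper does: in the sequence \eqref{exact2} of Lemma \ref{induction} the cokernel $C'$ is a trivial quotient of $N$, and $\gh N=N$ forces any trivial quotient to vanish. For (c) you take a genuinely different route: either right-exactness of $\Gamma(-)=U(\gh)\otimes_{U(\mathcal P)}(-)$ together with the observation that the surjection $\Gamma(M)\to\Gamma(L)$ carries the submodule $T_M$ into (and onto a generating set of) $T_L$, or, more formally, that $Lie$ is a left adjoint by Lemma \ref{adjoint} and hence preserves epimorphisms; both versions are sound, since $\JJ$ and $\gm$ are abelian and the isomorphism of Lemma \ref{adjoint} is natural (it comes from Frobenius reciprocity). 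The paper instead lets $C$ be the cokernel of $Lie(M)\to Lie(L)$, applies the exact functor $Jor$ to get $Jor(C)=0$, and uses that $Lie(L)$ is generated by its degree $-1$ component $L$, so a quotient with vanishing degree $-1$ part must be zero. Your adjoint argument is shorter; the paper's argument makes explicit the generation-in-degree-$(-1)$ mechanism that is reused later (e.g.\ in Corollary \ref{quiverab}).

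The genuine gap is in (b). You never construct the ``canonical map'' $N\to Lie(Jor(N))$, and your justification that its kernel is a trivial module (``as it maps to $C$'') does not parse: in \eqref{exact2} the only map relating $N$ and $Lie(Jor(N))$ is the counit $Lie(Jor(N))\to N$, whose kernel $C$ is a trivial submodule of $Lie(Jor(N))$, not of $N$; there is no map from $N$ into $Lie(Jor(N))$ in that sequence, and the unit of the adjunction lives in $\JJ$ (it is $M\to Jor(Lie(M))$, the isomorphism of Lemma \ref{identity}), not in $\gm$. The paper's own proof of (b) consists precisely in claiming that the hypothesis on $N$ forces $C=0$ in \eqref{exact2}; so whichever way one reads the direction of the map in the statement, the object that must be shown to vanish is $C$, and your mechanism ``a trivial submodule of $N$ lies in $N^{\gh}=0$'' does not touch it, because the kernel in question does not sit inside $N$. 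To repair an argument in your spirit you would have to either exhibit the map (for instance by dualizing (a) through the duality of Remark \ref{duality1}, using that $N$ has no trivial submodule exactly when $\gh N^*=N^*$ --- but this embeds $N$ into $(Lie(Jor(N^*)))^*$ rather than into $Lie(Jor(N))$) or argue directly that the trivial submodule $C$ of $Lie(Jor(N))$ dies under the stated hypothesis. As written, part (b) is not proved.
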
 
\begin{proof} Note that (a) and (b) follow from Lemma \ref{induction} since in (a) we have $C'=0$ and in (b) we have $C=0$.
To prove (c) consider the exact sequence $Lie(M)\to Lie(L)\to C\to 0$, where $C$ is the cokernel of $Lie(M)\to Lie(L)$ and apply $Jor$.
Then again we have $Jor(C)=0$. Note that by construction $Lie(L)$ is generated by $L=Lie(L)_{-1}$ and therefore $Lie(L)$ does not have a trivial quotient.
Hence $C=0$.
\end{proof}

\begin{lem}\label{proj} Let $P$ be a projective module in $\gm$
such that $\hat{\g}P=P$. Then $Jor(P)$ is projective in $\JJ$.
\end{lem}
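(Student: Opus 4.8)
The plan is to deduce projectivity of $Jor(P)$ from the adjunction in Lemma \ref{adjoint} together with the exactness of $Lie$ on surjections established in Corollary \ref{aux1}(c). Recall that by Lemma \ref{adjoint} we have a natural isomorphism $\operatorname{Hom}_{\J}(M,Jor(K))\simeq\operatorname{Hom}_{\hat\g}(Lie(M),K)$ for all $M\in\JJ$ and $K\in\gm$; so to test projectivity of $Jor(P)$ in $\JJ$ we must show that $\operatorname{Hom}_{\J}(Jor(P),-)$ is exact, i.e. that every surjection $M\twoheadrightarrow L$ in $\JJ$ induces a surjection $\operatorname{Hom}_{\J}(Jor(P),M)\to\operatorname{Hom}_{\J}(Jor(P),L)$.

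First I would translate the problem through the adjunction in the other variable. A morphism $Jor(P)\to L$ corresponds to a morphism $\varphi:P\to ?$ — but the adjunction runs the wrong way, so instead I work directly. Given a surjection $p:M\to L$ in $\JJ$, apply $Lie$ to obtain, by Corollary \ref{aux1}(c), a surjection $Lie(p):Lie(M)\twoheadrightarrow Lie(L)$ in $\gm$. Now a morphism $g:Jor(P)\to L$ corresponds under Lemma \ref{adjoint} to a morphism... no: Lemma \ref{adjoint} gives $\operatorname{Hom}_{\J}(M,Jor(P))$, not $\operatorname{Hom}_{\J}(Jor(P),L)$. The correct route uses the counit. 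By Lemma \ref{induction} (with $C=0$ since $\hat\g P=P$, by Corollary \ref{aux1}(a)) we have a surjection $\varepsilon_P:Lie(Jor(P))\twoheadrightarrow P$. Since $P$ is projective in $\gm$, this surjection splits: $P$ is a direct summand of $Lie(Jor(P))$, say $Lie(Jor(P))\cong P\oplus Q$ as $\hat\g$-modules. Applying the exact functor $Jor$ and using Lemma \ref{identity}, we get $Jor(P)\oplus Jor(Q)\cong Jor(Lie(Jor(P)))\cong Jor(P)$; this does not immediately help, so the real content must come from projectivity of $Lie(Jor(P))$.

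Here is the clean argument I would write. Because $P$ is projective in $\gm$ and is a quotient of $Lie(Jor(P))$, it is a direct summand of $Lie(Jor(P))$. So it suffices to show $Jor(Lie(N))$ is projective in $\JJ$ whenever $N\in\JJ$ and then note $Jor(P)$ is a summand of $Jor(Lie(Jor(P)))\cong Jor(P)$ — again circular. The non-circular fact is: for any $N\in\JJ$, the module $Lie(N)$ represents the functor $K\mapsto\operatorname{Hom}_{\J}(N,Jor(K))$, hence $N\mapsto Lie(N)$ sends projectives to projectives (the Corollary after Lemma \ref{adjoint}); but I need the converse-flavored statement. So the decisive step is: show $\operatorname{Hom}_{\J}(Jor(P),L)\cong\operatorname{Hom}_{\hat\g}(P,?)$ for a suitable right adjoint of $Jor$. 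Concretely, let $\rho:\gm\to\JJ$ be as above and let $\operatorname{Lie}^{!}$ denote the right adjoint of $Jor$ (which exists because $Jor$ is exact between nice abelian categories, or can be built as in the excerpt). Then $\operatorname{Hom}_{\J}(Jor(P),L)\simeq\operatorname{Hom}_{\hat\g}(P,\operatorname{Lie}^{!}(L))$, and since $P$ is projective in $\gm$ the right-hand functor is exact in $L$ provided $\operatorname{Lie}^{!}$ is exact. The main obstacle, therefore, is establishing exactness of the right adjoint $\operatorname{Lie}^{!}$ of $Jor$ on short exact sequences in $\JJ$ — equivalently, reformulating the whole argument so that it only uses $Lie$ (the left adjoint), which is what Corollary \ref{aux1}(c) provides.

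To avoid that obstacle entirely, I would instead argue as follows, which I expect is the intended proof. Take a surjection $M\twoheadrightarrow L$ in $\JJ$ and a morphism $g:Jor(P)\to L$. Compose the counit $\varepsilon_P:Lie(Jor(P))\twoheadrightarrow P$ (surjective by Corollary \ref{aux1}(a)) with... no. Rather: $g$ induces, by functoriality, $Lie(g):Lie(Jor(P))\to Lie(L)$, and $Lie(M)\twoheadrightarrow Lie(L)$ is surjective by Corollary \ref{aux1}(c); since $Lie(Jor(P))\cong P\oplus Q$ with $P$ projective, the composite $P\hookrightarrow Lie(Jor(P))\xrightarrow{Lie(g)}Lie(L)$ lifts to $\tilde{g}:P\to Lie(M)$. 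Now apply $Jor$: we get $Jor(\tilde g):Jor(P)\to Jor(Lie(M))\cong M$ by Lemma \ref{identity}, and chasing the diagram — using that $Jor(\varepsilon_P)$ is the identity of $Jor(P)$ under the summand identification, and naturality of the counit — shows $Jor(\tilde g)$ is a lift of $g$ along $M\to L$. Hence $\operatorname{Hom}_{\J}(Jor(P),-)$ is exact and $Jor(P)$ is projective. The step requiring the most care is verifying the diagram chase: one must track the identification $Lie(Jor(P))\cong P\oplus Q$ through $Jor$ and confirm that the extra summand $Jor(Q)$, which is trivial as a $\J$-module if $Q$ is a trivial $\hat\g$-module (it need not be — $Q$ is only a complement, not necessarily trivial), does not interfere; cleanly, one restricts attention to the component landing in the summand $Jor(P)$ and uses that $Jor(\varepsilon_P)|_{Jor(P)}=\mathrm{id}$.
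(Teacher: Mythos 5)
Your final paragraph is a correct proof and is essentially the paper's own argument: both rest on the surjection $Lie(Jor(P))\to P$ from Corollary \ref{aux1}(a) split by projectivity of $P$, on Corollary \ref{aux1}(c) making $Lie(M)\to Lie(L)$ surjective, and on $Jor\circ Lie\simeq\operatorname{id}$ (Lemma \ref{identity}) to bring the lift back to $\JJ$; the paper merely packages your lifting chase as the adjunction isomorphism of Lemma \ref{adjoint} applied to the surjection rewritten as $Jor(Lie(M))\to Jor(Lie(N))$. The one simplification you missed is that the complement $Q\simeq\operatorname{Ker}\varepsilon_P$ is a trivial $\hat\g$-module by Lemma \ref{induction}, so $Jor(Q)=0$ and in fact $Lie(Jor(P))\simeq P$ (as the paper asserts), which makes the careful tracking of the extra summand unnecessary.
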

\begin{proof} By Corollary \ref{aux1} (a) we have a surjection $Lie(Jor(P))\to P$. However, $P$ is projective and that gives an isomorphism
$Lie(Jor(P))\simeq P$. Now let $M\to N\to 0$ be an exact sequence in $\JJ$. We rewrite it in the form $Jor(Lie(M))\to Jor(Lie(N))\to 0$.
Now we use 
$$\operatorname{Hom}_{\J}(Jor(P),Jor(Lie(M)))\simeq \operatorname{Hom}_{\hat\g}(Lie(Jor(P)),Lie(M)),$$
$$\operatorname{Hom}_{\J}(Jor(P),Jor(Lie(N)))\simeq \operatorname{Hom_{\hat\g}}(Lie(Jor(P)),Lie(N)).$$
By Lemma \ref{aux1}(c) $Lie(M)\to Lie(N)\to 0$ is exact, hence we have that 
$$\operatorname{Hom}_{\hat\g}(P,Lie(M))\to \operatorname{Hom}_{\hat\g}(P,Lie(N))\to 0$$
is also exact. Application of $Jor$ implies exactness of 
$$\operatorname{Hom}_{\J}(Jor(P),M)\to \operatorname{Hom}_{\J}(Jor(P),N)\to 0.$$
The latter is projectivity of $Jor(P)$.
\end{proof}

\begin{Cor}\label{Jorsimple} Let $L\in\gm$ be simple and non-trivial. Then $Jor(L)$ is simple. 
\end{Cor}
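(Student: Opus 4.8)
The plan is to show $Jor(L)$ is both nonzero and has no proper nonzero submodule. For the first point: since $L$ is simple and non-trivial, $\hat\g L = L$ (otherwise $\hat\g L$ would be a proper submodule or zero, and simplicity forces $\hat\g L = L$; it cannot be $0$ since $L$ is non-trivial). Then the grading $L = L_1 \oplus L_0 \oplus L_{-1}$ cannot have $L_{-1}=0$: if it did, then $L_1 \oplus L_0$ would be $\hat\g$-stable only if also $L_1 = [\g_1, L_0]$ lands back... more carefully, if $L_{-1}=0$ then $\g_1 L_0 = 0$ by grading reasons, so $L_1$ is killed by $\g_1$ and generated under $\g_0$; but actually the cleanest argument is that $\hat\g$ is generated by $\g_{\pm 1} = \hat\g_{\pm 1}$ together with $\hat\g_0 = [\hat\g_1,\hat\g_{-1}]$, and a module concentrated in degrees $\geq 0$ with $L_{-1}=0$ would have $\hat\g L \subseteq L_0 \oplus L_1$ with the degree-$1$ part generated by $\g_1 L_0$; iterating shows $L$ would be a proper sum unless $L = L_0$ is trivial under $\g_{\pm 1}$, hence (being simple) a trivial module, contradiction. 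So $Jor(L) = L_{-1} \neq 0$.

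For simplicity of $Jor(L)$ as a $\J$-module: suppose $0 \neq M \subseteq L_{-1}$ is a $\J$-submodule. Consider the $\hat\g$-submodule $\hat\g M \subseteq L$ generated by $M$. By simplicity of $L$ we have $\hat\g M = L$, so in particular $(\hat\g M)_{-1} = L_{-1}$. Now I would argue $(\hat\g M)_{-1} = M$: the degree-$(-1)$ component of $U(\hat\g) M$ is spanned by elements obtained from $M$ by applying products of bracket operators that preserve total degree $-1$; using the PBW-type decomposition $U(\hat\g) = U(\hat\g_{-1}) U(\hat\g_0) U(\hat\g_1)$ and the fact that $\hat\g_1 M \subseteq L_0$, $\hat\g_0 L_0 \subseteq L_0$, one checks that to return to degree $-1$ one applies exactly the operators $[f,x]$ for $x \in \g_{-1}$, i.e. precisely the $\J$-action on $M$ (this is where one invokes the definition $x(m) = [f,x]m$ and the relation $\g_{-1} = \hat\g_{-1}$). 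Hence $(\hat\g M)_{-1} = \J M = M$, forcing $M = L_{-1} = Jor(L)$.

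The main obstacle is the bookkeeping in the second paragraph: verifying that no element of $U(\hat\g)M$ of degree $-1$ escapes $\J M$. The subtlety is that one can go up in degree (via $\hat\g_1$, into $L_0$), move around inside $L_0$ under $\hat\g_0$, and come back down via $\hat\g_{-1}$, so a priori the return to degree $-1$ need not factor through the single operators $[f,x]$. I expect this to be handled by noting that for $m \in M \subseteq L_{-1}$ and $y \in \hat\g_1$ we have $y m = [e',y']m$-type expressions reducing, via the $\sll_2$-relations and the bracket formulas of the TKK construction, to the original $\J$-operators; alternatively, and perhaps more cleanly, one replaces $M$ by $\hat\g_0 M + M$ — already a $\hat\g_0$-submodule of $L_0 \oplus L_{-1}$ — and observes its degree $-1$ part is still the $\J$-module $M$ since the $\hat\g_0$-action preserves the grading and $\hat\g_0 L_{-1}\subseteq L_{-1}$ with the induced $\J$-structure on $L_{-1}$ being exactly $Jor(L)$. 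Either way the corollary follows once it is shown that passing from $M$ to the generated $\hat\g$-submodule does not enlarge the degree $-1$ part.
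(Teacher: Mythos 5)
Your overall plan (show $Jor(L)=L_{-1}\neq 0$, then show a nonzero $\J$-submodule $M\subseteq L_{-1}$ generates all of $L$ over $\hat\g$ without enlarging the degree $-1$ component) can be made to work, but the crucial step is exactly where your argument is wrong as written, and your two proposed repairs do not close it. Writing $U(\hat\g)=U(\hat\g_{-1})U(\hat\g_0)U(\hat\g_1)$ and using $\hat\g_{-1}M\subseteq L_{-2}=0$, the degree $-1$ part of $U(\hat\g)M$ is $U(\hat\g_0)M$ plus terms of the form $x\,u_0\,y\,m$ with $x\in\hat\g_{-1}$, $y\in\hat\g_1$; since $xm=0$, the composite ``raise by $y$, lower by $x$'' acts on $M$ as $[x,y]$, and such brackets span all of $\hat\g_0$, not only the operators $[f,x]$. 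So the statement ``to return to degree $-1$ one applies exactly the operators $[f,x]$'' is false, and the real claim you need is $\hat\g_0 M\subseteq M$, i.e.\ that $\hat\g_0$ acts on $L_{-1}$ through the associative algebra generated by the operators $\rho(x)=[f,x]$, $x\in\g_{-1}$. A $\J$-submodule is a priori closed only under the $\rho(x)$, so this is precisely the point at issue; your second ``cleaner'' fix is circular, because the degree $-1$ part of $M+\hat\g_0M$ is $M+\hat\g_0M$ itself, and asserting it equals $M$ is again the unproven claim. (Your nonzero-ness argument is also only a sketch, but it is easily repaired: $h=[e,f]$ has trace zero on $L$, so $\dim L_1=\dim L_{-1}$, and if both vanish then $\g_{\pm1}$, hence $\hat\g$, which is generated by $\g_{\pm1}$, kills $L$, contradicting non-triviality.)

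The gap can be closed along the lines you hint at: since $\hat\g$ is perfect and generated by $\g_{\pm1}$, one has $\hat\g_0=[\hat\g_1,\hat\g_{-1}]$, and $\hat\g_1=\g_1$ is spanned by $f$ and the elements $[[f,a],f]$, $a\in\g_{-1}$; the Jacobi identity then expresses the action of $[[[f,a],f],x]$ on $L_{-1}$ through the operators $\rho(a),\rho(x),\rho(a\circ x)$, so $\hat\g_0 M\subseteq M$ and your bookkeeping goes through. The paper, however, does not argue this way at all: the corollary is meant to fall out of the functor formalism already established. Namely, simplicity and non-triviality give $\hat\g L=L$, so by Corollary \ref{aux1}(a) the canonical map $Lie(Jor(L))\to L$ is surjective and in particular $Jor(L)\neq 0$; and for a nonzero $\J$-submodule $M\subseteq Jor(L)$, the inclusion corresponds under Lemma \ref{adjoint} to a nonzero map $Lie(M)\to L$, which is surjective by simplicity of $L$, whence exactness of $Jor$ and Lemma \ref{identity} show the inclusion $M\to Jor(L)$ is itself surjective. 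That route avoids all PBW bookkeeping, which is exactly what the adjunction was set up to do; if you prefer your direct argument, you must supply the missing identity $\hat\g_0 M\subseteq M$ as above.
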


\begin{Cor}\label{Jorradical} Let $M\in\gm$. If $(M/rad M)^{\gh}=0$, then $Jor (rad M)=rad Jor(M)$.
\end{Cor}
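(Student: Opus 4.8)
The plan is to establish the two inclusions $rad(Jor(M))\subseteq Jor(rad\,M)$ and $Jor(rad\,M)\subseteq rad(Jor(M))$ separately, the first being formal and the second carrying the content. For the first, apply the exact functor $Jor$ to $0\to rad\,M\to M\to M/rad\,M\to 0$ to get $Jor(M)/Jor(rad\,M)\simeq Jor(M/rad\,M)$. The hypothesis $(M/rad\,M)^{\hat\g}=0$ says exactly that the semisimple module $M/rad\,M$ has no trivial summand, so by Corollary \ref{Jorsimple} the module $Jor(M/rad\,M)$ is a direct sum of simple $\J$-modules; hence $Jor(M)/Jor(rad\,M)$ is semisimple and $rad(Jor(M))\subseteq Jor(rad\,M)$. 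I also record that the hypothesis is equivalent to $\hat\g M=M$: a nonzero trivial quotient of $M$ would be semisimple, hence factor through $M/rad\,M$, contradicting $(M/rad\,M)^{\hat\g}=0$; conversely a trivial summand of $M/rad\,M$ would give a nonzero trivial quotient of $M$.

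For the reverse inclusion I first note that every simple object $T$ of $\JJ$ is isomorphic to $Jor(S)$ for some non-trivial simple $\hat\g$-module $S$. Indeed, $Lie(T)$ is nonzero by Lemma \ref{identity} and finite dimensional, hence has a simple quotient $S$; since $Lie(T)$ is generated by $(Lie(T))_{-1}=T$, it admits no nonzero quotient concentrated in degree $0$, so $S$ is non-trivial; and $0\neq\operatorname{Hom}_{\hat\g}(Lie(T),S)\simeq\operatorname{Hom}_{\J}(T,Jor(S))$ by Lemma \ref{adjoint}, which forces $T\simeq Jor(S)$ because $Jor(S)$ is simple by Corollary \ref{Jorsimple}.

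Since $\hat\g M=M$, Corollary \ref{aux1}(a) shows that the adjunction counit $\varepsilon\colon Lie(Jor(M))\to M$ (the map corresponding to $\mathrm{id}_{Jor(M)}$ under Lemma \ref{adjoint}) is surjective, while $\operatorname{Ker}\varepsilon$ is a trivial module by Lemma \ref{induction}. Hence, for any non-trivial simple $\hat\g$-module $S$, pulling back along $\varepsilon$ gives an isomorphism $\operatorname{Hom}_{\hat\g}(M,S)\xrightarrow{\sim}\operatorname{Hom}_{\hat\g}(Lie(Jor(M)),S)$: it is injective because $\varepsilon$ is onto, and surjective because any morphism to the non-trivial simple $S$ kills the trivial submodule $\operatorname{Ker}\varepsilon$. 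Composing with the isomorphism of Lemma \ref{adjoint} and unwinding definitions with the help of Lemma \ref{identity}, one sees that the resulting bijection $\operatorname{Hom}_{\hat\g}(M,S)\xrightarrow{\sim}\operatorname{Hom}_{\J}(Jor(M),Jor(S))$ is $g\mapsto Jor(g)$. Now it suffices to show that every surjection $f\colon Jor(M)\to T$ onto a simple $\J$-module annihilates $Jor(rad\,M)$: by the previous paragraph $T\simeq Jor(S)$ for a non-trivial simple $S$, whence $f=Jor(g)$ for some $g\colon M\to S$; since $S$ is simple, $g$ kills $rad\,M$, so $f=Jor(g)$ kills $Jor(rad\,M)$. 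Intersecting over all such $f$ yields $Jor(rad\,M)\subseteq rad(Jor(M))$, which completes the proof.

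I expect the reverse inclusion to be the main obstacle, and in particular the role of the hypothesis there. It is genuinely needed: without $\hat\g M=M$ the counit $Lie(Jor(M))\to M$ need not be surjective and the statement fails---for a non-split extension $0\to S\to M\to\kk\to 0$ of the trivial module $\kk$ by a non-trivial simple module $S$ one has $rad\,M=S$, so $Jor(rad\,M)=Jor(S)=Jor(M)\neq 0=rad(Jor(M))$. Thus the point is precisely to route the hypothesis, through Corollaries \ref{aux1} and \ref{Jorsimple}, into a comparison of the semisimple tops of $M$ and $Jor(M)$.
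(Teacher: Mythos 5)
Your proof is correct, and since the paper states this corollary without a written proof, your argument supplies it in exactly the way the surrounding results intend: exactness of $Jor$ plus Corollary \ref{Jorsimple} give $rad\, Jor(M)\subseteq Jor(rad\, M)$, while the adjunction machinery (Lemmas \ref{adjoint}, \ref{identity}, \ref{induction} and Corollary \ref{aux1}(a), via the equivalence of the hypothesis with $\hat\g M=M$) shows every map from $Jor(M)$ onto a simple object is $Jor(g)$ for some $g\colon M\to S$ with $S$ simple non-trivial, yielding the reverse inclusion. Two cosmetic remarks only: the first inclusion holds without the hypothesis, since $Jor$ of a trivial summand is zero; and to produce a simple quotient of $Lie(T)$ you need only that it is non-zero and finitely generated (it is generated by $T=Lie(T)_{-1}$), not that it is finite-dimensional.
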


\begin{rem} One can construct functors $Lie$ and $Jor$ between the category of special Jordan modules and the category of 
$\gh$-modules
with grading of length $2$. In this case these functors establish an equivalence of categories, see \cite{KS} for details.
\end{rem}

\section {On the categories $\ggm$ and $\gmh$}

In the rest of the paper we assume that the ground field $\kk$ is algebraically closed of characteristic zero.
Let $\g$ be a finite-dimensional Lie algebra which contains an $\sll_2$-subalgebra $\alpha=\langle e,h,f\rangle$ with short 
grading $\g=\g_{-1}\oplus\g_0\oplus\g_1$ induced by the action of $h$.
We fix a  Levi subalgebra $\g_{ss}\subset \g$ such that $\alpha\subset \g_{ss}$ and denote by $\rr$ the 
radical of $\g$. 
Then $\g$ is a semi-direct sum $\g_{ss}\niplus\rr$.
We assume in addition that $$\rr^{\g_{ss}}\subset [\g,\g]\cap Z(\g)$$ and $\g$ is generated by $\g_1$ and $\g_{-1}$. 
These assumptions imply that $\rr$ is a nilpotent Lie algebra. Define a decreasing filtration 
$$\rr=\rr^1\supset\rr^2\supset\dots$$
by setting $\rr^{i}:=[\rr,\rr^{i-1}]$ for all $i>1$.
Let $R^i=\rr^i/\rr^{i+1}$ and write $R=R^1=\rr/\rr^2$ to simplify notation.

Let $\mathcal S$ be the full subcategory of finite-dimensional $\g$-modules consisting of all modules  $M$ such that 
$$M=M_{-1}\oplus M_{-\frac{1}{2}}\oplus M_{0}\oplus M_{\frac{1}{2}}\oplus M_1$$
in the grading induced by the action of $h$.
In this section we prove some general statements about $\mathcal S$.

We notice first that $\mathcal S=\ggm\oplus\gmh$ is a direct sum of categories, and all simple objects in $\mathcal S$ are simple as $\g_{ss}$-modules.
In what follows we denote by $tr$ the trivial simple $\g$-module.

\begin{rem}\label{duality1} It is useful to notice that the category $\mathcal S$ has a contravariant duality functor which sends $M$ to $M^*$.
In particular, $\mathcal S$ is equivalent to $\mathcal S^{op}$.
\end{rem}

\begin{rem}\label{tensor} We also note that the tensor product of two modules from $\gmh$ is a module from $\ggm$.
So we have a bifunctor $\gmh\times\gmh\to \ggm$. In the language of Jordan algebras that corresponds to the well-known bifunctor
$\Jhalf\times\Jhalf\to \JJ$, see for example \cite{Jac}, Section II.10.
\end{rem}

\subsection{Indecomposable projectives and Ext quivers}\label{subsection-plus} 
Consider the category $(\g,\g_{ss})\text{-mod}$ of all $\g$-modules $M$ integrable over $\g_{ss}$. 
Note that $\g_{ss}$-integrability implies that the action of $h$ is semisimple and eigenvalues of $h$ are in 
$\frac{1}{2}\mathbb Z$. 
Clearly,  $\mathcal S$ is a full abelian subcategory in  $(\g,\g_{ss})\text{-mod}$.  
We define a functor $${}^{sh}:(\g,\g_{ss})\text{-mod}\to \mathcal S$$ by setting $M^{sh}=M/N$ where $N$ is the submodule generated by all 
graded components in $M$ of degree 
greater or equal than $\frac{3}{2}$. Obviously, we have
$$\operatorname{Hom}_{\g}(M^{sh},K)=\operatorname{Hom}_{\g}(M,K)$$
for any $M\in (\g,\g_{ss})\text{-mod}$ and $K\in\mathcal S$. In other words, ${}^{sh}$ is left adjoint to the embedding functor  
$\mathcal S\to (\g,\g_{ss})\text{-mod}$.
That implies in particular, that if $P$ is projective in $(\g,\g_{ss})\text{-mod}$, then $P^{sh}$ is projective in $\mathcal S$. 

To construct the projective cover $P(L)$ of a simple module
$L\in \mathcal S$ consider the induced module 
$$I(L)=U(\g)\otimes _{U(\g_{ss})}L.$$
By Frobenius reciprocity $I(L)$ is the projective cover of $L$ in  $(\g,\g_{ss})\text{-mod}$. Therefore
$P(L)=I(L)^{sh}$ is the projective cover of $L$ in $\mathcal S$.

\begin{lem}\label{findim} $P(L)$ is finite-dimensional.
\end{lem}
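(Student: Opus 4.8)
The plan is to show that $I(L)^{sh} = P(L)$ is finite-dimensional by bounding its graded components. Since $L$ is a simple object of $\mathcal S$, it is simple over $\g_{ss}$, hence finite-dimensional, and all its $h$-eigenvalues lie in $\{-1,-\tfrac12,0,\tfrac12,1\}$. The induced module $I(L) = U(\g)\otimes_{U(\g_{ss})} L$ is, as a $\g_{ss}$-module, isomorphic to $U(\rr)\otimes_{\kk} L$ via the PBW theorem applied to the decomposition $\g = \g_{ss}\niplus\rr$. Because $\rr$ is nilpotent (as noted in the text, this follows from the standing assumptions), $U(\rr)$ is a union of finite-dimensional $\g_{ss}$-submodules, and the $h$-grading on $U(\rr)$ is bounded below only by the structure of $\rr$ itself: $\rr = \rr_{-1}\oplus\rr_{-\frac12}\oplus\rr_0\oplus\rr_{\frac12}\oplus\rr_1$ since $\rr\subset\g\in\mathcal S$-type grading, so monomials in $U(\rr)$ of high degree can still have bounded $h$-weight. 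So $I(L)$ itself is genuinely infinite-dimensional in general, and the finiteness must come entirely from the quotient by $N$.

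First I would make precise what $I(L)^{sh}$ looks like: by construction $N\subset I(L)$ is the $\g$-submodule generated by $\bigoplus_{k\geq 3/2} I(L)_k$, and $I(L)^{sh} = I(L)/N$ has graded components only in degrees $-1,-\tfrac12,0,\tfrac12,1$. Each graded component $(I(L)^{sh})_j$ is a quotient of the $\g_{ss}$-module $I(L)_j$, which is finite-dimensional for each fixed $j$ (it is a sum of weight spaces of the locally finite $\g_{ss}$-module $U(\rr)\otimes L$ with bounded $h$-weight — here one uses that $\g_{ss}$ is reductive with $h$ in its Cartan, so each $h$-weight space of a locally finite module is finite-dimensional, and there are only finitely many $\g_{ss}$-weights with a given $h$-component appearing because... this is the point that needs care). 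Summing over the five allowed degrees gives finite-dimensionality.

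The main obstacle is precisely the claim that each graded piece $I(L)_j = (U(\rr)\otimes L)_j$ is finite-dimensional for fixed $h$-weight $j$. This is \emph{not} automatic: $\rr_0$ could be large and monomials $r_1^{a_1}r_2^{a_2}\cdots$ purely in $\rr_0$ all sit in the same $h$-degree. The resolution must use the hypothesis that $\g$ is generated by $\g_1$ and $\g_{-1}$ together with $\rr^{\g_{ss}}\subset [\g,\g]\cap Z(\g)$. I would argue as follows: the generation hypothesis forces $\rr_0 \subset [\rr_1,\rr_{-1}] + [\g_{ss},\rr] + \cdots$ — more precisely, since $\g$ is generated in degrees $\pm1$, any element of $\rr_0$ (indeed of $\g_0$) is a sum of brackets $[\g_{\pm1},\g_{\mp1}]$, and iterating, $\rr$ is generated as a Lie algebra over $\g_{ss}$ by its components $\rr_{\pm1}$ modulo the center. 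One then shows, using the central-and-in-derived-algebra condition on $\rr^{\g_{ss}}$ to control the $\g_{ss}$-fixed (hence potentially troublesome central) part, that $U(\rr)$ in each fixed $h$-degree is finite-dimensional: a PBW monomial of $h$-degree $j$ must involve a controlled number of factors from $\rr_{-1}\oplus\rr_1$ (those can appear arbitrarily often only in canceling pairs, and each $+1/-1$ cancelling pair produces, after reordering, a degree-$0$ bracket lying in the finite-dimensional space $[\rr_1,\rr_{-1}]$ plus central terms), so only finitely many monomials survive in degree $j$ after passing to $I(L)^{sh}$ where high positive degrees are killed. Honestly, the cleanest route is: in $I(L)^{sh}$ the image of $U(\g)$ is spanned by images of PBW monomials $y\cdot(\text{deg }0)\cdot(\text{lowering part})$ where $y$ ranges over products of at most two raising generators (since degree $\geq 3/2$ dies) — this immediately caps the number of raising factors, then the degree-$0$ middle part is constrained to a finite-dimensional space by the $\rr_0 = [\rr_1,\rr_{-1}] + \text{central}$ description combined with the observation that central elements of $\rr$ act on $I(L)$ through the finite-dimensional quotient dictated by $L$ being $\g_{ss}$-simple and the grading being short. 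I would write out this monomial-counting argument as the core of the proof, flagging the reduction $\rr_0 = [\rr_{-1},\rr_1]_{\g_{ss}\text{-span}} + \rr^{\g_{ss}}$ as the step where the hypotheses on $\g$ are consumed.
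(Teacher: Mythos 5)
Your reduction is sound up to a point: $I(L)\simeq U(\rr)\otimes L$ as a $\g_{ss}$-module, the finiteness must come from the quotient, and the number of raising factors (elements of $\rr_1$) in spanning PBW monomials of $P(L)=I(L)^{sh}$ can indeed be capped --- although for the ordering you chose (raising part leftmost) the one-line justification ``degree $\geq\frac32$ dies'' does not apply, since such a monomial acting on a vector of very negative degree never passes through a component of degree $\geq\frac32$; one should place the raising factors on the right, so that the tail $x_1\cdots x_k\otimes v$ itself lies in the generating space of the submodule $N$ that is factored out. The genuine gap is exactly the step you flag yourself: bounding the length of the degree-zero middle part, and neither mechanism you offer works. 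First, the description $\rr_0=[\rr_1,\rr_{-1}]+\rr^{\g_{ss}}$ is not what the generation hypothesis gives: $\rr_0$ also contains $[(\g_{ss})_{\pm1},\rr_{\mp1}]$ (for instance, if $\g_{ss}=\alpha=\sll_2$ and $\rr$ is abelian and isomorphic to the adjoint module, then $[\rr_1,\rr_{-1}]=0$ and $\rr^{\g_{ss}}=0$, yet $\rr_0\neq0$). More importantly, knowing that each $z\in\rr_0$ is a bracket does not make the linearly independent vectors $z^{\,l}\otimes v\in I(L)$ become dependent modulo $N$: commutator rewriting inside $U(\rr)$ never shortens such monomials (for abelian $\rr$ it does literally nothing), whereas the relations that actually kill high powers of degree-zero elements arise from applying lowering operators of $\g_{ss}$ to elements of $N$ of degree $\geq\frac32$ --- a non-formal input from the $\g_{ss}$-action which your counting never invokes. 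Second, the claim that central elements of $\rr$ act on $I(L)$ ``through a finite-dimensional quotient dictated by $L$ being $\g_{ss}$-simple'' is false: $I(L)\simeq U(\rr)\otimes L$ is a free $U(Z(\g))$-module, central elements sit in degree zero, and neither the shortening functor nor Schur's lemma constrains them.

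This is precisely where the paper's proof does its real work. It passes to $\operatorname{Gr}P(L)$ as a module over $S(\rr)$, splits $\rr=\rr'\oplus Z(\g)$ $\g_{ss}$-invariantly, and invokes the invariant-theoretic fact that a $\g_{ss}$-stable ideal $I'\subset S(\rr')$ of infinite codimension must miss all powers of some $\g_{ss}$-highest weight vector $v\in\rr'$; since $\rr'$ has no $\g_{ss}$-invariants, such a $v$ lies in $\rr'_1$, and its surviving powers would make the $h$-grading of $S(\rr')/I'$ unbounded --- a contradiction. Hence $(\rr')^k$ annihilates $P(L)$ modulo lower terms, so $P(L)=U(Z(\g))\sum_{i\leq k}(\rr')^iL$, and only then does the hypothesis $\rr^{\g_{ss}}\subset[\g,\g]\cap Z(\g)$ enter, in the form $Z(\g)\subset[\rr',\rr']$, to bound the powers of central elements as well. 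Some input of this kind (semisimple invariant theory applied to the annihilator in $S(\rr')$, or an equivalent use of the $\g_{ss}$-action) is indispensable; a purely combinatorial PBW count cannot supply it, so your argument as written does not close.
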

\begin{proof} Let $M=P(L)$. Recall that $S(\rr)$ is the associated graded algebra of the universal enveloping algebra $U(\rr)$ with 
respect to the PBW filtration. 
Let $Gr M$ be the corresponding graded $S(\rr)$-module. Note that $Gr M$ inherits the short grading of $M$ and is generated by $L$.
Let $I=\operatorname{Ann}_{S(\rr)} L$.
Then $I$ is a $\g_{ss}$-invariant ideal, and we have $Gr M\simeq (S(\rr)/I)\otimes L$. Consider 
the $\g_{ss}$-invariant decomposition $\rr=\rr'\oplus Z(\g)$. Let $I'=I\cap S(\rr')$. It is well know fact of representation theory of semisimple algebraic groups,
that if $\operatorname{dim}S(\rr')/I'=\infty$ then there exists a $\g_{ss}$-highest vector $v\in\rr'$ such that $v^m\notin I'$ for all $m>0$. 
That excludes the possibility
$\operatorname{dim}S(\rr')/I'=\infty$ since $v\in \rr'_1$ and the induced by action of $h$ grading of $S(\rr')/I'$ must be bounded. 

Thus, we have $\operatorname{dim}S(\rr')/I'<\infty$, which implies
$I'\supset (\rr')^k$ for some $k>0$. Therefore we have
$$M=U(Z(\g))\sum_{i=0}^k (\rr')^i L,$$
where $Z(\g)$ denotes the center of $\g$.

\noindent Since by our assumptions on $\g$ we have $Z(\g)\subset [\rr',\rr']$ we obtain that for sufficiently large $n$
$$M=\sum_{i=0}^n \rr^i L.$$
Therefore $M$ is finite-dimensional. 
\end{proof}

Let $\gb:=\g/[\rr,\rr]$. Then $\gb=\g_{ss}\niplus R$ where $R=\rr/[\rr,\rr]$ is the abelian radical of $\g$.
We denote by $Q(\mathcal S)$ the Ext quiver of the category $\mathcal S$, by $Q(\g)$ the Ext quiver of $\ggm$ and by $Q^{\frac12}(\g)$ the Ext quiver
of  $\gmh$. Clearly, $Q(\mathcal S)$ is the disjoint union of $Q(\g)$ and  $Q^{\frac12}(\g)$.  

\begin{lem}\label{quiver_Lie} Let $L$ and $L'$ be two simple modules in $\mathcal S$.
Then 
$$\operatorname{Ext}_{\g}^1(L',L)=\operatorname{Ext}_{\gb}^1(L',L)$$
and $\operatorname{dim}\operatorname{Ext}_{\g}^1(L',L)$ equals the
multiplicity of $L$ in $L'\otimes R$.
\end{lem}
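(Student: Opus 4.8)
The plan is to prove both assertions together by reducing from $\g$ to the two-step quotient $\gb=\g/[\rr,\rr]$ and then to an explicit $\operatorname{Ext}$ computation over $\gb$.

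\medskip

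First I would justify the reduction $\operatorname{Ext}^1_{\g}(L',L)=\operatorname{Ext}^1_{\gb}(L',L)$. Since every simple object of $\mathcal S$ is simple as a $\g_{ss}$-module, and $[\rr,\rr]$ acts by zero on any simple module (the action of $\rr$ on a simple module factors through an abelian quotient because $\rr$ is nilpotent and $\g_{ss}$-stable, so by Lie's theorem combined with $\g_{ss}$-semisimplicity $[\rr,\rr]$ must act trivially), both $L$ and $L'$ are naturally $\gb$-modules. Any extension in $\operatorname{Ext}^1_{\gb}$ inflates to one in $\operatorname{Ext}^1_{\g}$, giving an injection. Conversely, given a non-split extension $0\to L\to E\to L'\to 0$ of $\g$-modules, I must check that $[\rr,\rr]$ still acts by zero on $E$: for $x,y\in\rr$, the operator $[x,y]$ maps $E$ to $L$ (since it kills the quotient $L'$) and maps $L$ to $0$, so $[x,y]^2=0$ on $E$; moreover $[x,y]$ is a nilpotent element of the solvable ideal acting on $E$, and one can use the $\g_{ss}$-action — $[\rr,\rr]$ being a $\g_{ss}$-submodule of $\rr$ whose image in $\operatorname{End}(E)$ consists of nilpotents squaring to zero — together with the fact that $\rr^{\g_{ss}}\subset Z(\g)$ to force this image to be zero on the two-step module $E$. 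This shows the inflation map is also surjective, hence an isomorphism. (Here I would cite Lemma~\ref{findim}'s style of argument and the standing assumptions on $\g$.)

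\medskip

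Second, having reduced to $\gb=\g_{ss}\niplus R$ with $R$ abelian, I would compute $\operatorname{Ext}^1_{\gb}(L',L)$ directly. An extension $0\to L\to E\to L'\to 0$ of $\gb$-modules splits over $\g_{ss}$ (semisimplicity and finite dimension), so fix a $\g_{ss}$-splitting $E\simeq L\oplus L'$ as $\g_{ss}$-modules. The remaining data is the action of the abelian ideal $R$: for $r\in R$, the map $r$ sends $L'$ into $L$ (it kills the quotient and $R\cdot L\subset L$ with $R$ acting on the submodule $L$; but on a simple $\gb$-module $L$ the ideal $R$ acts by zero by the same argument as above, so in fact $R\cdot L=0$ inside $E$ as well once we check it — more carefully, $R\cdot L\subset L$ is a $\g_{ss}$-submodule, and since $R$ is an abelian ideal acting on the simple module $L$ it acts nilpotently and $\g_{ss}$-equivariantly, forcing $R\cdot L=0$). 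Thus the extension is determined by a $\g_{ss}$-equivariant linear map $\phi:R\to\operatorname{Hom}_\kk(L',L)$, i.e.\ an element of $\operatorname{Hom}_{\g_{ss}}(R,\operatorname{Hom}_\kk(L',L))$, and the cocycle condition coming from $R$ being an ideal (the bracket $[\g_{ss},R]$ acting correctly) is automatically the $\g_{ss}$-equivariance; coboundaries correspond to modifying the $\g_{ss}$-splitting, i.e.\ to maps factoring through $R\cdot(\text{a }\g_{ss}\text{-hom }L'\to L)=0$ since $L,L'$ are non-isomorphic simples or, if isomorphic, the scalar endomorphisms are killed by $R$. Hence $\operatorname{Ext}^1_{\gb}(L',L)\simeq\operatorname{Hom}_{\g_{ss}}(R,\operatorname{Hom}_\kk(L',L))$. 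Using self-duality of $R$-free data and semisimplicity of $\g_{ss}$, $\operatorname{Hom}_{\g_{ss}}(R,\operatorname{Hom}_\kk(L',L))\simeq \operatorname{Hom}_{\g_{ss}}(R\otimes L', L)$, whose dimension is exactly the multiplicity of $L$ in $R\otimes L'=L'\otimes R$.

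\medskip

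The main obstacle I anticipate is the careful verification in the reduction step that $[\rr,\rr]$ (and then $R$ on the submodule) acts trivially not just on simples but on the length-two module $E$ — this is where the hypotheses $\rr^{\g_{ss}}\subset[\g,\g]\cap Z(\g)$ and $\g$ being generated by $\g_{\pm1}$ genuinely enter, and one must rule out a nonzero nilpotent $\g_{ss}$-equivariant action of $[\rr,\rr]$ on $E$. Everything after that reduction is the standard Hochschild–Serre / semidirect-product $\operatorname{Ext}$ computation for an abelian ideal over a reductive Levi, so I would keep it brief. I would also remark that the isomorphism is compatible with the duality of Remark~\ref{duality1}, which is consistent with the symmetry of the multiplicity count.
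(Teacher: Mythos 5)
Your overall route coincides with the paper's: reduce the computation from $\g$ to $\gb$, then identify extensions over $\gb=\g_{ss}\niplus R$ with $\operatorname{Hom}_{\g_{ss}}(L'\otimes R,L)$, whose dimension is the multiplicity of $L$ in $L'\otimes R$. Your second half (the semidirect-product/cocycle analysis, with coboundaries dying because $R$ acts trivially on $\operatorname{Hom}_\kk(L',L)$) is correct and is essentially what the paper does in one sentence. The genuine gap is in the reduction step, i.e.\ precisely the first assertion of the lemma. For a non-split extension $0\to L\to E\to L'\to 0$ of $\g$-modules you only establish that each operator $[x,y]$, $x,y\in\rr$, squares to zero on $E$, and then assert that $\g_{ss}$-equivariance together with $\rr^{\g_{ss}}\subset Z(\g)$ ``forces'' the image of $[\rr,\rr]$ in $\operatorname{End}_\kk(E)$ to vanish. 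That is not an argument, and no argument along only those lines can work: equivariance never rules out a nonzero nilpotent action by itself (indeed $\operatorname{Hom}_{\g_{ss}}([\rr,\rr],\operatorname{Hom}_\kk(L',L))$ is often nonzero, e.g.\ for the algebras with $[\rr,\rr]\neq 0$ treated later in the paper, where $[\rr,\rr]$ genuinely acts nontrivially on modules of Loewy length three), and the standing hypotheses $\rr^{\g_{ss}}\subset[\g,\g]\cap Z(\g)$ and generation by $\g_{\pm1}$ play no role in this lemma, contrary to what you anticipate.

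What actually does the work is the commutator structure, and it makes the step a one-line computation — this is the paper's proof: since $\rr$ annihilates the simple subquotients, $\rr E\subset L$ and $\rr L=0$, hence for $x,y\in\rr$ and $v\in E$ one has $[x,y]v=x(yv)-y(xv)\in \rr L=0$. Thus $[\rr,\rr]E=0$, $E$ is a $\gb$-module, and inflation $\operatorname{Ext}^1_{\gb}(L',L)\to\operatorname{Ext}^1_{\g}(L',L)$ is bijective. So the step you flagged as ``the main obstacle'' is in fact trivial, but your proposed way of overcoming it would fail; with the displayed computation inserted in place of that paragraph, your proof is complete and coincides with the paper's argument (your re-derivation of $\rr L=0$ on simples is harmless, since the paper already records that simples in $\mathcal S$ are $\g_{ss}$-simple and that $\rr$ acts nilpotently on all modules in the category).
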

\begin{proof} Consider a non-trivial extension of $L$, $L'\in\mathcal S$
$$0\to L\to M\to L'\to 0,$$ then $\rr M\subset L$ and $\rr L=0$. Therefore $[\rr,\rr]M=0$, and hence $M$ is a $\gb$-module.
That implies the first assertion.
 
Now we use the fact that $M$ splits over $\g_{ss}$, and the action $R\otimes L' \to L$ is a $\g_{ss}$-invariant map. 
Therefore $\operatorname{Ext}^1_{\g}(L',L)\simeq\operatorname{Hom}_{\g_{ss}}(L'\otimes R,L)$.
\end{proof}
\begin{Cor}\label{hat=bar} $Q(\g)=Q(\gb)$ and $Q^{\frac12}(\g)=Q^{\frac12}(\gb)$.
\end{Cor}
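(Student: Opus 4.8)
The plan is to deduce this directly from Lemma \ref{quiver_Lie} together with the observation that the two categories whose Ext quivers are being compared have the same simple objects. First I would recall that by the discussion preceding the corollary, $\mathcal S = \ggm \oplus \gmh$ and all simple objects of $\mathcal S$ are simple as $\g_{ss}$-modules; since $\gb = \g/[\rr,\rr]$ has the same semisimple part $\g_{ss}$ and $[\rr,\rr]$ acts trivially on any simple $\g$-module (a simple module is annihilated by $\rr$ entirely, because $\rr$ is nilpotent and its action on a simple module over the solvable radical must be nilpotent, hence zero by simplicity), the simple $\g$-modules and the simple $\gb$-modules coincide, and this bijection respects the decomposition into the integer-graded and half-integer-graded parts. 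Thus it makes sense to compare $Q(\g)$ with $Q(\gb)$ and $Q^{\frac12}(\g)$ with $Q^{\frac12}(\gb)$ vertex by vertex.

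Next I would apply Lemma \ref{quiver_Lie} twice. Applied to the Lie algebra $\g$, it gives, for simple $L, L' \in \mathcal S$,
$$
\operatorname{Ext}^1_{\g}(L',L) = \operatorname{Ext}^1_{\gb}(L',L),
$$
where here $\gb = \g/[\rr,\rr]$ as defined just before the corollary. But Lemma \ref{quiver_Lie} is stated for an arbitrary Lie algebra satisfying the standing hypotheses of the section, so I may also apply it with $\gb$ in place of $\g$: one checks that $\gb$ again has Levi part $\g_{ss} \supset \alpha$, its radical is $R = \rr/[\rr,\rr]$, which is abelian, so $[\bar\rr,\bar\rr] = 0$ and hence $\overline{\gb} = \gb/[\bar\rr,\bar\rr] = \gb$; the remaining hypotheses ($\bar\rr^{\g_{ss}} \subset [\gb,\gb]\cap Z(\gb)$ and generation by degree $\pm 1$ parts) are inherited from the corresponding properties of $\g$. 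Applying the lemma to $\gb$ therefore gives $\operatorname{Ext}^1_{\gb}(L',L) = \operatorname{Ext}^1_{\overline{\gb}}(L',L) = \operatorname{Ext}^1_{\gb}(L',L)$, which is consistent but not yet the point; the real input is the second assertion of the lemma, namely that $\dim \operatorname{Ext}^1_{\g}(L',L)$ equals the multiplicity of $L$ in $L' \otimes R$. This multiplicity depends only on $\g_{ss}$ and the $\g_{ss}$-module $R = \rr/[\rr,\rr]$, both of which are literally the same for $\g$ and for $\gb$. Hence $\dim\operatorname{Ext}^1_{\g}(L',L) = \dim\operatorname{Ext}^1_{\gb}(L',L)$ for all pairs of simples, and since the quivers $Q(\g)$, $Q(\gb)$ (respectively $Q^{\frac12}(\g)$, $Q^{\frac12}(\gb)$) have the same vertex sets and the same number of arrows between each pair of vertices, they coincide.

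I do not expect a serious obstacle here; the corollary is essentially a bookkeeping consequence of Lemma \ref{quiver_Lie}. The only point requiring a word of care is the bijection on simple objects and the verification that the standing hypotheses of Section 4 pass from $\g$ to $\gb$, so that Lemma \ref{quiver_Lie} genuinely applies to $\gb$ as well; once that is in hand, the identity of the two Ext quivers is immediate from the formula for $\dim\operatorname{Ext}^1$ in terms of the multiplicity in $L' \otimes R$, which manifestly only sees the data $(\g_{ss}, R)$ shared by $\g$ and $\gb$. The decomposition $Q(\mathcal S) = Q(\g) \sqcup Q^{\frac12}(\g)$, already noted before the corollary, then splits the statement into its two displayed halves.
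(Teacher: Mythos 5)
Your argument is correct and is essentially the paper's: the corollary is an immediate consequence of Lemma \ref{quiver_Lie}, whose first assertion $\operatorname{Ext}^1_{\g}(L',L)=\operatorname{Ext}^1_{\gb}(L',L)$, together with the identification of the simple objects of $\mathcal S$ for $\g$ and $\gb$ (since $\rr$ annihilates simples), already identifies the vertex sets and arrow multiplicities of the two quivers. Your extra step of re-applying the lemma to $\gb$ and invoking the multiplicity of $L$ in $L'\otimes R$ is harmless but not needed, since the first assertion applied to $\g$ alone suffices.
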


Let $P$ (respectively, $P'$) be the direct sum of $P(L)$ over all up to isomorphism simple $L$ in $\ggm$ 
(respectively, $\gmh$),
$A(\g):=\operatorname{End}_{\g}(P)$, $A^{\frac12}(\g):=\operatorname{End}_{\g}(P')$.

By general results usually attributed to Gabriel (see Appendix) we know that $\ggm$ and $\gmh$ are equivalent to the categories of 
finite-dimensional right $A(\g)$-modules and $A^{\frac12}(\g)$-modules respectively.

\subsection{Radical filtration of indecomposable projectives} In what follows we will need a description of the first three layers of the radical
filtration of an indecomposable projective $P(L)$. To simplify notations we set $P^k(L):=rad^k P(L)/rad^{k+1} P(L)$.
We have $L=P(L)/rad P(L)$ by definition and $P^1(L)=(R\otimes L)^{sh}$ by Lemma\,\ref{quiver_Lie}.

Let $U(\rr)$ be the universal enveloping algebra of $\rr$ and $\mathfrak{R}=\langle\rr\rangle$ denote the augmentation ideal.
We observe first that $I(L)\simeq U(\rr)\otimes L$ as a module over $\rr$.  
Since the action of $\rr$ is nilpotent on all modules in the category $(\g,\g_{ss})$-mod, we obtain that
$$rad^kI(L)=\mathfrak R^k\otimes L.$$
Since $P(L)=I(L)^{sh}$ is a quotient of $I(L)$ we obtain 
$$rad^kP(L)=\mathfrak R^k P(L).$$

We proceed to describing  $P^2(L)$.
Let $L$ be a simple $\g_{ss}$-module and $$\pi:R\otimes R\otimes L\to (R\otimes (R\otimes L)^{sh})^{sh}, \quad p:R^2\otimes L\to(R^2\otimes L)^{sh}$$ 
be the maps induced by the canonical projection $X\to X^{sh}$. 
Consider also the maps $$\delta: \Lambda^2 R\to R^2,\quad\delta(x,y):=-[x,y]\,\mod\rr^3$$ and 
$$alt:\Lambda^2 R \to R\otimes R,\quad alt(x,y):=x\otimes y-y\otimes x.$$

\begin{lem}\label{technical2} Let $L$ be a semisimple $\g_{ss}$-module. Consider the maps
\begin{equation}\label{mu-pi-relation}
\mu: \Lambda^2R\otimes L \xrightarrow{alt\otimes 1}R\otimes R\otimes L\xrightarrow{\pi}(R\otimes (R\otimes L)^{sh})^{sh}
\end{equation}
and
$$\lambda: \Lambda^2 R\otimes L\xrightarrow{\delta\otimes 1} R^2\otimes L\xrightarrow{p} (R^2\otimes L)^{sh}.$$
Then $P^2(L)$ is isomorphic to the cokernel of $\mu\oplus\lambda$.
\end{lem}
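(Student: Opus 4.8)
The plan is to compute $P^2(L)=\mathfrak{R}^2 P(L)/\mathfrak{R}^3 P(L)$ by tracking the second layer of the radical filtration of $I(L)$ through the functor ${}^{sh}$. Since $I(L)\simeq U(\rr)\otimes L$ as an $\rr$-module and $rad^k I(L)=\mathfrak{R}^k\otimes L$, the graded algebra $S(\rr)$ controls the picture up to degree two: writing $S(\rr)=\kk\oplus R\oplus(S^2R)\oplus\cdots$ and using the standard identification $\mathfrak{R}^2/\mathfrak{R}^3\simeq S^2R\oplus R^2$ (coming from the PBW filtration together with the exact sequence $0\to R^2\to \rr/\rr^3\to R\to 0$), one gets $rad^2 I(L)/rad^3 I(L)\simeq (S^2R\otimes L)\oplus(R^2\otimes L)$ as a $\g_{ss}$-module. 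The first summand here is the symmetric part, which I want to rewrite in terms of $\Lambda^2 R$ via the split exact sequence of $\g_{ss}$-modules $0\to\Lambda^2R\xrightarrow{alt}R\otimes R\to S^2R\to 0$.

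Next I would pass to $P(L)=I(L)^{sh}$. The functor ${}^{sh}$ is right exact and kills exactly the submodule generated by components of degree $\ge\tfrac32$; on the associated graded level in low degrees its effect in degree one is $R\otimes L\mapsto(R\otimes L)^{sh}$ and in degree two it is the composite of two such truncations applied stepwise, which is precisely what the maps $\pi$ and $p$ in the statement record. Concretely, I claim $P^1(L)=(R\otimes L)^{sh}$ (already noted, from Lemma \ref{quiver_Lie}) and the module $P^2(L)$ is the image of $(S^2R\otimes L)\oplus(R^2\otimes L)$ inside $\bigl(R\otimes(R\otimes L)^{sh}\bigr)^{sh}\oplus(R^2\otimes L)^{sh}$, but with a correction: elements of $S^2R$ that become ``visible'' only after truncating the inner tensor must be identified with the image of $R^2$ under $\delta$, because in $\rr/\rr^3$ the bracket $[x,y]$ of two elements of $\rr$ lands in $\rr^2$ and equals $x\otimes y-y\otimes x$ modulo the symmetric relations. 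Making this precise: I would write down the presentation of $rad^2 P(L)$ as a quotient of $(R\otimes R\otimes L)\oplus(R^2\otimes L)$ — the first factor because $rad^2 P(L)=\mathfrak{R}\cdot rad^1 P(L)$ and $rad^1 P(L)$ is a quotient of $R\otimes L$, so after applying ${}^{sh}$ its image sits in $(R\otimes(R\otimes L)^{sh})^{sh}$; the second factor accounts for the genuinely degree-two generators of $S(\rr)$ coming from $\rr^2/\rr^3$ — and then identify the relations. The relations are exactly: (i) the symmetrization relation $x\otimes y + y\otimes x \sim 0$ coming from $S^2$, twisted by $\delta$ into the $R^2$ factor, which is the map $\lambda$; and (ii) the relation $alt(x,y)\otimes\ell \sim$ (its image after the double truncation $\pi$) identifying the skew part inside the iterated-${}^{sh}$ module, which is the map $\mu$. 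Hence $P^2(L)=\operatorname{coker}(\mu\oplus\lambda)$.

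The main obstacle, and where I would spend the most care, is step two: controlling the difference between the ``one-step'' truncation $(R\otimes L)^{sh}$ used to build $P^1(L)$ and the genuine graded pieces of $I(L)^{sh}$, i.e. showing that applying ${}^{sh}$ to $I(L)$ and then taking $\mathfrak{R}^2$ agrees with taking $\mathfrak{R}\cdot P^1(L)$ and truncating again — this is the commutation of ${}^{sh}$ with multiplication by $\mathfrak{R}$ in low degrees, and it is genuinely a statement about what the submodule $N$ (generated by degrees $\ge\tfrac32$) contributes in degrees $\le 1$ relative to the short grading versus the $h$-grading. One has to be careful because $R$ is not concentrated in a single $h$-degree: it decomposes as $R_{-1}\oplus R_{-1/2}\oplus R_0\oplus R_{1/2}\oplus R_1$ (or the integer part only, since we are in $\ggm$), so ``degree $\ge\tfrac32$'' gets triggered by products like $R_1\otimes R_1$, and tracking exactly which summands of $R\otimes R\otimes L$ survive is the delicate bookkeeping. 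I would handle this by working $\g_{ss}$-isotypically throughout (everything splits over $\g_{ss}$), reducing each claim to a statement about highest weights and the single scalar $h$-weight, and invoking Lemma \ref{quiver_Lie} and the description $rad^k P(L)=\mathfrak{R}^k P(L)$ to avoid any direct manipulation of $N$. The rest — right-exactness of ${}^{sh}$, the splitting $0\to\Lambda^2R\to R\otimes R\to S^2R\to 0$, and the identification of $\delta$ with the Lie bracket modulo $\rr^3$ — is routine.
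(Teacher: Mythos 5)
Your overall route is the paper's route: identify $\mathfrak R^2/\mathfrak R^3$ from the presentation of $U(\rr)$ as the tensor algebra modulo $x\otimes y-y\otimes x-[x,y]$, deduce that $I^2(L)=rad^2 I(L)/rad^3 I(L)$ is the cokernel of $(alt\otimes 1)\oplus(\delta\otimes 1)$, and then push this through ${}^{sh}$ to get $P^2(L)$. But at the decisive point your description of the relations is garbled in a way that matters. There is no ``symmetrization relation $x\otimes y+y\otimes x\sim 0$'': the only degree-two relations in $U(\rr)$ are $x\otimes y-y\otimes x=[x,y]$, i.e. $alt(x,y)+\delta(x,y)=0$ for $x,y\in R$, and each such relation couples the two summands $R\otimes R\otimes L$ and $R^2\otimes L$. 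Hence what must be quotiented out is the image of the single map $\omega\mapsto(\mu(\omega),\lambda(\omega))$, not the images of $\mu$ and of $\lambda$ imposed as two separate relations, which is how your items (i) and (ii) read. The distinction is not cosmetic: in Example \ref{centexs} one has $\operatorname{Coker}\mu=0$ and $\lambda$ surjective, so imposing the relations separately would give $P^2(L)=0$, while the correct joint cokernel is $L$. Your preliminary identification $\mathfrak R^2/\mathfrak R^3\simeq S^2R\oplus R^2$ is true as $\g_{ss}$-modules but hides exactly this coupling; the paper instead keeps the presentation $\left(R\otimes R\oplus R^2\right)/\left(alt(x\otimes y)+\delta(x,y)\right)_{x,y\in R}$ and never splits it, which is what makes $\mu$ and $\lambda$ appear jointly after truncation.

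Second, the step you yourself single out as the main obstacle --- that after passing from $I(L)$ to $P(L)=I(L)^{sh}$ the targets $R\otimes R\otimes L$ and $R^2\otimes L$ get replaced by $(R\otimes(R\otimes L)^{sh})^{sh}$ and $(R^2\otimes L)^{sh}$, i.e. that the contribution to the second radical layer of the submodule generated in degrees $\geq\frac32$ is exactly the kernels of $\pi$ and $p$ --- is left unproved; ``work $\g_{ss}$-isotypically and invoke Lemma \ref{quiver_Lie}'' is a plan, not an argument. In the paper this is precisely what the commutative diagram comparing $R\otimes R\otimes L\to(R\otimes(R\otimes L)^{sh})^{sh}$ with $I^2(L)\to P^2(L)$ accomplishes, using $rad^kP(L)=\mathfrak R^kP(L)$ and the fact that $P(L)$ is the quotient of $I(L)$ by the submodule generated by the components of degree at least $\frac32$. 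So: same strategy as the paper, but the presentation step is mis-stated (symmetric versus antisymmetric part, and joint versus separate relations) and the truncation-compatibility step remains open.
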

\begin{proof}  The universal enveloping algebra $U(\rr)$ is the quotient of the tensor algebra $T(\rr)$ by the ideal
generated by $x\otimes y-y\otimes x-[x,y]$. In particular, at the second layer of the augmentation  filtration we have
$${\mathfrak R}^2/{\mathfrak R}^3\simeq\left(R\otimes R\oplus R^2\right)/\left(alt(x\otimes y)+\delta(x,y)\right)_{x,y\in R}.$$
Therefore $I^2(L):=rad^2 I(L)/rad^3 I(L)$ is the cokernel of $(alt\otimes 1)\oplus(\delta\otimes 1)$.

Thus, the statement follows from the commutative diagram
\begin{equation}\xymatrix{{R\otimes R\otimes L}\ar[r]\ar[d]&{(R\otimes(R\otimes L)^{sh})^{sh}}\ar[d]\\
I^2(L)\ar[r]&P^2(L)}
\end{equation}
\end{proof}

\begin{ex}\label{centexs} Let $\g=\gb=\sll_2\niplus R$, where $R=R_1\oplus R_2$ is a direct sum of two adjoint representations.
Let $L$ be the standard two dimensional $\sll_2$-module. Let us calculate $P^2(L)$ in $\gmh$. We observe that
$$(R\otimes L)^{sh}\simeq L\oplus L,\quad (R\otimes(R\otimes L)^{sh})^{sh}\simeq L\oplus L\oplus L\oplus L.$$
For any $(x_1,x_2),(y_1,y_2)\in R$ and $v\in L$ we have
$$\pi((x_1,x_2),(y_1,y_2),v)=(x_1y_1v,x_1y_2v,x_2y_1v,x_2y_2v),$$
and
$$\mu((x_1,x_2),(y_1,y_2),v)=([x_1,y_1]v,x_1y_2v-y_1x_2v,x_2y_1v-y_2x_1v,[x_2,y_2]v).$$
One can check that $\operatorname{Coker}\mu=0$ and hence $P^2(L)=0$.

Now consider the universal central extension $\gh$ of $\g$. Then we have $\gh=\g\oplus\kk$ as a vector space, $R^2=\kk$ and
$$\delta((x_1,x_2),(y_1,y_2))=tr x_1y_2-tr x_2y_1.$$
Then $R^2\otimes L=L$ and
$$\lambda((x_1,x_2),(y_1,y_2),v))=(tr x_1y_2-tr x_2y_1)v.$$
Note that for $A,B\in \sll_2$ and $v\in V$ we have 
$$ABv+BAv=(2 tr AB)v.$$
That implies $P^2(L)=\operatorname{Coker}(\lambda\oplus\mu)=L$.

The above example illustrates that $\gmh$ and $\gh\text{-mod}_{\frac12}$ are not equivalent. 
To construct a similar example for the categories $\ggm$ and $\gm$ 
consider the Lie algebra $\g\oplus \sll_2$ and $P(L\boxtimes V)$, where $V$ is the standard module over the second copy of 
$\sll_2$ and $\boxtimes$ stands for the exterior tensor product. 
\end{ex}

\begin{lem}\label{special} Assume that $[\rr,\rr]=0$. Then for any $L\in \gmh$ we have $P^2(L)=0$. 
\end{lem}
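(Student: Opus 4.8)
The plan is to apply Lemma~\ref{technical2} with the hypothesis $[\rr,\rr]=0$, which forces two simplifications at once: first $\rr^2=[\rr,\rr]=0$, so the space $R^2=\rr^2/\rr^3$ vanishes and hence the map $\lambda$ has zero target; second $\delta(x,y)=-[x,y]\bmod\rr^3=0$, consistently. So $P^2(L)$ is just the cokernel of $\mu$ alone, where
\[
\mu:\Lambda^2 R\otimes L\xrightarrow{alt\otimes 1} R\otimes R\otimes L\xrightarrow{\pi}\bigl(R\otimes(R\otimes L)^{sh}\bigr)^{sh}.
\]
Thus the task reduces to showing that $\mu$ is surjective when $L\in\gmh$, i.e.\ when $L$ has $h$-weights in $\{-\tfrac12,\tfrac12\}$.

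The key point is a weight-grading argument. Since $[\rr,\rr]=0$, the radical $R=\rr$ itself carries the $h$-grading $R=R_{-1}\oplus R_0\oplus R_1$ (a short grading, as $\rr\subset\g$ and $\g$ has a short grading). For $L\in\gmh$ the weights of $L$ are $\pm\tfrac12$, so the weights appearing in $R\otimes L$ lie in $\{-\tfrac32,-\tfrac12,\tfrac12,\tfrac32\}$; passing to $(R\otimes L)^{sh}$ kills the degree-$\pm\tfrac32$ parts, and the surviving part $(R\otimes L)^{sh}$ has weights in $\{-\tfrac12,\tfrac12\}$ again — so it is once more an object of $\gmh$ (compatibly with Remark~\ref{tensor} applied with the trivial-type factor). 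Iterating, $\bigl(R\otimes(R\otimes L)^{sh}\bigr)^{sh}$ has weights in $\{-\tfrac12,\tfrac12\}$ and is spanned, modulo the $^{sh}$-quotient, by images of vectors $x\otimes y\otimes v$ with $x,y\in R$, $v\in L$, whose total weight is $\pm\tfrac12$; concretely this means the "bad" combinations where $x,y$ would need weights summing outside the admissible range have already been quotiented out. I would make this precise by writing $\mu(x\wedge y\otimes v)=\pi(x\otimes y\otimes v)-\pi(y\otimes x\otimes v)$ and arguing that, because only weight-$\pm\tfrac12$ contributions survive in the target, for every surviving basis vector of $\bigl(R\otimes(R\otimes L)^{sh}\bigr)^{sh}$ at least one of $\pi(x\otimes y\otimes v)$, $\pi(y\otimes x\otimes v)$ contributing to it has its partner already zero in the double-$^{sh}$ quotient (since $y\otimes x\otimes v$ or $x\otimes y\otimes v$ sits in a degree-$\tfrac32$ slot of the inner tensor before the second $^{sh}$ is applied), so the antisymmetrization $\mu$ hits it.

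The main obstacle is exactly this last bookkeeping: one must check that in the iterated shifted tensor construction $(R\otimes(R\otimes L)^{sh})^{sh}$, every surviving weight-$\pm\frac12$ vector is in the image of the antisymmetrized map $\mu$ rather than only of the full (non-antisymmetrized) $\pi$. The clean way is to decompose $R=R_{-1}\oplus R_0\oplus R_1$ and $L=L_{-1/2}\oplus L_{1/2}$ and track weights slot by slot: a vector $x\otimes y\otimes v$ survives both $^{sh}$-quotients iff $\deg(y)+\deg(v)\in\{-\tfrac12,\tfrac12\}$ and $\deg(x)+\deg(y)+\deg(v)\in\{-\tfrac12,\tfrac12\}$; one then observes that swapping $x$ and $y$ in such a surviving vector produces $y\otimes x\otimes v$ with $\deg(x)+\deg(v)$ possibly equal to $\pm\tfrac32$, in which case $\pi(y\otimes x\otimes v)=0$ and $\mu(x\wedge y\otimes v)=\pi(x\otimes y\otimes v)$ directly; in the remaining case where both orders survive, the image vectors $\pi(x\otimes y\otimes v)$ and $\pi(y\otimes x\otimes v)$ land in distinct summands of $\bigl(R\otimes(R\otimes L)^{sh}\bigr)^{sh}$ corresponding to the ordered pair of $R$-gradings, so again $\mu$ surjects onto each. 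Assembling these cases gives $\operatorname{Coker}\mu=0$, hence $P^2(L)=\operatorname{Coker}(\mu\oplus\lambda)=0$.
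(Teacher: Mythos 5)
Your reduction is fine as far as it goes: with $[\rr,\rr]=0$ one has $R^2=0$ and $\lambda=0$, so Lemma \ref{technical2} gives $P^2(L)=\operatorname{Coker}\mu$. But surjectivity of $\mu$ is not a formal consequence of weight bookkeeping, and your case analysis breaks down exactly where the content of the lemma lies. Since $\pi$ is surjective and $\operatorname{Coker}(alt\otimes 1)=S^2R\otimes L$, the cokernel of $\mu$ is precisely the image of the symmetric tensors modulo the image of the antisymmetric ones, so what must be shown is $\pi(S^2R\otimes L)\subset\pi(\Lambda^2R\otimes L)$ --- and nothing in your argument does this. Concretely: (i) if $\deg x=\deg y$ (say $x,y\in\rr_0$, $v\in L_{-\frac12}$), both orderings survive and lie in the same graded piece, so $\mu$ only produces alternating combinations $\pi(x\otimes y\otimes v)-\pi(y\otimes x\otimes v)$ and gives no control over the symmetric part $S^2\rr_0\otimes L_{-\frac12}$; (ii) if the degrees differ and both orderings survive (e.g.\ $(\deg x,\deg y,\deg v)=(0,-1,\tfrac12)$ versus $(-1,0,\tfrac12)$), knowing $u_1-u_2\in\operatorname{Im}\mu$ does not give $u_1,u_2\in\operatorname{Im}\mu$: the symmetric combination survives in the cokernel. (Also, ``distinct summands indexed by the ordered pair of $R$-degrees'' is not a well-defined splitting of $(R\otimes(R\otimes L)^{sh})^{sh}$, since the inner ${}^{sh}$ forgets the inner $R$-degree; and ${}^{sh}$ kills whole $\g_{ss}$-constituents meeting degree $\geq\frac32$, not just extreme weight spaces, so your ``survives iff'' is inaccurate as well.) The two problematic pieces are exactly $M=\left(S^2\rr_0\otimes L_{-\frac12}\right)\oplus\left(\rr_0\otimes\rr_{-1}\otimes L_{\frac12}\right)$, which is where the paper locates $P^2(L)_{-\frac12}$.

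The missing idea is that killing $M$ requires the standing assumptions of Section 4 on $\g$ ($\rr^{\g_{ss}}\subset[\g,\g]\cap Z(\g)$ and $\g$ generated by $\g_{\pm1}$): the paper argues that these imply there is no nonzero vector of $\rr_0$ annihilated by $(\g_{ss})_{-1}$, hence none in $M$, whereas a nonzero $P^2(L)\in\gmh$ would have its degree $-\frac12$ part annihilated by $(\g_{ss})_{-1}$. Your proposal never invokes these hypotheses, and a purely formal bookkeeping proof cannot exist because the bare statement fails without them: take $\g_{ss}=\alpha=\sll_2$ and $\rr$ a one-dimensional trivial module in degree $0$ (violating the standing assumptions); then $\Lambda^2R=0$, so $\mu=0$, and for the standard module $L$ one gets $P^2(L)\simeq S^2R\otimes L\simeq L\neq 0$. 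So to repair your approach you would have to prove directly that the images of $S^2\rr_0\otimes L_{-\frac12}$ and $\rr_0\cdot\rr_{-1}\otimes L_{\frac12}$ vanish in $P^2(L)$, and that step genuinely uses the structure of $\g$, as in the paper's proof.
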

\begin{proof} We use the fact that $P^2(L)$ is a $\g_{ss}$-submodule  of $S^2(R)\otimes L$. 
Since we have 
$(\rr_1\otimes L_{\frac12})^{sh}=(\rr_{-1}\otimes L_{-\frac12})^{sh}=0$, $P^2(L)_{-\frac12}$ is in fact a submodule in 
$$M=\left(S^2\rr_0\otimes L_{-\frac12}\right)\oplus \left(\rr_0\otimes \rr_{-1}\otimes L_{\frac12}\right).$$  
By our assumption on $\g$ there are no $(\g_{ss})_{-1}$-invariant vectors in $\rr_0$. Therefore $M$ also does not 
have $(\g_{ss})_{-1}$-invariant vectors. Hence $P^2(L)=0$.
\end{proof}

\subsection {Ext quivers $\JJ$ and $\gm$}
Now let $\J:=Lie(\g)$. Consider the category $\JJ$ and recall the functor $Jor:\gm\to\JJ$. 
If $L\in \gm$ is simple and not trivial, then $Jor(L)$ is simple in $\JJ$ and $Jor(P(L))=P(Jor(L))$ by
Lemma \ref{proj} and Lemma \ref{identity}.

\begin{lem}\label{idempotent} Let 
$$P(\J)=\bigoplus_{L\neq tr} P(Jor(L))$$ 
and $A(\J)=\operatorname{End}_{\J}(P(\J))$.
Then $$A(\J)=(1-e_{tr})A(\gh)(1-e_{tr}),$$
where $e_{tr}$ is the idempotent corresponding to the projector onto $P(tr)$.
\end{lem}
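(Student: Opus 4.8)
The plan is to relate the endomorphism algebra $A(\J)=\operatorname{End}_{\J}(P(\J))$ directly to $A(\gh)=\operatorname{End}_{\g}(P)$, where $P=\bigoplus_L P(L)$ is the sum over \emph{all} simple objects of $\gm$, including the trivial one $tr$. First I would recall that, by Lemma~\ref{proj} together with Corollary to Lemma~\ref{adjoint} and Lemma~\ref{identity}, for each simple nontrivial $L\in\gm$ the module $P(L)$ satisfies $\hat\g P(L)=P(L)$ (since $P(L)$ is generated by its $-1$-component $L$, which is nontrivial), hence $Jor(P(L))=P(Jor(L))$ is the projective cover of the simple $\J$-module $Jor(L)$. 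By Corollary~\ref{Jorsimple} the assignment $L\mapsto Jor(L)$ is a bijection between nontrivial simples of $\gm$ and simples of $\JJ$, so $P(\J)=\bigoplus_{L\neq tr}P(Jor(L))=Jor(P(1-e_{tr}))$, where $P(1-e_{tr})=\bigoplus_{L\neq tr}P(L)$ is the image of $P$ under the idempotent $1-e_{tr}$.

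Next I would identify $\operatorname{Hom}_\g$ between two projectives of the form $P(L)$, $P(L')$ with nontrivial $L,L'$ with $\operatorname{Hom}_\J$ between their images under $Jor$. This is exactly the content of the adjunction: $\operatorname{Hom}_{\hat\g}(P(L),P(L'))\simeq\operatorname{Hom}_{\hat\g}(Lie(Jor(P(L))),P(L'))\simeq\operatorname{Hom}_\J(Jor(P(L)),Jor(P(L')))$, where the first isomorphism uses $Lie(Jor(P(L)))\simeq P(L)$ (established in the proof of Lemma~\ref{proj} from $\hat\g P(L)=P(L)$ and projectivity) and the second is Lemma~\ref{adjoint}. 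These isomorphisms are natural, so summing over all nontrivial $L,L'$ gives a ring isomorphism $\operatorname{End}_{\hat\g}\bigl(P(1-e_{tr})\bigr)\xrightarrow{\sim}\operatorname{End}_\J(Jor(P(1-e_{tr})))=A(\J)$.

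Finally I would observe that $\operatorname{End}_{\hat\g}(P(1-e_{tr}))$ is precisely the corner algebra $(1-e_{tr})A(\gh)(1-e_{tr})$: writing $P=P(tr)\oplus P(1-e_{tr})$, the idempotent $e_{tr}\in A(\gh)$ is the projection onto the $P(tr)$-summand, and for any decomposition $P=P_1\oplus P_2$ with corresponding idempotent $e$ one has $\operatorname{End}(P_2)=(1-e)\operatorname{End}(P)(1-e)$ by restriction/inclusion. Combining this with the isomorphism of the previous paragraph yields $A(\J)=(1-e_{tr})A(\gh)(1-e_{tr})$. The main point requiring care — rather than a true obstacle — is the verification that $\hat\g P(L)=P(L)$ for every nontrivial simple $L$, so that Lemma~\ref{proj} applies and $P(Jor(L))=Jor(P(L))$; this follows because $P(L)$ is generated over $\hat\g$ by $P(L)_{-1}=L\neq tr$, so $P(L)$ has no trivial quotient and hence $\hat\g P(L)=P(L)$. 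Everything else is a bookkeeping of natural isomorphisms and the elementary fact about corner algebras of endomorphism rings.
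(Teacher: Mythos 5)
Your argument is correct and is essentially the paper's own proof, which simply cites Lemma \ref{proj} together with the identity $Lie(P(\J))=(1-e_{tr})P$: your object-by-object use of the adjunction (Lemma \ref{adjoint}), the isomorphism $Lie(Jor(P(L)))\simeq P(L)$ from the proof of Lemma \ref{proj}, and the corner-algebra fact $\operatorname{End}_{\hat\g}((1-e_{tr})P)=(1-e_{tr})A(\gh)(1-e_{tr})$ is exactly what the paper's ``follows immediately'' compresses. One phrasing to tighten: rather than saying $P(L)$ is generated by its degree $-1$ component ``which is $L$'' (that component is in general larger than $L_{-1}$), the clean reason that $\hat\g P(L)=P(L)$ is simply that the top of the projective cover $P(L)$ is $L\neq tr$, so $P(L)$ has no trivial quotient.
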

\begin{proof} Follows immediately from Lemma \ref{proj} and the identity
$$Lie(P(\J))=(1-e_{tr})P,$$
where $P$ is the direct sum of all up to isomorphism indecomposable projectives in $\gm$.
\end{proof}

\begin{Cor}\label{Q-Q'-relation} Let $Q(\J)$ be the Ext quiver of the category $\JJ$ and $Q'(\g)$ be the quiver 
obtained from $Q(\g)$ by removing the vertex corresponding to the 
trivial representation. Then $Q'(\g)$ is obtained from $Q(\J)$ by removing some edges.
\end{Cor}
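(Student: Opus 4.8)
The plan is to compare the two quivers vertex by vertex and edge by edge, using Lemma~\ref{idempotent} to express $A(\J)$ as a corner algebra of $A(\gh)$. First I would recall that by the general Gabriel theory (see Appendix) the quiver $Q(\g)$ of $\gm$ is read off from $A(\gh)$: its vertices are the isomorphism classes of simple objects in $\gm$ and the number of arrows from $L'$ to $L$ is $\dim\operatorname{Ext}^1_{\hat\g}(L',L)=\dim\bigl(e_{L'}\,(rad A(\gh)/rad^2 A(\gh))\,e_L\bigr)$. The simple objects of $\gm$ are the trivial module $tr$ together with the non-trivial simples; by Corollary~\ref{Jorsimple} the assignment $L\mapsto Jor(L)$ is a bijection between the non-trivial simples of $\gm$ and the simples of $\JJ$. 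Hence $Q'(\g)$ (remove the vertex $tr$) and $Q(\J)$ have the same vertex set, which is the first thing to establish.

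Next I would pass to arrows. By Lemma~\ref{idempotent}, $A(\J)=(1-e_{tr})A(\gh)(1-e_{tr})$, i.e. $A(\J)$ is the corner algebra $fA(\gh)f$ for the idempotent $f=1-e_{tr}$. For any $L',L\ne tr$ the space of arrows $L'\to L$ in $Q(\J)$ is $e_{L'}\bigl(rad A(\J)/rad^2 A(\J)\bigr)e_L$, while in $Q'(\g)$ it is $e_{L'}\bigl(rad A(\gh)/rad^2 A(\gh)\bigr)e_L$. Since $rad A(\J)=f\,(rad A(\gh))\,f$ for a corner algebra, there is a natural surjection
$$
e_{L'}\,(rad A(\J))\,e_L \;=\; e_{L'}\,(rad A(\gh))\,e_L \twoheadrightarrow e_{L'}\bigl(rad A(\J)/rad^2 A(\J)\bigr)e_L,
$$
and the corresponding space for $Q'(\g)$ is a further quotient of $e_{L'}(rad A(\gh))e_L$ by $e_{L'}(rad^2 A(\gh))e_L$. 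The key point is that $rad^2 A(\J)=f(rad^2 A(\gh))f$ is contained in (can be larger than) the image of $rad^2 A(\gh)$ under the corner projection: paths in $A(\gh)$ of length $\ge 2$ that pass through the vertex $tr$ become, after deleting that vertex, paths of length $\ge 2$ that are \emph{not} visible as products of arrows staying outside $tr$; such a path contributes nothing to $rad A(\gh)/rad^2 A(\gh)$ between $L'$ and $L$, but its image in $A(\J)=fA(\gh)f$ may fail to lie in $rad^2 A(\J)$, hence may survive in $rad A(\J)/rad^2 A(\J)$ as a genuine new arrow. Thus $\dim e_{L'}(rad A(\J)/rad^2 A(\J))e_L \ge \dim e_{L'}(rad A(\gh)/rad^2 A(\gh))e_L$, i.e. $Q(\J)$ has at least as many arrows $L'\to L$ as $Q'(\g)$, for every pair of vertices. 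Equivalently $Q'(\g)$ is obtained from $Q(\J)$ by deleting some edges, which is exactly the assertion.

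The main obstacle is the last inequality: one must argue cleanly that passing to the corner algebra $fA(\gh)f$ can only increase (never decrease) the number of arrows between surviving vertices. The cleanest route is probably to work with the basic algebra and a chosen set of representatives of arrows: every arrow of $Q'(\g)$ between $L',L\ne tr$ lifts to an element of $e_{L'}(rad A(\gh))e_L=e_{L'}(rad A(\J))e_L$ whose class modulo $rad^2$ is nonzero in $A(\gh)$; since $rad^2 A(\J)\supseteq f(rad^2A(\gh))f$ would be the wrong direction, one instead notes $rad^2 A(\J)= f(rad A(\gh))f\cdot f(rad A(\gh))f \subseteq f(rad^2 A(\gh))f$, so a class that is nonzero modulo $rad^2 A(\gh)$ stays nonzero modulo $rad^2 A(\J)$, giving an injection of the arrow space of $Q'(\g)$ into that of $Q(\J)$. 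That inclusion $rad^2 A(\J)\subseteq f(rad^2 A(\gh))f$ is the crux, and it follows at once from $rad A(\J)=f(rad A(\gh))f$ together with $f^2=f$. I would then remark that, as Example~\ref{centexs} already signals, the inclusion can be strict, which is why only "removing some edges'' (and not an isomorphism of quivers) is claimed.
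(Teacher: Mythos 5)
Your proposal is correct and follows essentially the paper's own route: both arguments reduce to Lemma \ref{idempotent}, and your key inclusion $rad^2 A(\J)=\bigl(f\,rad A(\gh)\,f\bigr)\bigl(f\,rad A(\gh)\,f\bigr)\subseteq f\,rad^2 A(\gh)\,f$ (so that an element of $e_{L'}\,rad A(\gh)\,e_L$ which is nonzero modulo $rad^2 A(\gh)$ remains nonzero modulo $rad^2 A(\J)$, giving at least as many arrows between non-trivial vertices in $Q(\J)$ as in $Q'(\g)$) is precisely the corner-algebra restatement of the paper's one-line observation $Jor(P^1)\subset rad\,P(\J)/rad^2 P(\J)$. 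The only blemish is the garbled claim in your second paragraph that $rad^2 A(\J)=f\,rad^2 A(\gh)\,f$, where in general only the inclusion holds; since you replace it by the correct inclusion in your final paragraph, the argument stands.
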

\begin{proof} In notations of the previous proof we have
$$Jor(P^1)\subset rad P(\J)/rad^2 P(\J).$$
Hence the statement.
\end{proof}

\begin{Cor}\label{quiverab} Let $\gh=\g$, the radical $\rr=R$ is abelian and simple over $\g_{ss}$. Then $Q(\J)=Q'(\g)$.
\end{Cor}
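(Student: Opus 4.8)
The plan is to combine Corollary \ref{Q-Q'-relation} with Lemma \ref{special} applied in the $\ggm$-setting. By Corollary \ref{Q-Q'-relation} we already know that $Q'(\g)$ is obtained from $Q(\J)$ by deleting some edges, so it suffices to show that no edge is actually deleted, i.e. that for every pair of non-trivial simple modules $L, L' \in \gm$ the space $\operatorname{Ext}^1_{\J}(Jor(L'), Jor(L))$ has the same dimension as $\operatorname{Ext}^1_{\g}(L', L)$. Since by hypothesis $\gh = \g$, we may work directly with $\g$, and since $\rr = R$ is abelian we have $\gb = \g$, so by Lemma \ref{quiver_Lie} the quiver $Q(\g)$ records precisely the multiplicities of $L$ in $L' \otimes R$.

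First I would translate the computation of $\operatorname{Ext}^1$ in $\JJ$ into the language of $\gm$ via the functors $Jor$ and $Lie$. Recall from the proof of Corollary \ref{Q-Q'-relation} that $Jor(P^1(L)) \subset rad P(Jor(L))/rad^2 P(Jor(L))$, so an edge of $Q(\g)$ survives in $Q(\J)$ as soon as the corresponding simple constituent of $P^1(L) = (R \otimes L)^{sh}$ is not killed by passing to $P(Jor(L))$. The point is that the discrepancy between $rad P(Jor(L))/rad^2$ and $Jor(rad P(L)/rad^2 P(L))$ can only come from trivial constituents appearing in $P^1(L)$ or $P^2(L)$: indeed $Jor$ is exact, $Jor\circ Lie \cong \mathrm{id}$, and $Jor(P(L)) = P(Jor(L))$ by Lemma \ref{proj} and Lemma \ref{identity}, so the only way $Jor$ can fail to commute with the radical filtration is through the trivial module, which it annihilates. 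Concretely, I would argue that $Jor(rad^k P(L)) = rad^k P(Jor(L))$ for $k = 1, 2$ using Corollary \ref{Jorradical}, provided $(rad^{k-1}P(L)/rad^k P(L))^{\gh} = 0$ for the relevant $k$; the case $k=1$ is automatic since $P^1(L) = (R\otimes L)^{sh}$ has no trivial summand (as $R$ is simple non-trivial over $\g_{ss}$ and $L$ is non-trivial, $L'\otimes R$ contains $tr$ only if $L' \cong R^*$, but even then one must check the short-grading truncation — actually the cleaner route is that $R$ simple non-trivial forces $R$ to live in degrees $\pm 1, 0$ and any trivial constituent of $R\otimes L$ would have to pair the extreme components, which survive ${}^{sh}$, so I need $L \not\cong R^*$; this requires a small separate argument or can be absorbed into the statement's hypotheses).

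The main obstacle will be controlling the second radical layer: I must rule out that an edge visible in $Q(\g)$ between two non-trivial simples gets absorbed into a longer path through the trivial vertex when passing to $\JJ$, which amounts to showing $P^2(Jor(L))$ contains no "extra" constituents beyond $Jor(P^2(L))$. Here Lemma \ref{special} is the key input: since $[\rr,\rr] = 0$, the map $\lambda$ in Lemma \ref{technical2} vanishes and $P^2(L)$ for $L \in \gmh$ is zero; for $L \in \ggm$ the analogous computation with $\mu$ alone shows that $P^2(L)$ is a $\g_{ss}$-submodule of $S^2(R)\otimes L$ cut out purely by the antisymmetrization relation, and in particular the trivial module can appear in $P^2(Jor(L))$ only if it already appears in $P^2(L)$ as a $\g_{ss}$-module. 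Since $A(\J) = (1-e_{tr})A(\gh)(1-e_{tr})$ by Lemma \ref{idempotent} and removing an idempotent from an algebra only deletes the corresponding vertex and the arrows incident to it (replacing length-two paths through that vertex by arrows only when $rad^2$ is affected, which the $rad^3 = 0$-type bound controls), one concludes that no arrow between surviving vertices is created or destroyed. Assembling these observations gives $Q(\J) = Q'(\g)$.
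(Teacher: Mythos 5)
Your opening reduction is the same as the paper's: by Corollary \ref{Q-Q'-relation} one only has to show $Jor(P^1(L))=rad\,Jor(P(L))/rad^2\,Jor(P(L))$ for every non-trivial simple $L$, and Corollary \ref{Jorradical} handles this whenever $P^1(L)$ has no trivial constituent. But the case you set aside, $L\cong R^*$, is not a side case that can be ``absorbed into the hypotheses'' --- it is the entire content of the corollary. The hypotheses ($\gh=\g$, $\rr=R$ abelian and simple) do not exclude $L\cong R^*$, and in the situations where the corollary is later applied this case actually occurs: for $\g_{ss}=\so_n$ with $R=V$ one has $V\cong V^*$ and, by Lemma \ref{lemma-tensor}, $(V\otimes V)^{sh}=tr\oplus\Lambda^2V$, so the trivial module genuinely sits in $P^1(V)$ (this is why $tr$ is joined to $V$ in ${\bf Q}_1^{2m+1}$ and ${\bf Q}_1^{2m}$). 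Precisely there the danger you must exclude is that a length-two configuration $L\,/\,tr\,/\,L'$ inside $P(L)$ collapses, after $Jor$ kills the trivial layer, into a new arrow $Jor(L)\to Jor(L')$ in $Q(\J)$: cutting $A(\gh)$ by the idempotent $1-e_{tr}$ (Lemma \ref{idempotent}) does in general create arrows out of length-two paths through the deleted vertex, and no ``$rad^3$-type bound'' is available at this stage --- those bounds appear later and are proved using this very corollary. Your appeal to Lemma \ref{special} is also off target: it concerns $\gmh$, whereas the simples of $\JJ$ live in $\ggm$, where $P^2(L)$ is typically nonzero (see Proposition \ref{standard1}).

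The missing argument, which is exactly what the paper supplies, runs as follows: if the inclusion $Jor(P^1(L))\subset rad\,Jor(P(L))/rad^2\,Jor(P(L))$ were strict, then $P(L)$ would have an indecomposable quotient $M$ of length three with layers $L$, $tr$, $L'$. Since the action $R\otimes M\to M$ is $\g_{ss}$-invariant, maps the radical filtration down by one step, and $R$ is simple, nonvanishing of $R\otimes L\to tr$ and of $R\otimes tr\to L'$ forces $L\simeq R^*$ and $L'\simeq R$, and the action must be $x(a,b,c)=(0,t_1\langle x,a\rangle,t_2bx)$. Then two non-proportional elements $x,y\in R$ fail to commute on $M$ unless $t_1t_2=0$, contradicting indecomposability of $M$ --- note that the abelianness of $\rr$ enters here in an essential way (commutativity of the action of $R$), not merely through the vanishing of $\lambda$ in Lemma \ref{technical2}, and the simplicity of $R$ is what pins down the critical configuration $R^*\,/\,tr\,/\,R$. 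Without this step your proposal has a genuine gap at exactly the point where the statement could fail.
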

\begin{proof} We have to check that for any non-trivial simple $L\in\gm$ we have
$$Jor (P^1(L))=rad(Jor(P(L)))/rad^2(Jor(P(L))).$$
As we already mentioned in the previous corollary we have 
$$Jor (P^1(L))\subset rad(Jor(P(L)))/rad^2(Jor(P(L))).$$
If the inclusion is strict, then by Corollary \ref{Jorradical} $P^1(L)$ contains 
a trivial submodule and $P(L)$ has an indecomposable quotient $M$ of length $3$ such that
$$M/rad M=L,\  rad M/ rad^2 M=tr,\  rad ^2M=L',$$
where $L'$ is some irreducible $\g$-module.
Consider the decomposition $M=L\oplus tr\oplus L'$ over $\g_{ss}$. Since the action $R\otimes M\to M$ is $\g_{ss}$-invariant and $R(rad^i M)\subset rad^{i+1} M$,
we have $L\simeq R^*, L'\simeq R$ and for any  $x\in R, a\in L, b\in \kk, c\in L'$ 
$$x(a,b,c)=(0,t_1\langle x,a\rangle,t_2 bx)$$ 
for some $t_1,t_2\in\kk$.
By obvious calculation $xy\neq yx$ if $x$ and $y$ are not proportional. Hence there is no such module. 
\end{proof}

\section{Applying $Jor$ and $Lie$ to the case of Jordan algebras of bilinear form}

Let $E$ be a finite-dimensional ${\bf k}$-vector space of dimension greater or equal $2$
and $q$ be a symmetric bilinear form on $E$. Then a {\it Jordan algebra of a bilinear form}
$\J=\J(E,q)$ is a vector space $E\oplus{\kk}$ endowed with a multiplication $\circ$
$$
(e_1+\lambda_1)\circ(e_2+\lambda_2)=\lambda_1\lambda_2+ q(e_1,e_2)+\lambda_1 e_2+\lambda_2 e_1,
$$
$e_1,e_2\in E$, $\lambda_1,\lambda_2\in\kk$.
In what follows we assume that $q$ is non-degenerate and consequently $\J(E,q)$ is a simple Jordan algebra.
It is useful to notice that $\J(E,q)$ is a Jordan subalgebra in the Clifford algebra $C(E,q)$ generated
by $E\subset C(E,q)$. If $\operatorname{dim} E$ is even, then $C(E,q)\simeq \operatorname{End}_\kk (S)$, and
$S$ is a unique up to isomorphism special irreducible $\J$-module.  If $\operatorname{dim} E$ is odd, then 
$C(E,q)\simeq \operatorname{End}_\kk (S^+)\oplus  \operatorname{End}_\kk (S^-)$, and $\J$ has two simple special
modules $S^+$ and $S^-$.

We proceed to describing $\g=Lie(\J)$.
Let $V$ be a $n$-dimensional vector space equipped with
non-degenerate symmetric bilinear form  $(\cdot,\cdot)$. The orthogonal Lie
algebra $\g=\so_n$ is the algebra of endomorphisms $A:V\to V$ satisfying
$(Aw,v)+(v,Aw)=0$ for all $w,v\in V$. 
Let $F\subset V$ be a subspace of codimension $2$ such that  $(\cdot,\cdot)$ is non-degenerate on $F$. 
Choose a basis $\xi,\eta\in F^\perp$ such that $(\xi,\eta)=1$, $(\xi,\xi)=(\eta,\eta)=0$. 
Let $h\in \g$ such that $h(\xi)=\xi, \, h(\eta)=-\eta,\, h(F)=0$. Then $h$ defines a short
$\mathbb Z$-grading of $\g$ such that
$$\begin{array}{c}\g_0=\{A\in\g\,|\,A(F)\subset F\},\quad
\g_1=\{A\in\g\,|\,A(\eta)\in F, A(F)\subset \kk \xi\},\\
\g_{-1}=\{A\in\g\,|\,A(\xi)\in F, A(F)\subset \kk \eta\}.
\end{array}
$$ 
Any non-zero element $f\in \g_1$ defines a Jordan algebra structure on $\g_{-1}$ isomorphic to $\J$.
In this way $n=\operatorname{dim}E+3$. 

Next we describe simple objects in $\ggm$ and $\gmh$. 
This description is slightly different in even and odd case. Let $n=2m$ or $2m+1$, 
$\omega_1,\dots \omega_m$ denote the fundamental weights.
We denote by $\Gamma$ the spinor representation of $\so_n$ with highest weight $\omega_m$ for $n=2m+1$ and by 
$\Gamma^\pm$ the 
spinor representations with highest weights $\omega_{m-1}$ and $\omega_m$ for $n=2m$, see  Section 20.1 in \cite{FH} for details.
Other irreducible fundamental representations of $\so_n$ can be obtained by taking the exterior powers of the standard
representation $V$. If $n=2m+1$ they are $\Lambda^i V$ for $i=1,\dots m-1$ with highest weights 
$\omega_1,\dots,\omega_{m-1}$ respectively. Note that $\Lambda^m V$ is irreducible
with highest weight $2\omega_m$. If  $n=2m$, then $\Lambda^i V$ for $i=1,\dots m-2$ are fundamental representations,
$\Lambda^{m-1}V$ is irreducible with highest weight $\omega_{m-1}+\omega_m$. Finally,
$\Lambda^{m}V$ splits into direct sum of two simple modules $\Lambda^+ V \oplus \Lambda^- V$ with highest weights
$2\omega_{m-1}$ and $2\omega_m$ respectively.

\begin{lem}\label{unital} Let $\g=\so_n$.

(a) Assume that $n=2m+1$.
Any simple object in $\gmh$ is isomorphic to the spinor representation $\Gamma$ and 
any simple object in $\ggm$ is isomorphic to $\Lambda^i V$ for some
$1\leq i\leq m$.

(b) Assume that $n=2m$. Any simple object in $\gmh$ is isomorphic to $\Gamma^+$ or $\Gamma^-$.
 Any simple object in $\ggm$ is isomorphic to $\Lambda^i V$ for $i\leq m-1$ or $\Lambda^\pm V$.
\end{lem}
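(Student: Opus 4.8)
The plan is to reduce the classification of simple objects in $\mathcal S$ for $\g=\so_n$ to a weight/character argument, using that by Remark at the start of Section 4 every simple object in $\mathcal S$ is already simple as a $\g_{ss}$-module, and here $\g=\g_{ss}=\so_n$. So the question becomes: which irreducible $\so_n$-modules $L$, restricted to the short grading defined by $h$, are supported only in degrees $\{-1,-\tfrac12,0,\tfrac12,1\}$? First I would observe that the $h$-eigenvalues on $L$ are exactly the values $\langle \lambda',h\rangle$ where $\lambda'$ runs over the weights of $L$; since $h$ is the coweight dual to the grading element coming from the pair $\xi,\eta$, this is a concrete linear functional on the weight lattice. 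The constraint is that the maximal such value is $\leq 1$, equivalently the highest weight $\lambda$ of $L$ satisfies $\langle\lambda,h\rangle\leq 1$ (using that $\lambda$ is dominant and $h$ is a sum of positive coroots up to the obvious identification).

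Next I would make the combinatorics explicit. Realizing $\so_n$ in the standard way with weights $\pm\v_i$ (and $0$ if $n$ odd) for $V$, the element $h$ can be arranged so that $\langle\v_1,h\rangle=1$ and $\langle\v_i,h\rangle=0$ for $i\geq 2$; this matches the description $h(\xi)=\xi$, $h(\eta)=-\eta$, $h(F)=0$ with $\xi,\eta$ spanning the hyperbolic plane dual to $\v_1$. Then for a dominant weight $\lambda=\sum a_i\v_i$ (with the usual half-integer/integer and dominance conditions) one computes $\langle\lambda,h\rangle=a_1$, and the grading length condition becomes $a_1\leq 1$. Running through the dominant weights of $\so_n$ with $a_1\leq 1$: the integral ones are $0$ (trivial, excluded or the $tr$ case), $\v_1$ (giving $V=\Lambda^1V$), $\v_1+\v_2+\dots+\v_i$ (giving $\Lambda^iV$), and in the even case $\v_1+\dots+\v_{m-1}\pm\v_m$ (giving $\Lambda^{m-1}V$ and, for the top, $\Lambda^\pm V$); the half-integral ones are $\tfrac12(\v_1+\dots+\v_m)$ and its $\pm$ variant, giving the spinor modules $\Gamma$, $\Gamma^\pm$, which have $a_1=\tfrac12$ and hence land in $\gmh$. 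One then checks that each of these indeed has grading supported in the claimed five (resp. three) degrees — for the exterior powers this is immediate since the weights of $\Lambda^iV$ are sums of distinct $\v_j$'s (and zero), so $a_1$-values lie in $\{-1,0,1\}$; for the spinors the weights are $\tfrac12(\pm\v_1\pm\dots\pm\v_m)$, so $a_1$-values lie in $\{-\tfrac12,\tfrac12\}$. This both produces the list and shows these modules actually lie in $\mathcal S$, with the integral ones in $\ggm$ and the spinors in $\gmh$, matching statements (a) and (b).

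The remaining point is that this list is exhaustive, i.e. that $\langle\lambda,h\rangle\leq 1$ for the highest weight is not only necessary but that no other dominant $\lambda$ with $a_1\leq 1$ exists beyond those listed. Here I would simply enumerate: dominance for $\so_{2m+1}$ forces $a_1\geq a_2\geq\dots\geq a_m\geq 0$ with all $a_i\in\Z$ or all $\in\Z+\tfrac12$; with $a_1\leq 1$ the integral solutions are exactly the $0/1$ vectors with a weakly decreasing pattern, i.e. $(1,\dots,1,0,\dots,0)$, giving $\Lambda^iV$ for $0\le i\le m$; the half-integral ones force all $a_i=\tfrac12$, giving $\Gamma$. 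For $\so_{2m}$ dominance allows $a_m$ to be negative with $|a_m|\le a_{m-1}$, which is what produces the two top cases $\Lambda^\pm V$ and the two half-integral spinors $\Gamma^\pm$; everything else is as before. The main obstacle, such as it is, is bookkeeping: getting the normalization of $h$ right (that $\langle\v_1,h\rangle=1$ and the others vanish, consistent with $n=\dim E+3$ and the description of $\g_{\pm1}$ in terms of $F,\xi,\eta$), and carefully handling the parity/dominance conditions in the even versus odd case so that no spurious fundamental weight is missed or double-counted. Once the normalization of $h$ is pinned down, the rest is a short finite check; no deep input beyond the highest-weight theory of $\so_n$ and the already-noted fact that simples in $\mathcal S$ are simple over $\g_{ss}$ is needed.
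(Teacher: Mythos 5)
Your argument is correct and is essentially the paper's own proof: the paper likewise reduces to the observation that simples in $\mathcal S$ are simple over $\g_{ss}=\so_n$, identifies the top $h$-eigenvalue with $\mu(h)=(\mu,\omega_1)$ (your $a_1$ in $\varepsilon$-coordinates), and enumerates the dominant weights with this pairing equal to $\tfrac12$ or $1$, arriving at the same list of exterior powers and spinor modules. The only difference is notational (fundamental weights versus $\varepsilon$-coordinates), so there is nothing substantive to add.
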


\begin{proof} We will prove (b) leaving (a) to the reader.
After suitable choice of a Cartan subalgebra
and Chevalley generators in $\g$ we may assume that $\mu(h)=(\mu,\omega_1)$ for any weight $\mu$.
Let $\mu$ be the highest weight of a simple $\g$-module $L$.
If $L$ has grading of length $2$, then $(\mu,\omega_1)=\frac12$. If
$L$ has grading of length $3$, then $(\mu,\omega_1)=1$. Note that
$(\omega_i,\omega_1)=1$ for $i\leq m-2$ and
$(\omega_{m-1},\omega_1)=(\omega_{m},\omega_1)=\frac12$. Therefore
if $L$ has grading of length $2$, $\mu=\omega_{m-1}$ or $\omega_m$.
If $L$ has grading of length $3$, we have the following
possibilities:
\begin{itemize}
\item $\mu=\omega_i$ for $i\leq m-2$ and $L\simeq \Lambda^iV$,
\item $\mu=\omega_{m-1}+\omega_m$ and $L\simeq \Lambda^{k-1}V$,
\item $\mu=2\omega_{m-1}$ or $2\omega_m$ and $L\simeq\Lambda^{\pm}V$.
\end{itemize}
\end{proof}
\begin{rem}\label{duality} In the case when $\g=\so_{2m+1}$ all simple modules are self-dual.
If $\g=\so_{2m}$ then $\Lambda^i V\simeq \Lambda^i V^*$, while
$$
(\Gamma^{\pm})^*=\begin{cases}\Gamma^{\pm} \text{\ if\ } m {\text\ is\ even,}\\ 
\Gamma^{\mp} \text{\ if\ } m {\text\ is\ odd,}\\\end{cases} \qquad
(\Lambda^{\pm}V)^*=\begin{cases}\Lambda^{\pm}V \text{\ if\ } m {\text\ is\ even,}\\ 
\Lambda^{\mp}V {\text\ if\ } m \text{\ is\ odd.}\\\end{cases}
$$
\end{rem}

\begin{rem}\label{involution-tau}
An orthogonal Lie algebra $\so_{2m}$ has an involution $\tau$ induced by the symmetry of its Dynkin diagram. It swaps
$\omega_{m-1}$ and $\omega_m$, and therefore $\tau(\Gamma^{\pm})=\Gamma^{\mp}$, 
$\tau(\Lambda^r V)=\Lambda^r V$ and $\tau(\Lambda^{\pm})V=\Lambda^{\mp}V$. 
\end{rem}

Next we calculate $(M\otimes N)^{sh}$ for simple $M$, $N\in \mathcal S$, when $\g=\so_{n}$.
\begin{lem}\label{lemma-tensor} Let $\g=\so_n$ with $n=2m$ or $2m+1$.
\begin{enumerate}
\item\label{tensor-2m+1} For any 
$1\leq r\leq q\leq m$, 
$$(\Lambda^q V\otimes\Lambda^r V)^{sh}=\bigoplus_{i=0}^r\Lambda^{q-r+2i} V.$$ 
\item\label{tensor-2m}  
If $n=2m$, then for any $1\leq r\leq m-1$
$$
(\Lambda^{\pm} V\otimes\Lambda^r V)^{sh}=\begin{cases} \Lambda^{\pm} V \oplus 
\Lambda^{m-2} V\oplus\dots\oplus \Lambda^{m-r} V \qquad {\rm if\ } r \ {\rm is\ even},\\
\Lambda^{m-1} V\oplus \Lambda^{m-3} V\oplus\dots\oplus\Lambda^{m-r} V\quad {\rm if\ } r \ {\rm is\ odd.}
\end{cases}
$$
\item\label{tensor_odd_even} Suppose $n=2m$, $1\leq r\leq m$, then 
$$
(\Gamma^{\pm}\otimes\Lambda^r V)^{sh} =\begin{cases}\Gamma^{\pm}, \  {\rm if\ } r {\rm\ is\ even},\\
\Gamma^{\mp}, \  {\rm if\ } r {\rm\ is\ odd,}\end{cases}\quad
(\Gamma^{\pm}\otimes\Lambda^{\pm})^{sh} =\begin{cases}\Gamma^{\pm}, \  {\rm if\ } m {\rm\ is\ even},\\
0, \  {\rm if\ } m {\rm\ is\ odd}.\end{cases}
$$

\item\label{tensor_gamma} If $n=2m$, then 
$$(\Gamma^{\pm}\otimes \Gamma^{\pm} )^{sh}=(\Gamma^{\pm})^{\otimes 2} =\Lambda^{\pm} V\oplus\bigoplus_{i=1}^{\lfloor\frac{m}{2}\rfloor}\Lambda^{m-2i}V,$$
$$(\Gamma^{+}\otimes\Gamma^{-})^{sh}=\Gamma^{+}\otimes\Gamma^{-}=\bigoplus_{i=1}^{\lfloor\frac{m}{2}\rfloor}\Lambda^{m-2i+1}V$$
\item If $n=2m+1$, then
$$(\Gamma\otimes\Gamma)^{sh}=\Gamma^{\otimes 2} =\bigoplus_{i=0}^m\Lambda^{i}V.$$
\end{enumerate}
\end{lem}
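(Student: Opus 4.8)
The plan is to compute all these ``short truncations'' of tensor products of fundamental and spinor representations of $\so_n$ by combining classical branching rules with the fact that $(-)^{sh}$ simply discards the constituents whose $\omega_1$-weight (equivalently, $h$-eigenvalue spread) is too large. Concretely, for a highest weight $\mu$ the module $L(\mu)$ carries a grading of length at most $3$ precisely when $(\mu,\omega_1)\le 1$; the functor $(-)^{sh}$ applied to $M\otimes N$ kills exactly those isotypic components $L(\nu)$ with $(\nu,\omega_1)\ge\frac32$. So the strategy in every case is: decompose $M\otimes N$ into irreducibles by a known rule, then retain only the summands with $(\nu,\omega_1)\in\{0,\tfrac12,1\}$.

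First I would handle \eqref{tensor-2m+1}, the product $\Lambda^q V\otimes\Lambda^r V$. The full decomposition of a product of exterior powers of the vector representation of $\so_n$ is classical (it follows from the Littlewood--Richardson rule for $\mathfrak{gl}_n$ followed by the $\so_n$ modification rules, or one can cite e.g.\ \cite{FH}); all constituents are of the form $\Lambda^j V$ or $L(\mu)$ with $\mu$ supported on the first few nodes, and the bound $(\nu,\omega_1)\le 1$ forces $\nu=\omega_j$, i.e.\ $\nu$ a single exterior power $\Lambda^{j}V$. Matching parities and ranges ($j\equiv q-r\bmod 2$, $|q-r|\le j\le q+r$, and $j\le m$ after the $\so_n$-modification when $q+r>m$) yields exactly $\bigoplus_{i=0}^{r}\Lambda^{q-r+2i}V$. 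The remaining cases \eqref{tensor-2m} and \eqref{tensor_odd_even} are the analogous truncations where one factor is a half-spin or a $\Lambda^{\pm}V$; here the relevant identity is the branching of $\Gamma^{\pm}\otimes V$ and $\Lambda^{\pm}V\otimes V$, iterated, together with Remark~\ref{duality} to pin down the $+/-$ labels. For $\Gamma^{\pm}\otimes\Lambda^rV$ only a single spin module survives (since $(\omega_{m},\omega_1)=\frac12$ already), and its parity flips with the parity of $r$; for $\Gamma^{\pm}\otimes\Lambda^{\pm}V$ one gets $\Gamma^{\pm}$ when $m$ is even and $0$ when $m$ is odd, which is exactly the statement that the would-be surviving constituent has $(\nu,\omega_1)=1$ only in the even case.

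The cases \eqref{tensor_gamma} and (5) are the spin-times-spin products. Here I would use the well-known decompositions $\Gamma\otimes\Gamma\cong\bigoplus_{i=0}^{m}\Lambda^iV$ for $\so_{2m+1}$ and the analogous $(\Gamma^{\pm})^{\otimes2}$, $\Gamma^+\otimes\Gamma^-$ decompositions for $\so_{2m}$ in terms of $\Lambda^iV$ and $\Lambda^{\pm}V$ (again \cite{FH}, Section 20; these are ``full'' decompositions into exterior powers, with no multiplicities). Since every constituent there is an exterior power $\Lambda^iV$ or $\Lambda^{\pm}V$, and $(\omega_i,\omega_1)=1$ for $i\le m-2$ while $(\omega_{m-1}+\omega_m,\omega_1)=1$ and $(2\omega_{m-1},\omega_1)=(2\omega_m,\omega_1)=1$ but $(\Lambda^{m}V,\ldots)$-type weights of ``height'' $>1$ are excluded, one checks that in the $\so_{2m+1}$ case \emph{all} of $\Lambda^0V,\dots,\Lambda^mV$ survive (each has $(\nu,\omega_1)\le1$), giving $\bigoplus_{i=0}^m\Lambda^iV$; in the $\so_{2m}$ case the top exterior powers appearing in $\Gamma\otimes\Gamma$ are $\Lambda^{m}V=\Lambda^+V\oplus\Lambda^-V$ and lower ones of the right parity, and the truncation keeps $\Lambda^{\pm}V$ together with $\Lambda^{m-2i}V$ for $i\ge1$ (resp.\ $\Lambda^{m-2i+1}V$ for the mixed product), as stated. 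The main obstacle is purely bookkeeping: getting the $\so_n$ ``modification rules'' right when the naive $\mathfrak{gl}_n$ Littlewood--Richardson product would produce partitions with more than $\lfloor n/2\rfloor$ rows, and correctly tracking the $+/-$ spin labels under the diagram involution $\tau$ of Remark~\ref{involution-tau}; once those are handled, the weight inequality $(\nu,\omega_1)\le1$ cuts down each sum to the advertised answer essentially mechanically.
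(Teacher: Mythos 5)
Your proposal follows essentially the same route as the paper: the paper's proof simply cites the known tensor product decompositions (from \cite{Vin}, Table 5, rather than \cite{FH}/Littlewood--Richardson) and then applies ${}^{sh}$, which, exactly as you say, amounts to discarding the constituents $L(\nu)$ with $\nu(h)=(\nu,\omega_1)\geq\frac32$. Your sketch is correct in substance (the only loose phrase is the justification of the $\Gamma^{\pm}\otimes\Lambda^{\pm}V$ case, where the dichotomy in $m$ comes from $(\Gamma^{\pm})^{*}$ and the decomposition of $(\Gamma^{\pm})^{\otimes2}$, not from the value of $(\nu,\omega_1)$), so there is nothing essential to add.
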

\begin{proof} The formulas for tensor products are given in \cite{Vin}, table\,5, applying $\ ^{sh}$ is straightforward. \end{proof}

\section {Admissible quivers}

We call the quiver $Q(\g)$ {\it admissible} if the associative algebra $\kk Q'(\g)/rad^2$ is tame. That happens exactly when the 
double quiver of $Q'(\g)$
is tame, see Theorem\,\ref{Gabriel}. Let $\J$ be a unital Jordan algebra and $\g=Lie(\J)$.
Lemma\,\ref{idempotent} and Corollary\,\ref{Q-Q'-relation} imply that if $A(J)$ is tame, then $Q(\g)$ is admissible. 
Therefore the first step towards classification of 
tame $A(J)$ is to classify admissible $Q(\g)$.

For the rest of this paper $J$ will be a unital Jordan algebra such that $J_{ss}$ is a direct sum of 
Jordan algebras $J(E,q)$, where $q$ is non-degenerate and $\dim E\geq 4$, $\g$ is the Lie algebra obtained from $J$ by the
Tits-Kantor-Koecher construction.

In this section we classify indecomposable Lie algebras $\g$ with admissible quivers $Q(\g)$ such that $\g_{ss}$ is a 
direct sum of
$\so_n$ with $n\geq 7$. If $\g_{ss}=\so_{n_1}\oplus\so_{n_2}$, then $V$ and $W$ denote the standard representations 
of $\so_{n_1}$ and $\so_{n_2}$ respectively.

\begin{thm}\label{main-quiver}
Let $\g=Lie(\J)$, where  $\J$ is a unital indecomposable Jordan algebra, such that $\J_{ss}$ is a direct sum 
of Jordan algebras of bilinear form over vector space of dimension greater or equal than $4$ and $\rr\neq 0$.
If $Q(\g)$ is admissible, then $Q(\g)$ is one of the following quivers:

\begin{equation}\label{quiver-standard-odd} {\bf Q^{2m+1}_1}: \quad \xymatrix{tr \ar@{-->}@/^0.4pc/[r]^{\gamma_0}&
V\ar@/^0.4pc/[r]^{\gamma_1}\ar@{-->}@/^0.4pc/[l]^{\delta_0} & \Lambda^2 V \ar@/^0.4pc/[r]
^{\gamma_2} \ar@/^0.4pc/[l]^{\delta_1}&\dots 
\ar@/^0.4pc/[l]^{\delta_2}\ar@/^0.4pc/[r] & \Lambda^{m-1} 
V\ar@/^0.4pc/[r]^{\gamma_{m-1}}\ar@/^0.4pc/[l]& 
\Lambda^m V\ar@/^0.4pc/[l]^{\delta_{m-1}}\ar@{->}^{\gamma_{m}}@(ur,dr)}
\end{equation}

\begin{equation}\label{quiver-standard-even} 
\xymatrix{ &&&&&& \Lambda^+\ar@/^0.4pc/[dl]^{\delta^+}\\
{\bf Q^{2m}_1}:&tr\ar@{-->}@/^0.4pc/[r]^{\gamma_0}  & V\ar@/^0.4pc/[r]^{\gamma_1} \ar@{-->}@/^0.4pc/[l]^{\delta_0}& 
\Lambda^2 V\ar@/^0.4pc/[r]\ar@/^0.4pc/[l]^{\delta_1} & \dots \ar@/^0.4pc/[r]^{\gamma_{m-2}\ }\ar@/^0.4pc/[l] &
\Lambda^{m-1} V\ar@/^0.4pc/[l]^{\delta_{m-2}\ }\ar@/^0.4pc/[ur]^{\gamma^+}\ar@/^0.4pc/[dr]^{\gamma^-}&\\
&&&&&&\Lambda^-\ar@/^0.4pc/[ul]^{\delta^-}}
\end{equation}

\begin{equation}\label{quiver-lambda-plus}  {\bf Q_2}: \qquad \xymatrix {V\ar@/^0.4pc/[r]^{\alpha_1} & \Lambda^3 V \ar@/^0.4pc/[l]^{\beta_1} 
\ar@{<-}_{\gamma_1}@(dr,ur)&  &
\Lambda^2 V\ar@/^0.4pc/[r]^{\alpha_2}\ar@{<-}^{\gamma_2}@(dl,ul) & \Lambda^+ \ar@{->}^{\gamma_3}@(ul,ur)\ar@/^0.4pc/[l]^{\beta_2}\ar@{-->}@/^0.4pc/[r] 
& tr\ar@{-->}@/^0.4pc/[l]  & \Lambda^-}
\end{equation}

\begin{equation}\label{so_8-so_8}  \xymatrix{& \Lambda^2 V\ar@/^0.4pc/[dr]^{\alpha_1}
& tr\ar@{-->}@/^0.3pc/[d] & {\Lambda^2 W}\ar@/^0.4pc/[dl]^{\alpha_3}\\
{\bf Q_3} &  & {\Gamma^+_1\boxtimes\Gamma^+_2}\ar@{-->}@/^0.3pc/[u]
\ar@/^0.4pc/[ur]^{\beta_3}\ar@/^0.4pc/[ul]^{\beta_1}\ar@/^0.4pc/[dr]^{\beta_4}\ar@/^0.4pc/[dl]^{\beta_2} & \\
 &{\Lambda^+ V}\ar@/^0.4pc/[ur]^{\alpha_2} & & {\Lambda^+ W}\ar@/^0.4pc/[ul]^{\alpha_4}} 
\qquad \quad \xymatrix{\Lambda^3 V\ar@/^0.4pc/[d]^{\gamma_1}\\ 
{\Gamma^-_1\boxtimes\Gamma^+_2}\ar@/^0.4pc/[d]^{\delta_2} \ar@/^0.4pc/[u]^{\delta_1}\\ V\ar@/^0.4pc/[u]^{\gamma_2}} 
\qquad \quad \xymatrix{\Lambda^3 W\ar@/^0.4pc/[d]^{\gamma_3}\\ 
{\Gamma^+_1\boxtimes\Gamma^-_2}\ar@/^0.4pc/[d]^{\delta_4} \ar@/^0.4pc/[u]^{\delta_3}\\ W\ar@/^0.4pc/[u]^{\gamma_4}} \qquad \xymatrix{\Lambda^- V\\ 
\Lambda^-W\\ \Gamma_1^-\boxtimes\Gamma_2^-}
\end{equation}

\begin{equation}\label{so_8-so_10} 
\xymatrix{&& tr \ar@{-->}[dddl] &&\\
{\bf Q_4}&& W \ar[ddr]_{\delta_1} &&\\
&& \Lambda^3 W \ar[dr]_{\delta_2} &&\\
\Lambda^- W & {\Gamma_1^+\boxtimes\Gamma_2^-}\ar[l]^{\gamma_3}\ar[uur]_{\gamma_1}\ar[ur]_{\gamma_2} & \Lambda^2 W \ar[l]_{\quad\beta_1} 
& {\Gamma_1^+\boxtimes\Gamma_2^+}\ar@{-->}[uuul] \ar[dl]_{\alpha_2}\ar[ddl]_{\qquad\alpha_3}\ar[dddl]^{\alpha_4\quad} \ar[l]_{\alpha_1} & \Lambda^+ W \ar[l]_{\quad\delta_3}\\
&& \Lambda^4 W \ar[ul]_{\beta_2} &&\\
&& \Lambda^2 V \ar[uul]_{\beta_3} &&\\
&& \Lambda^+ V \ar[uuul]^{\beta_4} &&\\} \quad \xymatrix{ & &\\ &{\Gamma_1^-\boxtimes \Gamma_2^-}\ar[dr]^{\rho_1}  \ar[dl]_{\rho_2} &\\
V\ar[dr]_{\tau_1}& &{\Lambda^3 V}\ar[dl]^{\tau_2}\\ &\Gamma_1^-\boxtimes\Gamma_2^+&  & &\\ \\  &\Lambda^- V &}
\end{equation}

\begin{equation}\label{so_10-so_10} 
\xymatrix{&& \Lambda^3 V \ar[dddl]_{\beta_2} &&\\
{\bf Q_5}&& W \ar[ddr]_{\delta_1} &&\\
\Lambda^+ V\ar[dr]^{\beta_3}&& \Lambda^3 W \ar[dr]_{\delta_2} && \Lambda^- V\\
\Lambda^- W  & {\Gamma_1^+\boxtimes\Gamma_2^-}\ar[l]_{\alpha_3} \ar[ur]_{\alpha_2} \ar[uur]_{\alpha_1}  & V \ar[l]_{\quad\beta_1} 
& {\Gamma_1^-\boxtimes\Gamma_2^+}\ar[ur]^{\gamma_3}\ar[uuul]_{\gamma_2}\ar[l]_{\gamma_1\quad}   & \Lambda^+ W\ar[l]_{\quad\delta_3}\\}
\xymatrix{ & \Lambda^2 V\ar[dl]_{\rho_1} &\\{\Gamma_1^-\boxtimes \Gamma_2^-} &\Lambda^4 V\ar[l]_{\quad\rho_2} & \Gamma_1^+\boxtimes\Gamma_2^+\ar[dl]_{\tau_3} 
\ar[l]_{\tau_2} \ar[ul]_{\tau_1} 
\ar[ddl]_{\tau_4} \ar@{-->}[dddl]\\
& \Lambda^2 W\ar[ul]_{\rho_3} & \\ & \Lambda^4 W\ar[uul]_{\rho_4}  &\\ &tr\ar@{-->}[uuul]&}
\end{equation}

Indecomposable Lie algebras $\g=\gb=\g_{ss}\oplus R$ with admissible $Q(\g)$  are  listed below. We will further refer to this list as {\bf List A} :
\begin{enumerate}
\item ${\g}=\so_{2m+1}\niplus V$, $m\geq 3$, $Q(\g)={\bf Q_{2m+1}^1}$;
\item ${\g}=\so_{2m}\niplus V$, $m\geq 4$, $Q(\g)={\bf Q_{2m}^1}$;
\item ${\g}=\so_8\niplus\Lambda^{\pm}$; $Q(\so_8+\Lambda^+)={\bf Q_2}$, while $Q(\so_8\niplus\Lambda^-)$ is obtained by 
application of $\tau$ to ${\bf Q_2}$, see Remark\,\ref{involution-tau}.
\item ${\g}=(\so_8\oplus\so_8)\niplus\Gamma_1^{\pm}\boxtimes\Gamma_2^{\pm}$; $Q((\so_8\oplus\so_8)\niplus\Gamma_1^+\boxtimes\Gamma_2^+)={\bf Q_3}$, 
the quivers corresponding to $\Gamma_1^+\boxtimes\Gamma_2^-$,
($\Gamma_1^{-}\boxtimes\Gamma_2^{+}$ and $\Gamma_1^{-}\boxtimes\Gamma_2^{-}$) are obtained from ${\bf Q_3}$ applying $1\times\tau$ (respectively
$\tau\times 1$ and $\tau\times\tau$). 
\item ${\g}=(\so_8\oplus\so_{10})\niplus\Gamma_1^{\pm}\boxtimes\Gamma_2^{\pm}$, 
$Q\left((\so_8\oplus\so_{10})\niplus\Gamma_1^{+}\boxtimes\Gamma_2^{+}\right)={\bf Q_4}$, while other 
quivers are obtained by application of $1\times\tau$, $\tau\times 1$ and $\tau\times\tau$ to ${\bf Q_4}$.
\item ${\g}=(\so_{10}\oplus\so_{10})\niplus\Gamma_1^{\pm}\boxtimes\Gamma_2^{\pm}$, $Q\left((\so_{10}\oplus\so_{10})\niplus\Gamma_1^+\boxtimes\Gamma_2^+\right)={\bf Q_5}$, 
while other quivers are obtained by application of $1\times\tau$, $\tau\times 1$ and $\tau\times\tau$ to ${\bf Q_5}$.
\end{enumerate}
\end{thm}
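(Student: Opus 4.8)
The plan is to reduce first to the abelianized radical and then run a finite case analysis. By Corollary~\ref{hat=bar} we have $Q(\g)=Q(\gb)$, so I may replace $\g$ by $\gb=\g_{ss}\niplus R$ with $R=\rr/[\rr,\rr]$ abelian; the hypothesis $\rr\neq0$ gives $R\neq0$, and indecomposability of $\J$ (equivalently of $\g$) means that $R$, as a $\g_{ss}$-module, is not supported on a proper subsum of the simple ideals of $\g_{ss}$. Since $\J_{ss}$ is a sum of Jordan algebras $\J(E,q)$ with $\dim E\geq4$, the Levi part $\g_{ss}$ is a direct sum of orthogonal Lie algebras $\so_{n_j}$ with $n_j=\dim E_j+3\geq7$, and the short grading element is $h=\sum_j h_j$.

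The next step is to record that $Q(\g)$ is a function of the pair $(\g_{ss},R)$ alone. Its vertices are the simple objects of $\ggm$; since these are simple over $\g_{ss}$, they are exterior tensor products $\boxtimes_j L_j$ of the $\so_{n_j}$-modules listed in Lemma~\ref{unital}, subject to the requirement that the $h$-grading be integral of length $\leq3$. Inspecting the $h$-weights this forces either all factors trivial (the module $tr$), or exactly one length-$3$ local factor $\Lambda^iV_j$ or $\Lambda^\pm V_j$ with the rest trivial, or exactly two length-$2$ spinor factors $\Gamma_j,\Gamma^\pm_j$ with the rest trivial. By Lemma~\ref{quiver_Lie} the number of arrows from $L'$ to $L$ equals the multiplicity of $L$ in $L'\otimes R$, which I compute with the ${}^{sh}$-tensor-product formulas of Lemma~\ref{lemma-tensor}; the constituents of $R$ itself obey the same integral length-$3$ restriction, coming from the short grading of $\g$. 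Finally, ``$Q(\g)$ admissible'' is, by definition and Theorem~\ref{Gabriel}, a tameness condition on the double quiver of $Q'(\g)=Q(\g)\setminus\{tr\}$, which pins its underlying graph down to a disjoint union of Dynkin and Euclidean diagrams.

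With this in place, the main body is a case analysis on the number $t$ of simple ideals of $\g_{ss}$ and on the irreducible constituents of $R$. For $t=1$ I would show that any constituent of $R$ other than the standard module $V$ (for arbitrary $n\geq7$), or other than $\Lambda^\pm$ in the single special case $n=8$, and likewise any occurrence of two constituents, produces via Lemma~\ref{lemma-tensor} a vertex of large valency carrying a loop or a second incoming arrow, hence a subquiver whose double is wild; the survivors are $R=V$, giving ${\bf Q^{2m+1}_1}$ and ${\bf Q^{2m}_1}$, and $R=\Lambda^\pm$ over $\so_8$, giving ${\bf Q_2}$ --- that is, items (1)--(3) of \textbf{List A}. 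For $t=2$, indecomposability forces every constituent of $R$ to be a genuine $L_1\boxtimes L_2$ with both factors nontrivial, the integral-length restriction forces both to be spinors, and then imposing tameness of the double of $Q'(\g)$ --- using that $(\Gamma^\pm)^{\otimes2}$ and $\Gamma^+\otimes\Gamma^-$ have $\lfloor m/2\rfloor$ exterior-power summands --- cuts the ranks down to $\{8,8\},\{8,10\},\{10,10\}$ and pins $R$ to a single $\Gamma_1^\pm\boxtimes\Gamma_2^\pm$, giving ${\bf Q_3},{\bf Q_4},{\bf Q_5}$ and items (4)--(6); the sign variants in each item come from applying the diagram involutions $\tau$ of Remark~\ref{involution-tau}. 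For $t\geq3$ I would check that no $R$ can simultaneously be indecomposable, compatible with the integral short grading, and admissible. For each surviving $\g$ one then verifies directly with Lemma~\ref{lemma-tensor} that the displayed quiver really is $Q(\g)$.

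The main obstacle will be the wildness arguments --- ruling out every $R$ not on the list. The most economical route is to exhibit, inside the double quiver of $Q'(\g)$, an explicit non-tame configuration: a vertex of degree $\geq5$, a degree-$4$ branch vertex sitting on arms that are too long, or a connected subquiver properly larger than a Euclidean diagram; any of these defeats admissibility by Theorem~\ref{Gabriel}. The delicate point is that the spinor-times-spinor decompositions of Lemma~\ref{lemma-tensor} make valencies in $Q(\g)$ grow with the rank $m$, and one wants to organize this growth uniformly so as to force the orthogonal ranks down to $7,8,10$ in one stroke rather than rank by rank; the special role of $\so_8$ (where $\Lambda^\pm V$ has the same dimension as $V$ by triality) is what allows the extra family ${\bf Q_2}$ and must be handled separately.
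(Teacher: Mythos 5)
Your plan is essentially the paper's own proof: reduce to $\gb=\g_{ss}\niplus R$ via Corollary~\ref{hat=bar}, build $Q(\g)$ from Lemma~\ref{quiver_Lie} together with the tensor-product formulas of Lemma~\ref{lemma-tensor}, keep $R=V$ (and $\Lambda^{\pm}$ over $\so_8$) in the one-ideal case and the spinor-times-spinor radicals over $\so_8,\so_{10}$ in the two-ideal case, ruling everything else out by exhibiting subquivers of $Q'(\g)$ whose doubles are wild (Theorem~\ref{Gabriel} and Lemma~\ref{lemma-pro-subquiver}), and finally discarding non-simple $R$ by the extra arrow it forces at $\Lambda^2V$ or $\Lambda^2W$. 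The remaining work you flag (the explicit wild configurations for each excluded $R$) is exactly the case-by-case computation the paper carries out, so the approach is the same.
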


\begin{proof} 
Suppose $J$ satisfies the conditions of the theorem, then $\g=Lie(\J)=\g_{ss}\niplus\rr$, where $\g_{ss}$ is a direct sum of orthogonal 
algebras $\so_n$, $n\geq 7$. 
Since $Q(\g)=Q(\gb)$ we may assume that $\g=\gb$ and hence $\rr=R$.
To construct $Q(\g)$  we use Lemma\,\ref{quiver_Lie}.
We start with classifying admissible quivers $Q(\g)$ in the case when $R$ is an irreducible faithful $\g_{ss}$-module.

Consider first the case $\g=\so_{2m+1}\niplus R$, $m\geq 3$. There are $m+1$ simple modules in the category $\gm$, namely $tr$ and 
$\Lambda^r V$, $r=1,\dots,m$, thus $Q(\g)$ has $m+1$ vertices. Let $R=V$ be the standard representation of $\so_{2m+1}$. 
Tensor product formulas in Lemma \ref{lemma-tensor}\eqref{tensor-2m+1} imply that the quiver of $\so_{2m+1}\niplus V$ is ${\bf Q_1^{2m+1}}$.
It is admissible by Theorem \ref{Gabriel}\eqref{double-quiver}.

Next we claim that if $R=\Lambda^r V$, $r\geq 2$ then the quiver $Q(\g)$ is not admissible. Indeed,
Lemma\,\ref{lemma-tensor}\eqref{tensor-2m+1} implies 
that $\Lambda^{m} V$ and $\Lambda^{m-1}V$ are simple constituents of $\Lambda^{m} V\otimes\Lambda^{r} V$,
$\Lambda^{m} V,\Lambda^{m-1}V, \Lambda^{m-2} V$ are simple  constituents of $\Lambda^{m-1} V\otimes\Lambda^{r} V$,
while  $\Lambda^{m-1} V$ and $\Lambda^{m-2}V$ are simple  constituents of
$\Lambda^{m-2} V\otimes\Lambda^{r} V$.  Therefore $Q(\g)$ 
has the following subquiver

\begin{equation}
\xymatrix{\Lambda^{m-2} V\ar@/^0.4pc/[r] & \Lambda^{m-1} V \ar@/^0.4pc/[r] 
\ar@/^0.4pc/[l]\ar@{->}@(ul,ur)& 
\Lambda^m V\ar@/^0.4pc/[l] \ar@/^0.4pc/[l]\ar@{->}@(ur,dr)}
\end{equation}

The corresponding double quiver is wild by Theorem\,\ref{Gabriel}\eqref{double-quiver}, hence the double quiver of $Q'(\g)$ is wild 
by Lemma \ref{lemma-pro-subquiver}. 
Therefore  $Q(\g)$ is not admissible.

Next, let us consider the case $\g=\so_{2m}\niplus R$, with $m\geq 4$. All up to isomorphism simple objects of $\ggm$ are
$tr$, $\Lambda^r V$, $r=1,\dots,m-1$ and $\Lambda^+ V$, $\Lambda^- V$. By Lemma\,\ref{lemma-tensor} 
$Q(\so_{2m}\niplus V)={\bf Q_1^{2m}}$. It is again admissible by Theorem \ref{Gabriel}\eqref{double-quiver}.

Let $R=\Lambda^r V$, $r=2,\dots,m-1$. We will show that $Q(\g)$ is not admissible. Lemma\,\ref{lemma-tensor}\eqref{tensor-2m+1} 
implies that if $m$ is even, then $\Lambda^{m-1}V\otimes\Lambda^r V$ contains $\Lambda^{m-1} V$ with multiplicity $2$ and 
$\Lambda^{m-3} V$ with multiplicity $1$. Hence $Q(\g)$ has the subquiver

\vspace{.5cm}
\begin{equation}
\xymatrix{\Lambda^{m-3} V & \Lambda^{m-1} V \ar[l]\ar@{->}@(ul,ur)\ar@{->}@(dl,dr)}
\end{equation}
\vspace{.5cm}

Thus, $Q(\g)$ is not admissible by Theorem \ref{Gabriel} and  Lemma \ref{lemma-pro-subquiver}.

Similarly, if $m$ is odd, $\Lambda^{m-1}V\otimes\Lambda^r V$ contains $\Lambda^{m-2} V$ with multiplicity $2$ and 
$\Lambda^+V$ with multiplicity $1$. 
Therefore $Q(\g)$ has the wild subquiver 
\begin{equation}
\xymatrix{\Lambda^{m-2} V & \Lambda^{m-1} V \ar@/^0.2pc/[l] \ar@/_0.2pc/[l] \ar[r] &
\Lambda^{+} V}
\end{equation} 
Thus,  $Q(\g)$ is not admissible by Lemma \ref{lemma-pro-subquiver}.

Let $R=\Lambda^{\pm} V$. For $\g=\so_8\niplus\Lambda^{+}V$  the Ext quiver of $\gm$ is ${\bf Q_2}$, which is admissible. 
The same applies to $Q(\g)$ for $\g=\so_8\niplus\Lambda^-V$, 
since the involution $\tau$ interchanges the vertices $\Lambda^+V$ and $\Lambda^-V$ of  ${\bf Q_2}$. 

By Lemma\,\ref{lemma-tensor}\eqref{tensor-2m} we obtain that $Q'(\g)$, where $\g=\so_{10}\niplus\Lambda^+V$, has the subquiver

\bigskip
\begin{equation}
\xymatrix{ &&&& \Lambda^+V\\
\so_{10}+\Lambda^+:&V\ar@/^1.4pc/[rrr] & \Lambda^2 V \ar@/^0.4pc/[urr] \ar@/^0.4pc/[r]& \Lambda^3 V \ar@/^0.4pc/[r] 
\ar@/^0.4pc/[l]&
\Lambda^4 V\ar@/^1.4pc/[lll]\ar@/^0.4pc/[l]\ar[u]&\\
&&&&\Lambda^-V\ar@/^0.4pc/[ull]\ar[u]}
\end{equation}
\bigskip

\noindent By Theorem\, \ref{Gabriel}\eqref{double-quiver} the double quiver of the above quiver is wild. Hence $Q(\g)$ is not admissible. 
The same argument works for $R=\Lambda^-V$.

For $m\geq 6$ one of the following subquivers  

\begin{equation}
\xymatrix{\Lambda^{m-4} V\ar@/^0.4pc/[r] & \Lambda^{m-2} V \ar@/^0.4pc/[r] 
\ar@/^0.4pc/[l]\ar@{->}@(ul,ur)& 
\Lambda^+V\ar@/^0.4pc/[l] \ar@/^0.4pc/[l]\ar@{->}@(ur,dr)} \qquad \quad {\rm if\ } m \ {\rm is\ even} 
\end{equation}

\begin{equation}
\xymatrix{ \Lambda^{m-6} V& \Lambda^+& \Lambda^{m-3} V\ar[l]&\\
 \Lambda^{m-4} V&\Lambda^{m-1\ar[l]\ar[r]\ar[u]\ar[ul]} V& \Lambda^{m-2} V & {\rm if\ } m \ {\rm is\ odd}}
\end{equation}

\noindent is a subquiver of $Q'(\so_{2m}+\Lambda^+ V)$. Both have wild double quivers and hence are not admissible. 

Next we move to the case when
$\g_{ss}=\so_{m_1}\oplus\so_{m_2}$, $m_1,\,m_2\geq 7$, and $R$ is an exterior tensor product of spinor modules 
$\Gamma$ or $\Gamma^{\pm}$, depending on the parity of $m_1$ and $m_2$, see Section 5 for details.

First, assume that $m_1=2m+1$ and $m_2=2n+1$ are odd, $m,n\geq 3$. Denote by $\Gamma_i$, $i=1,2$ the spinor representations of $\so_{m_i}$. 
Let $\g=(\so_{2m+1}\oplus\so_{2n+1})\niplus\Gamma_1\boxtimes\Gamma_2$. 
Lemma\, \ref{lemma-tensor} (5) implies 
$$\left((\Gamma_1\boxtimes\Gamma_2)\otimes (\Gamma_1\boxtimes\Gamma_2)\right)^{sh}=\bigoplus_{i=0}^m\Lambda^{i}V\oplus\bigoplus_{i=0}^n\Lambda^{i}W.$$
Therefore $Q'(\g)$ has a wild subquiver
\begin{equation}
\xymatrix{\Lambda^2 V&\Lambda^3 V&\Lambda^2 W\\
V&\Gamma_1\boxtimes\Gamma_2\ar[l]\ar[r]\ar[ur]\ar[u]\ar[ul]&W}
\end{equation}
and hence $Q(\g)$ is not admissible. 

Next let $m_1=2m+1,\,m_2=2n$, $n\geq 3$, $m\geq 4$ and $\g=(\so_{2m+1}\oplus\so_{2n})\niplus\Gamma_1\boxtimes\Gamma_2^+$. 
By Lemma\,\ref{lemma-tensor} (4) and (5) we easily obtain that 
$\left((\Gamma_1\boxtimes\Gamma_2^{+})\otimes(\Gamma_1\boxtimes(\Gamma_2^+)^{*})\right)^{sh}$ 
has at least five simple constituents: 
$$V,\quad\Lambda^2 V,\quad\Lambda^{3} V,\quad\Lambda^2 W,\quad\Lambda^4 W \quad(\Lambda^+W\quad\text{if}\quad m=4).$$ 
Therefore  the vertex $\Gamma_1\boxtimes(\Gamma_2^+)^{*}$ has at least five outgoing arrows in $Q'(\g)$. 
As in the previous case $Q(\g)$ is not admissible. 
The case of the algebra $R=\Gamma_1\boxtimes\Gamma_2^-$ can be reduced to the previous one by applying $1\times\tau$.

Finally, we have to deal with the case when both $m_1=2m$ and $m_2=2n$ are even, $n\geq m\geq 4$. 
Using Lemma\,\ref{lemma-tensor}\,\eqref{tensor_odd_even},\eqref{tensor_gamma} we obtain that quivers $Q(\g_i)$, $i=3,4,5$ of algebras 
$$\g_3=(\so_{8}\oplus\so_8)\niplus\Gamma_1^+\boxtimes\Gamma_2^+, 
\quad \g_4=(\so_{8}\oplus\so_{10})\niplus\Gamma_1^+\boxtimes\Gamma_2^+ ,
\quad \g_5=(\so_{10}\oplus\so_{10})
\niplus\Gamma_1^+\boxtimes\Gamma_2^+
$$ 
are ${\bf Q_3}$, ${\bf Q_4}$ and 
${\bf Q_5}$ respectively. By direct inspection they are admissible. Furthermore, the same is true
for $R=\Gamma_1^{\pm}\boxtimes\Gamma_2^{\pm}$, by application of
$\tau\times 1$, 
$1\times\tau$ or $\tau\times \tau$.

We claim that if $m\geq 4$, $n\geq 6$ and $R=\Gamma_1^+\boxtimes\Gamma_2^+$, then $Q(\g)$ is not admissible. 
Indeed, we use the same argument as before.  Lemma\,\ref{lemma-tensor} (4) implies that
$((\Gamma_1^+\boxtimes\Gamma_2^+)\otimes((\Gamma_1^+)^*\boxtimes(\Gamma_2^+)^*))^{sh}$ has at least five simple simple constituents: 
$$\Lambda^2 V,\quad \Lambda^4 V\quad(\Lambda^+ V\quad\text{if}\quad m=4),\quad \Lambda^2 W,\quad \Lambda^4 W, \quad\Lambda^6 W \quad(\Lambda^+ W\quad\text{if}\quad n=6).$$ 
There are at least five outgoing arrows in $Q'(\g)$ from the vertex $(\Gamma_1^+)^*\boxtimes(\Gamma_2^+)^*$. Theorem \ref{Gabriel}\eqref{Dynkin}
implies that $Q(\g)$ is not admissible.

We have shown that if $R$ is a faithful irreducible module, $\g$ is indecomposable, 
then $Q(\g_{ss}\niplus R)$ is admissible if and only 
if  $\g$ is one of the algebras from {\bf List A}.
It remains to prove that $Q(\g)$ is not admissible if $R$ is not simple. It follows from the observation that adding an irreducible component to $R$ implies 
adding at least one outgoing arrow to the vertex corresponding to $\Lambda^2V$ or $\Lambda^2W$. 
We leave it to the reader to check that 
adding such an arrow to one of the quivers from the list makes the corresponding double quiver wild.  
\end{proof}
\begin{rem} If $\g$ is from {\bf List A}, then $\gh=\g$ since $(\Lambda^2 R)^{\g_{ss}}=0$.
\end{rem}

\section{Relations in the case of abelian radical}

Let us assume that $\g=Lie(\J)$ is a Lie algebra from {\bf List A}.
The goal of this section is to show that for any such $\g$ the algebra $A(\J)$ is tame.
Recall that by Corollary \ref{quiverab} $A(\J)$ is a quotient of the path algebra $\kk Q'(\g)$ by some ideal $I$. It turns out
that $I$ is generated by quadratic relations and to describe them it is sufficient to calculate $P^2(L)$ for simple $L$ in $\ggm$.
We will see also that $rad^3(A(J))=0$.
 
\subsection{The case of simple $\g_{ss}$} In this subsection we assume that $\g_{ss}$ is simple, i.e. $\g$ is a Lie algebra from (1),(2) or (3) of
{\bf List A}.

\begin{prop}\label{standard1} Let $\g=\so_n\niplus V$. Then the first three layers of the radical filtration of indecomposable projectives in $\ggm$ are as follows
\begin{equation}\label{projective-standard}
\begin{array}{c}
\Lambda^r V\\
\overline{\Lambda^{{r-1}^{\ }}V\oplus\Lambda^{r+1}V}\\
\overline{\quad \Lambda^r V\quad }\\
{\text if \ } n=2m+1,\ 0\leq r\leq m\\
{\text if \ } n=2m,\ 0\leq r\leq m-2\\
\end{array} \qquad
\begin{array}{c}
\Lambda^{m-1} V\\
\overline{\Lambda^{m-2} V\oplus\Lambda^+ V\oplus\Lambda^{-}V }\\
\overline{\quad \Lambda^{m-1} V\quad }\\
{\text if \ } n=2m\\
\\
\end{array} \qquad \quad
\begin{array}{c}
\Lambda^{\pm} V\\
\overline{\quad \Lambda^{m-1} V\quad}\\
\overline{\quad \Lambda^{\mp} V \quad}\\
{\text if \ } n=2m\\
\\
\end{array} 
\end{equation}
\end{prop}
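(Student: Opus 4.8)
The plan is to exploit that $\g$ belongs to {\bf List A}, so by the remark following Theorem~\ref{main-quiver} one has $\gh=\g$, and moreover $\rr=R$ is abelian; since $[\rr,\rr]=0$ the maps $\delta$ and $\lambda$ of Lemma~\ref{technical2} vanish, and that lemma reduces the whole statement to two tensor‑product computations. Namely $P(L)$ has top $L$, first radical layer
$$P^1(L)=(R\otimes L)^{sh}=(V\otimes L)^{sh},$$
and second layer
$$P^2(L)=\operatorname{Coker}\Bigl(\mu:\ \Lambda^2V\otimes L\ \xrightarrow{alt\otimes 1}\ V\otimes V\otimes L\ \xrightarrow{\ \pi\ }\ \bigl(V\otimes(V\otimes L)^{sh}\bigr)^{sh}\Bigr).$$
I would run this case by case over the simple modules of $\ggm$ listed in Lemma~\ref{unital}: $L=tr$, $L=\Lambda^rV$, and (for $n=2m$) $L=\Lambda^{\pm}V$.

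The first layer is immediate from Lemma~\ref{lemma-tensor}: part~\eqref{tensor-2m+1} with smaller exponent $1$ gives $(V\otimes\Lambda^rV)^{sh}=\Lambda^{r-1}V\oplus\Lambda^{r+1}V$, where $\Lambda^0V:=tr$; for $n=2m+1$ the entry $\Lambda^{m+1}V$ is to be read as $\Lambda^mV$ by Hodge duality, which accounts for the loop at $\Lambda^mV$ in ${\bf Q^{2m+1}_1}$; for $n=2m$ one has $\Lambda^mV=\Lambda^+V\oplus\Lambda^-V$, whence $P^1(\Lambda^{m-1}V)=\Lambda^{m-2}V\oplus\Lambda^+V\oplus\Lambda^-V$, while part~\eqref{tensor-2m} with $r=1$ gives $P^1(\Lambda^{\pm}V)=\Lambda^{m-1}V$, and $P^1(tr)=V$. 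This reproduces the middle rows of all three displays and is compatible with ${\bf Q^{2m+1}_1}$ and ${\bf Q^{2m}_1}$.

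For the second layer, applying Lemma~\ref{lemma-tensor} once more describes the target of $\mu$: for $L=\Lambda^rV$ in the generic range it is $\bigl(V\otimes P^1(L)\bigr)^{sh}=\Lambda^{r-2}V\oplus(\Lambda^rV)^{\oplus 2}\oplus\Lambda^{r+2}V$, with the obvious shorter decompositions near the ends of the chain and for $L\in\{tr,\Lambda^{m-1}V,\Lambda^{\pm}V\}$. The structural point I would use is that a tensor product of simple objects of $\ggm$ is a semisimple $\g$‑module on which $\rr$ acts by zero (a nilpotent ideal acts by $0$ on a simple module, and this is inherited by tensor products), so ${}^{sh}$ deletes exactly the simple summands with an $h$‑eigenvalue $\pm2$; consequently the multiplicity of any $\Lambda^kV$ in $\Lambda^2V\otimes L$ equals its multiplicity in $(\Lambda^2V\otimes L)^{sh}$, which Lemma~\ref{lemma-tensor} again computes. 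For $L=\Lambda^rV$ each of $\Lambda^{r-2}V,\Lambda^rV,\Lambda^{r+2}V$ then occurs exactly once in the source (and near the ends so do $\Lambda^{\pm}V$); for $L=\Lambda^{\pm}V$ only $\Lambda^{m-2}V$ and $\Lambda^{\pm}V$ occur — in particular $\Lambda^{\mp}V$ appears in the target but not in the source, which by itself forces $\Lambda^{\mp}V$ into $\operatorname{Coker}\mu$. Since $\operatorname{Im}\mu$ is a $\g_{ss}$‑quotient of the source, each relevant constituent occurs in $\operatorname{Im}\mu$ at most once; comparing with the target multiplicities ($1,2,1$ in the generic case) shows that $\operatorname{Coker}\mu$ contains $\Lambda^rV$ at least once and the outer constituents $\Lambda^{r\pm2}V$ at most once.

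It remains to prove that $\mu$ is nonzero on each $\g_{ss}$‑summand of $\Lambda^2V\otimes L$ of a type occurring in the target — for $L=\Lambda^rV$ the summands of types $\Lambda^{r-2}V,\Lambda^rV,\Lambda^{r+2}V$ — and this is the only step that is not pure bookkeeping. I would do it by choosing a $\g_{ss}$‑highest weight vector of the summand (for the type $\Lambda^{r+2}V$ one may take, in the notation of Section~5 with $f_1,f_2,\dots$ a basis of $F$ adapted to a Borel of the orthogonal subalgebra on $F$, the vector $\xi\wedge f_1\otimes(f_2\wedge\cdots\wedge f_{r+1})$ expanded into $\Lambda^2V\otimes\Lambda^rV$), applying $alt\otimes1$, and tracing through the two projections ${}^{sh}$: the leading term survives and is, up to a nonzero scalar, a highest weight vector of the corresponding summand of the target, so by Schur's lemma that summand lies in $\operatorname{Im}\mu$; the remaining types are handled identically. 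Granting these nonvanishings, $\operatorname{Im}\mu\supseteq\Lambda^{r-2}V\oplus\Lambda^rV\oplus\Lambda^{r+2}V$ (one copy of each), hence $P^2(\Lambda^rV)=\operatorname{Coker}\mu=\Lambda^rV$; the degenerate case $L=tr$ is even simpler, since then $\mu$ is essentially the inclusion of the $\Lambda^2V$‑summand into $(V\otimes V)^{sh}=tr\oplus\Lambda^2V$, giving $P^2(tr)=tr$, and the boundary values $P^2(\Lambda^{m-1}V)=\Lambda^{m-1}V$, $P^2(\Lambda^{\pm}V)=\Lambda^{\mp}V$ follow by the same counting. The main obstacle is precisely this last nonvanishing check — verifying that the ``straight'' two‑step contributions $\Lambda^r\to\Lambda^{r\pm1}\to\Lambda^{r\pm2}$ vanish in $P^2$ while the ``return'' contributions $\Lambda^r\to\Lambda^{r\pm1}\to\Lambda^r$ do not all vanish; everything else is a direct application of Lemmas~\ref{technical2} and~\ref{lemma-tensor}.
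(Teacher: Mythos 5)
Your skeleton is the one the paper uses: since $\rr=R$ is abelian, Lemma~\ref{technical2} applies with $\lambda=0$, the first radical layer is $(V\otimes L)^{sh}$ computed from Lemma~\ref{lemma-tensor}, and $P^2(L)=\operatorname{Coker}\mu$. For generic $r$, and for $L=tr$ and $L=\Lambda^{\pm}V$, your bookkeeping is sound: the relevant isotypes occur once in the source, so by Schur it suffices to check that the composite of $\mu$ with each coordinate projection of the target is nonzero, and that can be tested on a single vector. The difference from the paper is in how that check is done: the paper introduces the wedge and contraction operators $m_v,i_v$ with the relations $i_vm_w+m_wi_v=(v,w)$, etc., and writes $\mu(v,w,x)=(2m_vm_wx,\,\pm T_{v\wedge w}x,\,T_{v\wedge w}x,\,2i_vi_wx)$ in closed form, from which the image (hence the cokernel) is read off at once; your ``leading term survives'' tracing, once made precise, would essentially have to reconstruct this calculus, and note that for the return routes $\Lambda^r\to\Lambda^{r\pm1}\to\Lambda^r$ the surviving term is exactly the action map $T_{v\wedge w}$, so the leading-term heuristic is doing real work there.

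There is, however, a genuine gap at one boundary case, $n=2m$ and $L=\Lambda^{m-1}V$. Because $\Lambda^{m+1}V\simeq\Lambda^{m-1}V$ for $\so_{2m}$, the short part of the source is $(\Lambda^2V\otimes\Lambda^{m-1}V)^{sh}=\Lambda^{m-3}V\oplus\Lambda^{m-1}V\oplus\Lambda^{m+1}V$, so the isotype $\Lambda^{m-1}V$ occurs \emph{twice} in the source, while in the target $(V\otimes(\Lambda^{m-2}V\oplus\Lambda^{m}V))^{sh}$ it occurs three times. Thus your premises ``each relevant constituent occurs exactly once in the source'' and ``hence at most once in $\operatorname{Im}\mu$'' fail here, and nonvanishing of $\mu$ on each of the two source summands does not exclude that both map into the same copy inside the three-dimensional multiplicity space, which would leave $P^2(\Lambda^{m-1}V)$ of length two; so ``follows by the same counting'' is not justified. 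What is actually needed is that $\mu$ is injective on that two-dimensional multiplicity space, i.e.\ that the double-wedge component $2m_vm_w$ (landing in the $\Lambda^{m+1}V$-coordinate) and the action component $T_{v\wedge w}$ (landing diagonally in the two remaining $\Lambda^{m-1}V$-coordinates) have no common kernel there; this is immediate from the paper's explicit formula for $\mu$ but is not delivered by your multiplicity argument. (The analogous coincidence at the end of the odd chain, $r=m$ for $n=2m+1$, is harmless, since there the source multiplicities are still one.) Apart from this case, and from the fact that the nonvanishing computations are only sketched, your plan agrees with the paper's proof.
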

\begin{rem} Note that for an odd $m$ we have $P^1(\Lambda^m V)=\Lambda^{m-1} V\oplus\Lambda^{m} V$ due 
to isomorphism $\Lambda^m V\simeq\Lambda^{m+1}V$. We also assume $\Lambda^{-1}V=0$.
\end{rem}
\begin{proof}
For any $v\in V$ we introduce the following operators $m_{v},i_{v}\in{\rm End}(\Lambda^{\bullet} V)$:
\begin{equation}\label{definition-mv-iv}
\begin{array}{l}
m_v(x_1\wedge\dots\wedge x_r)=v\wedge x_1\wedge\dots\wedge x_r,\\
i_v(x_1\wedge\dots\wedge x_r)=\sum^{i=1}_r\langle v,x_i\rangle 
(-1)^{i-1} x_1\wedge\dots\wedge\widehat{x}_i\wedge\dots\wedge x_r,
\end{array}
\end{equation}
$x=x_1\wedge\dots\wedge x_r\in \Lambda^r V$. These operators satisfy the following well-known relations:
\begin{equation}\label{relations-standard-m-i}
\begin{array}{c}
i_v i_w+i_w i_v=0;\\
m_v m_w+ m_w m_v=0;\\
i_v m_w+m_w i_v=(v,w).
\end{array}
\end{equation}
Moreover, the action of algebra $\so_n\simeq\Lambda^2 V$ on $\Lambda^{\bullet} V$ can be written as 
\begin{equation}\label{adjoint-action-so-2m}
T_{v\wedge w}(x)=m_v i_w(x)-m_w i_v (x), \quad x\in\Lambda^{\bullet} V,\  v\wedge w\in\so_n.
\end{equation}

First assume that $n=2m+1$, then  any simple module $L$ in $\ggm$  is isomorphic to $\Lambda^rV$ for some $0\leq r\leq m$.

By Lemma\,\ref{lemma-tensor} $$P^1(L)=(L\otimes V)^{sh}\simeq \Lambda^{r-1}V\oplus \Lambda ^{r+1} V.$$ 
Note that $\g_{ss}$-invariant maps
$\Lambda^r V\otimes V\to \Lambda^{r+1} V$ and $\Lambda^r V\otimes V\to \Lambda^{r-1} V$  
are given by $x\otimes v\mapsto m_v(x)$ and $x\otimes v\mapsto i_v(x)$ respectively.
To describe $P^2(L)$ we use Lemma \ref{technical2} with $\lambda=0$.
Indeed,
$$(R\otimes (R\otimes L)^{sh})^{sh}=\Lambda^{r+2}V\oplus\Lambda^r V\oplus \Lambda^r V\oplus \Lambda^{r-2}V,$$
and (\ref{relations-standard-m-i}), (\ref{adjoint-action-so-2m}) imply
$$\mu(v,w,x)=(2m_vm_w(x),-T_{v\wedge w}(x),T_{v\wedge w}(x), 2i_vi_w(x)).$$
That implies $P^2(L)\simeq L$.

Now let $n=2m$. In this case the calculation of the radical filtration of $P(L)$ for a simple $L=\Lambda^rV$ for $r\leq m-1$ is the same as in the case of odd $n$.
It remains to consider the cases $L=\Lambda^{\pm}V$.  
Then we have $P^1(L)=(L\otimes V)^{sh}=\Lambda^{m-1}V$.
Recall that we have a decomposition $\Lambda^mV=\Lambda^+V\oplus\Lambda^-V$.
After suitable normalization 
\begin{equation}\label{lambda-plus-or-minus}
\Lambda^{\pm} V=\{x\in \Lambda^{m} V\,|\, i_v(x)=\pm\psi m_v(x),\ \text{for\ all\ } v\in V\},
\end{equation} 
where $\psi:\Lambda^{m+1} V\to \Lambda^{m-1} V$ is an isomorphism of simple $\g_{ss}$-modules.
Furthermore,
$$(R\otimes (R\otimes L)^{sh})^{sh}=\Lambda^{m-2}V\oplus\Lambda^{m-1} V,$$
$$\mu(v,w,x)=(2i_vi_w(x),T_{v\wedge w}(x)).$$
The relation (\ref{adjoint-action-so-2m}) imply $\operatorname{Im}\mu=\Lambda^{m-2}V\oplus L$. Therefore we have $P^2(\Lambda^{\pm}V)=\Lambda^{\mp}V$. 
\end{proof}

\begin{prop}\label{projectives-so-lambda}
Let $\g=\so_8\niplus\Lambda^{+}V$. Then $\Lambda^- V$ is projective and
other indecomposable projectives have the following first three layers in the radical filtration 
\begin{equation}\label{projective-lambda-plus}
\begin{array}{c}
\Lambda^2 V\\
\overline{\Lambda^{2^{\ }}V\oplus\Lambda^+ V}\\
\overline{\quad \Lambda^2 V\quad }\\
\end{array} \qquad
\begin{array}{c}
\Lambda^+ V\\
\overline{\Lambda^{2^{\ }}V\oplus\Lambda^+ V \oplus tr} \\
\overline{\quad \quad \Lambda^+ V\oplus tr\quad \quad }\\
\end{array} \qquad
\begin{array}{c}
\Lambda^3 V\\
\overline{V^{\ }\oplus \Lambda^3 V}\\
\\
\end{array} \qquad \begin{array}{c}
V\\
\overline{\Lambda^3 V}\\
\\
\end{array} 
\end{equation}
\end{prop}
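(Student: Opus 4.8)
The plan is to obtain the first radical layer from the Ext quiver and the second from Lemma~\ref{technical2}. Since $\rr=R=\Lambda^+V$ is abelian, the map $\lambda$ of Lemma~\ref{technical2} vanishes, so for every simple $L\in\ggm$ we have $P^1(L)=(R\otimes L)^{sh}$ and
\[
P^2(L)=\operatorname{Coker}\mu_L,\qquad \mu_L\colon\Lambda^2R\otimes L\longrightarrow\bigl(R\otimes(R\otimes L)^{sh}\bigr)^{sh}=:T_L .
\]
By Lemma~\ref{unital}(b) with $m=4$ the non-trivial simple objects of $\ggm$ are $V,\Lambda^2V,\Lambda^3V,\Lambda^+V,\Lambda^-V$. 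First I would read off the first layers from $Q(\g)={\bf Q_2}$ (Theorem~\ref{main-quiver}) and Lemma~\ref{quiver_Lie}: $P^1(V)=\Lambda^3V$, $P^1(\Lambda^3V)=V\oplus\Lambda^3V$, $P^1(\Lambda^2V)=\Lambda^2V\oplus\Lambda^+V$, $P^1(\Lambda^+V)=\Lambda^2V\oplus\Lambda^+V\oplus tr$, while $\operatorname{Ext}^1_\g(\Lambda^-V,-)=0$, so $\Lambda^-V=P(\Lambda^-V)$ is simple projective. (These equalities also follow from Lemma~\ref{lemma-tensor}\eqref{tensor-2m} together with the decompositions of $(\Lambda^+V\otimes\Lambda^{\pm}V)^{sh}$, which one gets from \cite{Vin} or from $\so_8$-triality and Lemma~\ref{lemma-tensor}\eqref{tensor_gamma}.)

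Given the first layers, each $T_L$ is computed by one further application of the same formulas, since $T_L=\bigoplus_M P^1(M)$ with $M$ running over the constituents of $P^1(L)$ and $P^1(tr)=(R\otimes tr)^{sh}=\Lambda^+V$. This gives $T_V=V\oplus\Lambda^3V$, $T_{\Lambda^3V}=V\oplus\Lambda^3V\oplus\Lambda^3V$, $T_{\Lambda^2V}=\Lambda^2V\oplus\Lambda^2V\oplus\Lambda^+V\oplus\Lambda^+V\oplus tr$ and $T_{\Lambda^+V}=\Lambda^2V\oplus\Lambda^2V\oplus\Lambda^+V\oplus\Lambda^+V\oplus\Lambda^+V\oplus tr$. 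It remains to identify $\operatorname{Im}\mu_L$. For this I would use the model of $\Lambda^\bullet V$ by the operators $m_v,i_v$ as in the proof of Proposition~\ref{standard1}, describe $\Lambda^+V\subset\Lambda^4V$ via the self-duality relation \eqref{lambda-plus-or-minus}, and realise the canonical $\g_{ss}$-maps $\Lambda^+V\otimes\Lambda^rV\to\Lambda^sV$ as the $(4-k)$-fold wedge following a $k$-fold contraction of a $4$-form with an $r$-form. Writing $\mu_L(\omega\wedge\omega'\otimes x)$ as the antisymmetrisation in $\omega,\omega'$ of the resulting two-step product, one should find $\mu_V$ and $\mu_{\Lambda^3V}$ surjective, $\operatorname{Coker}\mu_{\Lambda^2V}=\Lambda^2V$ and $\operatorname{Coker}\mu_{\Lambda^+V}=\Lambda^+V\oplus tr$; this produces the displayed second rows and $P^2(V)=P^2(\Lambda^3V)=0$.

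It then remains to check $P^3(\Lambda^2V)=P^3(\Lambda^+V)=0$, so that every projective above has $rad^3=0$ (hence $rad^3A(\J)=0$). A third layer of $P(L)$ would be an extension of $P^2(L)$ by simples admitting an arrow from a constituent of $P^2(L)$; carrying the $\mu$-computation one degree higher, or invoking Theorem~\ref{Viktor}, shows the degree-$2$ relations already annihilate the degree-$3$ part of $P(L)$. Combined with the first two layers this gives the proposition.

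The main obstacle is the computation of $\operatorname{Coker}\mu_L$ for $L=\Lambda^2V$ and, above all, $L=\Lambda^+V$: one must determine precisely which copies of $\Lambda^2V$, $\Lambda^+V$ and $tr$ in $T_L$ survive the antisymmetrisation. The subtle point is that the trivial summand survives in $P^2(\Lambda^+V)$ although it survives nowhere else, so $P(\Lambda^+V)$ genuinely has length $3$ with $\g$-socle $\Lambda^+V\oplus tr$, in contrast to $P(V)$ and $P(\Lambda^3V)$, which collapse to length $2$. Making this bookkeeping clean — either directly through the $4$-form contractions of $\Lambda^+V$ on $\Lambda^\bullet V$, or by transporting the computation via triality to the spinor side, where these operators become Clifford multiplications — is the heart of the argument.
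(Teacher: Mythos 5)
You set the problem up exactly as the paper does: since $\rr=R=\Lambda^+V$ is abelian, Lemma~\ref{technical2} applies with $\lambda=0$, so $P^1(L)=(R\otimes L)^{sh}$ and $P^2(L)=\operatorname{Coker}\mu_L$; your list of simples, your first layers, and your targets $T_L$ are all correct (your count of $T_{\Lambda^+V}$, with the extra copy of $\Lambda^+V$ coming from $R\otimes tr$, is in fact the careful one). The genuine gap is that the proposition essentially \emph{is} the computation of $\operatorname{Coker}\mu_L$, and you never perform it: you write that ``one should find'' $\mu_V,\mu_{\Lambda^3V}$ surjective and $\operatorname{Coker}\mu_{\Lambda^2V}=\Lambda^2V$, $\operatorname{Coker}\mu_{\Lambda^+V}=\Lambda^+V\oplus tr$, and you yourself flag this as the main obstacle. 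Nothing in the proposal decides why exactly one copy of $\Lambda^+V$ together with the trivial summand survives antisymmetrisation for $L=\Lambda^+V$, why no trivial summand survives for $L=\Lambda^2V$, or why the image of $\mu$ exhausts all of $T_V$ and $T_{\Lambda^3V}$. This is where all the work in the paper's proof lies: for $L=\Lambda^2V$ and $L=\Lambda^+V$ it twists by the triality automorphism of $\so_8$, so that $R$ becomes the traceless symmetric matrices $S^2V_0$ and $\mu$ becomes an explicit expression in matrix commutators and anticommutators, from which the cokernels are read off; for $L=V$ and $L=\Lambda^3V$ it writes the invariant maps $\phi^3_3,\phi^3_1,\phi^1_3$ in terms of $m_v,i_v$ and \eqref{lambda-plus-or-minus} and evaluates $\mu$ on explicitly chosen vectors, checking that the resulting element generates the whole target as a $\g_{ss}$-module. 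You name the right tools (the operators $m_v,i_v$, the self-duality relation, triality), but until one of these computations is actually carried out the second and third rows of \eqref{projective-lambda-plus} are asserted, not proved.

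Two smaller points. The closing step about $P^3(\Lambda^2V)=P^3(\Lambda^+V)=0$ is not required: the proposition only describes the first three radical layers (only for $V$, $\Lambda^3V$ and $\Lambda^-V$ does it claim a vanishing, namely $P^2=0$, which already follows from $\operatorname{Coker}\mu=0$ and Nakayama). Moreover the justification you sketch for it would not work as stated: Theorem~\ref{Viktor} is a criterion for representation type and says nothing about vanishing of a third radical layer, and Lemma~\ref{technical2} computes only $P^2$, so ``carrying the $\mu$-computation one degree higher'' would itself need a new argument.
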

\begin{proof}
It follows from Lemma\,\ref{lemma-tensor} that $\Lambda^- V$ is projective  in $\ggm$.
To describe the projective covers of $\Lambda^2V$ and $\Lambda^+V$ we use an automorphism $\gamma$ of $\so_8$, induced by a rotation of the Dynkin diagram $D_4$.
Twisting by $\gamma$ defines the following identifications on simple modules
$$V\mapsto \Gamma^+,\quad \Lambda^2V\mapsto \Lambda^2V,\quad \Lambda^3V\to (V\otimes \Gamma^-)_0 \quad\Lambda^+\mapsto S^2V_0,$$
where by $S^2V_0$ we denote the traceless part of $S^2V$ and $V\otimes \Gamma^-=(V\otimes \Gamma^-)_0\oplus \Gamma^+$.

Let us calculate $P^2(L)$ for the case $L=S^2V_0$ using Lemma \ref{technical2} with $\lambda=0$. We identify $S^2V$ and $\Lambda^2V$ with the spaces of 
symmetric and skew symmetric matrices respectively.
We have 
$$(R\otimes (R\otimes L)^{sh})^{sh}=\Lambda^2V\oplus\Lambda^2V\oplus S^2V\oplus S^2V_0,\quad (R^2\otimes L)^{sh}=\Lambda^2V\oplus S^2V_0,$$
and
$$\pi(X\otimes Y\otimes A)=\left(\{X,[Y,A]\},[X,\{Y,A\}],\{X,\{Y,A\}\},[X,[Y,A]]\right),$$
where $\{C,B\}=CB+BC$ and $[C,B]=CB-BC$.
Next we calculate $\mu$:
$$\mu(X\otimes Y\otimes A)=\left(\{A,[X,Y]\},\{A,[X,Y]\}+2XAY-2YAX,[A,[X,Y]],[[X,Y],A]\right).$$
From this formula we see that cokernel of $\mu$ is isomorphic to $S^2V_0\oplus tr$.

Now let $L=\Lambda^2V$. Then
$$(R\otimes (R\otimes L)^{sh})^{sh}=\Lambda^2V\oplus\Lambda^2V\oplus S^2V\oplus S^2V_0,\quad (R^2\otimes L)^{sh}=\Lambda^2V\oplus S^2V_0,$$
and 
$$\pi(X\otimes Y\otimes A)=\left(\{X,\{Y,A\}\},[X,[Y,A]],\{X,[Y,A]\},[X,\{Y,A\}]\right),$$
$$\mu(X\otimes Y\otimes A)=\left([A,[X,Y]],[A,[X,Y]],\{A,[X,Y]\},\{A,[X,Y]\}+2XAY-2YAX\right),$$
and cokernel of $\mu$ is isomorphic to $\Lambda^2V$.



Next we construct the projective covers of $V$ and $\Lambda^3 V$ in $\gm$. We will show that both modules have Loewy length two.
Let $\{e_i,\,e_{-i}\,|\,1\leq i\leq 4\}$ be the basis of $V$ such that with respect to the form 
on $V$, $(e_i,e_{-j}) =\delta_{i,-j}$, $i,j\in\{1,2,3,4\}$. 
Then $\Lambda^4 V$ is spanned by $$e_{i_1}\wedge e_{i_2}\wedge e_{i_3}\wedge e_{i_4}, \quad i_1<i_2<i_3<i_4,$$ to check whether given element of $\Lambda^4 V$ belongs to 
$\Lambda^+ V$ we use \eqref{lambda-plus-or-minus}.

From Lemma\,\ref{lemma-tensor} $$P^1(\Lambda^3 V)=(\Lambda^3 V\otimes\Lambda^+ V)^{sh}=V\oplus \Lambda^3 V,\quad  
P^1(V)=(V\otimes\Lambda^+ V)^{sh}=\Lambda^3 V.$$ One can check that  $\so_8$-invariant maps  
$$\phi_3^3:\Lambda^3 V\otimes \Lambda^+ V\to \Lambda^3 V, \quad \phi^3_1:\Lambda^3 V\otimes \Lambda^+ V\to V, \quad
\phi^1_3:V\otimes\Lambda^+ V\to \Lambda^3 V$$  are given by 
\begin{equation}\phi^3_3(x_1\wedge x_2\wedge x_3\otimes v_1\wedge v_2\wedge v_3 \wedge v_4)=\sum_{\begin{array}{c} 1\leq j,k,l\leq 4\\ j\neq k\neq l\end{array}} 
\operatorname{sgn}(j,k,l) m_{x_j} i_{x_k} i_{x_l}(v_1\wedge v_2\wedge v_3 \wedge v_4),
\end{equation} 
\begin{equation} \phi^3_1(x_1\wedge x_2\wedge x_3\otimes v_1\wedge v_2\wedge v_3 \wedge v_4)=\sum_{\begin{array}{c} 1\leq j,k,l\leq 4\\ j\neq k\neq l\end{array}} 
\operatorname{sgn}(j,k,l) i_{x_j} i_{x_k} i_{x_l}(v_1\wedge v_2\wedge v_3 \wedge v_4),
\end{equation} 
\begin{equation}\phi_3^1 (x_1\otimes v_1\wedge v_2\wedge v_3 \wedge v_4)=i_{x_1}(v_1\wedge v_2\wedge v_3 \wedge v_4).\end{equation}
Here $v_1\wedge v_2\wedge v_3 \wedge v_4 \in\Lambda^+ V$, $x_j\in V$, $1\leq j\leq 4$, $\operatorname{sgn}(j,k,l)$ is the 
sign of permutation $(j,k,l)\in\Sigma_3$, 
while $m_{x_j}$ and $i_{x_j}$ are given by \eqref{definition-mv-iv}.

To show that $P^2(V)=0$ we describe map $\mu$ for $L=V$, see Lemma\,\ref{technical2}. Observe that $\lambda=0$
therefore $P^2(V)=\operatorname{Coker}\mu$. We have 
$$
(\Lambda^+ V\otimes(\Lambda^+ V\otimes V)^{sh})^{sh}=V\oplus\Lambda^3 V,
$$
hence 
$$
\mu(v,w,x)=(\phi_3^1(\phi_1^3(x\otimes v)\otimes w)-\phi_3^1(\phi_1^3(x\otimes w)\otimes v),\phi_3^3(\phi_1^3(x\otimes v)\otimes w)-\phi_3^3(\phi_1^3(x\otimes w)\otimes v)),
$$
where $v,w\in\Lambda^+ V$, $x\in V$.
Suppose $v=e_1\wedge e_2\wedge e_3 \wedge e_4$, $w=e_{-1}\wedge e_{-2}\wedge e_{-3}\wedge e_{-4}$ and $x=e_1$, then
$$\mu(v,w,x)=(-e_1,e_1\wedge e_2\wedge e_{-2}+e_1\wedge e_3\wedge e_{-3}+e_1\wedge e_4\wedge e_{-4}).$$ 
Since $\operatorname{Im}\mu$ is $\g_{ss}$-invariant and $\mu(v,w,x)$ generates $V\oplus\Lambda^3 V$ as a $\g_{ss}$-module, 
we obtain $P^2(V)=\operatorname{Coker}\mu=0$.

To check that $P^2(\Lambda^3 V)=0$, note that $(\Lambda^+ V\otimes(\Lambda^+ V\otimes \Lambda^3 V)^{sh})^{sh}=V\oplus\Lambda^3 V\oplus\Lambda^3 V$
and this projection is given by
$$\pi(v,w,y)=(\phi_3^1(\phi_3^3(x\otimes v)\otimes w),
\phi_1^3(\phi_3^1(y\otimes v)\otimes w),\phi_3^3(\phi_3^3(x\otimes v)\otimes w)),$$ 
$v,w\in\Lambda^+ V$, $y\in \Lambda^3 V$ and $\mu(v,w,y)=(alt\otimes 1)\circ \pi$, see Lemma \ref{technical2}. 
Choosing 
$$
v=e_1\wedge e_{-1}\wedge e_3 \wedge e_4+e_2\wedge e_{-2}\wedge e_3 \wedge e_4, \quad w=e_{-1}\wedge e_{-2}\wedge e_{-3}\wedge e_{-4}, \quad y=e_1\wedge e_2\wedge e_3
$$
we obtain that 
$$
\mu(v,w,y)=(2e_3,e_1\wedge e_{-1}\wedge e_3+e_2\wedge e_{-2}\wedge e_3, -2e_3\wedge e_4\wedge e_{-4}-e_1\wedge e_{-1}\wedge e_3-e_2\wedge e_{-2}\wedge e_3).
$$ 
Observe that $\g_{ss}$-submodule generated by $\mu(v,w,y)$ coincides with $V\oplus\Lambda^3 V\oplus\Lambda^3 V$. Hence 
$P^2(\Lambda^3 V)=\operatorname{Coker} \mu=0$.
\end{proof}

\begin{Cor}\label{dualityofp} If $\g=\so_n\niplus V$ or $\g=\so_8\niplus \Lambda^+V$ and $P^2(L)\neq 0$, 
then $Jor(P^2(L))$ is simple and 
coincides with the socle of 
$Jor(P(L)/rad^3P(L))$.
\end{Cor}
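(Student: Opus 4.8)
The plan is to read both assertions off the explicit radical filtrations of Propositions \ref{standard1} and \ref{projectives-so-lambda}, feeding them into the formal properties of $Jor$ that are already available: exactness, $Jor(tr)=0$, Corollary \ref{Jorsimple} (a non-trivial simple goes to a simple), Corollary \ref{Jorradical} ($Jor(rad N)=rad\,Jor(N)$ when $N/rad N$ has no trivial summand), and $Jor(P(L'))=P(Jor(L'))$ from Lemmas \ref{proj} and \ref{identity}. Throughout write $M:=P(L)/rad^3P(L)$, so that $rad^2M=P^2(L)$ and the statement becomes: $Jor(P^2(L))$ is simple and equals $\operatorname{soc}Jor(M)$. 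For simplicity, one runs through the short list of simple non-trivial $L$ with $P^2(L)\ne 0$: for $\g=\so_n\niplus V$ these are $L=\Lambda^rV$ with $P^2(L)=\Lambda^rV$, together with $L=\Lambda^{\pm}V$ when $n=2m$, where $P^2(L)=\Lambda^{\mp}V$; for $\g=\so_8\niplus\Lambda^+V$ they are $L=\Lambda^2V$ with $P^2(L)=\Lambda^2V$, and $L=\Lambda^+V$ with $P^2(L)=\Lambda^+V\oplus tr$. In every case $P^2(L)$ is a non-trivial simple $\g$-module, possibly together with one extra copy of $tr$, so $Jor(P^2(L))$ is simple by additivity and exactness of $Jor$, by $Jor(tr)=0$, and by Corollary \ref{Jorsimple}. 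Applying the exact functor $Jor$ to $rad^2M\hookrightarrow M$ shows moreover that $Jor(P^2(L))=Jor(rad^2M)$ is a simple submodule of $Jor(M)$, i.e. $Jor(P^2(L))\subseteq\operatorname{soc}Jor(M)$.

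The substance is the reverse inclusion $\operatorname{soc}Jor(M)\subseteq Jor(P^2(L))$. I would first pin down the radical filtration of $Jor(M)$, namely $rad\,Jor(M)=Jor(rad M)$ and $rad^2Jor(M)=Jor(rad^2M)$. The first is Corollary \ref{Jorradical}, since $M/rad M=L$ is non-trivial. For the second one applies Corollary \ref{Jorradical} to $rad M$: its top is $P^1(L)$, which by Propositions \ref{standard1} and \ref{projectives-so-lambda} has a trivial summand only for $L=V$ when $\g=\so_n\niplus V$ and for $L=\Lambda^+V$ when $\g=\so_8\niplus\Lambda^+V$; outside these two cases Corollary \ref{Jorradical} finishes the job, and in them one uses the explicit operators $m_v,i_v$ (and the maps $\phi^j_k$ in the $\Lambda^+V$ case) together with the $\rr$-action written down in the proofs of Propositions \ref{standard1} and \ref{projectives-so-lambda} to see that the length-two filtration of the $\J$-module $Jor(rad M)$ does not split, hence is its radical filtration, so $rad\,Jor(rad M)=Jor(rad^2M)$ again. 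Granting $rad^2Jor(M)=Jor(P^2(L))$, suppose $T\subseteq Jor(M)$ is simple with $T\not\subseteq Jor(P^2(L))$; then $T$ embeds in $Jor(M)/rad^2Jor(M)$, a module of Loewy length at most two with radical $Jor(rad M)/Jor(P^2(L))\simeq Jor(P^1(L))\ne 0$ and top $Jor(L)$. A surjection $T\twoheadrightarrow Jor(L)$ would split that extension and kill its radical, so $T$ must lie in $Jor(P^1(L))\subseteq Jor(rad M)$, i.e. $T$ is a simple submodule of $Jor(rad M)$ outside $rad\,Jor(rad M)$; this is excluded by the non-splitting just established (equivalently, by $\operatorname{soc}(rad M)=rad^2M$, which is immediate from the explicit $\rr$-action, together with the same check that $Jor$ does not enlarge this socle). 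Hence $\operatorname{soc}Jor(M)=Jor(P^2(L))$.

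I expect the main obstacle to be exactly this last inclusion, and within it the two exceptional cases where a trivial constituent sits in $P^1(L)$: there the formal behaviour of $Jor$ only reduces the statement, and one has to return to the explicit exterior-algebra module structure of the preceding proofs to verify that passing to $Jor$ creates no new simple submodules. A more conceptual argument via the duality of $\mathcal S$ (Remark \ref{duality1}) does not seem directly available, since $Jor$ interchanges the degree $+1$ and $-1$ graded pieces and hence does not commute with that duality; so the hands-on verification appears to be unavoidable.
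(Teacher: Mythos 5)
Your proposal is correct and is essentially the paper's own argument: the printed proof is simply ``Follows from direct description of $P(L)$,'' i.e.\ one reads the statement off the explicit radical layers and maps computed in Propositions \ref{standard1} and \ref{projectives-so-lambda}, and your formal scaffolding (exactness of $Jor$, $Jor(tr)=0$, Corollaries \ref{Jorsimple} and \ref{Jorradical}, $Jor(P(L))=P(Jor(L))$) plus the case-by-case inspection is exactly that, spelled out. One small caution: when $P^1(L)$ is not simple, ``the length-two filtration of $Jor(rad\,M)$ does not split'' is weaker than what is needed, and the correct target is the one you state parenthetically, namely $\operatorname{soc}Jor(rad\,M)=Jor(rad^2M)$ (no simple summand of the middle layer splits off after applying $Jor$), which is precisely what the explicit $\rr$-action via $m_v,i_v$ and $\phi^j_k$ from those proofs verifies, in particular in the two cases where $tr$ occurs in $P^1(L)$.
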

\begin{proof} Follows from direct description of $P(L)$.
\end{proof}

\begin{thm}\label{classification-unital}
\begin{enumerate}
\item  If $\g=\so_{2m+1}\niplus V$, then $A(J)={\bf k}({\bf Q}_1^{2m+1})'/I$, where $I$ is generated by the following relations with $r=2,\dots,m-1$
\begin{equation}\label{relations-standard-odd}\begin{array}{c}
\gamma_{r-1}\gamma_r=\delta_{r}\delta_{r-1}=0,\\ \gamma_{r-1} \delta_{r-1}=\delta_{r} \gamma_{r},\\ 
 \gamma_{m-1}\delta_{m-1}=\gamma_m^2.
\end{array}
\end{equation}
\item If $\g=\so_{2m}\niplus V$, then $A(J)={\bf k}({\bf Q}_1^{2m})'/I$, where $I$ is generated by the following relations with $r=2,\dots,m-2$
\begin{equation}\label{relations-standard-even}\begin{array}{c}
\gamma^{\pm}\delta^{\pm}=\gamma_{r-1}\gamma_r=\delta_{r}\delta_{r-1}=0,\\
\gamma_{r-1} \delta_{r-1}=\delta_{r} \gamma_{r},\\
\gamma_{m-2}\delta_{m-2}=\delta^+\gamma^+=\delta^-\gamma^-.
\end{array}
\end{equation}
\item If $\g=\so_8\niplus \Lambda^+ V$, then $A(J)={\bf k}{\bf Q}'_2/I$, where $I$ is generated by 
\begin{equation}\label{relations-lambda-plus}\begin{array}{c}
\alpha_1\beta_1=\beta_1\alpha_1=\beta_1\gamma_1=\gamma_1\alpha_1=\gamma^2_1=\alpha_2\gamma_2=\gamma_2\beta_2=\gamma_3\alpha_2=\beta_2\gamma_3=0,\\ 
\gamma_2^2=\beta_2\alpha_2,\quad \gamma^2_3=\alpha_2\beta_2.
\end{array}
\end{equation}
\item All above algebras are quadratic, satisfy $rad^3A(J)=0$. Furthermore, in the first two cases $A(J)$ is a Frobenius algebra.
\end{enumerate}
\end{thm}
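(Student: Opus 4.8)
The plan is to work throughout with a Lie algebra $\g$ from {\bf List A} whose Levi part $\g_{ss}$ is simple, i.e. with $\g=\so_n\niplus V$ or $\g=\so_8\niplus\Lambda^+V$; then $\gh=\g$ and the radical $\rr=R$ is abelian and simple over $\g_{ss}$, so by Corollary \ref{quiverab} the Ext quiver of $\JJ$ is $Q'(\g)$ and $A(\J)=\kk Q'(\g)/I$ for a uniquely determined admissible ideal $I$. By Lemma \ref{proj} and Lemma \ref{identity} the indecomposable projectives of $\JJ$ are the modules $P(Jor(L))=Jor(P(L))$ with $L$ a non-trivial simple object of $\ggm$, so the whole statement reduces to understanding these modules. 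I would proceed in three steps: (i) determine the radical filtration of each $P(Jor(L))$ from Propositions \ref{standard1} and \ref{projectives-so-lambda}; (ii) read off the quadratic relations; (iii) deduce $I$ is quadratic, $rad^3A(\J)=0$, and the Frobenius property.

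For step (i) the point is that $Jor$ is exact and kills $tr$, so the layers of $P(Jor(L))$ should be those of Proposition \ref{standard1}/\ref{projectives-so-lambda} with every trivial constituent deleted and the remaining submodule structure untouched. In cases (1) and (2) the trivial module is attached only at the vertex $V$: $P(V)$ has layers $V/(tr\oplus\Lambda^2V)/V$ and the two length-two loops at $V$ (through $tr$ and through $\Lambda^2V$) are identified in $A(\g)$, so after applying $Jor$ the bottom copy of $V$ survives, now reached through $\Lambda^2V$, while the two arrows meeting $tr$ and all relations involving them simply disappear in $Q'(\g)$. In case (3) the trivial module occurs only in the second and third layers of $P(\Lambda^+V)$, and I would use Corollary \ref{dualityofp}, which identifies the socle of $Jor(P(L)/rad^3P(L))$, to conclude that $P(Jor(\Lambda^+V))$ has layers $\Lambda^+V/(\Lambda^2V\oplus\Lambda^+V)/\Lambda^+V$; similarly $P(Jor(\Lambda^3V))$ and $P(Jor(V))$ have Loewy length two and $\Lambda^-V$ is projective. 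In all cases the indecomposable projectives of $\JJ$ have Loewy length at most three.

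For step (ii), since $\rr^2=0$ the map $\lambda$ of Lemma \ref{technical2} vanishes, so $P^2(L)=\operatorname{Coker}\mu$ with target $(R\otimes(R\otimes L)^{sh})^{sh}$, which splits as a direct sum over composable pairs of arrows of $Q(\g)$ out of $L$; hence the space of quadratic relations based at $Jor(L)$ is $\operatorname{Im}\mu$, and it can be read off from the explicit formulas for $\mu$ obtained in the proofs of Propositions \ref{standard1} and \ref{projectives-so-lambda} (built from $m_v,i_v,T_{v\wedge w}$ in the $\so_n\niplus V$ case, and from the $\phi^i_j$ and matrix commutators after untwisting the triality automorphism in the $\so_8\niplus\Lambda^+V$ case). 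The mechanism is uniform: the components of $\mu$ landing in the ``outer'' summands are surjective, which produces the zero relations killing all straight two-step paths, while the two ``inner'' components coincide up to sign, which identifies the corresponding pair (or triple) of length-two loops; carrying this out gives precisely the relations of \eqref{relations-standard-odd}, \eqref{relations-standard-even} and \eqref{relations-lambda-plus}. Let $I_2$ denote the ideal they generate.

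For step (iii), $I_2\subseteq I$ by construction, so there is a surjection $\kk Q'(\g)/I_2\to A(\J)$. A direct check, uniform in $m$, shows $rad^3\bigl(\kk Q'(\g)/I_2\bigr)=0$: every path of length three contains a length-two subpath lying in $I_2$, because the zero relations collapse all straight two-step paths and the loop relations reduce any remaining zig-zag of length three to a straight one. Thus both algebras have Loewy length at most three, and the surjection is an isomorphism on $rad^i/rad^{i+1}$ for $i=0$ (simple modules), $i=1$ (arrows, by Corollary \ref{quiverab}) and $i=2$ (both equal $\bigoplus_L Jor(P^2(L))$ by steps (i)–(ii)); hence it is an isomorphism, so $A(\J)=\kk Q'(\g)/I_2$ is quadratic with $rad^3A(\J)=0$. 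Finally, a basic algebra is Frobenius iff it is self-injective iff every indecomposable projective has simple socle and $L\mapsto\operatorname{soc}P(L)$ is a permutation of the simples. In cases (1) and (2) step (i) gives $\operatorname{soc}P(Jor(\Lambda^rV))=Jor(\Lambda^rV)$ and $\operatorname{soc}P(Jor(\Lambda^{\pm}V))=Jor(\Lambda^{\mp}V)$ — the middle-layer constituents are not in the socle because the surviving arrows map them onto the bottom (the pertinent loop relations being non-zero) — so $A(\J)$ is Frobenius (even weakly symmetric in case (1)); in case (3), $P(Jor(\Lambda^3V))$ has the non-simple socle $Jor(V)\oplus Jor(\Lambda^3V)$, so $A(\J)$ is not self-injective, consistently with the statement. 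The main obstacle I foresee is step (i): transferring the radical filtrations from $\ggm$ to $\JJ$ is exactly where the non-equivalence of $\ggm$ and $\gm$ (the failure of trivial modules to split off at the Lie-algebra level) intervenes, and one must argue with the concrete module structure of the $P(L)$, not merely with composition multiplicities; steps (ii) and (iii) are then mechanical.
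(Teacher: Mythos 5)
Your proposal is correct and follows essentially the same route as the paper: it rests on the same inputs (the radical filtrations of Propositions \ref{standard1} and \ref{projectives-so-lambda}, Corollary \ref{dualityofp}, Corollary \ref{quiverab} together with Lemmas \ref{proj} and \ref{identity}), reads the quadratic relations off the second radical layers, and closes by the layer-by-layer comparison that the paper compresses into ``it is straightforward that the quadratic relations imply $rad^3A(J)=0$.'' The only divergences are cosmetic: you justify the Frobenius property via simple socles and the socle permutation (self-injectivity of a basic algebra), whereas the paper invokes the duality $P(L)^*\simeq P(L)$ (respectively $P(\Lambda^{\pm}V)^*\simeq P(\Lambda^{\mp}V)$), which amounts to the same structural facts about the projectives.
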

\begin{proof} Corollary \ref{dualityofp} implies that all paths in $Q'(\g)$ of length $2$ leading from vertex $i$ to vertex $j$ are proportional 
with non-zero coefficients.
Moreover, after suitable  normalization one can make them equal.  

It is straightforward that the quadratic relations imply $rad^3A(J)=0$. Finally, in the first two cases $A(J)$ is a
Frobenius algebra since $P(L)^*\simeq P(L)$ if $L\neq \Lambda^\pm V$ and $m$ is odd. In the latter case
$P(\Lambda^\pm V)^*\simeq P(\Lambda^\mp V)$. 
\end{proof}

\subsection {Mixed case} Now we will deal with the case when $\g$ is (4),(5) or (6) from {\bf List A}. We will prove first some statements about more general 
situation.
Assume that $\g_{ss}=\g_l\oplus \g_r$, where $\g_l$ and $\g_r$ are simple Lie algebras and $R=\Gamma_l\boxtimes \Gamma_r$, where
$\Gamma_l\in \g_l-\text{mod}_{\frac{1}{2}}$ and $\Gamma_r\in \g_r-\text{mod}_{\frac{1}{2}}$. In our situation $\g_l$ and $\g_r$ are orthogonal Lie algebras,
hence both $\Gamma_l$ and $\Gamma_r$ are spinor modules. Since spinor modules are minuscule, then its tensor product with any irreducible module is multiplicity 
free. In particular, $S^2\Gamma_l$, $\Lambda^2\Gamma_l$ (respectively, $S^2\Gamma_r$, $\Lambda^2\Gamma_r$) are multiplicity free disjoint 
$\g^l$ (respectively, $\g^r$)-modules. 

Note that the $sl_2$-subalgebra $\alpha$ is the diagonal subalgebra in $\alpha_l\oplus\alpha_r$, where $\alpha_l$ and  $\alpha_{r}$ are $sl_2$-subalgebras in
$\g_l$ and $\g_r$ respectively. Therefore any module $M$ in $\ggm$ is equipped with
with $\mathbb Z/2 \oplus \mathbb Z/2$-grading 
\begin{equation}\label{doublegrading}
M=\bigoplus M_{(1,0)}\oplus M_{(0,1)}\oplus M_{(\frac 12,\frac 12)}\oplus M_{(0,0)}\oplus M_{(-1,0)}\oplus M_{(0,-1)}\oplus M_{(-\frac 12,-\frac 12)},
\end{equation}
such that short grading of $M$ with respect to $\alpha$ is given by 
$$M_k=\bigoplus_{i+j=k}M_{i,j}.$$

\begin{lem}\label{highestvector} Let $v$ be a highest weight vector in $L$.
\begin{enumerate}
\item $S^kR(v)$ generates $P^k(L)$.
\item If $w\in P^k(L)$ is a highest weight vector, then  $w\in S^kR(v)$. 
\end{enumerate}
\end{lem}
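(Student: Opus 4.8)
The plan is to work throughout with the concrete model $P(L)=I(L)^{sh}$, $I(L)=U(\g)\otimes_{U(\g_{ss})}L$. Since $\g$ is from {\bf List A}, the radical $\rr=R$ is abelian, so $U(\rr)=S(R)$ and, by PBW, $I(L)\cong S(R)\otimes L$ as $\g_{ss}$-modules with the diagonal action, $R$ acting by multiplication on the first factor. Combining this with the facts already recorded, namely $rad^kI(L)=\mathfrak{R}^k\otimes L$ and $rad^kP(L)=\mathfrak{R}^kP(L)$, the projection $I(L)\twoheadrightarrow P(L)$ induces a $\g_{ss}$-epimorphism
$$\phi_k\colon S^k(R)\otimes L=rad^kI(L)/rad^{k+1}I(L)\twoheadrightarrow rad^kP(L)/rad^{k+1}P(L)=P^k(L),$$
and by construction $\phi_k(S^k(R)\otimes\kk v)=S^kR(v)$. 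Note also that $R$ kills $P^k(L)$, so ``generates'' may be read over $\g_{ss}$.

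For (1) it then suffices to show that $S^k(R)\otimes\kk v$ generates $S^k(R)\otimes L$ over $\g_{ss}$. I would prove $S^k(R)\otimes L_\nu\subseteq U(\g_{ss})\bigl(S^k(R)\otimes\kk v\bigr)$ for every weight $\nu$ of $L$ by downward induction on $\nu$: if $\nu$ is not the highest weight, every $u\in L_\nu$ is a sum $\sum_i f_i u_i$ with $f_i$ the negative simple root vectors and $u_i$ of strictly higher weight (an irreducible highest weight module is generated from $v$ by the $f_i$), whence for $X\in S^k(R)$
$$X\otimes u=\sum_i\bigl(f_i\cdot(X\otimes u_i)-(f_iX)\otimes u_i\bigr),$$
and both summands lie in $U(\g_{ss})\bigl(S^k(R)\otimes\kk v\bigr)$ by the induction hypothesis, because $f_iX\in S^k(R)$ and $u_i$ has higher weight. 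Applying $\phi_k$ yields (1).

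For (2) the first observation is that $S^k(R)\otimes\kk v$ is a $\mathfrak{b}$-submodule of $S^k(R)\otimes L$: as $v$ is killed by $\mathfrak{n}^{+}$, each $e_i\cdot(X\otimes v)=(e_iX)\otimes v$ stays inside it, so $S^kR(v)$ is a $\mathfrak{b}$-submodule of $P^k(L)$. Choosing a $\g_{ss}$-stable complement $W$ of $\ker\phi_k$, a highest weight vector $w\in P^k(L)$ lifts to a highest weight vector $\tilde w=\sum_\nu X_\nu\otimes u_\nu\in W\subseteq S^k(R)\otimes L$, and the task becomes to produce $X\in S^k(R)$ with $\tilde w\equiv X\otimes v\pmod{\ker\phi_k}$. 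One cannot get by with the two facts just noted alone --- a $\mathfrak{b}$-stable subspace generating $P^k(L)$ over $\g_{ss}$ can omit a highest weight vector --- so the argument must use the description of $S^kR(v)$ as $\phi_k(S^k(R)\otimes\kk v)$ together with extra structure. The natural extra input is the short $\Z$-grading: $P^k(L)\subseteq P(L)\in\ggm$ is concentrated in $h$-degrees $-1,0,1$, while $R=R_{-1}\oplus R_0\oplus R_1$ and $v$ lies in a single $h$-degree; matching $h$-degrees in $\tilde w$ together with the explicit form of $\ker\phi_k$ --- the image in $S^k(R)\otimes L$ of $rad^kI(L)$ intersected with the sum of $rad^{k+1}I(L)$ and the submodule cut out by the ${}^{sh}$-truncation --- should let one replace the ``deep'' components $X_\nu\otimes u_\nu$ with $\nu\neq\omega$ by elements of $S^k(R)\otimes\kk v$ modulo $\ker\phi_k$.

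The step I expect to be the real obstacle is precisely this last one: describing $\ker\phi_k$ sharply enough to see that every highest weight vector of $P^k(L)$ is congruent mod $\ker\phi_k$ to an element of $S^k(R)\otimes\kk v$. In the mixed case one should be able to push this through using that $R=\Gamma_l\boxtimes\Gamma_r$ is a tensor product of minuscule spinor modules, so $S^k(R)\otimes L$ and all tensor products appearing are multiplicity free (as already exploited earlier); this rules out the ``diagonal'' configurations that obstruct the naive statement and lets the degree count close the argument. An alternative route to the same step is induction on $k$ via the surjection $R\otimes P^k(L)\twoheadrightarrow P^{k+1}(L)$ and the identity $R\cdot S^kR(v)=S^{k+1}R(v)$, which reduces it to the corresponding assertion about highest weight vectors in $R\otimes P^k(L)$.
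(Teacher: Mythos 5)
Part (1) of your argument is fine: the surjection $\phi_k\colon S^k(R)\otimes L\to P^k(L)$ and the downward weight induction showing that $S^k(R)\otimes\kk v$ generates $S^k(R)\otimes L$ over $U(\g_{ss})$ is exactly the routine verification (the paper disposes of both assertions with ``both assertions are obvious'', so there is no more elaborate argument there to compare with).

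For part (2), however, what you have written is a plan, not a proof, and the step you yourself flag as ``the real obstacle'' is the entire mathematical content of the statement. You correctly observe that $\mathfrak{b}$-stability of $S^kR(v)$ plus generation is insufficient, but you never identify $\ker\phi_k$ nor carry out the promised degree-matching or multiplicity-free argument; phrases like ``should let one replace'' and ``one should be able to push this through'' leave the claim unestablished, and your ``alternative route'' via $R\otimes P^k(L)\twoheadrightarrow P^{k+1}(L)$ merely transfers the same question to $R\otimes P^k(L)$ without answering it. The gap is genuine because assertion (2) is not a soft formal fact about quotients of $S^kR\otimes L$: it really uses the specific shape of the mixed case ($R=\Gamma_l\boxtimes\Gamma_r$ with the $\alpha_l\oplus\alpha_r$ bigrading). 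For instance, for $\g_{ss}=\so_{2m+1}$ with abelian radical $R=V$ and $L=\Lambda^rV$ ($2\le r\le m-1$), one has $P^1(L)=\Lambda^{r+1}V\oplus\Lambda^{r-1}V$ with the class of $u\otimes v$ equal to $(u\wedge v,\ i_u v)$; any $u$ with $u\wedge v=0$ lies in $\langle e_1,\dots,e_r\rangle$ and then $i_uv=0$, so the highest weight vector $e_1\wedge\dots\wedge e_{r-1}$ of the second summand is \emph{not} in $R(v)$. Thus the analogue of (2) fails as soon as one leaves the hypotheses of the subsection, which shows that the minuscule/bigrading input you defer to is not a finishing touch but the whole proof; a complete argument must also confront the weight-zero highest weight vectors (e.g.\ the trivial summand of $P^1(L)$ for $L=\Gamma_l^*\boxtimes\Gamma_r^*$, where the image of $R\otimes\kk v$ does not obviously contain a vector supported purely on that summand), a case your sketch does not address at all.
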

\begin{proof} Both assertions are obvious.
\end{proof}

\begin{lem}\label{radcubelem} 
(a) Let $L$ be a simple non-trivial module in $\ggm$. Then $L\in  \g_l-\text{mod}_{1}$, $L\in  \g_r-\text{mod}_{1}$ or $L$ is isomorphic 
to $A\boxtimes B$ for some simple $A\in \g_l-\text{mod}_{\frac{1}{2}}$ and $B\in \g_r-\text{mod}_{\frac{1}{2}}$.

(b) If $L\in  \g_l-\text{mod}_{1}$ (resp., $L\in  \g_r-\text{mod}_{1}$), then $P^3(L)=0$ and $P^2(L)\in  \g_r-\text{mod}_{1}$ 
(resp., $\g_l-\text{mod}_{1}$);

(c) If $L=A\boxtimes B$, then $P^3(L)$ is a trivial $\g_{ss}$-module.
\end{lem}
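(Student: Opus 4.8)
The plan is to obtain (a) by restriction to $\g_{ss}$ together with an elementary weight count, and then to reduce (b) and (c) to two ingredients: a ``grading overflow'' principle for $P(L)$ and Lemma~\ref{highestvector}(2).

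\emph{Part (a).} Since $L$ is simple in $\ggm$ it is simple over $\g_{ss}=\g_l\oplus\g_r$ (Section~4), so $L\simeq A\boxtimes B$ with $A,B$ simple over $\g_l,\g_r$. The element $h=h_l+h_r$ acts on $L$ with eigenvalues in $\{-1,0,1\}$, so the largest $h_l$-weight of $A$ plus the largest $h_r$-weight of $B$ is at most $1$; since the $h_l$-weights of a simple $\so_n$-module are all integral or all half-integral, the only options are $B=tr$ and $A\in\g_l-\text{mod}_1$; $A=tr$ and $B\in\g_r-\text{mod}_1$; or $A\in\g_l-\text{mod}_{1/2}$ and $B\in\g_r-\text{mod}_{1/2}$ (both spinor). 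This is the asserted trichotomy.

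Now the engine for (b),(c). Besides $\alpha$, the algebra $\g$ contains the anti-diagonal $\sll_2$-subalgebra $\alpha'=\langle e_l+f_r,\ h_l-h_r,\ f_l+e_r\rangle$, and a short computation with the bigrading of $R=\Gamma_l\boxtimes\Gamma_r$ shows that $h'=h_l-h_r$ also induces a short grading on $\g$. By (a) every module in $\ggm$ is, over the reductive $\g_{ss}$, a direct sum of simples of the four kinds just listed, and on each of them $h'$ acts with eigenvalues in $\{-1,0,1\}$; hence $\ggm$ coincides with the category attached to $\alpha'$. In particular $P(L)$ has both $h$- and $h'$-eigenvalues in $\{-1,0,1\}$, i.e.\ its $(h_l,h_r)$-bigrading is supported on the pairs $(a,b)$ with $|a+b|\le 1$ and $|a-b|\le 1$; this is the meaning of \eqref{doublegrading}. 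Decompose $R=R_1\oplus R_0^+\oplus R_0^-\oplus R_{-1}$ according to the bigradings $(\frac12,\frac12),(\frac12,-\frac12),(-\frac12,\frac12),(-\frac12,-\frac12)$, so that $R_1\subset\g_1$, $R_0^\pm\subset\g'_{\pm1}$, $R_{-1}\subset\g_{-1}$. Then the grading constraint gives the \emph{overflow relations} in $P(L)$: $R_1w=0$ whenever $h(w)=1$; $R_0^\pm w=0$ whenever $h'(w)=\pm1$; and $(R_0^\pm)^2w=0$ whenever $h'(w)=0$. Since the radical of a List~A algebra is abelian, $\mathfrak{R}=S(R)$, the map $\lambda$ of Lemma~\ref{technical2} vanishes, and $P^k(L)$ is a $\g_{ss}$-quotient of $S^k(R)\otimes L$; moreover the $(h_l,h_r)$-parity of $\mathfrak{R}^k P(L)$ alternates with $k$, so for $L$ of type $A\boxtimes tr$ or $tr\boxtimes B$ the layers $P^1(L),P^3(L),\dots$ are supported on the half-integer bigradings $(\pm\frac12,\pm\frac12)$ and $P^2(L),P^4(L),\dots$ on the integer bigradings $(0,0),(\pm1,0),(0,\pm1)$, while for $L$ of mixed type this is reversed.

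It remains to kill the ``wrong'' constituents in the relevant layer, and here Lemma~\ref{highestvector}(2) is decisive: a highest weight vector $w$ of a constituent of $P^k(L)$ of highest weight $\mu$ lies in $S^k(R)v$, where $v$ is the highest weight vector of $L$, so $w$ is the image of $sv$ for some $s\in S^k(R)$ whose $(h_l,h_r)$-bigrading is exactly the bigrading of $\mu$ minus that of $v$. For (b) take $L$ of type $A\boxtimes tr$, so $v$ has bigrading $(1,0)$, i.e.\ $h(v)=h'(v)=1$. A constituent of $P^2(L)$ of type $A'\boxtimes tr$ has $\mu$ of bigrading $(1,0)$, forcing $s$ into the span of $R_1R_{-1}$ and $R_0^+R_0^-$; using commutativity of $R$ these become $R_{-1}(R_1v)$ and $R_0^-(R_0^+v)$, both $0$ by the overflow relations. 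Hence $P^2(L)$ has no constituent of this type, $\g_l$ acts trivially on it, and $P^2(L)\in\g_r-\text{mod}_1$. A mixed-type constituent of $P^3(L)$ would force $s$ into the span of $R_1R_0^-R_{-1}$ and $R_0^+(R_0^-)^2$, again $0$ on $v$; so $P^3(L)=0$. Part~(c) is the same computation with $L$ of mixed type, where $v$ has bigrading $(\frac12,\frac12)$, $h(v)=1$, $h'(v)=0$: a constituent of $P^3(L)$ of type $A'\boxtimes tr$ needs $s$ in the span of $R_1R_0^+R_{-1}$ and $(R_0^+)^2R_0^-$, one of type $tr\boxtimes B'$ needs $s$ in the span of $R_1R_0^-R_{-1}$ and $R_0^+(R_0^-)^2$, and all four monomials annihilate $v$ in $P(L)$ by $R_1v=0$ and $(R_0^\pm)^2v=0$ (again pulling the overflowing factor to the right). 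Thus the only surviving constituents of $P^3(L)$ are trivial, so $P^3(L)$ is a trivial $\g_{ss}$-module. The only mildly delicate point is the finite bookkeeping of which monomials in $S^k(R)$ realise a prescribed bigrading; everything else comes from the two overflow relations and the commutativity of $R$.
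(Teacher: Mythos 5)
Your argument is correct and is essentially the paper's own proof: the bigrading support for modules in $\ggm$, Lemma~\ref{highestvector}(2) applied to a $\g_{ss}$-highest weight vector of the relevant layer, and commutativity of $R$ to push the ``overflowing'' factor onto $v$, which kills exactly the monomials you list. Your anti-diagonal $\sll_2$ (the short grading by $h'=h_l-h_r$) is merely a repackaging of the support constraint the paper records in \eqref{doublegrading} (whose displayed list, as your version implicitly corrects, should also include the components $(\pm\tfrac12,\mp\tfrac12)$), so the two proofs coincide in substance.
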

\begin{proof} (a) Follows easily from the double grading. Indeed, we have the following three possibilities
\begin{itemize}
\item $L=L_{1,0}\oplus L_{0,0}\oplus L_{-1,0}$;
\item $L=L_{0,1}\oplus L_{0,0}\oplus L_{0,-1}$;
\item $L=L_{\frac{1}{2},\frac{1}{2}}\oplus L_{-\frac{1}{2},-\frac{1}{2}}$.
\end{itemize}

(b) Without loss of generality assume that  $L=L_{1,0}\oplus L_{0,0}\oplus L_{-1,0}$.
Let $v\in L, w\in P^3(L)$ be $\g_{ss}$-highest weight vectors.   Then
$v\in L_{(1,0)}, w\in P^3(L)_{(\frac 12,\frac 12)}$ and by Lemma \ref{highestvector} we have 
$$w\in \sum_{i,j,k\in \{\pm\frac 12\}}R_{(-\frac 12, i)} R_{(-\frac 12, j)}  R_{(\frac 12, k)}v.$$
But  $R_{(\frac 12,k)}L_{(1,0)}=0$ by (\ref{doublegrading}). Contradiction.

If we assume that $w\in P^2(L)$ is a highest vector, then by the similar grading consideration we have
$$w\in \sum_{j,k,\in \{\pm\frac 12\}}R_{(-\frac 12, j)}  R_{(-\frac 12, k)}v.$$
This implies that the degree of $w$ is $(0,0)$ or $(0,1)$. Hence $P^2(L)\in  \g_r-\text{mod}_{1}$.

(c) Now let $L=L_{\frac{1}{2},\frac{1}{2}}\oplus L_{-\frac{1}{2},-\frac{1}{2}}$, $v\in L, w\in P^3(L)$ be $\g_{ss}$-highest weight vectors. 
We want to show that the degree of $w$ is $(0,0)$. Indeed, assume without loss of generality that degree of $w$ is $(1,0)$.
Then $v\in L_{\frac{1}{2},\frac{1}{2}}$ and we have 
$$w\in \sum_{i,j,k\in \{\pm\frac 12\}}R_{(-\frac 12,i)} R_{(\frac 12,j)}  R_{(\frac 12,k)}v.$$
But  $R_{(\frac 12,j)}  R_{(\frac 12,k)}L_{(\frac 12,\frac 12)}=0$ by (\ref{doublegrading}). Contradiction.
\end{proof}
\begin{Cor}\label{radcubecor} We have $rad^3 A(\J)=0$. 
\end{Cor}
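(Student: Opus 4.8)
The plan is to reduce the assertion to a statement about the radical filtration of the indecomposable projectives of $\gm$ and then feed in Lemma~\ref{radcubelem}. Since $A(\J)=\operatorname{End}_{\J}\bigl(\bigoplus_{L\neq tr}P(Jor(L))\bigr)$ and $Jor(P(L))=P(Jor(L))$ by Lemmas~\ref{proj} and~\ref{identity}, we have $rad^3A(\J)=0$ if and only if $rad^3Jor(P(L))=0$ for every non-trivial simple $L\in\gm$. Applying the exact functor $Jor$ to the radical filtration of the (finite-dimensional, by Lemma~\ref{findim}) module $P(L)$ yields a filtration $Jor(P(L))\supseteq Jor(rad\,P(L))\supseteq Jor(rad^2P(L))\supseteq\cdots$ whose subquotients are the $Jor(P^k(L))$; each $P^k(L)$ is a semisimple $\g$-module because the abelian radical $R$ acts on it by zero, hence each $Jor(P^k(L))$ is semisimple by Corollary~\ref{Jorsimple}. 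Since $rad^kN\subseteq F_k$ for any filtration $F_\bullet$ of a module $N$ with semisimple subquotients, this gives $rad^kJor(P(L))\subseteq Jor(rad^kP(L))=(rad^kP(L))_{-1}$, and it suffices to prove $(rad^3P(L))_{-1}=0$ for every non-trivial simple $L$.

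For $L\in\g_l-\text{mod}_1$ or $L\in\g_r-\text{mod}_1$ this is immediate: Lemma~\ref{radcubelem}(b) gives $P^3(L)=0$, and since $R$ acts nilpotently on $P(L)$ it follows that $rad^3P(L)=0$. For a mixed $L=A\boxtimes B$, Lemma~\ref{radcubelem}(c) only tells us that $P^3(L)$ is a trivial $\g_{ss}$-module; such a module is concentrated in bidegree $(0,0)$, so $(P^3(L))_{-1}=0$, and the claim reduces to showing $rad^4P(L)=0$, for then $rad^3P(L)=P^3(L)$ and $(rad^3P(L))_{-1}=0$ as needed.

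To obtain $rad^4P(L)=0$ I would argue exactly as in the proof of Lemma~\ref{radcubelem}(c). Let $w$ be a $\g_{ss}$-highest weight vector of $P^4(L)$; by Lemma~\ref{highestvector} it lies in $S^4R(v)$, where $v$ is the highest weight vector of $L$, of bidegree $(\frac12,\frac12)$, and each of the four $R$-factors shifts the bidegree by an element of $\{\pm\frac12\}\times\{\pm\frac12\}$. The first component of the bidegree of $w$ is $\geq 0$ (as $w$ is $\g_{ss}$-highest), lies in $\{-1,-\frac12,0,\frac12,1\}$ (the $\g_l$-degrees occurring in $\gm$), and differs from $\frac12$ by an integer (four half-integer shifts); hence it equals $\frac12$. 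Thus the four first-component shifts sum to $0$, so exactly two of the four $R$-factors raise the first component. Since $R$ is abelian these operators commute, so we may let those two act on $v$ first — but then the first component reaches $\frac32$, which does not occur in $\gm$, a contradiction. Therefore $P^4(L)=0$, whence $rad^4P(L)=0$ and $(rad^3P(L))_{-1}=0$. Combining the two cases gives $rad^3Jor(P(L))=0$ for every non-trivial simple $L$, so $rad^3A(\J)=0$. The one genuinely delicate point is the vanishing of the fourth radical layer for a mixed $L$: Lemma~\ref{radcubelem} controls only the third layer, and ruling out a fourth one relies essentially on both the commutativity of $R$ and the narrow grading range of the category $\gm$.
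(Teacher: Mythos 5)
Your argument is correct, and it is in fact more complete than what the paper offers: the corollary is stated with no proof, as if it were immediate from Lemma~\ref{radcubelem}. Your reduction — via Lemma~\ref{idempotent}, the identification $Jor(P(L))=P(Jor(L))$, exactness of $Jor$, and the inclusion $rad^{3}Jor(P(L))\subseteq Jor(rad^{3}P(L))$ coming from semisimplicity of the layers $Jor(P^{k}(L))$ — is exactly the right bookkeeping, and you correctly isolate the point the paper glosses over: in the mixed case Lemma~\ref{radcubelem}(c) controls only the third Loewy layer, and a hypothetical nonzero $P^{4}(L)$, being a quotient of $R\otimes P^{3}(L)$ with $R=\Gamma_l\boxtimes\Gamma_r$ non-trivial, would have non-trivial constituents, survive under $Jor$, and spoil the conclusion; so something beyond the literal statement of the lemma is needed. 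Your proof that $P^{4}(L)=0$ — Lemma~\ref{highestvector}, the bidegree constraints of \eqref{doublegrading} forcing a highest weight vector of $P^{4}(L)$ into bidegree $(\frac12,\frac12)$, and commutativity of $R$ letting you apply the two first-component-raising factors first so that a partial product lands in the forbidden degree $\frac32$ — is precisely the style of argument the authors themselves use inside the proof of Lemma~\ref{radcubelem}(b),(c), and it is confirmed a posteriori by Lemma~\ref{mixedcase}, which computes the projectives explicitly and shows they have Loewy length at most three. So where the paper implicitly relies on the later explicit description of the projectives (or simply omits the higher-layer issue), you close the gap with a short self-contained grading argument that uses only results stated before the corollary; both routes lead to the same place, but yours makes the logical dependence cleaner.
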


\begin{lem}\label{mixedcase} 
Let $L$ be a simple non-trivial module in $\ggm$. Then the length of the radical filtration of the indecomposable projective $P(L)$ is at most $3$. 
Moreover, it is $3$ in one of the following cases
\begin{enumerate}
\item $L$ is a simple submodule in $S^2\Gamma_l^*$ (respectively, $\Lambda^2\Gamma_l^*$). Then $P^1(L)=\Gamma_l^*\boxtimes \Gamma_r$ and 
$P^2(L)=S^2 \Gamma_r$ (respectively, $\Lambda^2 \Gamma_r$).  
\item $L$ is a simple submodule in $S^2\Gamma_r^*$ (respectively, $\Lambda^2\Gamma_r^*$). Then $P^1(L)=\Gamma_l\boxtimes \Gamma_r^*$ and 
$P^2(L)=S^2 \Gamma_l$ (respectively, $\Lambda^2 \Gamma_l$). 
\item $L=\Gamma_l^*\boxtimes \Gamma_r^*$, then $P(L)$ is self-dual with $P^2(L)=\Gamma_r\boxtimes \Gamma_l$ and 
$P^1(L)=(\Gamma_l\otimes\Gamma_l^*\oplus \Gamma_r\otimes \Gamma_r^*)/tr$.
\end{enumerate}
\end{lem}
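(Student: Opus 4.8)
The plan is to run the trichotomy of Lemma~\ref{radcubelem}(a): a simple non-trivial $L\in\ggm$ either lies in $\g_l\text{-mod}_{1}$, or in $\g_r\text{-mod}_{1}$, or is of the form $A\boxtimes B$ with $A$ a spinor $\g_l$-module and $B$ a spinor $\g_r$-module; by the left--right symmetry of the hypotheses it suffices to treat the first and the third type. When $L\in\g_l\text{-mod}_{1}$, Lemma~\ref{radcubelem}(b) already gives $P^{3}(L)=0$, so the radical length is at most $3$, together with $P^{2}(L)\in\g_r\text{-mod}_{1}$. For $L=A\boxtimes B$, Lemma~\ref{radcubelem}(c) gives that $P^{3}(L)$ is a trivial $\g_{ss}$-module; its vanishing is the only delicate point and I postpone it.

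To locate the cases of radical length exactly $3$ and to compute the first two layers, I would use Lemma~\ref{highestvector}(1): $P^{2}(L)$ is generated by $S^{2}R\cdot v$ for $v$ a $\g_{ss}$-highest weight vector of $L$, and, since $\Gamma_l,\Gamma_r$ are minuscule, $S^{2}R=(S^{2}\Gamma_l\boxtimes S^{2}\Gamma_r)\oplus(\Lambda^{2}\Gamma_l\boxtimes\Lambda^{2}\Gamma_r)$ with the two summands multiplicity-free and sharing no $\g_{ss}$-constituent. If $L\in\g_l\text{-mod}_{1}$, then $P^{2}(L)$ is a trivial $\g_l$-module (Lemma~\ref{radcubelem}(b)), so $S^{2}\Gamma_l\otimes L$ or $\Lambda^{2}\Gamma_l\otimes L$ must contain a trivial $\g_l$-summand; equivalently $L^{*}$ embeds into $S^{2}\Gamma_l$ or into $\Lambda^{2}\Gamma_l$, which is precisely case~(1) (respectively its $\Lambda^2$-variant), and these are mutually exclusive because the two summands are disjoint. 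Conversely, for such $L$, Lemma~\ref{quiver_Lie} computes $rad\,P(L)/rad^{2}P(L)=(R\otimes L)^{sh}$: every constituent has the form $A'\boxtimes\Gamma_r$ with $A'$ a constituent of $L\otimes\Gamma_l$ that lies in $\g_l\text{-mod}_{\frac12}$ (the remaining ones are destroyed by ${}^{sh}$ for degree reasons), and a short weight count leaves only $A'=\Gamma_l^{*}$, whence $P^{1}(L)=\Gamma_l^{*}\boxtimes\Gamma_r$. Finally $P^{2}(L)=\operatorname{Coker}\mu$ by Lemma~\ref{technical2} with $\lambda=0$ (as $[\rr,\rr]=0$ on {\bf List A}); the $\g_l$-invariant part of the target $(R\otimes P^{1}(L))^{sh}=\bigl((\Gamma_l\otimes\Gamma_l^{*})\boxtimes(\Gamma_r\otimes\Gamma_r)\bigr)^{sh}$ is $S^{2}\Gamma_r\oplus\Lambda^{2}\Gamma_r$, and a computation of $\mu$ in the spirit of Propositions~\ref{standard1} and \ref{projectives-so-lambda} shows its image exhausts the $\g_l$-non-trivial part together with the $\Lambda^{2}\Gamma_r$ (resp. $S^{2}\Gamma_r$) summand, so $P^{2}(L)=S^{2}\Gamma_r$ (respectively $\Lambda^{2}\Gamma_r$). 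The two-sided case is parallel: $\operatorname{Ext}^{1}_{\g}(L,-)$ computed from $L\otimes R=(A\otimes\Gamma_l)\boxtimes(B\otimes\Gamma_r)$ forces $A=\Gamma_l^{*}$, $B=\Gamma_r^{*}$ once $P^{2}(L)\neq0$, and then $P^{1}(L)=(\Gamma_l\otimes\Gamma_l^{*}\oplus\Gamma_r\otimes\Gamma_r^{*})/tr$ and $\operatorname{Coker}\mu=\Gamma_r\boxtimes\Gamma_l\cong L^{*}$.

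The main obstacle, as I see it, is proving $P^{3}(L)=0$ — equivalently, the self-duality of $P(L)$ — in the two-sided case~(3). A grading argument does not settle this: a hypothetical trivial $P^{3}(L)$ would sit in the degree-$(0,0)$ component, and $L^{*}$ really does occur inside $S^{k}R$ for $k\geq3$, so ${}^{sh}$ does not visibly remove it; moreover, since $Jor$ is blind to trivial modules, the already-proved $rad^{3}A(\J)=0$ (Corollary~\ref{radcubecor}) cannot be transported back to give it. Instead I would show that $L^{*}\cong\Gamma_r\boxtimes\Gamma_l$ is the \emph{entire} socle of $P(L)$, by inspecting $rad^{2}P(L)$ — the image in $P(L)=I(L)^{sh}$ of $\mathfrak R^{2}\otimes L\subseteq I(L)$ — and checking that every vector of degree $(0,0)$ occurring there already lies in the submodule $N$ that ${}^{sh}$ kills. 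Granting this, $P(L)$ embeds into the injective hull of $L^{*}$, which is $P(L)^{*}$, and equality of dimensions forces $P(L)\cong P(L)^{*}$; the radical filtration is then palindromic of length $3$, so $P^{3}(L)=0$ and the proof is complete. (Once the Ext quivers $\mathbf{Q}_{3},\mathbf{Q}_{4},\mathbf{Q}_{5}$ and their quadratic relations are available, one can alternatively read off self-injectivity of the relevant blocks of $A(\g)$ and conclude in the same way.)
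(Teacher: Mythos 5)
Up to the crucial point your proposal follows the paper's own route: the trichotomy of Lemma \ref{radcubelem}(a), the reduction by left--right symmetry, the observation that in the one-sided case $P^2(L)\neq 0$ forces $(\Gamma_l\otimes\Gamma_l\otimes L)^{\g_l}\neq 0$, i.e. $L\subset (\Gamma_l\otimes\Gamma_l)^*$, and the identification of $P^2(L)$ as a cokernel of $\mu$ via Lemma \ref{technical2}. What you leave as assertions, the paper actually carries out: the explicit formula for $\bar\mu$ which produces the symmetric/antisymmetric swap ($\operatorname{Im}\bar\mu=\Lambda^2\Gamma_r$ for $L\subset S^2\Gamma_l^*$, and vice versa), the surjectivity of $\pi_l$ which kills $P^2(L)$ when exactly one of $A=\Gamma_l^*$, $B=\Gamma_r^*$ holds, and the map $\theta$ exhibiting $\operatorname{Coker}\mu=\Gamma_l\boxtimes\Gamma_r$ in case (3). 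These omissions are sketch-level rather than structural.

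The genuine gap is in case (3), exactly where you diverge from the paper. First, your socle argument is under-specified: checking that the degree-$(0,0)$ vectors in the image of $\mathfrak R^2\otimes L$ lie in the submodule killed by ${}^{sh}$ only rules out \emph{trivial} submodules of $P(L)$; it does not show that $\operatorname{soc}P(L)\simeq L^*$ is simple, since a priori the socle could also contain a constituent of $P^1(L)$ splitting off, or a further copy of $L^*$ coming from a hypothetical layer of index $\geq 4$ (note $R\simeq L^*$ here, so no grading consideration excludes $L^*$ from deeper layers). Second, even granting $\operatorname{soc}P(L)=L^*$ and hence $P(L)\simeq P(L)^*$ by your injective-hull and dimension count, the concluding step ``the radical filtration is palindromic of length $3$, so $P^3(L)=0$'' is not formal: $L^*$ recurs in $P(L)$ (already $P^2(L)\simeq L^*$, and for $\so_8\oplus\so_8$ even $L\simeq L^*$), so self-duality alone does not bound the Loewy length without an additional multiplicity argument, which you do not supply. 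The paper handles this point differently: by Lemma \ref{radcubelem}(c) a nonzero $P^3(L)$ would be trivial, and it would be reached inside $P(L)$ through the submodule generated by a lift of some simple $L'\subset P^1(L)$, i.e. through a quotient of $P(L')$; this is then played off against the already computed $P^1(L')$ and $P^2(L')$ to get a contradiction, and self-duality is obtained not from a socle computation but from the explicit realization $P(L)\simeq M\otimes M^*$, where $M\in\gmh$ is the length-two module with submodule $\Gamma_r$ and quotient $\Gamma_l^*$ constructed earlier in the proof. To salvage your route you would have to actually prove simplicity of the socle (which is strictly more than the degree-$(0,0)$ check you propose) or do the multiplicity bookkeeping; as written, the key claims $P^3(L)=0$ and the self-duality of $P(L)$ are not established.
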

\begin{proof} We have to consider three cases as in Lemma \ref{radcubelem}. 

The first two cases are similar by symmetry. Therefore it is sufficient to consider the case $L\in\g_l-\text{mod}_1$.
Then we have
$$P^1(L)=(R\otimes L)^{sh}=(\Gamma_l\otimes L)^{sh}\boxtimes \Gamma_r.$$
Furthermore,
\begin{equation}\label{CD}
(R\otimes (R\otimes L)^{sh})^{sh}=C\oplus D,
\end{equation}
where $C\in  \g_l-\text{mod}_{1}$, $D \in  \g_r-\text{mod}_{1}$, and we assume in addition that $C^{\g_l}=0$.
Due to  Lemma \ref{radcubelem}(b) we know that $P^2(L)=\operatorname{Coker}\bar{\mu}$, where $\bar{\mu}$ is the composition of $\mu$, defined in
Lemma \ref{technical2}, with the natural projection on $D$.
More precisely, $D=(\Gamma_l\otimes (\Gamma_l\otimes L)^{sh})^{\g_l}\boxtimes (\Gamma_r\otimes\Gamma_r)$.
If $D\neq 0$, then $(\Gamma_l\otimes \Gamma_l\otimes L)^{\g_l}\neq 0$, which is only possible when $L$ is a simple submodule in $(\Gamma_l\otimes\Gamma_l)^*$.
In this case the multiplicity of $L$ in  $(\Gamma_l\otimes\Gamma_l)^*$ is one. Therefore $D=\Gamma_r\otimes\Gamma_r$.

Let $L$ be a submodule in $S^2\Gamma_l^*$. Let $v_1,v_2\in\Gamma_l$, $w_1,w_2\in\Gamma_r$ and $x \cdot y\in S^2\Gamma_l^*$. Then
$$\bar\mu (v_1\boxtimes v_2, w_1\boxtimes w_2, x\cdot y)=(\langle x,v_1\rangle \langle y,v_2 \rangle+\langle y,v_1\rangle \langle x,v_2 \rangle)w_1\otimes w_2- 
(\langle x,v_2\rangle \langle y,v_1 \rangle+\langle y,v_2\rangle \langle x,v_1 \rangle)w_2\otimes w_1.$$
Thus, $\operatorname{Im}\bar\mu=\Lambda^2\Gamma_r$ and hence $P^2(L)=S^2\Gamma_r$.

In the similar way, with the change of sign, one obtains that if $L$ is a submodule in $\Lambda^2\Gamma_l^*$, then 
$\operatorname{Im}\bar\mu=S^2\Gamma_r$ and hence $P^2(L)=\Lambda^2\Gamma_r$.

We also have an explicit construction of $P(L)$. Assume for example that $L\subset S^2\Gamma_l^*$. There exists an indecomposable
module $M$ of length two in $\gmh$ with submodule $\Gamma_r$ and quotient $\Gamma_l^*$. Then $S^2M\in\ggm$ is indecomposable, with the radical filtration:
$M^0=S^2\Gamma_l^*$, $M^1=\Gamma_l^*\boxtimes \Gamma_r$, $M^2=S^2\Gamma_r$. One can check that
$L\subset M_0$ generates a submodule isomorphic to $P(L)$. 

Now assume that $L=A\boxtimes B$ as in Lemma \ref{radcubelem}(c). 
If $A\neq \Gamma_l^*$ and $B\neq \Gamma_r^*$, then $P(L)^1=(L\otimes R)^{sh}=0$
and hence $P(L)=L$. Assume next that $A\neq \Gamma_l^*$ and $B=\Gamma_r^*$. Then
$$P^1(L)=\Gamma_l\otimes A$$
and 
$$(R\otimes (R\otimes L))^{sh}=(\Gamma_l\otimes\Gamma_l\otimes A)^{sh}\boxtimes \Gamma_r.$$
Furthermore, if we denote by $\pi_l$ the natural projection $\Gamma_l\otimes\Gamma_l\otimes A\to (\Gamma_l\otimes\Gamma_l\otimes A)^{sh}$,
 then
for all $v_1,v_2\in\Gamma_l$, $w_1,w_2\in\Gamma_r$ and $a\in A, b\in B$ we have 
$$\mu (v_1\boxtimes w_1, v_2\boxtimes w_2, a\boxtimes b)=
\langle b,w_2\rangle \pi_l(v_1,v_2,a)\boxtimes w_1-\langle b,w_1\rangle \pi_l(v_2,v_1,a)\boxtimes w_2.$$
Since $\pi_l$ is surjective, we obtain $\operatorname{Coker}\mu=0$ and hence $P^2(L)=0$.

Finally, we assume that $L=\Gamma_l^*\boxtimes \Gamma_r^*$. In this case
$$P^1(L)=(R\otimes L)^{sh}=tr \oplus (\Gamma_l\otimes\Gamma_l^*)_0\oplus (\Gamma_r\otimes \Gamma_r^*)_0,$$
where $(X\otimes X^*)_0$ denotes the traceless part of $X\otimes X^*$.
Then $$(R\otimes (R\otimes L)^{sh})^{sh}=\Gamma_l\boxtimes \Gamma_r\oplus (\Gamma_l\otimes(\Gamma_l\otimes\Gamma_l^*)_0)^{sh}\boxtimes \Gamma_r\oplus
\Gamma_l\boxtimes(\Gamma_r\otimes(\Gamma_r\otimes\Gamma_r^*)_0)^{sh}.$$

We notice that $(\Gamma_l\otimes(\Gamma_l\otimes\Gamma_l^*)_0)^{sh}\simeq \Gamma_l$ with the natural projection
$\pi_l:\Gamma_l\otimes(\Gamma_l\otimes\Gamma_l^*)_0\to \Gamma_l$ given by the formula
$$\pi_l(v_1, v_2, x)=\langle x,v_1\rangle v_2-\langle x,v_2\rangle v_1.$$
We have analogous formula for $\pi_r:\Gamma_r\otimes(\Gamma_r\otimes\Gamma_r^*)_0\to \Gamma_r$.
Then $\pi:R\otimes R\otimes L\to (R\otimes (R\otimes L)^{sh})^{sh}$ is defined by
$$\pi (v_1\boxtimes w_1, v_2\boxtimes w_2, x\boxtimes y)=(\langle x,v_2\rangle \langle y,w_2\rangle v_1\boxtimes w_1,
\langle y,w_2\rangle\pi_l(v_1,v_2,x)\boxtimes w_1,\langle x,v_2\rangle v_1\boxtimes\pi_r(w_1,w_2,y).$$
By tedious straightforward calculations one obtains that $\theta:(R\otimes (R\otimes L)^{sh})^{sh}\to \Gamma_l\boxtimes\Gamma_r$
defined by $\theta(x_1,x_2,x_3)=2x_1-x_2-x_3$ gives the cokernel of $\mu$.
Therefore, we obtain $P^2(L)=  \Gamma_l\boxtimes\Gamma_r$.
Finally, let us prove that $P^3(L)=0$. Assume the opposite. Then $P^3(L)$ is trivial and every submodule $N$ generated by a 
simple submodule $L'$
in $P^1(L)$ has Loewy length $3$ with trivial submodule in $rad^3 N$. That contradicts description of $P^1(L)$ and $P^2(L')$.

We leave to the reader to check that $P(L)\simeq M\otimes M^*$ where $M$ is defined above in this proof.   
\end{proof}

We use the last lemma in order to determine $A(J)$, equivalently the relations in $Q'(\g)$, when $\g$ in {\bf List A} 
and $\g_{ss}=\so_{2m}\oplus\so_{2n}$, $m,n\in\{4,5\}$. 
\begin{thm}\label{quivers-mixed}
\begin{enumerate}
\item If $\g=(\so_8\oplus\so_8)\niplus\Gamma^+_1\boxtimes\Gamma^+_2$, then $A(\J)={\bf k}{\bf Q}'_3/I$, where the ideal $I$ is generated by
\begin{equation}\label{relations-so8}
\begin{array}{c}
\alpha_i\beta_i=\alpha_j\beta_j,\  \beta_i\alpha_i=\delta_i\tau_j=\tau_i\delta_j=0,\ 1\leq i,j\leq 4;\\ 
\beta_2\alpha_1=\beta_4\alpha_1=\beta_1\alpha_2=\beta_3\alpha_2=\beta_2\alpha_3=\beta_4\alpha_3=\beta_1\alpha_4=\beta_3\alpha_4=0.
\end{array}
\end{equation}

\item  If $\g=(\so_8\oplus\so_{10})\niplus\Gamma^+_1\boxtimes\Gamma^+_2$, then $A(\J)={\bf k}{\bf Q}'_4/I$, where the ideal $I$ is 
generated by
\begin{equation}\label{relations-so8-so10}
\begin{array}{c}
\beta_i\alpha_i=\beta_j\alpha_j,\ 1\leq i,j\leq 4;\ \alpha_i\delta_1=\alpha_i\delta_3=0,\ i\neq 4;\ \alpha_i\delta_2=0,\ i\neq 3;\ 
\gamma_i\beta_3=0,\ i=1,3;\\ 
\gamma_i\beta_4=0,\ i=1,2;\ \gamma_i\beta_1=\gamma_i\beta_2=0,\ 1\leq i\leq 3;\ \delta_j\gamma_j=0,\ j=1,2;\ \tau_i\rho_i=0,\ i=1,2.
\end{array}
\end{equation}

\item If $\g=(\so_{10}\oplus\so_{10})\niplus\Gamma^+_1\boxtimes\Gamma^+_2$, then  $A(\J)={\bf k}{\bf Q}'_5/I$, where 
the ideal $I$ is generated by
\begin{equation}\label{relations-so10}
\begin{array}{c}
\alpha_1\beta_2=\alpha_3\beta_2=\alpha_2\beta_1=\alpha_2\beta_3=\gamma_1\delta_2=\gamma_3\delta_2=\gamma_2\delta_1=\gamma_2\delta_3=0;\ 
\rho_i\tau_i=\rho_j\tau_j,\ 1\leq i,j\leq 4.
\end{array}
\end{equation}
\end{enumerate}
\end{thm}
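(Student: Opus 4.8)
The plan is to identify $A(\J)$ with $\kk Q'(\g)/I$ — legitimate by Corollary \ref{quiverab}, since in each of the three cases the radical $\rr=R=\Gamma_1^+\boxtimes\Gamma_2^+$ is an abelian ideal which is irreducible over $\g_{ss}$, and $\gh=\g$ — and then to determine the ideal $I$ layer by layer. By Corollary \ref{radcubecor} we have $rad^3A(\J)=0$, so $I$ automatically contains every path of length $\ge 3$ and the whole problem reduces to the quadratic part of $I$; equivalently, modulo the relations in the statement each indecomposable projective $A(\J)e_{Jor(L)}=P(Jor(L))$ must have radical layers $Jor(L)$, $Jor(P^1(L))$, $Jor(P^2(L))$, which are precisely the data supplied by Lemma \ref{mixedcase} together with Lemma \ref{quiver_Lie} (recall that $Jor$ is exact, kills $tr$, and commutes with $rad$ on non-trivial modules by Corollary \ref{Jorradical}).

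Next I would fix a non-trivial simple $L\in\ggm$ and work inside $\ggm$. The space of length-$2$ paths out of $L$ is the graded $\g_{ss}$-module $(R\otimes(R\otimes L)^{sh})^{sh}$, and since $\rr$ is abelian we have $\rr^2=0$, so the map $\lambda$ of Lemma \ref{technical2} vanishes and the span of the quadratic relations starting at $L$ is exactly $\operatorname{Im}\mu$, with $\operatorname{Coker}\mu=P^2(L)$. By Lemma \ref{highestvector} it is enough to evaluate $\mu$ on highest-weight vectors, and the explicit formulas for $\mu$ (and for the projections $\pi$, $\pi_l$, $\pi_r$) established in the proof of Lemma \ref{mixedcase} tell us, for each composable pair of arrows $L\to L'\to L''$, whether the composition lies in $\operatorname{Im}\mu$ — a relation of the form "$=0$" — and, when several compositions share the same source and target, which of their linear combinations lie in $\operatorname{Im}\mu$ — a relation of the form $\beta_i\alpha_i=\beta_j\alpha_j$ after rescaling the arrows. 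Sorting the vertices into the three families of Lemma \ref{radcubelem}(a), namely $L\in\g_l\text{-mod}_1$, $L\in\g_r\text{-mod}_1$, and $L=A\boxtimes B$, and reading off $P^1(L)$, $P^2(L)$ from Lemma \ref{mixedcase}(1)--(3), one obtains vertex by vertex over ${\bf Q_3}$, ${\bf Q_4}$, ${\bf Q_5}$ exactly the relation lists \eqref{relations-so8}, \eqref{relations-so8-so10}, \eqref{relations-so10}; a matching dimension count (both sides have $rad^3=0$ and the same $rad^0$, $rad^1$, $rad^2$ pieces) then promotes the surjection $\kk Q'(\g)/(\text{listed relations})\twoheadrightarrow A(\J)$ to an isomorphism. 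The remaining sign choices $R=\Gamma_1^{\pm}\boxtimes\Gamma_2^{\pm}$ follow by applying the diagram automorphisms $\tau\times 1$, $1\times\tau$, $\tau\times\tau$, which merely permute vertices and arrows.

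The step I expect to be the genuine obstacle — apart from the sheer volume of this case analysis — is controlling the trivial vertex, which occurs in $Q(\g)$ but is deleted from $Q'(\g)$. Since $tr$ is joined by the dashed arrows only to $L_0=\Gamma_1^+\boxtimes\Gamma_2^+$, the only length-$2$ path of $A(\g)$ with both endpoints in $Q'(\g)$ that runs through $tr$ is a loop at $L_0$, and one must check that it produces no new relation in $A(\J)=(1-e_{tr})A(\g)(1-e_{tr})$. For $\so_{10}\oplus\so_{10}$, Lemma \ref{mixedcase}(3) gives $P^2(L_0)=\Gamma_1^-\boxtimes\Gamma_2^-$, which does not contain $L_0$, so this loop is already zero in $A(\g)$; for $\so_8\oplus\so_8$ the spinor modules are self-dual, $P^2(L_0)=L_0$, and the loop is nonzero, but the cokernel computation in the proof of Lemma \ref{mixedcase}(3) (the map $\theta(x_1,x_2,x_3)=2x_1-x_2-x_3$ on the three channels of $(R\otimes(R\otimes L_0)^{sh})^{sh}$) forces it to equal a nonzero multiple of the common value of the $\beta_i\alpha_i$, hence to be expressible through paths already present in $Q'(\g)$; the intermediate case $\so_8\oplus\so_{10}$ is handled the same way. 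Once this is settled, the remaining verifications run parallel to — though are longer than — those in the proof of Theorem \ref{classification-unital}, and may be left to the reader.
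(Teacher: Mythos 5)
Your proposal follows essentially the same route as the paper: reduce everything to the first three radical layers of the indecomposable projectives in $\ggm$, which is exactly the content of Lemma \ref{mixedcase} (whose proof contains the explicit $\mu$, $\pi_l$, $\pi_r$ computations you invoke), combine this with Loewy length $\leq 3$ (Lemma \ref{radcubelem}, Corollary \ref{radcubecor}) and with the passage to $\JJ$ via Lemma \ref{idempotent} and Corollary \ref{quiverab}, and read off the quadratic relations vertex by vertex. That is precisely how the paper argues, and your additional remarks (vanishing of $\lambda$ since $\rr$ is abelian, the normalization of arrows, the check that the listed quadratic relations already force $rad^3=0$) are correct.

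The only place where you go astray is the paragraph you single out as ``the genuine obstacle,'' and there the slip is factual: it is not true in ${\bf Q_4}$ and ${\bf Q_5}$ that $tr$ is joined by dashed arrows only to $L_0=\Gamma_1^+\boxtimes\Gamma_2^+$. In ${\bf Q_4}$ the dashed arrows are $\Gamma_1^+\boxtimes\Gamma_2^+\to tr$ and $tr\to\Gamma_1^+\boxtimes\Gamma_2^-$, and in ${\bf Q_5}$ they are $\Gamma_1^+\boxtimes\Gamma_2^+\to tr$ and $tr\to\Gamma_1^-\boxtimes\Gamma_2^-$, so the length-two path through $tr$ is not a loop at $L_0$ but a path between two different spinor vertices; your argument ``$P^2(L_0)$ does not contain $L_0$, so this loop is zero'' therefore addresses a path that does not exist, and in the ${\bf Q_3}$ case the loop at $L_0$ is the common value of the $\alpha_i\beta_i$, not of the $\beta_i\alpha_i$ (the latter are the loops at the outer vertices and vanish). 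Fortunately this entire worry is redundant: once Corollary \ref{quiverab} identifies the Ext quiver of $\JJ$ with $Q'(\g)$, the general Gabriel theory recalled in the Appendix guarantees that the basic algebra $A(\J)=(1-e_{tr})A(\g)(1-e_{tr})$ is generated by its idempotents and the arrows of $Q'(\g)$, so any element coming from a path of $A(\g)$ through $tr$ is automatically a linear combination of $Q'$-paths modulo $rad^3=0$ and requires no separate control --- which is why the paper never has to confront it. With that paragraph either corrected or simply deleted, your proof is the paper's proof.
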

\begin{proof}
In order to write down the relations in the path algebra ${\bf k}Q'(\g)$ it is enough to describe all projective covers
 of simple non-trivial modules of Loewy length three. (Recall that by Lemma\,\ref{radcubelem} all indecomposable projectives have Loewy length at most three).

Let $\g=(\so_8\oplus\so_8)\niplus\Gamma^+_1\boxtimes\Gamma^+_2$. For $\so_8$-spinor module $\Gamma^+$ we have
\begin{equation}\label{gamma-so8}
(\Gamma^+)^*=\Gamma^+,\quad S^2(\Gamma^+)=\Lambda^+ V\oplus tr,\quad \Lambda^2(\Gamma^+)=\Lambda^2 V.
\end{equation}
 By Lemma\,\ref{mixedcase} we obtain the following indecomposable projectives of Loewy length three.
\begin{equation}\label{projective-so8-so8}
\begin{array}{c}
\Lambda^+ V\\
\overline{\Gamma_1^+\boxtimes\Gamma_2^+ }\\
\overline{\Lambda^+ W \oplus tr}\\
\end{array} \quad
\begin{array}{c}
\Lambda^+ W\\
\overline{\Gamma_1^+\boxtimes\Gamma_2^+ }\\
\overline{\Lambda^+ V \oplus tr}\\
\end{array} \quad
\begin{array}{c}
\Lambda^2 V\\
\overline{\Gamma_1^+\boxtimes\Gamma_2^+ }\\
\overline{\quad \Lambda^2 W \quad}\\
\end{array} \quad
\begin{array}{c}
\Lambda^2 W\\
\overline{\Gamma_1^+\boxtimes\Gamma_2^+ }\\
\overline{\quad \Lambda^2 V \quad}\\
\end{array}\quad
\begin{array}{c}
\Gamma_1^+\boxtimes\Gamma_2^+\\
\overline{tr\oplus\Lambda^2 V\oplus\Lambda^+ V\oplus\Lambda^2 W\oplus\Lambda^+ W} \\
\overline{\qquad \qquad \Gamma_1^+\boxtimes\Gamma_2^+\qquad\qquad }\\
\end{array} 
\end{equation}
The relations in $A(\J)=\kk Q'_3/I$ follow from \eqref{projective-so8-so8}. They imply $rad^3 A(\J)=0$.

Let $\g=(\so_8\oplus\so_{10})\niplus\Gamma^+_1\boxtimes\Gamma^+_2$. For $\so_{10}$-spinor modules $\Gamma_2^\pm$ we have
\begin{equation}\label{gamma-so10}
(\Gamma_2^+)^*=\Gamma_2^-,\quad S^2(\Gamma_2^\pm)=\Lambda^\pm W\oplus W,\quad \Lambda^2(\Gamma_2^\pm)=\Lambda^3 W, \quad
\Gamma_2^+\otimes \Gamma_2^-=tr\oplus \Lambda^2W\oplus\Lambda^4W.
\end{equation}

By Lemma\,\ref{mixedcase} the indecomposable projective modules of Loewy length three in $\ggm$ are the following:
\begin{equation}\label{projective-so8-so10}
\begin{array}{c}
\Lambda^2 V\\
\overline{\Gamma_1^+\boxtimes\Gamma_2^+ }\\
\overline{\quad \Lambda^3 W \quad}\\
\end{array} \quad
\begin{array}{c}
\Lambda^+ V\\
\overline{\Gamma_1^+\boxtimes\Gamma_2^+ }\\
\overline{\Lambda^+ W \oplus W}\\
\end{array} \quad
\begin{array}{c}
\Lambda^3 W\\
\overline{\Gamma_1^+\boxtimes\Gamma_2^- }\\
\overline{\quad \Lambda^2 V \quad}\\
\end{array} \quad
\begin{array}{c}
\Lambda^- W\\
\overline{\Gamma_1^+\boxtimes\Gamma_2^- }\\
\overline{\Lambda^+ V \oplus tr}\\
\end{array} \quad
\begin{array}{c}
W\\
\overline{\Gamma_1^+\boxtimes\Gamma_2^- }\\
\overline{\Lambda^+ V \oplus tr}\\
\end{array}\quad
\begin{array}{c}
\Gamma_1^+\boxtimes\Gamma_2^-\\
\overline{tr\oplus\Lambda^2 V\oplus \Lambda^+ V\oplus\Lambda^2 W\oplus\Lambda^4 W} \\
\overline{\qquad \qquad \Gamma_1^+\boxtimes\Gamma_2^+\qquad\qquad }\\
\end{array} 
\end{equation}
The relations \eqref{relations-so8-so10} in $A(\J)$ follow and imply $rad^3A(\J)=0$.

Finally, if $\g=(\so_{10}\oplus\so_{10})\niplus\Gamma^+_1\boxtimes\Gamma^+_2$, using \eqref{gamma-so10}, we apply 
Lemma\,\ref{mixedcase} to obtain the indecomposable projectives in $\ggm$:
\begin{equation}\label{projective-so10-so10}
\begin{array}{c}
\begin{array}{c}
\Lambda^- V\\
\overline{\Gamma_1^-\boxtimes\Gamma_2^+}\\
\overline{ \Lambda^- W\oplus W }\\
\end{array} \quad
\begin{array}{c}
V\\
\overline{\Gamma_1^-\boxtimes\Gamma_2^+}\\
\overline{\Lambda^- W \oplus W }\\
\end{array} \quad
\begin{array}{c}
\Lambda^3 V\\
\overline{\Gamma_1^-\boxtimes\Gamma_2^+ }\\
\overline{\Lambda^3 W }\\
\end{array} \quad
\begin{array}{c}
\Lambda^- W\\
\overline{\Gamma_1^+\boxtimes\Gamma_2^- }\\
\overline{\Lambda^- V\oplus V }\\
\end{array} \quad
\begin{array}{c}
W\\
\overline{\Gamma_1^+\boxtimes\Gamma_2^- }\\
\overline{\Lambda^- V\oplus V }\\
\end{array}\quad
\begin{array}{c}
\Lambda^3 W\\
\overline{\Gamma_1^+\boxtimes\Gamma_2^- }\\
\overline{\Lambda^3 V }\\
\end{array}\\
\begin{array}{c}
\Gamma_1^-\boxtimes\Gamma_2^-\\
\overline{tr\oplus\Lambda^2 V\oplus\Lambda^4 V\oplus\Lambda^2 W\oplus\Lambda^4 W} \\
\overline{\qquad \qquad \Gamma_1^+\boxtimes\Gamma_2^+\qquad\qquad }\\
\end{array} 
\end{array}
\end{equation}
The relations \eqref{relations-so10} in $A(\J)$ follow and imply $rad^3A(\J)=0$.
\end{proof}

\subsection{Tameness}
\begin{thm} The algebras $A(J)$ described in Theorem\,\ref{classification-unital} and Theorem\,\ref{quivers-mixed} are tame.
\end{thm}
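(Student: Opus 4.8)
The plan is to treat the six families of \textbf{List~A} in turn, the decisive tool being Theorem~\ref{Viktor} from the Appendix. Recall that Theorem~\ref{Viktor} applies to a finite-dimensional quadratic algebra $A=\kk Q/I$ with $rad^3A=0$ in which, for every ordered pair of vertices, the length-two paths modulo $I$ span a space of dimension at most one, and for such $A$ it reduces tameness to a combinatorial condition on the reduced quiver $Q'$ --- a condition which is essentially the admissibility already isolated in Theorem~\ref{main-quiver} and controlled by Theorem~\ref{Gabriel}. Now Theorems~\ref{classification-unital} and~\ref{quivers-mixed}, together with Corollary~\ref{radcubecor}, exhibit every $A(\J)=\kk Q'(\g)/I$ as quadratic with $rad^3A(\J)=0$, while the proportionality of parallel length-two paths is read off from the radical layers of the indecomposable projectives: from Corollary~\ref{dualityofp} when $\g_{ss}$ is simple, and from Lemma~\ref{mixedcase} together with the lists \eqref{projective-so8-so8}--\eqref{projective-so10-so10} in the mixed case. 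So each $A(\J)$ lies in the scope of Theorem~\ref{Viktor}, and what remains is to verify its combinatorial hypothesis for the quivers produced in Theorem~\ref{main-quiver}.

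For the algebras of Theorem~\ref{classification-unital}, i.e. items (1)--(3) of \textbf{List~A}, I would in fact establish the stronger fact that $A(\J)$ is \emph{special biserial}. Indeed, away from the (already removed) trivial vertex, each vertex of the quiver carries at most two incoming and at most two outgoing arrows, and one checks directly that the relations \eqref{relations-standard-odd}, \eqref{relations-standard-even}, \eqref{relations-lambda-plus} leave, for each arrow, at most one nonzero continuation on either side; in item (3) the simple module $\Lambda^{-}V$ is projective--injective and splits off as a one-dimensional semisimple block. Since special biserial algebras are tame (Appendix), this settles items (1)--(3) in one stroke, and the argument is manifestly uniform in $m$, so the infinite families (1) and (2) need no special treatment. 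One could instead appeal to Theorem~\ref{Viktor} directly, the relevant double quiver being tame by the admissibility of Theorem~\ref{main-quiver}.

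For the mixed cases (4)--(6), with quivers $\mathbf{Q}_3,\mathbf{Q}_4,\mathbf{Q}_5$, the algebra $A(\J)$ is no longer special biserial: the central vertex $\Gamma_l\boxtimes\Gamma_r$ carries four, respectively five, arrows in each direction. Here Theorem~\ref{Viktor} is indispensable. From the radical layers \eqref{projective-so8-so8}, \eqref{projective-so8-so10}, \eqref{projective-so10-so10} and the relations \eqref{relations-so8}, \eqref{relations-so8-so10}, \eqref{relations-so10} I would confirm the hypotheses of that theorem --- all the paths $\alpha_i\beta_i$ at a central vertex coincide with one ``loop'' element, and between any two peripheral vertices there is at most one nonzero path of length two, every other length-two composition vanishing --- and then check the combinatorial tameness condition block by block. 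After discarding the semisimple blocks coming from the arrowless simple modules ($\Lambda^{-}V$, $\Lambda^{-}W$, $\Gamma_1^{-}\boxtimes\Gamma_2^{-}$, and so on), the remaining pieces are a few string-type and ``commutative-square'' blocks, which are plainly tame, together with one ``star'' block around each central vertex whose underlying graph is of Euclidean type $\widetilde{D}_4$ or $\widetilde{D}_5$.

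The hard part, I expect, will be exactly those central ``star'' blocks in items (4)--(6): they violate the special biserial condition, so their tameness is not free and depends on the precise shape of the relations from Theorem~\ref{quivers-mixed} --- it is the vanishing of every ``cross'' composition $\beta_j\alpha_i$ and the coincidence of the loop elements $\alpha_i\beta_i$ that bring each such block within reach of Theorem~\ref{Viktor}. Carrying out, for each of the six entries of \textbf{List~A}, the verification that the hypotheses of Theorem~\ref{Viktor} hold and that its tameness conclusion agrees with the admissibility list of Theorem~\ref{main-quiver} is the essential --- and otherwise bookkeeping --- content of the proof.
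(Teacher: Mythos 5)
Two remarks on the part where $\g_{ss}$ is simple, before the main problem. First, you misquote Theorem~\ref{Viktor}: its hypothesis is not that length-two path spaces between vertices are at most one-dimensional, but that every indecomposable projective $P$ with $rad^2P\neq 0$ is simultaneously projective and injective with $rad^3P=0$ (the situation of Lemma~\ref{Viktor1}); in the paper this is secured for cases (1)--(3) by the explicit radical layers and the Frobenius property recorded in Theorem~\ref{classification-unital}(4). Second, your ``special biserial in one stroke'' claim is false for case (2): in $\mathbf{Q}_1^{2m}$ the vertex $\Lambda^{m-1}V$ has three incoming arrows ($\gamma_{m-2},\delta^{+},\delta^{-}$) and three outgoing arrows ($\delta_{m-2},\gamma^{+},\gamma^{-}$), and $rad\,P(\Lambda^{m-1}V)$ has three simple summands in its top, so $A(\J)$ is not special biserial there (also, the Appendix contains no statement about special biserial algebras, so you are citing a result the paper does not provide). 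Your fallback --- Theorem~\ref{Viktor} together with admissibility --- does handle (1)--(3), and that is exactly the paper's route, but it requires verifying projective--injectivity, not a path-dimension count.

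The genuine gap is in cases (5) and (6). Theorem~\ref{Viktor} simply does not apply to the large blocks of $\mathbf{Q}_4$ and $\mathbf{Q}_5$: by \eqref{projective-so8-so10} the projective $P(\Lambda^+V)$ has second radical layer $\Lambda^+W\oplus W$, hence non-simple socle, hence is not injective, and likewise $P(W)$, $P(\Lambda^-W)$ there and $P(V),P(\Lambda^-V),P(W),P(\Lambda^-W)$ in \eqref{projective-so10-so10}; only the cover of the central vertex ($P(\Gamma_1^+\boxtimes\Gamma_2^-)$, resp.\ $P(\Gamma_1^-\boxtimes\Gamma_2^-)$) satisfies Lemma~\ref{Viktor1}. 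Nor do these blocks decompose into ``string, commutative-square and $\widetilde{D}_4$/$\widetilde{D}_5$ star'' pieces: after stripping off the unique projective--injective via Lemma~\ref{Viktor1}, one is left with a connected ten-vertex algebra, and the paper must invoke Lemma~\ref{divide} (which uses precisely the vanishing of all compositions crossing the central vertex) to force every indecomposable to live over one of two tree algebras $B'$, $B''$ (resp.\ $C$, $C'$). Their underlying graphs are stars with seven (resp.\ six) edges --- wild as hereditary quivers --- so their tameness is not ``plain''; it is obtained only through the relations, by computing the Tits form \eqref{Tits-form} and checking weak non-negativity, and then applying Br\"ustle's Theorem~\ref{Bruestle} on tame tree algebras. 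This reduction and the Tits-form verification constitute the substantive content of the proof for $\mathbf{Q}_4$ and $\mathbf{Q}_5$, and they are missing from your proposal; ``check the combinatorial tameness condition block by block'' does not supply them, and as stated your argument for these two families would not go through.
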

\begin{proof}
First, we deal with the cases $\g=\so_{n}\niplus V$, $\g=\so_8\niplus\Lambda^+ V$ and $\g=(\so_8\oplus\so_8)\niplus\Gamma_1^+\boxtimes\Gamma_2^+$. 
We note that $A(J)$ 
satisfies the conditions of Theorem\,\ref{Viktor}, Theorem\,\ref{Gabriel}\eqref{double-quiver} implies that $Jor(\so_{n}\niplus V)$ is
 of finite type, 
while $Jor(\so_8\niplus\Lambda^+)$ and $Jor((\so_8\oplus\so_8)\niplus\Gamma_1^+\boxtimes\Gamma_2^+)$ are tame.

Next, let $\g=(\so_8\oplus\so_{10})\niplus\Gamma^+_1\boxtimes\Gamma^+_2$ then $A(\g)=A^1\oplus A^2\oplus A^3$ is the direct sum 
of subalgebras such that
$A^1={\bf k}$, $Q(A^2)$ has four vertices and $Q(A^3)$ has ten vertices, see  \eqref{so_8-so_10}. 
Observe that $rad^2 A^2=0$ therefore $A^2$ is of finite representation type by Theorem\,\ref{Gabriel}\eqref{double-quiver}. 

Let us show that $A^3$ is tame. Let $I_1=\langle \alpha_i\beta_i\rangle_{1\leq i\leq 4}=rad^2 A^3 e_{\Gamma_1^+\boxtimes\Gamma_2^-}$, put $B=A^3/ I_1$.
Note that the projective $P(\Gamma_1^+\boxtimes\Gamma_2^-)$ satisfies the condition of Lemma\,\ref{Viktor1}. 
Therefore an indecomposable $A^3$-module non-isomorphic $P(\Gamma_1^+\boxtimes\Gamma_2^-)$ is, in fact, a
module over the algebra $B$.

Furthermore divide the set of $V(Q(A^3))$ into three subsets, namely $S_l=\{\Lambda^- W, \Gamma^+_1\boxtimes\Gamma^-_2\}$, 
$S_r=\{\Lambda^+ W, \Gamma_1^+\boxtimes\Gamma^+_2\}$ and $T=\{W, \Lambda^2 W, \Lambda^3 W,\Lambda^4 W, \Lambda^2 V, \Lambda^+ V\}$ 
then by Lemma\,\ref{divide} any indecomposable $B$-module $M$ is either 
a $B'=e_{S_l}B e_{S_l}$-module or  a $B''=e_{S_r}B e_{S_r}$-module. The Ext quivers of $B'$ and $B''$ are the following:
\begin{equation}
\xymatrix{
W&&\Lambda^2 W \ar[ddl]_{\beta_1}  &&& \Lambda^2 W &&W\ar[ddl]_{\delta_1}\\
\Lambda^3 W&& \Lambda^4 W \ar[dl]_{\beta_2}  &&& \Lambda^4 W &&  \Lambda^3 W \ar[dl]_{\delta_2}\\
\Lambda^- W & {\Gamma_1^+\boxtimes\Gamma_2^-}\ar[l]^{\gamma_3}\ar[uul]_{\gamma_1}\ar[ul]_{\gamma_2} & \Lambda^2 V \ar[l]_{\beta_3}&&&   \Lambda^2 V 
& {\Gamma_1^+\boxtimes\Gamma_2^+}\ar[ul]_{\alpha_2}\ar[l]_{\alpha_3}\ar[dl]^{\alpha_4} \ar[uul]_{\alpha_1} & \Lambda^+ W \ar[l]_{\delta_3}\\
&& \Lambda^+ V \ar[ul]^{\beta_4} &&& \Lambda^+ V &&\\}
\end{equation}
Note that $(B')^{op}$-mod is equivalent to $B''$-mod thus it is sufficient to determine the type of $B'$. Since $Q(B')$
is a tree we determine its representation type by calculating the Tits form, see \eqref{Tits-form} in Appendix. It can be written in the following form
$$
q_{B'}(x)=(x_1+x_5+x_6-x_4)^2+(x_2+x_3+x_7+x_8-x_4)^2+\sum_{i=2,5,7}(x_i-x_{i+1})^2+x_1^2+2x_6(x_2+x_3).
$$
One can see that $q_{B'}$ is weakly non-negative, therefore $B'$ as well as $B''$ are of tame representation type.
That implies that $A^3$ is tame.

The last case is $\g=(\so_{10}\oplus\so_{10})\niplus\Gamma^+_1\boxtimes\Gamma^+_2$, and $A(\g)=A^1\oplus A^2$, where
$Q(A^1)$ has seven vertices while $Q(A^2)$ has ten vertices, see  \eqref{so_10-so_10}. 
By Theorem\,\ref{Viktor} the algebra $A^1$ is tame. To prove tameness of $A_2$ we split $V(Q(A^2))$ into 
into three subsets, namely $S_l=\{\Lambda^- W, \Lambda^+ V, \Gamma^+_1\boxtimes\Gamma^-_2\}$, 
$S_r=\{\Lambda^+ W, \Lambda^- V, \Gamma_1^-\boxtimes\Gamma^+_2\}$ and $T=\{W, \Lambda^3 W, \Lambda^3 V, V\}$ 
then by Lemma\,\ref{divide} any indecomposable $A^2$-module is either 
$C=e_{S_l}A^2 e_{S_l}$-module or $C'=e_{S_r}A^2 e_{S_r}$-module, where $Q(C)$ and $Q(C')$ are respectively
 \begin{equation}
\xymatrix{
W&& V \ar[ddl]_{\beta_1}  &&& V &&W\ar[ddl]_{\delta_1}\\
\Lambda^3 W&& \Lambda^3 V \ar[dl]_{\quad\beta_2}  &&& \Lambda^3 V &&  \Lambda^3 W \ar[dl]_{\quad\delta_2}\\
\Lambda^- W & {\Gamma_1^+\boxtimes\Gamma_2^-}\ar[l]^{\alpha_3}\ar[uul]_{\alpha_1}\ar[ul]_{\alpha_2} & \Lambda^+ V \ar[l]_{\quad\beta_3}&&&   \Lambda^- V 
& {\Gamma_1^-\boxtimes\Gamma_2^+}\ar[ul]_{\gamma_2}\ar[l]_{\gamma_3}\ar[uul]_{\gamma_1} & \Lambda^+ W \ar[l]_{\quad\delta_3}\\}
\end{equation}
Like in the previous case $C^{op}$-mod is equivalent to $C'$-mod therefore it is sufficient to determine the type of $C$. The Tits form corresponding to $C$
\begin{equation}
q_C(x)=(x_1+x_3+x_5+x_7-x_4)^2+(x_2+x_6-x_4)^2+(x_1-x_3)^2+(x_5-x_7)^2+x_2^2+x_6^2
\end{equation}
is weakly non-negative. Therefore, by Theorem\,\ref{Bruestle}, $C$ is tame. This finishes the proof.
\end{proof}

\section{Algebras with $[\rr,\rr]\neq 0$}
In view of Theorem \ref{main-quiver} our next step is to consider indecomposable Lie algebras $\g$ with short grading, 
irreducible $R=\rr/[\rr,\rr]$ such that $Q(\g)$ is admissible and
$[\rr,[\rr,\rr]]=0$. 
Irreducibility of $R$ implies that $\rr$ is a 
nilpotent Lie algebra of nilindex two, and $[\rr,\rr]$ is a $\g_{ss}$-submodule in $(\Lambda^2R)^{sh}$. 
Therefore we have to consider the following cases
\begin{enumerate}
\item $\g_{ss}=\so_m$, $R=V$, $R^2=(\Lambda^2 V)^{sh}=\Lambda^2 V$;
\item $\g_{ss}=\so_8$, $R=\Lambda^+$, $R^2=(\Lambda^2 R)^{sh}=\Lambda^2V$;
\item $\g_{ss}=\so_8\oplus \so_8$, $R=\Gamma^+_1\boxtimes \Gamma^+_2$. In this case $[\rr,\rr]=(\Lambda^2 R)^{sh}=\Lambda^2V\oplus \Lambda^2W$, hence
$[\rr,\rr]=(\Lambda^2 R)^{sh}$ or $[\rr,\rr]=\Lambda^2V$ or  $[\rr,\rr]=\Lambda^2W$, and the last two cases are clearly isomorphic;
\item $\g_{ss}=\so_8\oplus \so_{10}$, $R=\Gamma^+_1\boxtimes \Gamma_2$. In this case $(\Lambda^2 R)^{sh}=\Lambda^3 W$ is irreducible;
\item $\g_{ss}=\so_{10}\oplus \so_{10}$, $R=\Gamma_1\boxtimes \Gamma_2$. In this case $(\Lambda^2 R)^{sh}=0$, therefore it does not need further consideration.
\end{enumerate} 
In what follows we refer to the above list as {\bf List B}.
In this section we will prove that $A(J)$ is wild for the algebras (1)-(4) in {\bf List B} .

\begin{lem}\label{standard2} Let  $\g_{ss}=\so_m$, $R=V$, $[R,R]=\Lambda^2 V=(\Lambda^2 V)^{sh}$. Then $A(\g)=\kk {\bf Q}^1_m/I_2$, where 
$I_2\subset rad^3\kk {\bf Q}^1_m$, i.e all the relations are of degree $3$ or higher.
\end{lem}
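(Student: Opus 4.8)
The plan is to prove the sharper statement that, for every simple object $L$ of $\ggm$, the second layer $P^2(L)$ of the radical filtration of its projective cover equals the whole module $(R\otimes(R\otimes L)^{sh})^{sh}$; this is exactly the assertion that $A(\g)$ agrees with the path algebra in degrees $\le 2$, i.e. that $I_2\subseteq rad^3$.

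First I would record the easy reductions. Since $\rr$ is $2$-step nilpotent with $[\rr,\rr]=\rr^2=\Lambda^2 V$, the algebra $\gb=\g/[\rr,\rr]$ equals $\so_m\niplus V$ with abelian radical; hence $Q(\g)=Q(\gb)={\bf Q}^1_m$ by Corollary~\ref{hat=bar} together with the computation of $Q(\so_m\niplus V)$ in the proof of Theorem~\ref{main-quiver}, and $\gh=\g$ because $(\Lambda^2 V)^{\g_{ss}}=0$. Thus $A(\g)=\kk{\bf Q}^1_m/I_2$ with $I_2\subseteq rad^2\kk{\bf Q}^1_m$ admissible, and $I_2\subseteq rad^3$ is equivalent to $P^2(L)=(R\otimes(R\otimes L)^{sh})^{sh}$ for all simple $L\in\ggm$ (the two sides having matching dimension by $P^1(L)=(R\otimes L)^{sh}$ and Lemma~\ref{quiver_Lie}).

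The heart of the argument is Lemma~\ref{technical2}, which gives $P^2(L)=\operatorname{Coker}(\mu\oplus\lambda)$ with $\mu\colon\Lambda^2 R\otimes L\to A:=(R\otimes(R\otimes L)^{sh})^{sh}$ and $\lambda=p\circ(\delta\otimes\mathrm{id})\colon\Lambda^2 R\otimes L\to B:=(R^2\otimes L)^{sh}$, where $p$ is the projection $X\to X^{sh}$ and $\delta(x\wedge y)=-[x,y]\bmod\rr^3$. The hypothesis $[R,R]=R^2=\Lambda^2 V$, together with $\rr^3=[\rr,[\rr,\rr]]=0$, says precisely that $\delta\colon\Lambda^2 R\to R^2$ is an isomorphism (of $\g$-modules, $\rr$ acting trivially on both $\Lambda^2 R$ and $R^2$) preserving the $h$-grading. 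I would then draw two conclusions. First, $\lambda$ is surjective, so a short diagram chase identifies $\operatorname{Coker}(\mu\oplus\lambda)$ with $A/\mu(\ker\lambda)$; hence it suffices to show $\mu(\ker\lambda)=0$, i.e. $\ker\lambda\subseteq\ker\mu$. Second, since $\delta\otimes\mathrm{id}$ is a grading-preserving $\g$-isomorphism, $\ker\lambda=(\delta\otimes\mathrm{id})^{-1}(\ker p)$ is, by the definition of the functor $^{sh}$, the $\g$-submodule of $\Lambda^2 R\otimes L$ generated by its graded components of degree $\ge\frac32$. But $\mu$ takes values in $A$, which lies in $\cS$ and is therefore concentrated in $h$-degrees $\{-1,0,1\}$, so $\mu$ kills every graded component of $\Lambda^2 R\otimes L$ of degree $\ge\frac32$; being a homomorphism of $\g$-modules it kills the submodule they generate, and hence $\ker\lambda\subseteq\ker\mu$. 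Therefore $P^2(L)=A=(R\otimes(R\otimes L)^{sh})^{sh}$ for every simple $L\in\ggm$ (including $L=tr$, where $\lambda$ is itself an isomorphism), and $I_2\subseteq rad^3$.

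I expect the only delicate point to be the step ``$\mu$ kills the top graded components, hence the submodule they generate'': this needs that $\mu$ (and likewise $\lambda$ and $\delta$), although stated in Lemma~\ref{technical2} merely as maps of $\g_{ss}$-modules, are in fact $\g$-equivariant — which is immediate from their construction out of the functor $^{sh}$ and the fact that $\rr$ acts trivially on $R=\rr/\rr^2$ and on $R^2=\rr^2$. As a book-keeping remark, $R=V$ has one-dimensional weight spaces of $h$-weight $\pm1$, so $\Lambda^2 V$, and thus $\Lambda^2 R\otimes L$ and $R^2\otimes L$, live in $h$-degrees $\le 2$, and ``degree $\ge\frac32$'' here simply means ``degree $2$''. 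Finally it is worth pointing out exactly where the hypothesis $[R,R]=\Lambda^2 V$ (rather than a proper $\g_{ss}$-submodule) enters: only then is $\delta$ invertible, and if $\ker\delta\ne 0$ then $\ker\lambda\supseteq\ker\delta\otimes L$ need not lie in $\ker\mu$, so quadratic relations survive — as they indeed do in the abelian radical case of Proposition~\ref{standard1}.
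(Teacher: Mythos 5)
Your argument is correct and is essentially the paper's own proof: both rest on Lemma~\ref{technical2} together with the observation that, since $\delta\colon\Lambda^2R\to R^2$ is an isomorphism, $\mu$ factors through the surjection $\lambda$, whence $\operatorname{Coker}(\mu\oplus\lambda)\simeq(R\otimes(R\otimes L)^{sh})^{sh}$ and no quadratic relations occur. You merely spell out the factorization (surjectivity of $\lambda$ plus $\ker\lambda\subseteq\ker\mu$ via the $h$-degree argument) that the paper states as the displayed composition through $\lambda$, and here $\g_{ss}$-equivariance of $\mu$ already suffices for that step.
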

\begin{proof} The proof amounts to showing that for a simple $L\in\ggm$, we have $$P^2(L)=(R\otimes (R\otimes L)^{sh})^{sh}.$$ We use Lemma \ref{technical2}.
Since $\delta:\Lambda^2R\to R^2$ is an isomorphism, the map 
$$\mu:\Lambda^2R\otimes L\to (R\otimes (R\otimes L)^{sh})^{sh}$$ is the composition
$$\Lambda^2R\otimes L\xrightarrow{\lambda}(R^2\otimes L)^{sh}\xrightarrow{alt\otimes 1} (R\otimes R\otimes L)^{sh}\rightarrow (R\otimes (R\otimes L)^{sh})^{sh}.$$
Hence $P^2(L)=\operatorname{Coker} (\mu\oplus\lambda)=(R\otimes (R\otimes L)^{sh})^{sh}$. 
\end{proof}

\begin{lem}\label{jstandard2} Let $\g$ be as above and $\J=Jor(\g)$. Then the Ext quiver of $\JJ$ is

\begin{equation}\label{qui-relations-odd} m {\text\ is \ odd}: \quad \xymatrix{ 
V\ar@/^0.4pc/[r]^{\gamma_1}\ar_{\bar\gamma_0}@(ul,dl) & \Lambda^2 V \ar@/^0.4pc/[r]
^{\gamma_2} \ar@/^0.4pc/[l]^{\delta_1}&\dots 
\ar@/^0.4pc/[l]^{\delta_2}\ar@/^0.4pc/[r] & \Lambda^{m-1} 
V\ar@/^0.4pc/[r]^{\gamma_{m-1}}\ar@/^0.4pc/[l]& 
\Lambda^m V\ar@/^0.4pc/[l]^{\delta_{m-1}}\ar@{->}^{\gamma_{m}}@(ur,dr)}
\end{equation}

\begin{equation}\label{qui-relations-even}
\xymatrix{ &&&&& \Lambda^+\ar@/^0.4pc/[dl]^{\delta^+}\\
m {\text \ is \  even}: & V\ar_{\bar\gamma_0}@(ul,dl)\ar@/^0.4pc/[r]^{\gamma_1}& 
\Lambda^2 V\ar@/^0.4pc/[r]\ar@/^0.4pc/[l]^{\delta_1} & \dots \ar@/^0.4pc/[r]^{\gamma_{m-2}\ }\ar@/^0.4pc/[l] &
\Lambda^{m-1} V\ar@/^0.4pc/[l]^{\delta_{m-2}\ }\ar@/^0.4pc/[ur]^{\gamma^+}\ar@/^0.4pc/[dr]^{\gamma^-}&\\
&&&&&\Lambda^-\ar@/^0.4pc/[ul]^{\delta^-}}
\end{equation}
and the relations in $A(\J)$ lie in $rad^3 A(\J)+\bar\gamma_0 rad A(\J)$. 
\end{lem}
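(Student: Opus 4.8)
The plan is to reduce the statement to a purely combinatorial description of a corner algebra. By Lemma\,\ref{idempotent} we have $A(\J)=(1-e_{tr})A(\gh)(1-e_{tr})$. By Corollary\,\ref{hat=bar} the quiver of $\gh$ is the quiver of $\g$, namely ${\bf Q}^1_m$; and the computation of Lemma\,\ref{standard2} applies to $\gh$ as well (passing to the universal central extension only modifies radical layers of degree $\ge 3$ near the relevant vertices $tr,V,\Lambda^2V$, so it does not affect the degree-$\le 2$ part of the algebra that we need; in fact, since $(\Lambda^2R)^{\g_{ss}}=(\Lambda^2V)^{\so_m}=0$ one expects $\gh=\g$ here, as in the remark after Theorem\,\ref{main-quiver}). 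Thus $A(\gh)=\kk{\bf Q}^1_m/I_2$ with $I_2\subseteq rad^3\,\kk{\bf Q}^1_m$, and it suffices to analyse $e'Be'$ for $B=\kk{\bf Q}^1_m/I_2$ and $e'=1-e_{tr}$.

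For the quiver, the key point is that $tr$ is a \emph{pendant} vertex of ${\bf Q}^1_m$: it is joined only to $V$, by the single arrow $\gamma_0\colon tr\to V$ and the single arrow $\delta_0\colon V\to tr$. Hence any path of ${\bf Q}^1_m$ with both endpoints distinct from $tr$ that passes through $tr$ is, in $B$, a product of arrows between non-trivial vertices and copies of the length-two cycle $\bar\gamma_0:=\gamma_0\delta_0$ at $V$ (each visit to $tr$ is flanked by $\delta_0$ and then $\gamma_0$); so $\bar\gamma_0$ is the only candidate for a new arrow. Here the hypothesis $[\rr,\rr]=\Lambda^2V=(\Lambda^2V)^{sh}$ enters decisively through Lemma\,\ref{standard2}: because $I_2\subseteq rad^3$, the class of $\bar\gamma_0$ in $rad^2B/rad^3B$ is nonzero and independent of the class of the other length-two cycle $\delta_1\gamma_1\colon V\to\Lambda^2V\to V$, so $\bar\gamma_0$ is a genuine new arrow, and it is the only one. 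Since every arrow of ${\bf Q}^1_m$ between two non-trivial vertices also survives in $e'Be'$ (it has length one in $B$, all of whose relations have length $\ge 3$, so it is not in $rad^2(e'Be')$; compare Corollary\,\ref{Q-Q'-relation}), we conclude that $Q(\J)$ is obtained from ${\bf Q}^1_m$ by deleting $tr$ together with $\gamma_0,\delta_0$ and adjoining the loop $\bar\gamma_0$ at $V$, i.e. it is \eqref{qui-relations-odd} for odd $m$ and \eqref{qui-relations-even} for even $m$. (In the abelian-radical case of Theorem\,\ref{classification-unital} the ideal contained the quadratic relation $\gamma_0\delta_0=\delta_1\gamma_1$, which is precisely why no loop occurs there.)

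For the relations, let $\varphi\colon\kk Q(\J)\to\kk{\bf Q}^1_m$ be the algebra homomorphism that is the identity on every arrow except that $\bar\gamma_0\mapsto\gamma_0\delta_0$. It is injective on path bases --- in a path of ${\bf Q}^1_m$ any occurrence of $tr$ is necessarily preceded by $\delta_0$ and followed by $\gamma_0$ --- and it never decreases path length. The composite $\kk Q(\J)\xrightarrow{\varphi}e'\kk{\bf Q}^1_m e'\twoheadrightarrow e'Be'=A(\J)$ is surjective, so the defining ideal of $A(\J)$ equals $\varphi^{-1}(e'I_2e')$, and $e'I_2e'\subseteq I_2\subseteq rad^3\,\kk{\bf Q}^1_m$. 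Therefore any relation $x$ of $A(\J)$ has $\varphi(x)\in rad^3$; since $\varphi$ is length-nondecreasing and injective on paths, the part of $x$ supported on paths of length $\le 2$ must be supported on paths involving $\bar\gamma_0$. Hence the defining ideal lies in $rad^3\,\kk Q(\J)+\bar\gamma_0\,rad\,\kk Q(\J)+rad\,\kk Q(\J)\,\bar\gamma_0$, which is the asserted statement that the relations of $A(\J)$ lie in $rad^3A(\J)+\bar\gamma_0\,rad\,A(\J)$. The case of even $m$ is word-for-word identical: the forked end $\Lambda^{m-1}V\to\Lambda^\pm V$ of ${\bf Q}^1_{2m}$ plays no part in the deletion of $tr$.

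The main obstacle is the vertex-deletion step: pinning down that exactly one new arrow (the loop $\bar\gamma_0$) is produced and, correlatively, that no quadratic relation survives among the arrows of $Q'(\g)$. This is exactly where the hypothesis on $[\rr,\rr]$ is used, via Lemma\,\ref{standard2} and the resulting $I_2\subseteq rad^3$; in the abelian-radical situation the quadratic relation $\gamma_0\delta_0=\delta_1\gamma_1$ both destroys the loop and changes the shape of the answer. A secondary, routine point is checking that the passage from $\g$ to $\gh$ in Lemma\,\ref{idempotent} affects neither the quiver (Corollary\,\ref{hat=bar}) nor the degree-$\le 2$ part of the relations.
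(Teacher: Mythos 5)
Your proposal is correct and takes essentially the same route as the paper's (much terser) proof: combine Lemma \ref{idempotent} with Lemma \ref{standard2}, noting that since $I_2\subset rad^3$ the cycles $\gamma_0\delta_0$ and $\delta_1\gamma_1$ are linearly independent modulo $rad^3$, so $\bar\gamma_0=\gamma_0\delta_0$ survives as a new loop at $V$ in the corner algebra $(1-e_{tr})A(\g)(1-e_{tr})$, while all old arrows persist and the relations only appear in degree $\geq 3$ or involve $\bar\gamma_0$. One small caveat: your parenthetical claim that passing to $\gh$ only affects radical layers of degree $\geq 3$ is false in general (Example \ref{centexs} shows a central extension changing $P^2$), and the correct justification is your other remark that $\gh=\g$ here --- though for the non-abelian radical $\rr=V\oplus\Lambda^2V$ the relevant vanishing is that of $\g_{ss}$-invariant $2$-cocycles on all of $\rr$ (which holds, since $\Lambda^2(\Lambda^2V)\simeq\Lambda^2\so_m$ has no invariants), not merely $(\Lambda^2V)^{\so_m}=0$.
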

\begin{proof} It is a straightforward consequence of Lemma \ref{idempotent}. The appearance of new arrow $\bar\gamma_0$ follows 
from Lemma \ref{standard2}.
Indeed, recall 
${\bf Q}_1^{2m+1}$ and ${\bf Q}_1^{2m}$ of Theorem \ref{main-quiver}. Now $\bar\gamma_0:=\gamma_0\delta_0$ lies in the radical of 
$A(\J)$ since $\gamma_0\delta_0$ and
$\delta_1\gamma_1$ are linearly independent. 
\end{proof}

\begin{lem}\label{so8unital2} Let $\g=\so_8$, $R=\Lambda^+ V$ and $R^2=\Lambda^2 V$. 
Then the block of $\JJ$ containing $\Lambda^2V$ and 
$\Lambda^+ V$ has the quiver
\begin{equation}\label{quiver-lambda-plus1} \xymatrix 
{\Lambda^2 V\ar@/^0.4pc/[r]^{\alpha_2}\ar@{<-}^{\gamma_2}@(dl,ul) & \Lambda^+ V\ar@{<-}_{\gamma_3}@(dr,ur)\ar@/^0.4pc/[l]^{\beta_2}}
\end{equation}
and the only relations in this block modulo $rad^3$ are $$\alpha_2\gamma_2=\gamma_3\alpha_2,\quad \gamma_2\beta_2=\beta_2\gamma_3.$$ 
\end{lem}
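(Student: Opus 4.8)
The plan is to pass to the Lie side, compute the first three radical layers of the relevant indecomposable projectives in $\gm$, and then descend to $\JJ$. Since $[\rr,\rr]=\Lambda^2V$, the abelianization $\gb=\g/[\rr,\rr]$ is $\so_8\niplus\Lambda^+V$ with abelian radical, whose Ext quiver is ${\bf Q_2}$ by Theorem \ref{main-quiver}(3); hence by Corollary \ref{hat=bar} we have $Q(\g)=Q(\gb)={\bf Q_2}$, so the block of $\gm$ through $\Lambda^2V$ has vertices $\{\Lambda^2V,\Lambda^+V,tr\}$ with the arrows $\alpha_2,\beta_2,\gamma_2,\gamma_3$ together with the pair of dashed arrows joining $\Lambda^+V$ and $tr$. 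By Lemma \ref{idempotent} the corresponding block of $A(\J)$ is the idempotent truncation killing the vertex $tr$; the only new arrow that this deletion could a priori introduce is a loop at $\Lambda^+V$ arising from a path $\Lambda^+V\to tr\to\Lambda^+V$, so I also need to see that this composite lies in the span of $\gamma_3^2$ and $\alpha_2\beta_2$ modulo $rad^3$. Both that point and the relations of $A(\J)$ of degree $\le 2$ in this block will be read off, via Corollaries \ref{Q-Q'-relation} and \ref{Jorradical}, from the non-trivial parts of $P^2(\Lambda^2V)$ and $P^2(\Lambda^+V)$ computed in $\gm$.

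For the latter I would invoke Lemma \ref{technical2}: $P^2(L)=\operatorname{Coker}(\mu\oplus\lambda)$, with $\mu$ as in \eqref{mu-pi-relation}. The map $\mu$, and hence $\operatorname{Coker}\mu$, depends only on $R=\Lambda^+V$ and $L$ as $\so_8$-modules, so it is exactly the map computed in Proposition \ref{projectives-so-lambda} (through the triality identification $\Lambda^+V\leftrightarrow S^2V_0$, with $\Lambda^2V$ fixed): $\operatorname{Coker}\mu=\Lambda^2V$ for $L=\Lambda^2V$ and $\operatorname{Coker}\mu=\Lambda^+V\oplus tr$ for $L=\Lambda^+V$. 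The genuinely new ingredient is the second map $\lambda$, which factors through $\delta\otimes1$ for the bracket $\delta:\Lambda^2(\Lambda^+V)\to\Lambda^2V$ of $\g$; this $\delta$ is nonzero, and it is essentially unique since $\Lambda^2V$ occurs in $\Lambda^2(\Lambda^+V)$ with multiplicity one. By Lemma \ref{lemma-tensor} one has $(\rr^2\otimes\Lambda^2V)^{sh}=tr\oplus\Lambda^2V\oplus\Lambda^+V\oplus\Lambda^-V$ and $(\rr^2\otimes\Lambda^+V)^{sh}=\Lambda^2V\oplus\Lambda^+V$. Writing $\operatorname{Coker}(\mu\oplus\lambda)$ as an extension of $\operatorname{Coker}\mu$ by $(\rr^2\otimes L)^{sh}/\lambda(\ker\mu)$, one already knows from $Q(\g)={\bf Q_2}$ that $\lambda(\ker\mu)$ must absorb the $\Lambda^-V$-summand (it cannot appear in this block); the substance of the computation — carried out by evaluating $\mu$ and $\lambda$ on highest weight vectors in the matrix model of $\so_8\niplus S^2V_0$ — is that it absorbs nothing else from $\Lambda^2V\oplus\Lambda^+V$, and that the path through $tr$ is redundant. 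This yields that, after discarding the trivial summands (which die under $Jor$), the non-trivial part of $P^2(\Lambda^2V)$ is $\Lambda^2V^{\oplus2}\oplus\Lambda^+V$ and that of $P^2(\Lambda^+V)$ is $\Lambda^+V^{\oplus2}\oplus\Lambda^2V$, and that no new arrow is created in $\JJ$.

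The relations then come from a path count. At $\Lambda^2V$ the length-two paths are $\gamma_2^2,\beta_2\alpha_2$ (landing at $\Lambda^2V$) and $\alpha_2\gamma_2,\gamma_3\alpha_2$ (landing at $\Lambda^+V$); since the non-trivial part of $P^2(\Lambda^2V)$ contains $\Lambda^2V$ with multiplicity two and $\Lambda^+V$ once, the first pair stays linearly independent and the second satisfies a single relation, which after rescaling $\gamma_2$ and $\gamma_3$ we write $\alpha_2\gamma_2=\gamma_3\alpha_2$. The symmetric count at $\Lambda^+V$ (using the non-trivial part $\Lambda^+V^{\oplus2}\oplus\Lambda^2V$ of $P^2(\Lambda^+V)$) gives the single relation $\gamma_2\beta_2=\beta_2\gamma_3$, and there are no further relations of degree $\le2$; this is the claimed presentation. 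I expect the one real obstacle to be that $\so_8$-equivariant computation — deciding exactly which isotypic summands of $(\rr^2\otimes L)^{sh}$ survive in $P^2(L)$, which forces one to make the bracket $\delta$ on $\Lambda^+V$ explicit (triality turning it into a matrix bracket) and to track the trivial composition factors carefully when passing from $\gm$ to $\JJ$.
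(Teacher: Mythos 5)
Your plan is essentially the paper's proof: pass through Lemma \ref{technical2}, use the triality twist to realize $R$ as $S^2V_0$ and the bracket $\delta$ as the matrix commutator, compute $P^2(\Lambda^2V)$ and $P^2(\Lambda^+V)$, and then read off the block quiver and the degree-two relations via Lemma \ref{idempotent}; your predicted answers ($P^2(\Lambda^+V)=\Lambda^2V\oplus S^2V\oplus S^2V_0$, $P^2(\Lambda^2V)=\Lambda^2V\oplus\Lambda^2V\oplus S^2V$ in the twisted picture) agree with the paper. Your one economy over the paper is legitimate and nice: since $\mu$ does not involve the bracket, $\operatorname{Coker}\mu$ can be quoted from Proposition \ref{projectives-so-lambda}, and $\operatorname{Coker}(\mu\oplus\lambda)$ is then an extension of $\operatorname{Coker}\mu$ by $(R^2\otimes L)^{sh}/\lambda(\ker\mu)$, so only the effect of $\lambda$ on $\ker\mu$ remains to be determined. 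The caveat is that this remaining step, which you defer (``evaluating $\mu$ and $\lambda$ on highest weight vectors''), is exactly the substance of the paper's proof: there it is done by writing $\lambda(X,Y,A)=-\left(\{[X,Y],A\},[[X,Y],A]\right)$, exhibiting the explicit cokernel map $\theta(x_1,\dots,x_6)=(x_1+x_2-2x_5,\,x_3-x_4,\,x_4-x_6)$, and verifying $\theta\circ(\mu\oplus\lambda)=0$ with three matrix identities, with a component count giving exactness. Note also that this explicit identification is needed not only to exclude the extra loop at $\Lambda^+V$ coming from the path through $tr$, but also to know that in each stated relation both length-two paths are nonzero (the multiplicity count alone would be consistent with, say, $\alpha_2\gamma_2\in rad^3$), which is what justifies normalizing to the equalities $\alpha_2\gamma_2=\gamma_3\alpha_2$ and $\gamma_2\beta_2=\beta_2\gamma_3$ used later in the wildness argument via Lemma \ref{han}.
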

\begin{proof} In order to prove this lemma, we have to compute
$P^2(\Lambda^2 V)$ and $P^2(\Lambda^+ V)$. Using symmetry of the Dynkin diagram of $\so_8$ we identify $\Lambda^+ V$ with $S^2V_0$ 
(the latter stands for the traceless part of $S^2V$)  as
in the proof of Proposition \ref{projectives-so-lambda}. 
If we identify $\so_8$ with the space of skew-symmetric matrices and $R=S^2V_0$ with the space of traceless symmetric matrices, 
the map $\delta:S^2 V_0\times S^2 V_0\to \Lambda^2 V$ is given by the usual commutator of matrices. 

We will do this computation for the case $L=S^2V_0$ leaving the second case to the reader.
We have 
$$(R\otimes (R\otimes L)^{sh})^{sh}=\Lambda^2V\oplus\Lambda^2V\oplus S^2V\oplus S^2V_0,\quad (R^2\otimes L)^{sh}=\Lambda^2V\oplus S^2V_0,$$
and
$$\pi(X\otimes Y\otimes A)=\left(\{X,[Y,A]\},[X,\{Y,A\}],\{X,\{Y,A\}\},[X,[Y,A]]\right),$$
where $\{C,B\}=CB+BC$ and $[C,B]=CB-BC$.

The canonical projection $p:R^2\otimes L\to (R^2\otimes L)^{sh}$ is given by
$$p(Z\otimes A)=\left(\{Z,A\},[Z,A]\right),$$
where $A\in S^2V_0, Z\in\Lambda^2V$. Therefore
$$\lambda(X,Y,A)=-\left(\{[X,Y],A\},[[X,Y],A]\right).$$

Define the map
$$\theta:(R\otimes (R\otimes L)^{sh})^{sh}\oplus(R^2\otimes L)^{sh}=
\Lambda^2V\oplus\Lambda^2V\oplus S^2V\oplus S^2V_0\oplus \Lambda^2V\oplus S^2V_0\to \Lambda^2V\oplus S^2V\oplus S^2V_0$$ 
by $\theta(x_1,x_2,x_3,x_4,x_5,x_6)=(x_1+x_2-2x_5,x_3-x_4,x_4-x_6)$. We claim that the sequence
$$\Lambda^2 R\otimes L\xrightarrow{\mu\oplus\lambda} (R\otimes (R\otimes L)^{sh})^{sh}\oplus(R^2\otimes L)^{sh}\xrightarrow{\theta} 
\Lambda^2V\oplus S^2V\oplus S^2V_0\to 0$$
is exact. Surjectivity of $\theta$ is trivial. The identity $\theta(\lambda\oplus\mu)=0$ follows from the following identities
\begin{equation}
\begin{array}{l}
{[X,\{Y,A\}]+[Y,\{X,A\}]=2\{XY,A\}},\\
{\{X,\{Y,A\}\}-[X,[Y,A]]=2XAY+2YAX},\\ 
{[X,[Y,A]]-[Y,[X,A]]=[[X,Y],A].}\\
\end{array}
\end{equation}
Balancing the numbers of $\g_{ss}$ components implies that $\operatorname{Ker}\theta=\operatorname{Im}(\lambda\oplus\mu)$.
Hence we obtain
$$P^2(S^2V_0)\simeq \Lambda^2V\oplus S^2V\oplus S^2V_0.$$
Similarly,
$$P^2(\Lambda^2V)=\Lambda^2V\oplus\Lambda^2V\oplus S^2V.$$
The relations modulo $rad^3$ follow by Lemma \ref{idempotent}. 
\end{proof}
\begin{lem}\label{mixedso8} 
Let $\g_{ss}=\so_8\oplus \so_8$ or $\so_8\oplus \so_{10}$, $R=\Gamma_1^+\boxtimes \Gamma_2^+$, $R^2=\Lambda^2 \Gamma_2^+$ and
the commutator $R\otimes R\to R^2$ be defined by the formula
$$[v_1\boxtimes w_1,v_2\boxtimes w_2]=(v_1,v_2)w_1\wedge w_2,$$
where $v_1,v_2\in \Gamma_1^+$,  $w_1,w_2\in \Gamma_2^+$ and $(\cdot,\cdot)$ is an invariant scalar product in $\Gamma_1^+$.
Consider the subcategory of $\ggm$ generated by simple submodules in $(\Gamma_2^+\otimes \Gamma_2^+)^*$ and 
$\Gamma_1^+\boxtimes (\Gamma_2^+)^*$.
Let $\bar P(L)$ denote the projective cover of $L$ in this subcategory. Then for any simple submodule $L$ of 
$(\Gamma_2^+\otimes \Gamma_2^+)^*$ we have
$\bar P^2(L)=\Gamma_2^+\otimes (\Gamma_2^+)^*$.
\end{lem}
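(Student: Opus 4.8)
The plan is to compute the first two layers of the radical filtration of the relative projective cover $\bar P(L)$ by hand, combining the two techniques already used in this section: the mixed‑factor bookkeeping of Lemma~\ref{mixedcase}(1)--(2), which governs how a module supported on one of the two orthogonal factors sees the radical $R=\Gamma_1^+\boxtimes\Gamma_2^+$, and the contribution of the bracket $[\rr,\rr]$ to the second layer, which was handled in the $\so_8$ computation of Lemma~\ref{so8unital2}. Throughout write $\g_r=\so_n$ ($n\in\{8,10\}$) for the second factor and $\Gamma=\Gamma_2^+$; since all the spinor modules in sight are minuscule, every tensor product of a spinor module with another irreducible is multiplicity free, and the exterior/spinor decompositions needed are those listed in \eqref{gamma-so8}, \eqref{gamma-so10} and Lemma~\ref{lemma-tensor}.

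\emph{Step 1: the first layer.} Since $L$ is a simple submodule of $(\Gamma\otimes\Gamma)^*$, it carries the trivial action of the first factor $\g_l$, so
$$\bigl(R\otimes L\bigr)^{sh}=\Gamma_1^+\boxtimes\bigl(\Gamma\otimes L\bigr)^{\mathrm{sp}},$$
where $(\,\cdot\,)^{\mathrm{sp}}$ denotes the sum of the grading‑length‑two (i.e.\ spinor) constituents. By the tensor--Hom adjunction the multiplicity of $(\Gamma_2^+)^*$ in $\Gamma\otimes L$ equals the multiplicity of $L^*$ in $\Gamma\otimes\Gamma$, which is one, and $(\Gamma_2^+)^*$ is, up to the chosen subcategory, the only relevant constituent. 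Exactly as in Lemma~\ref{mixedcase}(1)--(2) this gives $\bar P^1(L)=\Gamma_1^+\boxtimes(\Gamma_2^+)^*$, with multiplicity one. The essential difference from Lemma~\ref{mixedcase} is that here $R^2=[\rr,\rr]=\Lambda^2\Gamma_2^+\neq 0$, so the map $\lambda$ of Lemma~\ref{technical2} is no longer zero and must be kept.

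\emph{Step 2: setting up the second layer.} By Lemma~\ref{technical2} (applied to $\bar P(L)$, i.e.\ with $P^1(L)$ replaced by $\bar P^1(L)$ and the constituents outside the subcategory discarded), $\bar P^2(L)$ is the cokernel of $\mu\oplus\lambda$ on $\Lambda^2R\otimes L$, the target being $(R\otimes\bar P^1(L))^{sh}\oplus(R^2\otimes L)^{sh}$. Decompose
$$\Lambda^2R=\bigl(S^2\Gamma_1^+\boxtimes\Lambda^2\Gamma_2^+\bigr)\oplus\bigl(\Lambda^2\Gamma_1^+\boxtimes S^2\Gamma_2^+\bigr).$$
From the commutator formula $[v_1\boxtimes w_1,v_2\boxtimes w_2]=(v_1,v_2)\,w_1\wedge w_2$ one reads off that $\delta\colon\Lambda^2R\to R^2=\Lambda^2\Gamma_2^+$ is $-(\cdot,\cdot)\otimes\mathrm{id}$ on the first summand and $0$ on the second, so that $\lambda$ is supported on $S^2\Gamma_1^+\boxtimes\Lambda^2\Gamma_2^+\otimes L$. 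The map $\mu$ I would write on decomposable tensors through the bilinear form $(\cdot,\cdot)$ on $\Gamma_1^+$ and the corresponding contraction on $\Gamma_2^+$, in the same spirit as the operators $m_v,i_v$ of Proposition~\ref{projectives-so-lambda} and the matrix brackets $\{X,Y\},[X,Y]$ of Lemma~\ref{so8unital2}.

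\emph{Step 3: the cokernel — the main obstacle.} On $\Lambda^2\Gamma_1^+\boxtimes S^2\Gamma_2^+\otimes L$ only $\mu$ acts; on $S^2\Gamma_1^+\boxtimes\Lambda^2\Gamma_2^+\otimes L$ the maps $\mu$ and $\lambda$ combine, and one compares their images precisely as in the proof of Lemma~\ref{so8unital2}, a few identities for spinor contractions playing the role of the three displayed matrix identities there. Balancing $\g_{ss}$‑characters then yields $\operatorname{Coker}(\mu\oplus\lambda)\cong\Gamma_2^+\otimes(\Gamma_2^+)^*$. I expect the delicate point to be exactly this multiplicity‑free bookkeeping — deciding which copies of which constituents inside $(R\otimes\bar P^1(L))^{sh}\oplus(R^2\otimes L)^{sh}$ are actually hit — and, in particular, the case $\g_{ss}=\so_8\oplus\so_{10}$, where $(\Gamma_2^+)^*=\Gamma_2^-$ and the modules entering the two pieces of the target are largely distinct, so the cancellations have to be tracked with care.
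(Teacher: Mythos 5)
There is a genuine gap: the heart of your argument, Step 3, is not an argument but a restatement of the conclusion. You never write the maps $\mu$ and $\lambda$ explicitly on the summands $S^2\Gamma_1^+\boxtimes\Lambda^2\Gamma_2^+\otimes L$ and $\Lambda^2\Gamma_1^+\boxtimes S^2\Gamma_2^+\otimes L$, and ``balancing $\g_{ss}$-characters'' cannot by itself determine the cokernel: the target $(R\otimes\bar P^1(L))^{sh}\oplus(R^2\otimes L)^{sh}$ contains several copies of the same simple constituents, and the whole question is precisely which copies lie in $\operatorname{Im}(\mu\oplus\lambda)$. In the computations you invoke as models (Proposition~\ref{projectives-so-lambda}, Lemma~\ref{so8unital2}) this was resolved only because triality for $\so_8$ allowed an identification of the spinor module with $V$ or $S^2V_0$ and hence explicit matrix or $m_v,i_v$ formulas; for $\g_{ss}=\so_8\oplus\so_{10}$ no such model is available, so the ``few identities for spinor contractions'' you defer to are exactly the missing content. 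There is also an unjustified step in passing from Lemma~\ref{technical2}, which computes $P^2(L)$ for the absolute projective cover, to the relative cover $\bar P(L)$ in the Serre subcategory: discarding out-of-category constituents of the target is plausible but needs an argument, since $\bar P(L)$ is a quotient of $P(L)$ and quotienting can in principle affect the second layer.

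The paper proves the lemma by a different and computation-free route, which you may find instructive: the inclusion $\bar P^2(L)\subset\Gamma_2^+\otimes(\Gamma_2^+)^*$ is read off from the quiver, and the reverse inclusion is obtained by exhibiting an explicit witness module. One first constructs by hand an indecomposable $M\in\gmh$ with radical layers $(\Gamma_2^+)^*,\ \Gamma_1^+,\ \Gamma_2^+$, checking directly that the prescribed actions of $R$ and $R^2$ satisfy the given commutator formula; then $T=M\otimes(\Gamma_2^+)^*$ is indecomposable with layers $(\Gamma_2^+\otimes\Gamma_2^+)^*$, $\Gamma_1^+\boxtimes(\Gamma_2^+)^*$, $\Gamma_2^+\otimes(\Gamma_2^+)^*$, and since the middle layer is simple, the submodule of $T$ generated by any simple $L\subset T^0$ has full second layer $T^2$ and is therefore isomorphic to $\bar P(L)$. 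If you want to keep your cokernel strategy, you would need to supply the explicit Clifford-algebra (spinor contraction) formulas for $\mu$ and $\lambda$ and compute their image in both cases $\so_8\oplus\so_8$ and $\so_8\oplus\so_{10}$; as written, the proposal stops exactly where the proof has to begin.
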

\begin{proof} It is clear from the quiver $Q(\g)$ that $\bar P^2(L)$ is a submodule of $\Gamma_2^+\otimes (\Gamma_2^+)^*$. 

We claim that there exists an indecomposable module $M$ in $\gmh$ with the radical filtration 
$$M^0=(\Gamma_2^+)^*,\quad  M^1=\Gamma^+_1,\quad  M^2=\Gamma_2^+.$$ 
To show it, we define the action of $R$ and $R^2$ on $M$ by the formulas:
$$
\begin{array}{c}
v\boxtimes w(x_0,x_1,x_2)=(0,\langle w,x_0\rangle v,(v,x_1)w) \\
w_1\wedge w_2 (x_0,x_1,x_2)=(0,0, \langle w_1,x_0\rangle w_2-\langle w_2,x_0\rangle w_1)
\end{array}
$$
for all $v\in\Gamma_1^+$, $w_1$, $w_2\in\Gamma_2^+$, $x_i\in M^i$, $i=0,1,2$. Then we have
\begin{equation}
\begin{array}{c}
v_1\boxtimes w_1(v_2\boxtimes w_2)(x_0,x_1,x_2)=(0,0,\langle w_2,x_0\rangle(v_1,v_2) w_1)\\
v_2\boxtimes w_2(v_1\boxtimes w_1)(x_0,x_1,x_2)=(0,0,\langle w_1,x_0\rangle(v_2,v_1) w_2)
\end{array}
\end{equation}
and the reader can see that 
$$
[v_1\boxtimes w_1, v_2\boxtimes w_2]=(v_1,v_2) w_1\wedge w_2.
$$
Thus $M$ is a $\g$-module, it is indecomposable by construction.

Let $T=M\otimes (\Gamma_2^+)^*$. Then $T$ is indecomposable with 
radical filtration $$T^0=(\Gamma_2^+\otimes \Gamma_2^+)^*,\quad  T^1=\Gamma^+_1\boxtimes\Gamma_2^*,
\quad T^2= \Gamma_2^+\otimes (\Gamma_2^+)^*.$$ Since $T^1$ is simple,
any submodule $L$ of $T^0$ generates an indecomposable submodule $S$ with radical filtration $S^0=L, S^1=T^1, S^2=T^2$. 
But then $S$ is a quotient of 
$\bar P(L)$ and since $\bar P^2(L)\subset T^2=S^2$ we have $S\simeq \bar P(L)$. 
\end{proof}

\begin{Cor}\label{mixed2}

(a) Let $\g_{ss}=\so_8\oplus \so_8$, $R=\Gamma^+_1\boxtimes \Gamma^+_2$, $R^2=\Lambda^2V$. Then the quiver of $A(\J)$ is ${\bf Q}_3'$ and 
$\beta_3\alpha_3$, $\beta_4\alpha_4$, $\beta_4\alpha_3$, $\beta_3\alpha_4$ and $\beta_1\alpha_3$ are linearly independent elements in $rad^2 A(\J)/rad^3 A(J)$.

(b) Let $\g_{ss}=\so_8\oplus \so_{10}$, $R=\Gamma^+_1\boxtimes \Gamma^+_2$, $R^2=\Lambda^3W$. Then the quiver of $A(\J)$ is ${\bf Q}_4'$ and 
$\gamma_1\beta_1$, $\gamma_1\beta_2$, $\gamma_3\beta_1$, $\gamma_3\beta_2$, $\gamma_1\beta_4$ and $\gamma_3\beta_4$ are linearly independent elements in 
$rad^2 A(\J)/rad^3 A(J)$.
\end{Cor}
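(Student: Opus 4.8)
The plan is to assemble three facts: the shape of the $\operatorname{Ext}$ quiver, the explicit projective covers coming from Lemma \ref{mixedso8}, and the translation between $\gm$ and $\JJ$ provided by Lemma \ref{idempotent}. Write $\gb=\g/[\rr,\rr]$; by hypothesis $\gb$ is the algebra from item (4) (resp. (5)) of {\bf List A}, so by Corollary \ref{hat=bar} $Q(\g)=Q(\gb)={\bf Q}_3$ (resp. ${\bf Q}_4$), and, exactly as for {\bf List A}, $\gh=\g$ because $(\Lambda^2\rr)^{\g_{ss}}=0$.

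First I would pin down $Q(\J)$. By Corollary \ref{Q-Q'-relation} the quiver $Q'(\g)={\bf Q}'_3$ (resp. ${\bf Q}'_4$) is obtained from $Q(\J)$ by deleting edges, so it remains to exclude extra edges. As in the proof of Corollary \ref{quiverab}, an extra edge would force an indecomposable quotient $M$ of some $P(L)$ of length three with $M/\operatorname{rad}M=L$, $\operatorname{rad}M/\operatorname{rad}^2M=tr$, $\operatorname{rad}^2M=L'$ simple; inspecting $Q(\g)$ one sees that $tr$ is adjacent only to vertices of the form $\Gamma_1^{\pm}\boxtimes\Gamma_2^{\pm}$, so $L$ and $L'$ are of that form. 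Splitting $M$ over $\g_{ss}$, the actions of $R=\rr/[\rr,\rr]$ and of $[\rr,\rr]=R^2$ on $M$ are $\g_{ss}$-equivariant, and the identity $xy-yx=[x,y]$ on $M_{-1}$ forces the $\g_{ss}$-map on $\Lambda^2 R$ sending $x\wedge y$ to ``act by $x$ then $y$, minus act by $y$ then $x$'' to factor through $[\,\cdot\,,\cdot\,]\colon\Lambda^2R\to R^2$. But $[\,\cdot\,,\cdot\,]$ has a large kernel: e.g. $[v\boxtimes w,v'\boxtimes w']=0$ as soon as the relevant pairing of $w,w'$ (resp. of $v,v'$) vanishes, while $v\boxtimes w$ and $v'\boxtimes w'$ need not be proportional, and the displayed difference does not vanish on such pairs. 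Hence no such $M$ exists and $Q(\J)={\bf Q}'_3$ (resp. ${\bf Q}'_4$).

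Next I would prove non-vanishing. Each listed path has the form $\beta\alpha$ (resp. $\gamma\beta$) factoring through the middle vertex $\Gamma_1^+\boxtimes\Gamma_2^{\pm}$, it is the unique path of length two between its source and target in $Q'(\g)$, and the listed paths have pairwise distinct (source, target) pairs; so the corollary is equivalent to the statement that each of them is non-zero in $\operatorname{rad}^2A(\J)/\operatorname{rad}^3A(\J)$. By Lemma \ref{idempotent} $A(\J)=(1-e_{tr})A(\g)(1-e_{tr})$, whence $\operatorname{rad}^kA(\J)\subseteq (1-e_{tr})\operatorname{rad}^kA(\g)(1-e_{tr})$, so it suffices that the corresponding product be non-zero in $\operatorname{rad}^2A(\g)/\operatorname{rad}^3A(\g)$, equivalently that the simple at the target occur in $P^2(L)$ for the simple $L$ at the source. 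For the products that are already non-zero in the abelian case (such as $\beta_1\alpha_3$ in (a)), this follows from Theorem \ref{quivers-mixed} together with $P^3(L)=0$ from Lemma \ref{radcubelem}: since $P(L)$ for $\gb$ is a quotient of $P(L)$ for $\g$ and both have vanishing third radical layer, an element of $P^2(L)$ for $\gb$ lifts to one for $\g$. For the products that vanish in the abelian case, I would invoke Lemma \ref{mixedso8}: after, if necessary, interchanging the two orthogonal factors and applying the duality of $\mathcal S$ (Remark \ref{duality1}) and the automorphism $\tau$ (Remark \ref{involution-tau}) to the module $M$ of that lemma, one obtains for each relevant simple $L$ an indecomposable of the form $M'\otimes(\text{spinor})$ whose image is $\bar P(L)$ and whose second radical layer is a full tensor square of a spinor; reading off its constituents from \eqref{gamma-so8}–\eqref{gamma-so10} shows that the target simple occurs there, hence in $P^2(L)$.

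Combining the two steps, each of the five (resp. six) listed products is a non-zero element of a distinct component $e_b\bigl(\operatorname{rad}^2A(\J)/\operatorname{rad}^3A(\J)\bigr)e_a$, so they are linearly independent. The main obstacle is the middle step: one must organise, case by case, which dualised/$\tau$-twisted copy of the module of Lemma \ref{mixedso8} realises which of the named arrows, and check that together these copies cover all the listed paths — a purely bookkeeping task using the tensor tables of Lemma \ref{lemma-tensor} and \eqref{gamma-so8}–\eqref{gamma-so10}, but one that has to be carried out separately for $\so_8\oplus\so_8$ and for $\so_8\oplus\so_{10}$.
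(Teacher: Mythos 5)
Your proposal is correct, and for the substance of the corollary (the linear independence statements) it follows the same route as the paper: there, too, the paths that die in the abelian case are produced by Lemma \ref{mixedso8} applied to the simple submodules of $(\Gamma_2^+\otimes\Gamma_2^+)^*$ (giving the full spinor square inside $P^2(L)$), the remaining paths come from the abelian-radical projectives of Theorem \ref{quivers-mixed} via the surjection $P_{\g}(L)\twoheadrightarrow P_{\gb}(L)$, and independence follows because the listed products sit in distinct components $e_b\bigl(\operatorname{rad}^2A(\J)/\operatorname{rad}^3A(\J)\bigr)e_a$ and are the unique length-two paths between their endpoints; your factor-swap/$\tau$/duality bookkeeping is exactly the (unstated) adjustment the paper also makes when it quotes the lemma with $R^2=\Lambda^2\Gamma_2^+$ while the corollary is phrased with $R^2=\Lambda^2V$. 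What you add beyond the paper is an actual argument for the assertion that $Q(\J)$ equals ${\bf Q}_3'$ (resp.\ ${\bf Q}_4'$): the paper's proof never addresses this, and your adaptation of Corollary \ref{quiverab} is sound — a hypothetical length-three quotient with middle layer $tr$ forces $L\simeq R^*$, $L'\simeq R$, and the identity $xy-yx=[x,y]$ fails on non-proportional decomposable elements of $\ker\bigl(\delta\colon\Lambda^2R\to R^2\bigr)$; this is precisely what distinguishes these cases from Lemma \ref{jstandard2}, where $\delta$ is injective and a new arrow does appear. Two small repairs: Lemma \ref{radcubelem} is proved only for abelian $R$, so you should not invoke $P^3(L)=0$ for the present $\g$ — but you do not need it, since the surjection $P_{\g}(L)\twoheadrightarrow P_{\gb}(L)$ carries $\operatorname{rad}^k$ onto $\operatorname{rad}^k$, which already puts every constituent of $P^2_{\gb}(L)$ into $P^2_{\g}(L)$; likewise ``$\gh=\g$ because $(\Lambda^2\rr)^{\g_{ss}}=0$'' is the criterion used for abelian radical and is not justified here, but it is also not needed, because the lower bounds on $P^2$ coming from explicit $\g$-modules (and the quotient argument above) pass to $\gh$ for the same reason.
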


\begin{proof} It follows from computation of $P^2(L)$ for certain simple $L$. For example, to prove (a) take $L$ to be a non-trivial simple submodule in 
$\Gamma^+_2\otimes \Gamma_2^+=\Lambda^2 W\oplus \Lambda^+W\oplus tr$. Lemma \ref{mixedso8} implies $\Lambda^2 W\oplus \Lambda^+W\oplus tr\subset P^2(L)$.
On the other hand, it is shown in Lemma \ref {quivers-mixed} (1) that $\Lambda^2V\subset P^2(\Lambda^2W)$. Now (a) follows.
The proof of (b) is left to the reader.
\end{proof}

\begin{thm}
If $[\rr,\rr]\neq 0$, then $A(J)$ is wild.
\end{thm}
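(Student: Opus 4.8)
The proof runs through the cases of List B. Case (5) is vacuous: there $(\Lambda^2R)^{sh}=0$, so $[\rr,\rr]=0$ automatically, and by Theorem~\ref{main-quiver} together with the standing hypotheses of this section ($R$ irreducible, $[\rr,[\rr,\rr]]=0$, $Q(\g)$ admissible) we are reduced to cases (1)--(4). In each of these the quiver of $A(\J)$ and enough of its relations have already been computed — in Lemma~\ref{jstandard2} (case (1)), Lemma~\ref{so8unital2} (case (2)), and Corollary~\ref{mixed2} (cases (3) and (4)). The common mechanism is that $[\rr,\rr]\ne 0$ makes the summand $(R^2\otimes L)^{sh}$ in the target of the map $\mu\oplus\lambda$ of Lemma~\ref{technical2} nonzero, and this destroys most of the degree-two relations that are present in the tame algebras of List~A; as a result $A(\J)$ becomes much closer to $\kk Q/rad^3$. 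The plan is therefore uniform: pass from $A(\J)$ to $A(\J)/rad^3A(\J)$, then to a corner algebra $e\,A(\J)\,e$, then to a quotient of that corner by a small two-sided ideal, reaching an explicit small algebra which one proves to be wild; since each of these steps reflects wildness upward, $A(\J)$ is wild.

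For cases (3) and (4) the final small algebra has radical square zero, so Theorem~\ref{Gabriel} (with Lemma~\ref{lemma-pro-subquiver}) applies directly. In case (3) I would take $e$ to be the sum of the idempotents at $\Lambda^2V,\Lambda^+V,\Lambda^2W$: since the five paths $\beta_3\alpha_3,\beta_4\alpha_4,\beta_4\alpha_3,\beta_3\alpha_4,\beta_1\alpha_3$ of Corollary~\ref{mixed2}(a) factor through $\Gamma_1^+\boxtimes\Gamma_2^+\notin\operatorname{supp}e$, they become five linearly independent arrows of $e\,A(\J)\,e$, and the separated quiver of $e\,A(\J)\,e/rad^2$ then contains a four-cycle with a pendant edge — neither Dynkin nor Euclidean, hence wild. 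In case (4), Corollary~\ref{mixed2}(b) produces six linearly independent paths $\gamma_i\beta_j$ which, in the corner at $\{W,\Lambda^-W,\Lambda^2W,\Lambda^4W,\Lambda^+V\}$, realize a complete bipartite pattern $K_{3,2}$ of arrows, so the separated quiver of the radical-square-zero quotient contains $K_{3,2}$, again wild. (Alternatively one computes the Tits form \eqref{Tits-form} of the peeled-off algebra and checks it is not weakly non-negative, which by Theorem~\ref{Bruestle} and the tame--wild dichotomy again forces wildness.)

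Cases (1) and (2) are the delicate ones, and I expect them to be the main obstacle: here the small algebra one arrives at has radical square \emph{nonzero}, and in fact the radical-square-zero quotients of ${\bf Q}^1_m$ and of ${\bf Q}'_2$ are, after separation, disjoint unions of cycles and so only tame — the wildness genuinely lives one radical layer deeper. In case (1), after passing to $A(\J)/rad^3A(\J)$, to the corner at $e_V+e_{\Lambda^2V}$, and then quotienting by the one-dimensional ideal generated by the loop $\delta_2\gamma_2$, one obtains the algebra $C=\kk Q_0/rad^3$, where $Q_0$ has one vertex with a loop (namely $\bar\gamma_0$, the loop coming from $[\rr,\rr]\ne 0$) and a second vertex joined to it by a double bond; because Lemma~\ref{standard2} leaves no degree-two relations, both $\bar\gamma_0^{\,2}$ and the length-two path through $\Lambda^2V$ are nonzero, so $C$ is not special biserial, and one then exhibits directly a representation embedding of a wild algebra — for instance $\kk\langle u,v\rangle/(u,v)^3$, or the path algebra of a wild quiver — into $C$-mod. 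Case (2) is handled the same way, starting from the block $\{\Lambda^2V,\Lambda^+V\}$ of Lemma~\ref{so8unital2}, whose only surviving degree-two relations are the two commutativity relations $\alpha_2\gamma_2=\gamma_3\alpha_2$ and $\gamma_2\beta_2=\beta_2\gamma_3$, so that $rad^2/rad^3$ is again too large for tameness. The essential point — and the part I expect to require real work — is that for (1) and (2) none of the radical-square-zero or weakly-non-negative criteria of the Appendix can be invoked (the algebra has a loop and infinite global dimension), so wildness must be produced by hand through an explicit multiparameter family of pairwise non-isomorphic indecomposable modules, built much as in the proofs of Lemmas~\ref{so8unital2} and~\ref{mixedso8}.
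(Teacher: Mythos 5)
Your treatment of cases (3) and (4) of {\bf List B} is essentially sound and parallels the paper (the paper cuts to the corner $\{\Lambda^2V,\Lambda^2W,\Lambda^+W\}$ in case (3) and to a six--vertex corner in case (4), then invokes Theorem \ref{Gabriel} via Lemma \ref{lemma-pro-subquiver}); note only that in case (3) your corner must contain $\Lambda^+W$ rather than $\Lambda^+V$, since the paths $\beta_4\alpha_4,\beta_4\alpha_3,\beta_3\alpha_4$ of Corollary \ref{mixed2}(a) have an endpoint at $\Lambda^+W$ and otherwise do not survive in $e\,A(\J)\,e$.

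The genuine gap is in cases (1) and (2), which you yourself flag as ``the delicate ones'' and then do not prove. You assert that none of the Appendix criteria apply and that wildness ``must be produced by hand'' via an explicit representation embedding, which you do not construct. This overlooks Lemma \ref{han} (Han's minimal wild two--point algebras $A_1$, $A_2$ of \eqref{A_1_A_2}), which is in the Appendix precisely for this purpose and is how the paper settles these cases: for $\so_{2m+1}$ it takes the corner at $\{\Lambda^{m-1}V,\Lambda^mV\}$, where Lemma \ref{jstandard2} locates all relations in $\gamma_{m-2}\delta_{m-2}\,rad+rad^3$, so the corner surjects onto $A_1$; for $\so_8\niplus\Lambda^+V$ the corner at $\{\Lambda^2V,\Lambda^+V\}$ has $A_2$ as a quotient (Lemma \ref{so8unital2}); and for $\so_{2m}$ the one--vertex corner at $\Lambda^{m-1}V$ carries three independent loops, so its radical--square--zero quotient is wild by Theorem \ref{Gabriel}\eqref{double-quiver}. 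Passing to a quotient only requires the relations to be located, not certain products to be nonzero. By contrast, your route needs positive statements you cannot justify: from Lemma \ref{jstandard2} the relations of $A(\J)$ lie in $rad^3A(\J)+\bar\gamma_0\,rad\,A(\J)$, so degree--two relations involving $\bar\gamma_0$ are \emph{not} excluded, and your claim that $\bar\gamma_0^2\neq 0$ and that the corner at $\{V,\Lambda^2V\}$ modulo the extra loop is exactly $\kk Q_0/rad^3$ is unsupported (Lemma \ref{standard2} concerns $A(\g)$, in which $\bar\gamma_0$ is already a length--two path, so $\bar\gamma_0^2$ is of length four and may be killed by the higher relations). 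Even granting that presentation, the wildness of your target algebra $C$ is asserted, not proved, and the phrase ``$rad^2/rad^3$ is too large for tameness'' in case (2) is not an argument. As it stands, the two standard--representation cases and the $\so_8\niplus\Lambda^+V$ case --- the heart of the theorem --- remain unestablished.
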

\begin{proof}
One has to show that if $\g$ is a Lie algebra from (1)-(4)  {\bf List B}, then $A(\J)$ is wild. 
For any simple $L\in\gm$ denote by $e_{L}$
the idempotent corresponding to the projector onto $P(L)$. In all cases we use the same method. 
We consider $B=eA(\J)e$ for some idempotent $e\in A(\J)$ and show  
that $B$ is wild. Then by Lemma \ref{lemma-pro-subquiver} $A(\J)$ is wild.

Let $\g=\so_{2m+1}\niplus(V\niplus\Lambda^2 V)$. Put
$$B=(e_{\Lambda^m V}+e_{\Lambda^{m-1} V})A(\J)(e_{\Lambda^m V}+e_{\Lambda^{m-1} V}).$$
By Lemma\,\ref{jstandard2} $B={\bf k}Q/I$, where
\begin{equation}
Q:\quad \xymatrix{\Lambda^{m-1} V\ar@/^0.4pc/[r]^{\gamma_{m-1}}\ar@{<-}^{\gamma_{m-2}\delta_{m-2}}@(ul,ur) & 
\Lambda^m V\ar@{<-}^{\gamma_m}@(ul,ur)\ar@/^0.4pc/[l]^{\delta_{m-1}}}
\end{equation} 
and $I\subset \gamma_{m-2}\delta_{m-2} rad B+rad^3 B$. Then $B$ has a factor algebra isomorphic to $A_1$, see \eqref{A_1_A_2}, 
and by Lemma\,\ref{han} is wild.

For $\g=\so_{2m}\niplus(V\niplus\Lambda^2 V)$ set 
$$B=e_{\Lambda^{m-1} V}A(\J)e_{\Lambda^{m-1} V}.$$
Then Lemma \ref{jstandard2} implies that $\gamma^+\delta^+$, $\gamma^-\delta^-$, $\delta_{m-2}\gamma_{m-2}$ are linearly independent 
elements in $rad B/rad^2 B$. Thus the quiver of $B$ is one vertex with at least three loops. 
From Theorem\,\ref{Gabriel}\eqref{double-quiver} 
it follows that $B$ is wild.

Let $\g_{ss}=\so_8$, $R=\Lambda^+ V$ and $R^2=\Lambda^2V$. We set
$$B=(e_{\Lambda^+V}+e_{\Lambda^2V})A(\J)(e_{\Lambda^+V}+e_{\Lambda^2V}).$$
Lemma\,\ref{so8unital2} implies the quiver of $B$ is \eqref{quiver-lambda-plus1}. Furthermore, $B$ has a quotient isomorphic to $A_2$, see \eqref{A_1_A_2},
by Lemma\,\ref{han} $B$ is wild.

Let $\g_{ss}=\so_8\oplus\so_8$, $R=\Gamma_1^+\boxtimes\Gamma_2^+$, $R^2=\Lambda^2V$. Set 
$$e_1=e_{\Lambda^2 V}+e_{\Lambda^2 W}+e_{\Lambda^+ W},$$
$B=e_1A(\J)e_1$ and apply Corollary \ref{mixed2} (a). Then $B$ has the quiver
\begin{equation}
\xymatrix{\Lambda^2 V\ar[r]^{\beta_3\alpha_1} & \Lambda^2 W\ar@/^0.4pc/[r]^{\beta_4\alpha_3}\ar@{<-}^{\beta_3\alpha_3}@(ul,ur) & 
\Lambda^+ W\ar@{<-}^{\beta_4\alpha_4}@(ul,ur)\ar@/^0.4pc/[l]^{\beta_3\alpha_4}}
\end{equation} 
By Theorem\,\ref{Gabriel}\eqref{double-quiver} $B/rad^2B$ is wild, hence $B$ is wild.

Finally, we consider $\g_{ss}=\so_{8}+\so_{10}$, $R=\Gamma_2^+\boxtimes\Gamma_2^+$, $R^2=\Lambda^3W$.
Let $$e_2=e_{W}+e_{\Lambda^2 W}+e_{\Lambda^3 W}+e_{\Lambda^+ W}+e_{\Lambda^+ V}+e_{\Gamma^+_1\boxtimes\Gamma_2^-},$$ $B=e_2 A(\J)e_2$. 
Corollary \ref{mixed2}(b) implies that $B$ is the path algebra of the quiver 
$$
\xymatrix{
W& \Lambda^2 W \ar[d]_{\beta_1} & \Lambda^4 W \ar[dl]_{\beta_2}\\
\Lambda^- W & {\Gamma_1^+\boxtimes\Gamma_2^-}\ar[l]^{\gamma_3}\ar[ul]_{\gamma_1} &  \Lambda^+ V \ar[l]^{\beta_4}}
$$
By Theorem\,\ref{Gabriel}\eqref{Dynkin} $B$ is wild.

\end{proof}

\begin{Cor}\label{only-zero-rad}
Let $\g$ be such that $Jor(\g)$-mod$_1$ is tame then $\hat{\g}=\g$.
\end{Cor}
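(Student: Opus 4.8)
The plan is to obtain this as a consequence of the classification carried out in Sections~6 and~7, combined with the Remark after Theorem~\ref{main-quiver}: every Lie algebra on \textbf{List A} satisfies $(\Lambda^2 R)^{\g_{ss}}=0$ and hence $\gh=\g$. So it suffices to show that tameness of $\JJ$, with $\J=Jor(\g)$, forces $\g=\gb=\g_{ss}\niplus R$ with $\g_{ss}\niplus R$ on \textbf{List A}. The case $\rr=0$ is immediate ($\g$ is then semisimple, so $\gh=\g$); and since $\J\text{-mod}_1$ for a decomposition $\J=\bigoplus\J_i$ is the product of the categories $\J_i\text{-mod}_1$ while the invariants $(\Lambda^2 R)^{\g_{ss}}$ acquire no cross terms between distinct simple factors, one reduces at once to $\J$ indecomposable with $\rr\neq 0$.

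First I would locate $\gb$. If $\JJ$ is tame then the finite-dimensional algebra $A(\J)$, for which $\JJ$ is equivalent to $A(\J)\text{-mod}$, is tame, so by Lemma~\ref{idempotent} and Corollary~\ref{Q-Q'-relation} the quiver $Q(\g)$ is admissible; as $Q(\g)=Q(\gb)$ by Corollary~\ref{hat=bar}, Theorem~\ref{main-quiver} forces $\gb=\g_{ss}\niplus R$ with $R$ simple and $\g_{ss}\niplus R$ on \textbf{List A}. Next I would show $\rr^2=0$. Suppose not; then $\rr^2\neq\rr^3$ (otherwise $\rr^2=0$ by nilpotency), so $\g'=\g/\rr^3$ satisfies $[\rr',\rr']\neq 0$, $[\rr',[\rr',\rr']]=0$, has the same Levi $\g_{ss}$, the same simple $R=\rr'/[\rr',\rr']$, and admissible quiver $Q(\gb)$. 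Hence $\g'$ is one of the algebras (1)--(4) of \textbf{List B} — the entry with $\g_{ss}=\so_{10}\oplus\so_{10}$, $R=\Gamma_1\boxtimes\Gamma_2$ is excluded since there $[\rr',\rr']$, being a $\g_{ss}$-submodule of $(\Lambda^2 R)^{sh}=0$, would vanish. Now $Jor(\g')=\J/K$ with $K=(\rr^3)_{-1}$, a Jordan ideal which is nonzero exactly when $\rr^3\neq 0$ (using that $Lie(\J)$ is minimal, so the nonzero graded ideal $\rr^3$ meets $\g_{-1}$); thus $Jor(\g')\text{-mod}_1$ is equivalent to modules over a quotient algebra of $A(\J)$ and so is tame, contradicting the theorem that $A(Jor(\g'))$ is wild for every algebra on \textbf{List B}. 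Therefore $\rr^2=0$, $\g=\gb$ is on \textbf{List A}, and the Remark after Theorem~\ref{main-quiver} gives $\gh=\g$.

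The step I expect to be the main obstacle is the reduction modulo $\rr^3$: one has to check that $\rr^3$ is a proper nonzero graded ideal of $Lie(\J)$ meeting $\g_{-1}$, that $\g/\rr^3$ (after minimalization, and possibly discarding summands not meeting $\g_{-1}$) is honestly one of the \textbf{List B} algebras, and that wildness of the category of unital modules over the quotient Jordan algebra $Jor(\g/\rr^3)$ genuinely propagates back to $\JJ$ — i.e. that the universal multiplication envelope of $(\J/K)\text{-mod}_1$ is a quotient algebra of that of $\JJ$, so that a wild block produced from \textbf{List B} survives as a wild quotient of $A(\J)$. The remaining verifications amount to routine comparison between \textbf{List A} and \textbf{List B}.
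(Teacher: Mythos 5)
Your proposal is correct and is essentially the argument the paper intends (the corollary is stated without a separate proof): tameness of $\JJ$ makes $A(\J)$ tame, so $Q(\g)$ is admissible and, by the wildness theorem of Section 7 --- whose implicit reduction to {\bf List B} via the quotient $\g/\rr^3$ you spell out, with wildness propagating back because the unital enveloping algebra of a quotient Jordan algebra is a quotient of that of $\J$ --- the radical is abelian; hence $\g=\gb$ lies on {\bf List A} and the Remark after Theorem \ref{main-quiver} gives $\gh=\g$ since $(\Lambda^2 R)^{\g_{ss}}=0$. The only inaccuracy is the claim that for $\J=\bigoplus\J_i$ the category $\JJ$ is the product of the categories $\J_i\text{-mod}_1$ --- it also contains the mixed blocks $\mathcal{S}_{i,j}$ of Section 8 --- but this is harmless here, since tameness still passes to each $\J_i\text{-mod}_1$ (again by the quotient-algebra argument) and the cross summands of $\Lambda^2 R$ carry no $\g_{ss}$-invariants because each $R_i$ is a nontrivial irreducible.
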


\section{Classification theorem: general case}

\begin{thm}\label{thm-general-case}
Let $\J$ be a unital Jordan algebra such that $\J_{ss}$ is a direct sum of Jordan algebras of bilinear 
form over vector space of dimension greater than $4$. Then
$\J$ is tame if and only if $Lie(\J)$ is a direct sum of Lie algebras from  {\bf List A} and simple orthogonal algebras. 
\end{thm}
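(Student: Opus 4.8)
The plan is to deduce Theorem~\ref{thm-general-case} from the analysis of the previous sections by reducing the general case — where $\J_{ss}$ may be a sum of several copies of Jordan algebras of bilinear form and the radical $\rr$ is arbitrary — to the building blocks already classified. First I would observe that $A(\J)$ tame forces, by Lemma~\ref{idempotent}, Corollary~\ref{Q-Q'-relation} and Corollary~\ref{only-zero-rad}, that $\gh=\g$ and that $Q(\g)$ must be admissible. Writing $\g=\g_{ss}\niplus\rr$ and passing to $\gb=\g/[\rr,\rr]$, Corollary~\ref{hat=bar} gives $Q(\g)=Q(\gb)$, so for the quiver analysis we may assume $\rr=R$ is abelian. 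The first reduction is that each simple ideal of $\g_{ss}$ is either an orthogonal algebra $\so_n$ with $n\ge 7$ on which $R$ acts trivially (a ``simple orthogonal summand'' in the sense of the statement) or participates in one of the configurations of {\bf List A}; this is exactly the content of Theorem~\ref{main-quiver} once we know that an indecomposable $\g$ with $R\ne 0$ and admissible quiver must be on {\bf List A}.

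The key point is then to show that if $\J$ is indecomposable, $\rr\ne 0$, and $A(\J)$ is tame, then in fact $[\rr,\rr]=0$ and $\g$ lies on {\bf List A}. The inclusion $[\rr,\rr]=0$ follows from Section~8: Theorem~\ref{main-quiver} restricts $\g/[\rr,\rr]$ to one of the algebras of {\bf List A}, hence $R=\rr/[\rr,\rr]$ is one of the modules appearing there, and the possible nonzero brackets $[\rr,\rr]\subseteq(\Lambda^2R)^{sh}$ are enumerated in {\bf List B}; for each entry of {\bf List B} the theorem of Section~8 produces a wild subalgebra $eA(\J)e$, so $A(\J)$ is wild by Lemma~\ref{lemma-pro-subquiver}. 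Once $\rr=R$ is abelian, indecomposability of $\J$ means $R$ is a faithful module over $\g_{ss}$ with no trivial summand and no summand that splits $\g_{ss}$ off; combined with admissibility of $Q(\g)$ this is precisely {\bf List A}. In the converse direction, for each $\g$ on {\bf List A} the algebra $A(\J)$ was shown to be tame in Section~7 (Theorems~\ref{classification-unital} and~\ref{quivers-mixed} together with the tameness theorem of that section), and for a simple orthogonal algebra $\so_n$ with trivial radical the category $\JJ$ is semisimple by Jacobson's classical result recalled in the introduction, so $A(\J)$ is tame (indeed of finite type). Since the category $\JJ$ decomposes as a product over the indecomposable summands of $\J$, tameness of $\J$ is equivalent to tameness of each summand, and the theorem follows.

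The main obstacle, and the step requiring the most care, is the reduction to indecomposable $\J$ and the bookkeeping of how the summands interact: a priori a Jordan algebra whose semisimple part is a sum $\bigoplus_i\J(E_i,q_i)$ could have a radical mixing several simple ideals of $\g_{ss}$, and one must check that admissibility of $Q(\g)$ together with Theorem~\ref{main-quiver} really forbids any mixing beyond the pairs $\so_{2m}\oplus\so_{2n}$ with $m,n\in\{4,5\}$ already on {\bf List A}. Here the argument is the one sketched at the end of the proof of Theorem~\ref{main-quiver}: adding any further irreducible constituent to $R$ forces an extra outgoing arrow at a vertex $\Lambda^2 V$ (or $\Lambda^2 W$, or at a spinor vertex), and a case check using Theorem~\ref{Gabriel} shows the resulting double quiver is wild. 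I would also need to confirm that the dimension hypothesis $\dim E>4$ is used exactly to guarantee $\g_{ss}$ is a sum of $\so_n$ with $n\ge 7$, so that the small-rank exceptional isomorphisms of orthogonal algebras do not intervene, and that $\rr^{\g_{ss}}\subseteq[\g,\g]\cap Z(\g)$ holds automatically here since $\gh=\g$ and $(\Lambda^2R)^{\g_{ss}}=0$ for the relevant $R$.
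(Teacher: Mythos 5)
There is a genuine gap, and it sits exactly where the real content of this theorem lies. Your closing step asserts that ``the category $\JJ$ decomposes as a product over the indecomposable summands of $\J$, so tameness of $\J$ is equivalent to tameness of each summand.'' This is false for unital Jordan bimodules. If $\J=\J(1)\oplus\dots\oplus\J(r)$ with unit elements $e_1,\dots,e_r$, a unital $\J$-bimodule has Peirce components on which two distinct idempotents $e_i$ and $e_j$ each act by $\frac12$; such a component is a special (one-half) bimodule over $\J(i)$ and over $\J(j)$ simultaneously and does not belong to any single $\J(i)\text{-mod}_1$ (this is the bifunctor $\Jhalf\times\Jhalf\to\JJ$ of Remark \ref{tensor}). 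Equivalently, on the Lie side, for $\g=\bigoplus_i\g(i)$ one has
$$\gm\;=\;\bigoplus_i \g(i)\text{-mod}_1\;\oplus\;\bigoplus_{i<j}\mathcal S_{i,j},$$
where $\mathcal S_{i,j}$ consists of $\g(i)\oplus\g(j)$-modules with very short grading over each factor. Hence tameness of every indecomposable summand does not yield tameness of $\J$: you must also prove that each mixed block $\mathcal S_{i,j}$ is tame when $\g(i),\g(j)$ are from {\bf List A} (or simple orthogonal). This is precisely what the paper's proof consists of: it computes the Ext quivers $Q^{\frac12}(\g)$ of the half-categories for every algebra on {\bf List A}, invokes Lemma \ref{special} to see that indecomposable projectives in $\gmh$ have Loewy length at most two, uses Lemma \ref{projective-tesor} to identify the indecomposable projectives of $\mathcal S_{i,j}$ with $P(L)\boxtimes P(L')$ (whose third radical layer $P^1(L)\boxtimes P^1(L')$ is simple), checks that the resulting algebra satisfies the hypotheses of Theorem \ref{Viktor}, and verifies that the double quiver of $\mathcal S_{i,j}$ is a disjoint union of Dynkin and extended Dynkin diagrams. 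None of this appears in your proposal, so the ``if'' direction is not established by your argument.

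The remaining parts of your write-up (tameness forces $\gh=\g$ and an admissible $Q(\g)$; Theorem \ref{main-quiver} pins down the indecomposable pieces with $\rr\neq 0$; Section 8 rules out $[\rr,\rr]\neq 0$; Section 7 gives tameness of each individual {\bf List A} summand; Jacobson's result handles the simple orthogonal summands) are consistent with the paper, but these are exactly the results already proved in earlier sections, which the paper's proof of Theorem \ref{thm-general-case} takes for granted in its opening sentence. To repair your proof you must replace the false product decomposition by the decomposition above and supply the tameness argument for the mixed categories $\mathcal S_{i,j}$.
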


Let $\displaystyle\g=\bigoplus_{i=1}^r \g(i)$ be a direct sum of Lie algebras with short grading. Then the category $\gm$ has a 
decomposition in the direct sum 
$$\bigoplus_{i=1}^r\g(i)-\text{mod}_1\oplus \bigoplus_{i<j\leq r}\mathcal S_{i,j},$$
where $\mathcal S_{i,j}$ is the category of $\g(i)\oplus\g(j)$-modules which have very short grading over $\g(i)$ and $\g(j)$.
Simple objects in $\mathcal S_{i,j}$ are isomorphic to $L_1\boxtimes L_2$, where $L_1$ is a simple object 
$\g(i)$-mod$_{\frac12}$ and $L_2$ is a simple object 
$\g(j)$-mod$_{\frac12}$. 

\begin{lem}\label{projective-tesor} If $P(L)$ and $P(L')$ are projective covers of $L$ and $L'$ in $\g(i)$-mod$_{\frac12}$ and 
$\g(j)$-mod$_{\frac12}$ respectively, then $P(L)\boxtimes P(L')$ is the projective cover $L\boxtimes L'$
in $\mathcal S_{i,j}$.
\end{lem}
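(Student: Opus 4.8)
The plan is to establish two facts: that $P(L)\boxtimes P(L')$ is projective in $\mathcal S_{i,j}$, and that its radical quotient (top) is the simple object $L\boxtimes L'$. These suffice: a projective module with simple top $S$ is indecomposable (a nontrivial decomposition into projective summands would split the top), and its quotient map onto $S$ is essential by Nakayama's lemma for finite-dimensional modules, so it is a projective cover of $S$; applying this with $S=L\boxtimes L'$ identifies $P(L)\boxtimes P(L')$ with $P(L\boxtimes L')$.

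For projectivity I would exploit the natural isomorphism
\[
\operatorname{Hom}_{\g(i)\oplus\g(j)}\bigl(P(L)\boxtimes P(L'),\,M\bigr)\;\cong\;\operatorname{Hom}_{\g(i)}\bigl(P(L),\,\operatorname{Hom}_{\g(j)}(P(L'),M)\bigr),
\]
valid for every finite-dimensional $\g(i)\oplus\g(j)$-module $M$, where $\g(i)$ acts on the inner Hom-space through its action on $M$. The crucial point is that for $M\in\mathcal S_{i,j}$ the $\g(i)$-module $H(M):=\operatorname{Hom}_{\g(j)}(P(L'),M)$ again lies in $\g(i)\text{-mod}_{\frac12}$: it is finite-dimensional; it is integrable over $\g(i)_{ss}$, being a $\g(i)_{ss}$-subquotient of $\operatorname{Hom}_{\kk}(P(L'),M)$; and since each $h$-eigenspace $M_a$ of $\alpha_i$ ($a=\pm\frac12$) is stable under $\g(j)$, and $h$ acts on $\operatorname{Hom}_{\g(j)}(P(L'),M_a)$ by the scalar $a$, the induced grading of $H(M)$ has length at most two. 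Next, $M\mapsto H(M)$ is exact on $\mathcal S_{i,j}$: the restriction to $\g(j)$ of a short exact sequence in $\mathcal S_{i,j}$ is a short exact sequence all of whose terms lie in $\g(j)\text{-mod}_{\frac12}$, on which $\operatorname{Hom}_{\g(j)}(P(L'),-)$ is exact because $P(L')$ is projective there. Composing with the exact functor $\operatorname{Hom}_{\g(i)}(P(L),-)$ on $\g(i)\text{-mod}_{\frac12}$ yields exactness of $\operatorname{Hom}_{\g(i)\oplus\g(j)}\bigl(P(L)\boxtimes P(L'),-\bigr)$ on $\mathcal S_{i,j}$, i.e. $P(L)\boxtimes P(L')$ is projective.

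For the top, the canonical surjection
\[
P(L)\boxtimes P(L')\;\twoheadrightarrow\;\bigl(P(L)/rad P(L)\bigr)\boxtimes\bigl(P(L')/rad P(L')\bigr)=L\boxtimes L'
\]
shows that $rad\bigl(P(L)\boxtimes P(L')\bigr)$ is contained in its kernel, so the top of $P(L)\boxtimes P(L')$ is a nonzero quotient of the simple object $L\boxtimes L'$, hence equals $L\boxtimes L'$; combined with projectivity, this identifies $P(L)\boxtimes P(L')$ with $P(L\boxtimes L')$. I expect the only step requiring genuine care to be the verification in the second paragraph that $H(M)$ stays inside $\g(i)\text{-mod}_{\frac12}$ — the bookkeeping of the $\alpha_i$- and $\alpha_j$-gradings together with integrability — since it is precisely this that permits the two halves of the tensor--Hom adjunction to be chained; the remaining steps are routine.
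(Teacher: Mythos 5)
Your projectivity argument is correct and genuinely different from the paper's proof: the paper simply uses $P(L\boxtimes L')=I(L\boxtimes L')^{sh}$ together with $U(\g(i)\oplus\g(j))=U(\g(i))\otimes U(\g(j))$, so that $I(L\boxtimes L')=I(L)\boxtimes I(L')$ and ${}^{sh}$ commutes with $\boxtimes$, whereas you chain the tensor--Hom adjunction; your verification that $\operatorname{Hom}_{\g(j)}(P(L'),M)$ stays in $\g(i)$-mod$_{\frac12}$ and the exactness bookkeeping are fine (you should also note, though it is immediate, that $P(L)\boxtimes P(L')$ itself lies in $\mathcal S_{i,j}$).

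The gap is in the last paragraph, in the identification of the top. From the canonical surjection $P(L)\boxtimes P(L')\to L\boxtimes L'$ you correctly get $\operatorname{rad}\bigl(P(L)\boxtimes P(L')\bigr)\subseteq \operatorname{Ker}$, but this says that $L\boxtimes L'$ is a quotient of the top, not --- as you wrote --- that the top is a quotient of $L\boxtimes L'$; the conclusion ``hence equals $L\boxtimes L'$'' does not follow. What you actually need is the reverse inclusion $\operatorname{rad}P(L)\boxtimes P(L')+P(L)\boxtimes \operatorname{rad}P(L')\subseteq \operatorname{rad}\bigl(P(L)\boxtimes P(L')\bigr)$, i.e.\ that the top of the box product is the box product of the tops; a priori the top could contain extra simple summands, in which case $P(L)\boxtimes P(L')$ would only contain $P(L\boxtimes L')$ as a proper direct summand. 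This missing half is easy to supply with the adjunction you already set up: for any simple object $A\boxtimes B$ of $\mathcal S_{i,j}$ one has
$$\operatorname{Hom}_{\g(i)\oplus\g(j)}\bigl(P(L)\boxtimes P(L'),A\boxtimes B\bigr)\simeq \operatorname{Hom}_{\g(i)}(P(L),A)\otimes\operatorname{Hom}_{\g(j)}(P(L'),B),$$
which vanishes unless $A\simeq L$ and $B\simeq L'$, and is one-dimensional in that case; hence the semisimple quotient of $P(L)\boxtimes P(L')$ by its radical is exactly $L\boxtimes L'$. (Alternatively, invoke the standard fact that over an algebraically closed field $\operatorname{rad}(X\boxtimes Y)=\operatorname{rad}X\boxtimes Y+X\boxtimes \operatorname{rad}Y$.) With this step repaired your proof is complete.
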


\begin{proof} As it was explained in Section \ref{subsection-plus} 
$$P(L\boxtimes L')=(I(L\boxtimes L'))^{sh}.$$
Since $U(\g(i)\oplus\g(j))=U(\g(i))\otimes U(\g(j))$ we have
$$I(L\boxtimes L')=I(L)\boxtimes I(L').$$
Furthermore,
$$(I(L)\boxtimes I(L'))^{sh}=(I(L))^{sh}\boxtimes (I(L'))^{sh}=P(L)\boxtimes P(L').$$
\end{proof}

\begin{Cor}
Let $P$ and $P'$ are projective generators in $\g(i)$-mod$_{\frac12}$ and $\g(j)$-mod$_{\frac12}$ respectively.
Then $P\boxtimes P'$ is a projective generator in $\mathcal S_{i,j}$ and 
$$\operatorname{End}_{\g}(P\boxtimes P')\simeq  
\operatorname{End}_{\g(i)}(P)\otimes \operatorname{End}_{\g(j)}(P').$$
\end{Cor}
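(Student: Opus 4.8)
The plan is to deduce both assertions from Lemma~\ref{projective-tesor} together with the standard behaviour of $\operatorname{Hom}$-spaces under exterior tensor products of finite-dimensional modules; once Lemma~\ref{projective-tesor} is available the statement is essentially formal. Here $P\boxtimes P'$ is viewed as a $\g(i)\oplus\g(j)$-module, and also as a $\g$-module on which the remaining summands act by zero, so that $\operatorname{End}_{\g}(P\boxtimes P')=\operatorname{End}_{\g(i)\oplus\g(j)}(P\boxtimes P')$. For the projective generator claim, recall that a module is a projective generator of $\mathcal S_{i,j}$ exactly when it is, up to multiplicities, the direct sum of all indecomposable projectives $P(L\boxtimes L')$, where $L$ runs over the simple objects of $\g(i)\text{-mod}_{\frac12}$ and $L'$ over those of $\g(j)\text{-mod}_{\frac12}$. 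Writing $P=\bigoplus_L P(L)^{\oplus a_L}$ and $P'=\bigoplus_{L'}P(L')^{\oplus b_{L'}}$ with all $a_L,b_{L'}\geq 1$ and using additivity of $\boxtimes$ in each variable, we get $P\boxtimes P'=\bigoplus_{L,L'}(P(L)\boxtimes P(L'))^{\oplus a_Lb_{L'}}$; by Lemma~\ref{projective-tesor} each summand $P(L)\boxtimes P(L')$ is $P(L\boxtimes L')$, and since $a_Lb_{L'}\geq 1$ for all pairs every indecomposable projective of $\mathcal S_{i,j}$ occurs, so $P\boxtimes P'$ is a projective generator.

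For the endomorphism algebra it suffices to prove that for finite-dimensional $\g(i)$-modules $M_1,M_2$ and finite-dimensional $\g(j)$-modules $N_1,N_2$ there is an isomorphism, compatible with composition,
$$\operatorname{Hom}_{\g(i)\oplus\g(j)}(M_1\boxtimes N_1,M_2\boxtimes N_2)\simeq\operatorname{Hom}_{\g(i)}(M_1,M_2)\otimes\operatorname{Hom}_{\g(j)}(N_1,N_2),$$
and then to specialise to $M_1=M_2=P$, $N_1=N_2=P'$, which are finite-dimensional by Lemma~\ref{findim}. The map $\varphi\otimes\psi\mapsto\varphi\boxtimes\psi$ is a natural algebra homomorphism from the right-hand side to the left-hand side, so it remains to prove bijectivity. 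Using $\operatorname{Hom}_{\g(i)}(M_1,M_2)=(M_1^*\otimes M_2)^{\g(i)}$ and $\operatorname{Hom}_{\g(j)}(N_1,N_2)=(N_1^*\otimes N_2)^{\g(j)}$, valid since the $M$'s and $N$'s are finite-dimensional, together with $(M_1\boxtimes N_1)^*\otimes(M_2\boxtimes N_2)=(M_1^*\otimes M_2)\boxtimes(N_1^*\otimes N_2)$, the claim reduces to the identity $X^{\g(i)}\otimes Y^{\g(j)}=(X\otimes Y)^{\g(i)\oplus\g(j)}$ for a $\g(i)$-module $X$ and a $\g(j)$-module $Y$. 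This holds because $\g(j)$ acts trivially on $X$ and $\g(i)$ trivially on $Y$: choosing a basis of $Y$ gives $(X\otimes Y)^{\g(i)}=X^{\g(i)}\otimes Y$, likewise $(X\otimes Y)^{\g(j)}=X\otimes Y^{\g(j)}$, and the intersection of these two subspaces of $X\otimes Y$ equals $X^{\g(i)}\otimes Y^{\g(j)}$. Compatibility with composition follows at once from the contraction description of composition in terms of invariants.

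The only thing that genuinely has to be invoked is the finite-dimensionality underlying $\operatorname{Hom}_{\g(i)}(M_1,M_2)\simeq(M_1^*\otimes M_2)^{\g(i)}$, which is supplied by Lemma~\ref{findim}; apart from this (minor) point the corollary is a formal consequence of Lemma~\ref{projective-tesor}, so I expect no real obstacle.
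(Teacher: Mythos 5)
Your proof is correct and follows the intended route: the paper states this corollary without any printed proof, treating it as immediate from Lemma~\ref{projective-tesor}, and your argument (decomposing $P\boxtimes P'$ into the summands $P(L)\boxtimes P(L')=P(L\boxtimes L')$ to get a projective generator, then the standard $\operatorname{Hom}$--$\boxtimes$ identity via invariants for the endomorphism algebra) just supplies the routine details the authors omit. No gaps.
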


\noindent Now we can prove Theorem\,\ref{thm-general-case}
\begin{proof}
It is sufficient to show that if $\g(i)$ and $\g(j)$ are two algebras from {\bf List A} then $\mathcal S_{i,j}$ is tame
(if $\g(i)$ is simple, this is trivial).
First we construct the projective indecomposable modules in $\g$-mod$_{\frac12}$, where $\g$ is one of the algebra from {\bf List A}.
By Lemma\,\ref{quiver_Lie} the Ext quiver $Q^{\frac12}(\g)$ is the following:
$$\begin{array}{ccccccc} Q^{\frac12}(\so_{2m+1}\niplus V) &&  Q^{\frac12}(\so_{2m}\niplus V) &&   Q^{\frac12}(\so_8\niplus \Lambda^{+} V) 
 && Q^{\frac12}((\so_8\oplus\so_8)\niplus\Gamma_1^{+}\boxtimes\Gamma_2^{+})\\ \xymatrix{\Gamma\ar_{\ }@(ur,ul)} && \xymatrix{\Gamma^+\ar@/^0.4pc/[r]^{\ } & 
\Gamma^-\ar@/^0.4pc/[l]} && \xymatrix{\ ^{\ ^{\ }}\Gamma^+ \ar_{\ }@(ur,ul)& 
\Gamma^-}&&  \xymatrix{\Gamma_1^+\ar@/^0.4pc/[r]^{\ } & 
\Gamma_2^+\ar@/^0.4pc/[l] &\Gamma_1^-\quad\Gamma_2^-} \end{array}$$
$$\begin{array}{ccccc} Q^{\frac12}((\so_8\oplus\so_{10})\niplus\Gamma_1^{+}\boxtimes\Gamma_2^{+}) & \xymatrix{\Gamma_1^+\ar[r] & 
\Gamma_2^+\\ \Gamma_1^- & 
\Gamma_2^-\ar[ul] } &&
Q^{\frac12}((\so_{10}\oplus\so_{10})\niplus\Gamma_1^{+}\boxtimes\Gamma_2^{+})&\xymatrix{\Gamma_1^+ & 
\Gamma_2^+\\ \Gamma_1^-\ar[ur] & 
\Gamma_2^-\ar[ul] }\end{array}$$

\medskip

\noindent Observe that for any algebra $\g$ from {\bf List A} Lemma \ref{special} implies that any projective indecomposable module in 
$\g$-mod$_{\frac12}$ has radical 
filtration of the length at most two. Therefore 
it is completely determined by $Q^{\frac12}(\g)$. Moreover for any simple module $L\in\g$-mod$_{\frac12}$ 
its projective cover $P(L)$ is either $L$ or $P(L)=\frac{L}{K}$, where $P^1(L)=K$ is simple $\g$-mod$_{\frac12}$ module.

Let $P=P(L)$ and $P'=P(L')$ be projective indecomposable modules in 
$\g(i)$-mod$_{\frac12}$ and $\g(j)$-mod$_{\frac12}$ respectively. Then 
$P\boxtimes P'$ is one of the following 
$$
\begin{array}{ccccc}
L\boxtimes L' && \begin{array}{c} L\boxtimes L'\\ \overline{\ \ P^1(L)\boxtimes L'\ \ } \end{array}   && 
\begin{array}{c} L\boxtimes L'\\ \overline{\ P^1(L)\boxtimes L'+ L\boxtimes \ P^1(L')}\\ 
\overline{\quad \ P^1(L)\boxtimes \ P^1(L')\quad}  \end{array} 
\end{array}
$$
Since $P^1(L), P^1(L')$ are simple, $P^1(L)\boxtimes P^1(L')\in {\mathcal S}_{i,j}$ is also simple. Hence the associative algebra of the category
$\mathcal S_{i,j}$ satisfies the conditions of 
Theorem\,\ref{Viktor}. One can check that if $\g(i)$, $\g(j)$ are from {\bf List A}, then the double quiver of the Ext quiver of $\mathcal S_{i,j}$ 
is a disjoint union of Dynkin and extended Dynkin diagrams, therefore $\mathcal S_{i,j}$ is either tame or finite.
This finishes the proof.
\end{proof}
 
\begin{ex}
The Ext quiver for the category $\mathcal S_{i,j}$ if $\g(i)=\so_{2m+1}\niplus V$ and $\g(j)=\so_{2m}\niplus V$ is
$$\xymatrix 
{{\Gamma_i\boxtimes\Gamma^+_j}\ar@(ul,ur)^{\alpha}\ar@/^0.4pc/[r]^{\delta} & 
 {\Gamma_i\boxtimes\Gamma^-_j}\ar@(ul,ur)^{\beta}\ar@/^0.4pc/[l]^{\nu}} \qquad 
\begin{array}{c} \delta\alpha=\beta\nu  \\ {\rm all\ other\ path\ of\ length\ two\ are\ zero}\end{array}
$$

\noindent The Ext quiver for the category $\mathcal S_{i,j}$ if $\g(i)=(\so_8\oplus\so_{10})\niplus\Gamma_1^+\boxtimes\Gamma_2^+$ and 
$\g(j)=\so_{2m}\niplus V$ is
$$\xymatrix 
{{\Gamma_2^-\boxtimes\Gamma^+_j}\ar[r]^{\alpha_1}\ar@/^0.4pc/[d]^{\delta_1} & 
{\Gamma_1^+\boxtimes\Gamma^+_j}\ar[r]^{\alpha_2}\ar@/^0.4pc/[d]^{\delta_2} 
& {\Gamma_2^+\boxtimes\Gamma^+_j}\ar@/^0.4pc/[d]^{\delta_3} & {\Gamma_1^-\boxtimes\Gamma^+_j}\\
{\Gamma_2^-\boxtimes\Gamma^-_j}\ar[r]^{\beta_1}\ar@/^0.4pc/[u]^{\nu_1} & 
{\Gamma_1^+\boxtimes\Gamma^-_j}\ar[r]^{\beta_2}\ar@/^0.4pc/[u]^{\nu_2}
&{\Gamma_2^+\boxtimes\Gamma^-_j}\ar@/^0.4pc/[u]^{\nu_3} & {\Gamma_1^-\boxtimes\Gamma^-_j}\\} \qquad 
\begin{array}{c} \\ \beta_i\delta_i=\delta_{i+1}\alpha_i\\
\beta_i\nu_i=\nu_{i+1}\alpha_i, =1,2; \\ {\rm all\ other\ path\ of\ length}\\
{\rm two\ are\ zero}\end{array}
$$
\end{ex}

\section{Appendix: Representations of quivers}

In this section we will collect some notions, theorems and methods which will be used to determine the representation type of algebras
given as a quiver with relations.

Let $\mathcal C$ be an abelian category and $P$ be a projective generator in $\mathcal C$.
It is well-known fact (see \cite{Gelf-Manin} ex.2 section 2.6) that the functor $\operatorname{Hom}_{\mathcal C}(P,M)$
provides an equivalence of $\mathcal C$ and the category of right modules over the ring
$A=\operatorname{Hom}_{\mathcal C}(P,P)$. In case when every object in $\mathcal C$  has the finite length and each simple object 
has a projective cover, one reduces the problem of classifying indecomposable objects in $\mathcal C$ 
to the similar problem for modules over a finite-dimensional algebra (see \cite{Gab,Gab2}).
If $L_1,\dots,L_r$ is
the set of all up to isomorphism simple objects in $\mathcal C$
and $P_1,\dots,P_r$ are their projective covers, then $A$ is a
pointed algebra which is usually realized as the path algebra of a
certain quiver $Q$ with relations. The vertices of $Q$ correspond
to simple (resp. projective) modules and the number of arrows from
vertex $i$ to vertex $j$ equals
$\operatorname{dim}\operatorname{Ext}^1(L_j,L_i)$ (resp.
$\operatorname{dim}\operatorname{Hom}(P_i,\operatorname{rad}P_j/\operatorname{rad}^2 P_j$)).

We apply this approach to the case when $\mathcal C$ is  $\gm$.

For any quiver $Q$ let $V(Q)$ denote the 
set of vertices of $Q$ and $Ar(Q)$ the set of its arrows.
The {\it quiver double} $D(Q)$
of the quiver $Q$ is defined
as follows:
\begin{equation}\label{equation-def-double-quiver}
V(D(Q))=\{\,X^{+}, X^{-}|\,X\in V(Q)\}, \
Ar(D(Q))=\{\,\tilde{a}: X^{-} \mapsto Y^{+}\, |\, \text {if\ }a:\,X\mapsto Y\in Ar(Q)\}.
\end{equation}

The following results are
classical.

\begin{thm}\label{Gabriel}
\begin{enumerate}
\item\label{Dynkin} Let $A=\kk[Q]$ is
the path algebra of a quiver $Q$. Then $A$ if of finite (tame)
representation type if and only if $Q$ is a disjoint union of
oriented Dynkin diagrams (extended Dynkin diagrams).

\item\label{double-quiver}
Let $A$ be a finite dimensional
associative algebra, such that
$rad^{2}A=0$, $Q$ its quiver.
Then $A$  is of finite (tame)
representation type if and only
if $D(Q)$ is a disjoint union
of oriented Dynkin diagrams
(extended Dynkin diagrams).
\end{enumerate}
\end{thm}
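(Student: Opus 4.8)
Both parts of this theorem are classical: part \eqref{Dynkin} is Gabriel's theorem \cite{Gab} together with its tame completion (Nazarova, Donovan--Freislich), and part \eqref{double-quiver} reduces to it via the separated-quiver construction. The plan is to run the standard argument built around the Tits quadratic form, reflection functors, and Drozd's Tame--Wild dichotomy, and to isolate the points where deep external input is used. To a finite quiver $Q$ I attach the Tits form $q_Q(d)=\sum_{i\in V(Q)}d_i^2-\sum_{a\in Ar(Q)}d_{s(a)}d_{t(a)}$ on $\Z^{V(Q)}$, where $s(a),t(a)$ are the source and target of $a$.

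For part \eqref{Dynkin}, first I would note that if $Q$ has an oriented cycle then $\kk[Q]$ is infinite-dimensional and obviously of infinite type, so we may assume $Q$ is acyclic. The geometric step: for a dimension vector $d$ the affine variety $\mathrm{rep}(Q,d)$ of representations has dimension $\sum_a d_{s(a)}d_{t(a)}$, and $GL(d)=\prod_i GL_{d_i}$ acts on it with orbits the isomorphism classes and with stabilizer of $M$ equal to $\mathrm{Aut}(M)$, an open subset of $\mathrm{End}(M)$; if for each $d$ there are only finitely many isomorphism classes, some orbit is dense in each component, and comparing dimensions forces $q_Q$ to be positive definite (finite type) or positive semidefinite (tame type). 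The purely combinatorial classification of connected graphs whose associated symmetric form is positive definite, resp. positive semidefinite, then yields exactly the simply-laced Dynkin diagrams $A_n,D_n,E_{6,7,8}$, resp. these together with the extended Dynkin diagrams. For the converse I would use the Bernstein--Gelfand--Ponomarev reflection functors: in the Dynkin case, iterating them along a source/sink ordering produces Gabriel's bijection between indecomposables and positive roots of the corresponding finite root system, hence finitely many; in the Euclidean case one exhibits the defect-zero one-parameter families (for the Kronecker quiver $\tilde A_1$ directly, the general case by Nazarova and Donovan--Freislich), hence tameness; and if $q_Q$ is indefinite then $Q$ properly contains a Euclidean subquiver, from which one constructs a full exact embedding of the representations of a known wild quiver, so by Drozd's dichotomy \cite{drozd} $\kk[Q]$ is wild.

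For part \eqref{double-quiver}, let $A$ be finite-dimensional with $\mathrm{rad}^2A=0$ and quiver $Q$. Since $\mathrm{rad}A\cdot\mathrm{rad}M\subseteq\mathrm{rad}^2A\cdot M=0$ for every $A$-module $M$, both $M/\mathrm{rad}M$ and $\mathrm{rad}M$ are semisimple and the $A$-module structure on $M$ is completely encoded by the $A/\mathrm{rad}A$-linear map $\mathrm{rad}A\otimes_{A/\mathrm{rad}A}(M/\mathrm{rad}M)\to\mathrm{rad}M$ induced by multiplication. Decomposing these semisimple modules into simples, this datum is precisely a representation of the separated quiver, which is exactly the quiver $D(Q)$ of \eqref{equation-def-double-quiver}: the copy $X^-$ records the multiplicity of $S_X$ in $M/\mathrm{rad}M$, the copy $X^+$ its multiplicity in $\mathrm{rad}M$, and an arrow $a:X\to Y$ the corresponding component of the structure map. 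I would then check that $M\mapsto$ (this representation) is functorial and is a representation equivalence: dense up to semisimple and projective summands, and preserving indecomposability and isomorphism classes in both directions. Representation type is invariant under such an equivalence, so $A$ is of finite (tame) type if and only if $\kk[D(Q)]$ is, and part \eqref{Dynkin} applied to $D(Q)$ concludes the proof.

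The main obstacles are the two pieces of heavy machinery in part \eqref{Dynkin}: the explicit tame classification of the Euclidean path algebras (the homogeneous and non-homogeneous one-parameter families, after Nazarova and Donovan--Freislich) and the wildness of every path algebra with indefinite Tits form, which rests on Drozd's Tame--Wild dichotomy; reproving these in full lies outside the scope of this appendix, so we cite them. The one remaining delicate point, in part \eqref{double-quiver}, is the bookkeeping of the semisimple and projective summands, so that the separated-quiver functor genuinely transports the representation type rather than only the set of indecomposables.
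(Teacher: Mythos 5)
Your proposal is correct, but there is nothing in the paper to compare it against: the paper states Theorem \ref{Gabriel} with the single sentence ``The following results are classical'' and gives no proof, deferring entirely to the literature (Gabriel for the finite case, Nazarova and Donovan--Freislich for the tame case, and the standard separated-quiver reduction for radical-square-zero algebras). Your outline is an accurate account of those standard arguments: the Tits-form/orbit-dimension argument plus reflection functors for part \eqref{Dynkin}, and the equivalence between modules over an algebra with $rad^2A=0$ and representations of the separated quiver $D(Q)$ of \eqref{equation-def-double-quiver} for part \eqref{double-quiver}, with the genuinely deep inputs (the Euclidean tame classification and the tame--wild dichotomy) correctly flagged as cited rather than reproved. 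One minor remark: your identification of which copy ($X^+$ or $X^-$) carries $M/\mathrm{rad}\,M$ versus $\mathrm{rad}\,M$ depends on the paper's convention that an arrow $i\to j$ counts $\dim\operatorname{Ext}^1(L_j,L_i)$, so the structure map naturally points the other way from what you wrote; this is immaterial here, since only the underlying graph of $D(Q)$ enters the Dynkin/extended-Dynkin criterion, but it is the kind of bookkeeping you rightly single out as the delicate point of part \eqref{double-quiver}.
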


\begin{lem}\label{lemma-pro-subquiver}\cite[1.4.7]{Erdmann} Suppose $A$ is a finite-dimensional algebra.
\begin{enumerate}
\item If $e$ is an idempotent of $A$ such that $eAe$ is wild then so is $A$.
\item  Let $I$ be an ideal of $A$. If $A/I$ is wild then $A$ is wild as well.
\end{enumerate}
\end{lem}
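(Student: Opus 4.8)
The plan is, in each of the two cases, to build an explicit functor $F$ from the module category of the ``smaller'' algebra into $A\text{-mod}$ which is fully faithful — and hence carries indecomposables to indecomposables and non-isomorphic modules to non-isomorphic ones — and then to precompose $F$ with a representation embedding $\kk\langle x,y\rangle\text{-mod}\to(eAe)\text{-mod}$ in case (1), resp.\ $\kk\langle x,y\rangle\text{-mod}\to(A/I)\text{-mod}$ in case (2), which exists by the hypothesis that $eAe$, resp.\ $A/I$, is wild; the resulting functor $\kk\langle x,y\rangle\text{-mod}\to A\text{-mod}$ then witnesses wildness of $A$.

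Case (2) is immediate: the functor is restriction of scalars along the surjection $A\twoheadrightarrow A/I$, which identifies $(A/I)\text{-mod}$ with the full subcategory of $A\text{-mod}$ consisting of the modules killed by $I$. This inclusion is exact and fully faithful, so it transports any wildness witness for $A/I$ to one for $A$.

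For case (1) I would set $B=eAe\cong\operatorname{End}_A(Ae)^{\mathrm{op}}$, so that $Ae$ is an $(A,B)$-bimodule, and take $F=Ae\otimes_B(-)\colon B\text{-mod}\to A\text{-mod}$. By the tensor--hom adjunction, $F$ is left adjoint to the Schur functor $e(-)=\operatorname{Hom}_A(Ae,-)\colon A\text{-mod}\to B\text{-mod}$, $M\mapsto eM$, and the adjunction unit $N\to e(Ae\otimes_B N)=eAe\otimes_B N=N$ is the identity. Hence $F$ is fully faithful, so $\operatorname{End}_A(FN)\cong\operatorname{End}_B(N)$ and $\operatorname{Hom}_A(FN,FN')\cong\operatorname{Hom}_B(N,N')$ compatibly with composition; this already gives that $F$ preserves indecomposability and reflects isomorphisms. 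Composing $F$ with a wildness witness for $B=eAe$ then produces the required functor into $A\text{-mod}$.

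The delicate point, which I expect to account for essentially all of the real work, is that $F$ need not be \emph{exact} — in general $Ae$ is not flat as a right $eAe$-module — so one cannot simply invoke ``a composite of representation embeddings is a representation embedding''. One has instead to check directly that $F$ followed by a wildness witness still has the precise shape required by Drozd's tame--wild dichotomy, namely that of a functor $M\otimes_{\kk\langle x,y\rangle}(-)$ with $M$ finitely generated and free over $\kk\langle x,y\rangle$. This bookkeeping is exactly the content of \cite[1.4.7]{Erdmann}, and for the purposes of the present paper it is enough to quote that reference.
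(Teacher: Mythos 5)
Your proposal is correct and matches the content behind the paper's citation: the paper itself gives no argument for this lemma, quoting \cite{Erdmann}, I.4.7, and your sketch is exactly the standard proof of that result --- restriction of scalars along $A\twoheadrightarrow A/I$ for (2), and the fully faithful functor $Ae\otimes_{eAe}(-)$ (fully faithful by the adjunction with the Schur functor $M\mapsto eM$, whose unit is an isomorphism) for (1). You also correctly isolate the only delicate point, namely that the composite with a wildness witness for $eAe$ must again have the precise form $M\otimes_{\kk\langle x,y\rangle}(-)$ with $M$ finitely generated free over $\kk\langle x,y\rangle$, and deferring that bookkeeping to the cited reference is exactly what the paper does.
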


\begin{lem}\label{Viktor1} Let $A={\bf k}Q/I$, $e$ be an indecomposable idempotent and 
$P=Ae$ is both projective and injective. Assume that $rad^3 P=0$, while $rad^2 P\neq 0$.
Then any indecomposable $A$-module $M$ such that $rad^2 eM\neq 0$ is isomorphic to $P$.
\end{lem}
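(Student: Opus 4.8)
The plan is to exploit the hypothesis that $P = Ae$ is injective as well as projective, so that $P$ plays the role of an ``isolated'' module that can be peeled off any indecomposable. First I would let $M$ be an indecomposable $A$-module with $\operatorname{rad}^2 eM \neq 0$ and consider the natural map. Since $eM = \operatorname{Hom}_A(P, M)$, a nonzero element of $\operatorname{rad}^2 eM$ corresponds to a homomorphism $f : P \to M$ whose image lies in $\operatorname{rad}^2 M$; the condition $\operatorname{rad}^2 eM \neq 0$ guarantees such an $f$ exists with $f(\operatorname{rad}^2 P) \neq 0$. The first key step is to show $f$ is injective: its kernel is a submodule of $P$, hence (since $P$ is projective indecomposable with Loewy length $3$) either $0$, $\operatorname{rad}^2 P$, $\operatorname{rad} P$, or $P$; the condition $f(\operatorname{rad}^2 P)\neq 0$ rules out all but $\ker f = 0$, because any nonzero proper submodule of $P$ contains $\operatorname{rad}^2 P$ (as $\operatorname{rad}^2 P$ is the simple socle of $P$, using $P$ injective indecomposable so that $\operatorname{soc} P$ is simple, together with $\operatorname{rad}^3 P = 0$). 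Wait — I should be slightly careful: $\operatorname{rad} P$ could fail to contain $\operatorname{rad}^2 P$ only if $\operatorname{rad}^2 P = 0$, which is excluded, so indeed every nonzero submodule of $P$ meets $\operatorname{soc} P = \operatorname{rad}^2 P$ nontrivially, forcing $\ker f = 0$.

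Next I would use injectivity of $P$ to split the inclusion. Since $f : P \hookrightarrow M$ and $P$ is injective, there is a retraction $g : M \to P$ with $g f = \operatorname{id}_P$, so $M \cong P \oplus \operatorname{coker}(f)$ as $A$-modules. Indecomposability of $M$ then forces $\operatorname{coker}(f) = 0$, i.e. $f$ is an isomorphism and $M \cong P$.

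The main obstacle is the injectivity claim $\ker f = 0$, which relies on pinning down the submodule lattice of $P$ precisely from the Loewy data $\operatorname{rad}^3 P = 0 \neq \operatorname{rad}^2 P$ together with $P$ being both projective and injective indecomposable. The essential point is that an injective indecomposable module has simple socle, and here that socle must be $\operatorname{rad}^2 P$ (it is the unique minimal nonzero submodule, since $\operatorname{rad}^2 P \neq 0$ and $\operatorname{rad}^3 P = 0$ make $\operatorname{rad}^2 P$ semisimple and hence equal to $\operatorname{soc} P$ when the latter is simple). Once this is in hand, every nonzero submodule of $P$ contains $\operatorname{rad}^2 P$, so a nonzero map out of $P$ that does not kill $\operatorname{rad}^2 P$ must be injective; and the choice of $f$ from a nonzero element of $\operatorname{rad}^2 eM$ precisely guarantees $f|_{\operatorname{rad}^2 P} \neq 0$. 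Everything else is the standard ``injective summand splits off'' argument, so the proof is short once the lattice of $P$ is understood.
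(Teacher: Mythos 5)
Your proof is correct and takes essentially the same route as the paper: identify $\operatorname{rad}^2P$ with the simple socle of the injective indecomposable $P$, use the hypothesis $\operatorname{rad}^2eM\neq 0$ to produce a homomorphism $P\to M$ (the paper phrases it as the cyclic submodule $Aev$ with $\operatorname{rad}^2Aev\neq 0$) that does not kill this socle, conclude it is injective, and then split it off using injectivity of $P$ together with indecomposability of $M$. Your passing claim that the only submodules of $P$ are $0$, $\operatorname{rad}^2P$, $\operatorname{rad}P$, $P$ is neither needed nor true in general, but your subsequent socle argument replaces it correctly, so there is no gap.
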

\begin{proof}
Injectivity of $P$ implies that $rad^2 P$ is simple and coincides with the socle of $P$. 
Let $v\in M$ be such that $rad^2A ev\neq 0$. Then $rad^2 Aev=rad^2 P$ and therefore
$Aev\simeq P$. Since $P$ is injective, we obtain that $M=Aev\simeq P$.
\end{proof}
 
Next we determine representation type of algebras whose indecomposable projective modules satisfy 
the condition of the Lemma\,\ref{Viktor1}.
\begin{thm}\label{Viktor}
Suppose that $A={\bf k}Q/I$ and  any indecomposable projective module $P$ such that $rad^2 P\neq 0$ satisfies the conditions of 
Lemma\,\ref{Viktor1}. Then $A$ is of finite (respectively tame) representation type if and only if the double quiver $D(Q)$ is a 
disjoint union of Dynkin diagrams (respectively extended Dynkin diagrams).
\end{thm}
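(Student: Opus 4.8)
The plan is to reduce the statement to the radical-square-zero case, which is Theorem~\ref{Gabriel}\eqref{double-quiver}. Put $\bar A:=A/\operatorname{rad}^2 A$. Since the relation ideal $I$ lies in $\operatorname{rad}^2(\mathbf{k}Q)$, we have $\operatorname{rad}^2 A=\operatorname{rad}^2(\mathbf{k}Q)/I$, hence $\bar A\simeq \mathbf{k}Q/\operatorname{rad}^2(\mathbf{k}Q)$; in particular $\bar A$ has quiver $Q$ and $\operatorname{rad}^2\bar A=0$, so by Theorem~\ref{Gabriel}\eqref{double-quiver} it is of finite (resp.\ tame) representation type if and only if $D(Q)$ is a disjoint union of Dynkin (resp.\ extended Dynkin) diagrams. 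It therefore suffices to prove that $A$ and $\bar A$ have the same representation type.

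For this I would describe the indecomposable $A$-modules. Let $\mathcal L$ be the finite set of isomorphism classes of simple modules $L$ with $\operatorname{rad}^2 P(L)\neq 0$; by hypothesis each such $P(L)=Ae_L$ is projective and injective with $\operatorname{rad}^3 P(L)=0$, so it meets the hypotheses of Lemma~\ref{Viktor1}, while for $L\notin\mathcal L$ one has $(\operatorname{rad}^2 A)e_L=\operatorname{rad}^2 P(L)=0$. Now let $M$ be indecomposable. If $(\operatorname{rad}^2 A)M\neq 0$, then using $1=\sum_L e_L$ and that $\operatorname{rad}^2 A$ is a two-sided ideal we get $(\operatorname{rad}^2 A)M=\sum_L(\operatorname{rad}^2 A)(e_L M)$, so $(\operatorname{rad}^2 A)(e_L M)\neq 0$ for some $L$, necessarily $L\in\mathcal L$; Lemma~\ref{Viktor1} then forces $M\simeq P(L)$. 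Otherwise $(\operatorname{rad}^2 A)M=0$, i.e.\ $M$ is a $\bar A$-module. Conversely, every indecomposable $\bar A$-module is an indecomposable $A$-module via $A\twoheadrightarrow\bar A$, while each $P(L)$ with $L\in\mathcal L$ satisfies $(\operatorname{rad}^2 A)P(L)\neq 0$ and so is not a $\bar A$-module. Hence $\operatorname{ind}(A)=\operatorname{ind}(\bar A)\sqcup\{P(L):L\in\mathcal L\}$, a disjoint union whose second part is finite. Since $\bar A$ is a quotient of $A$, Lemma~\ref{lemma-pro-subquiver} gives that $A$ is wild whenever $\bar A$ is; since $\operatorname{ind}(A)$ differs from $\operatorname{ind}(\bar A)$ only by finitely many indecomposables, $A$ is of finite (resp.\ tame) type whenever $\bar A$ is; and the inclusion $\operatorname{ind}(\bar A)\subseteq\operatorname{ind}(A)$ yields the reverse implications. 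Thus $A$ and $\bar A$ have the same representation type, and the theorem follows by the first paragraph.

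The only delicate point is the bookkeeping of the second paragraph: one must read the hypothesis ``$\operatorname{rad}^2 eM\neq 0$'' of Lemma~\ref{Viktor1} as $(\operatorname{rad}^2 A)(eM)\neq 0$ and verify carefully that the finitely many projective-injectives $P(L)$, $L\in\mathcal L$, are exactly the indecomposable $A$-modules not annihilated by $\operatorname{rad}^2 A$. Once this is in place, the remaining content is the formal observation that deleting or adjoining a finite set of indecomposables affects neither finiteness nor tameness nor wildness of the representation type, which reduces everything to Gabriel's radical-square-zero criterion for $\bar A$.
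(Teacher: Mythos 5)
Your proposal is correct and takes essentially the same route as the paper: there, too, Lemma~\ref{Viktor1} is used to show that every indecomposable $M$ with $\operatorname{rad}^2M\neq 0$ is one of the finitely many projective(--injective) modules, so all other indecomposables are $A/\operatorname{rad}^2A$-modules, and the conclusion is then read off from Theorem~\ref{Gabriel}\eqref{double-quiver}. Your write-up only makes explicit the bookkeeping (that the quiver of $A/\operatorname{rad}^2A$ is $Q$, the disjoint-union description of the indecomposables, and that adding finitely many indecomposables does not change the representation type) which the paper leaves implicit.
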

\begin{proof} 
Let $M$ be an indecomposable $A$-module and $rad^2 M\neq 0$
then by Lemma\,\ref{Viktor1} $M$ is projective. Otherwise $M$ is 
a module over $A/rad^2A$. The statement follows 
from Theorem\,\ref{Gabriel} \eqref{double-quiver}.
\end{proof}

\begin{lem}\label{divide}  Assume that $V(Q)$ is a disjoint 
union of $S_l$, $S_r$ and $T$.
Assume $Q(T)$ is disjoint, any path from $S_l$ to $S_r$ (or from $S_r$ to $S_l$) contains a vertex from $T$ 
and any path from $S_r$ to $S_r$ and from $S_l$ to $S_l$ does not contain a vertex from $T$. 
Let $A={\bf k}Q/I$ and any path from $S_l$ to $S_r$ (or from $S_r$ to $S_l$) belongs to $I$. 
Then for any indecomposable $A$-module $M$ either $e_{S_l} M=0$ or $e_{S_r} M=0$, where $e_{S_r}=\sum_{i\in S_r} e_i$ and 
$e_{S_l}=\sum_{i\in S_l} e_i$.
\end{lem}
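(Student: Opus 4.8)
The plan is to prove the contrapositive by producing, from any $A$-module $M$ with $e_{S_l}M\neq 0$ and $e_{S_r}M\neq 0$, a non-trivial direct sum decomposition of $M$; such an $M$ then cannot be indecomposable. Write $A={\bf k}Q/I$ and set $e_T=\sum_{t\in T}e_t$, so that $e_{S_l},e_{S_r},e_T$ are pairwise orthogonal idempotents with sum $1$ and $M=Me_{S_l}\oplus Me_{S_r}\oplus Me_T$ as a $\kk$-space. The hypotheses enter through two observations. First, since every path from $S_l$ to $S_r$ and from $S_r$ to $S_l$ lies in $I$, one has $e_{S_l}Ae_{S_r}=e_{S_r}Ae_{S_l}=0$; consequently $e_{S_l}A$ is spanned by the trivial paths together with the paths that either stay in $S_l$ or run from $S_l$ into $T$, and dually for $e_{S_r}A$. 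Second, since $Q(T)$ has no internal arrows and since no path inside $S_l$ or inside $S_r$ meets $T$, an arrow from $S_l$ into a vertex $t\in T$ followed by an arrow from $t$ into $S_r$ is a path from $S_l$ to $S_r$ and hence acts by $0$ on $M$ (and symmetrically); moreover no $t\in T$ both receives an arrow from $S_l$ and emits one to $S_l$, nor the analogue with $S_r$.

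Next I would construct the candidate summands. For $t\in T$ let $U^{l}_t\subseteq Me_t$ be the sum of the images of all paths from $S_l$ to $t$ applied to $Me_{S_l}$, and define $U^{r}_t$ in the same way with $S_r$ in place of $S_l$. Set
\[
M^{(l)}:=Me_{S_l}\oplus\bigoplus_{t\in T}U^{l}_t,\qquad M^{(r)}:=Me_{S_r}\oplus\bigoplus_{t\in T}U^{r}_t.
\]
The first observation shows that $M^{(l)}$ is an $A$-submodule, supported on $S_l\cup T$: arrows inside $S_l$ and arrows from $S_l$ into $T$ keep us inside $M^{(l)}$ by the very definition of the $U^{l}_t$; there are no arrows from $S_l$ to $S_r$, and none within $T$; finally an arrow from $U^{l}_t$ into $S_l$ lands in $Me_{S_l}$, while an arrow from $U^{l}_t$ into $S_r$ annihilates $U^{l}_t$ because $Me_{S_l}\cdot(e_{S_l}Ae_T)\cdot(e_TAe_{S_r})\subseteq Me_{S_l}\cdot(e_{S_l}Ae_{S_r})=0$. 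Symmetrically $M^{(r)}$ is a submodule supported on $S_r\cup T$. It then remains to split each $T$-component as $Me_t=U^{l}_t\oplus U^{r}_t\oplus W_t$ with $W_t$ killed by every arrow out of $t$, so that $M^{0}:=\bigoplus_{t\in T}W_t$ is an $A$-submodule; given this, $M=M^{(l)}\oplus M^{(r)}\oplus M^{0}$, and since $M^{(l)}\supseteq Me_{S_l}\neq 0$ and $M^{(r)}\supseteq Me_{S_r}\neq 0$ are both non-zero, this contradicts indecomposability of $M$.

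The main obstacle is exactly this splitting of the $T$-components, that is, the transversality $U^{l}_t\cap U^{r}_t=0$ together with the existence of an arrow-annihilated complement $W_t$ for every $t\in T$. Here the structural consequence of the hypotheses is decisive: each $t\in T$ behaves as a bridge, receiving arrows from only one of the two sides and sending them to the other, with the corresponding length-two composites lying in $I$; this makes one of $U^{l}_t,U^{r}_t$ vanish and reduces the choice of $W_t$ to complementing the part of $Me_t$ reached from the relevant side by a subspace contained in the common kernel of the arrows leaving $t$. In every case in which the Lemma is used one moreover has $rad^{3}A=0$ (Corollary \ref{radcubecor}), so that only paths of length $\leq 2$ need to be tracked and the verification becomes a short finite check carried out vertex by vertex of $T$; I would present the argument in that setting, which covers all the applications.
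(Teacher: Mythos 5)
Your candidate summands $M^{(l)}$ and $M^{(r)}$ are exactly the submodules generated by the $S_l$- and $S_r$-parts (the same objects that appear in the paper's one-line proof), and your verification that they are submodules is fine. The genuine gap is the last step: the splitting $Me_t=U^l_t\oplus U^r_t\oplus W_t$ with $W_t$ annihilated by every arrow leaving $t$ need not exist, even in the bridge situation with $rad^3A=0$ that you propose to work in. Minimal example: $Q\colon s_r\xrightarrow{\alpha}t\xrightarrow{\beta}s_l$ with $S_r=\{s_r\}$, $T=\{t\}$, $S_l=\{s_l\}$, $I=(\beta\alpha)$; all hypotheses of the Lemma hold and $rad^2A=0$. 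Let $M$ be the direct sum of the simple at $s_r$ and the length-two module with $M_t=M_{s_l}=\kk$, $\beta=\operatorname{id}$. Then $e_{S_l}M\neq0\neq e_{S_r}M$, so your contrapositive must handle this $M$; but $U^r_t=\operatorname{Im}\alpha=0$, $U^l_t=0$, so $W_t$ would have to be all of $M_t$, on which $\beta$ is injective. Hence the decomposition $M=M^{(l)}\oplus M^{(r)}\oplus M^0$ you aim for simply does not exist (even though $M$ is of course decomposable), and the argument stops exactly at the point you yourself call the main obstacle: the assertion that the bridge structure reduces the choice of $W_t$ to a subspace of the common kernel of the outgoing arrows is precisely the unproved, and false, claim that every vector of $Me_t$ not reached from the feeding side is killed by the arrows leaving $t$.

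The repair is to abandon the arrow-annihilated third summand and build a two-term decomposition $M=M'\oplus M''$ with $M'$ supported on $S_l\cup T$ and $M''$ on $S_r\cup T$. At a vertex $t\in T$ whose incoming arrows come from $S_r$ and whose outgoing arrows go to $S_l$, assign to $M''$ not the reached part $U^r_t$ but the whole common kernel $K_t$ of the arrows leaving $t$ (it contains $U^r_t$ exactly because the through-composites lie in $I$), and assign to $M'$ an arbitrary vector-space complement of $K_t$ in $Me_t$: its image under the outgoing arrows lies in $Me_{S_l}\subseteq M'$, so nothing has to be annihilated; treat vertices fed by $S_l$ symmetrically. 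One checks directly that $M'$ and $M''$ are submodules, and indecomposability forces one of them, hence one of $e_{S_l}M$, $e_{S_r}M$, to vanish. (The same switch from images to kernels is what is really hidden behind the paper's assertion that suitable summands ``split off''.) Finally, be aware that the statement you rely on, that each $t\in T$ receives arrows from only one of $S_l,S_r$ and sends them to the other, is not a formal consequence of the hypotheses as printed: a vertex $t\in T$ with arrows $a\to t\leftarrow b$, $a\in S_l$, $b\in S_r$, and no outgoing arrows violates none of them, yet that quiver has an indecomposable representation supported on all three vertices. It does hold in every quiver to which the paper applies the Lemma, so it should be stated as an assumption you use rather than derived.
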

\begin{proof}
Let $e_{T}=\sum_{i\in T} e_i$. One can check that both $e_{S_r}M+e_{T}Ae_{S_r}M$ and $e_{S_l}M+e_{T}Ae_{S_l}M$ split as direct summands. 
Hence one of them is zero.
\end{proof}
 
Recall that for any associative algebra $A={\bf k}Q/I$ of finite global dimension 
the {\it Tits form of} $A$ is the quadratic form
$q_A:\Z^{V(Q)}\to\Z$ which is defined by
\begin{equation}\label{Tits-form}
q_A(x)=\sum_{i\in V(Q)} x_i^2-\sum_{i\to j\in E(Q)} x_ix_j +\sum_{i,j\in V(Q)} g(i,j) x_i x_j,
\end{equation}
where $g(i,j)=|G\cap e_j Ie_i|$ for a minimal set $G\subset \bigcup_{i,j\in V(Q)} e_j Ie_i$ of generators of the ideal $I$.

A quiver $Q$ is called a {\it tree} if its underlying graph is a tree (i.e. does not contain cycles), the algebra 
$A={\bf k}Q/I$ is a {\it tree algebra} if $Q$ is a tree.

\begin{thm}\label{Bruestle}\cite[Thm 1.1]{Bru}
Let $A$ be a tree algebra. Then $A$ is tame precisely when the Tits form $q_A$ is weakly non-negative.
\end{thm}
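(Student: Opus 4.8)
The plan is to establish the equivalence in Theorem~\ref{thm-general-case} by combining the reduction to the Lie side, already furnished by the adjoint functors $Lie$ and $Jor$ of Section~3, with the quiver classification of Sections 6--9. First I would observe that by hypothesis $\J_{ss}=\bigoplus_i\J(E_i,q_i)$ with $\dim E_i>4$, so $\g=Lie(\J)=\g_{ss}\niplus\rr$ with $\g_{ss}$ a sum of orthogonal algebras $\so_{n_i}$, $n_i\geq 7$. Tameness of $\JJ$ means tameness of the associative algebra $A(\J)=\operatorname{End}_{\J}(P(\J))$, and by Lemma~\ref{idempotent} together with Corollary~\ref{Q-Q'-relation} this forces $Q(\g)$ to be admissible, hence (Theorem~\ref{main-quiver}) forces the indecomposable summands of $\g$ with nonzero radical to lie in {\bf List A}, while the simple summands are exactly the simple orthogonal algebras $\so_n$ (these give $\J(E,q)$ with $\J$ semisimple, and the corresponding categories are semisimple by Jacobson, hence trivially tame). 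This disposes of the ``only if'' direction modulo ruling out the case $[\rr,\rr]\neq 0$, which is exactly Theorem of Section~8: if $[\rr,\rr]\neq 0$ then, going through {\bf List B}, $A(\J)$ is wild. So in the ``only if'' direction one concludes: tameness of $\J$ $\Rightarrow$ $\gh=\g$ (Corollary~\ref{only-zero-rad}), $Q(\g)$ admissible, $[\rr,\rr]=0$, whence each indecomposable summand of $\g$ is in {\bf List A} or is simple orthogonal.

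For the ``if'' direction I would argue that it suffices to treat $\g$ a direct sum of algebras from {\bf List A} (the simple orthogonal summands contributing only semisimple blocks). Write $\g=\bigoplus_{k=1}^r\g(k)$; then $\gm=\bigoplus_k\g(k)\text{-mod}_1\oplus\bigoplus_{k<l}\mathcal S_{k,l}$ as in the paragraph preceding Lemma~\ref{projective-tesor}, and correspondingly $A(\g)$ decomposes into blocks indexed by single $\g(k)$ and by pairs. The single-algebra blocks $A(\g(k))$ for $\g(k)$ in {\bf List A} are tame by the theorems of Section~7 (Theorem~\ref{classification-unital} and Theorem~\ref{quivers-mixed} give the explicit quivers with relations; the final theorem of Section~7.3 proves tameness via Theorems~\ref{Viktor}, \ref{Bruestle} and Lemma~\ref{divide}). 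For the pair-blocks I would use Lemma~\ref{projective-tesor}: the projective cover of $L\boxtimes L'$ in $\mathcal S_{k,l}$ is $P(L)\boxtimes P(L')$, where $P(L),P(L')$ are projectives in $\g(k)\text{-mod}_{\frac12}$, $\g(l)\text{-mod}_{\frac12}$. By Lemma~\ref{special}, since $[\rr,\rr]=0$ for every algebra from {\bf List A}, these half-modules have radical filtration of length at most two, so $P(L)\boxtimes P(L')$ has radical length at most three with $rad^2$ simple; hence $\operatorname{End}(P\boxtimes P')$ satisfies the hypotheses of Theorem~\ref{Viktor}. It then remains to inspect the Ext quiver $Q^{\frac12}(\g(k))$ for each of the seven types in {\bf List A} (these quivers are listed explicitly in the proof of Theorem~\ref{thm-general-case}), form the product quivers $Q^{\frac12}(\g(k))\times Q^{\frac12}(\g(l))$ with the Segre-type relations coming from the tensor structure, and check that each double quiver is a disjoint union of Dynkin and extended-Dynkin diagrams; Theorem~\ref{Viktor} then yields tameness (or finiteness) of each $\mathcal S_{k,l}$, and a finite direct sum of tame algebras is tame.

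The main obstacle is the last verification: there are $\binom{7+1}{2}$ unordered pairs of {\bf List A} types to consider, and for each one must correctly identify the Ext quiver of $\mathcal S_{k,l}$ --- vertices are pairs $(\Gamma,\Gamma')$ of half-spin simples, arrows come from $R(k)\otimes\Gamma$ and $R(l)\otimes\Gamma'$ acting in the two tensor factors separately --- and then check weak non-negativity of the Tits form or directly recognize the double quiver. The examples at the end of Section~10 ($\so_{2m+1}\niplus V$ with $\so_{2m}\niplus V$, and $(\so_8\oplus\so_{10})\niplus\Gamma_1^+\boxtimes\Gamma_2^+$ with $\so_{2m}\niplus V$) indicate the shape of these quivers: grids of $2\times2$ or $2\times 4$ ``commuting square'' pieces with all other length-two paths zero, whose double quivers are products of type-$A$ or type-$\widetilde A$ graphs. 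I expect each such double quiver to be a disjoint union of copies of $A_n$, $D_n$, $\widetilde A_n$ or $\widetilde D_n$, so that Theorem~\ref{Viktor}\eqref{Dynkin-part} (equivalently, Theorem~\ref{Gabriel}\eqref{double-quiver}) applies; the routine but lengthy part is simply running through the list. I would present the argument by (i) doing the reduction to {\bf List A} summands carefully, (ii) citing Section~7 for the diagonal blocks, (iii) proving the length-$\leq 3$, $rad^2$-simple structure of $P(L)\boxtimes P(L')$ from Lemma~\ref{special} and Lemma~\ref{projective-tesor}, and (iv) asserting the case-by-case double-quiver check, illustrating it with the representative examples already in the text and leaving the remaining cases to the reader.
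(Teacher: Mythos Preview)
Your proposal does not address the stated theorem. The statement you were asked to prove is Theorem~\ref{Bruestle}: \emph{a tree algebra $A$ is tame precisely when its Tits form $q_A$ is weakly non-negative}. This is a general structural result about finite-dimensional algebras whose quiver is a tree, and in the paper it carries the citation \cite[Thm~1.1]{Bru}; the paper does not supply a proof but merely quotes it from Br\"ustle's work and later \emph{applies} it (in Section~7.3) to the specific algebras $B'$ and $C$ arising from $\so_8\oplus\so_{10}$ and $\so_{10}\oplus\so_{10}$.

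What you have written is instead a sketch of the proof of Theorem~\ref{thm-general-case}, the classification of tame Jordan algebras of Clifford type. That is an entirely different statement, and indeed your outline follows closely the paper's own proof of Theorem~\ref{thm-general-case} in Section~9 (reduction via Lemma~\ref{idempotent} and Theorem~\ref{main-quiver}, exclusion of $[\rr,\rr]\neq 0$ via Section~8, tameness of List~A blocks from Section~7, and the pair-block analysis via Lemma~\ref{projective-tesor}, Lemma~\ref{special}, and Theorem~\ref{Viktor}). But none of this bears on Theorem~\ref{Bruestle}: proving that specific Jordan-algebra categories are tame says nothing about why \emph{every} tree algebra with weakly non-negative Tits form is tame, nor about the converse. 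A proof of Theorem~\ref{Bruestle} would require the machinery of Br\"ustle's paper (degenerations, critical and hypercritical algebras, the Bongartz--de~la~Pe\~na criterion, etc.), none of which is reproduced in the present paper.
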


In \cite{Han} Y.~Han has classified tame two-point algebras and minimal wild two-point algebras.
We list
the following two algebras from Table W, \cite{Han}, $A_1={\bf k}Q/I_1$ and $A_2={\bf k}Q/I_2$, where
\begin{equation}\label{A_1_A_2}
\xymatrix 
{Q:\ &\bullet\ar@/^0.4pc/[r]^{\mu}\ar@{<-}^{\alpha}@(dl,ul) & \bullet\ar@{<-}_{\beta}@(dr,ur)\ar@/^0.4pc/[l]^{\nu}} \quad 
\begin{array}{l}
I_1=\langle \alpha^2=\mu\alpha=\alpha\nu=\nu\beta=\mu\nu=\beta^2=0\rangle,\\
I_2=\langle \mu\alpha-\beta\mu=\alpha^2=\nu\mu=\alpha\nu=\nu\beta=\beta^3=\beta^2\mu=0\rangle.
\end{array}
\end{equation}
\begin{lem}\label{han}\cite[Thm 1]{Han}
Let $B={\bf k}Q/I$ be a two-point algebra which has either algebra $A_1$ or algebra $A_2$ as a factor, then $B$ is wild.
\end{lem}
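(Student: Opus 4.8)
The statement splits into two parts: a soft reduction, and the wildness of the two explicit eight\nobreakdash-dimensional algebras $A_1$ and $A_2$. For the reduction, suppose $B=\kk Q/I$ is a two\nobreakdash-point algebra having $A_i$ ($i\in\{1,2\}$) as a factor, i.e.\ $B/\mathcal K\simeq A_i$ for some ideal $\mathcal K\trianglelefteq B$. Granting that $A_i$ is wild, the second part of Lemma\,\ref{lemma-pro-subquiver} applied to the ideal $\mathcal K$ shows that $B$ is wild. So everything reduces to proving that $A_1$ and $A_2$ are of wild representation type.

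To prove that $A_1$ is wild I would produce a representation embedding $F\colon\operatorname{rep}(K_3)\to A_1\text{-mod}$, where $K_3$ denotes the quiver with two vertices and three parallel arrows $a_1,a_2,a_3$; the path algebra $\kk K_3$ is wild by Theorem\,\ref{Gabriel}\eqref{Dynkin}, since the underlying graph of $K_3$ (two nodes joined by a triple edge) is neither a disjoint union of Dynkin nor of extended Dynkin diagrams. A representation of $K_3$ is a pair of spaces $U,W$ together with linear maps $a_j\colon U\to W$. The functor $F$ would send $(U,W;a_1,a_2,a_3)$ to the $A_1$-module whose vertex\nobreakdash-$1$ component is a direct sum of several copies of $U$ and of $W$ and whose vertex\nobreakdash-$2$ component is a direct sum of copies of $W$, with the four generators $\alpha,\beta,\mu,\nu$ acting by block matrices whose nonzero entries are either one of the $a_j$, or an identity map, or a scalar, chosen so that (i) all six relations $\alpha^2=\mu\alpha=\alpha\nu=\nu\beta=\mu\nu=\beta^2=0$ generating $I_1$ hold identically, and (ii) the three maps $a_j$ can be read off from $F(U,W;a_*)$. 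On morphisms $F$ acts by the corresponding block matrices, which makes it visibly additive, exact and faithful.

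The algebra $A_2$ I would handle in the same spirit, the only changes being that the relation $\mu\alpha=0$ is replaced by $\mu\alpha=\beta\mu$ and that $\beta$ is now $3$-step nilpotent rather than $2$-step, so the blocks defining $\mu$ and $\beta$ must be adjusted accordingly while keeping the remaining relations of $I_2$. The main obstacle in both cases is the verification that $F$ is \emph{full}: one must show that an arbitrary $A_i$-homomorphism $F(X)\to F(Y)$ is forced, by commutation with $\alpha,\beta,\mu,\nu$ together with the placement of the identity blocks, to have the block form coming from a morphism $X\to Y$ in $\operatorname{rep}(K_3)$, so that non-isomorphic (indecomposable) $K_3$-representations are sent to non-isomorphic (indecomposable) $A_i$-modules. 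This is a finite but delicate linear\nobreakdash-algebra bookkeeping; alternatively one may reach the same conclusion via Drozd's reduction of $A_i\text{-mod}$ to a bocs module problem and a check that the resulting bocs is wild.
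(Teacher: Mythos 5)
This lemma is not proved in the paper at all: it is quoted verbatim from Han's classification of wild two-point algebras (\cite[Thm 1]{Han}), with $A_1$ and $A_2$ taken from Han's Table W of minimal wild algebras, so the ``paper's proof'' is just the citation. Measured against that, your reduction step is fine and is indeed the only soft part: if $B/\mathcal K\simeq A_i$ and $A_i$ is wild, then Lemma~\ref{lemma-pro-subquiver}(2) gives wildness of $B$. The problem is everything after that. The wildness of $A_1$ and $A_2$ is the entire content of the cited theorem (these are exactly the \emph{minimal} wild two-point algebras, so there is no smaller wild quotient to fall back on), and your argument for it is a plan rather than a proof: you never write down the block matrices for $\alpha,\beta,\mu,\nu$, never check that the relations of $I_1$ (resp.\ $I_2$) hold for them, and you explicitly defer the verification that the functor $F$ preserves indecomposables and reflects isomorphism classes (``a finite but delicate linear-algebra bookkeeping''), which is precisely the step where such constructions usually succeed or fail. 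Saying the entries are ``either one of the $a_j$, or an identity map, or a scalar, chosen so that'' the relations hold and the $a_j$ can be recovered is a statement of intent; for a specific bound quiver algebra it can and does happen that no such choice exists with the shape you prescribe (e.g.\ the relations $\mu\alpha=\alpha\nu=\mu\nu=0$ for $A_1$, or $\mu\alpha=\beta\mu$ with $\beta^3=0$ for $A_2$, put real constraints on where the $a_j$ can sit). The closing alternative (``Drozd's reduction to a bocs and a check that the resulting bocs is wild'') is in fact the route Han takes, but again you only name it.

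So the proposal has a genuine gap: the key claim that $A_1$ and $A_2$ are wild is asserted with a strategy sketch but not established. Within this paper the honest proof is simply to invoke \cite[Thm 1]{Han} (or Han's Table W together with Lemma~\ref{lemma-pro-subquiver}(2)); if you want a self-contained argument you must actually exhibit a representation embedding from $\operatorname{rep}(K_3)$ (or from $\kk\langle x,y\rangle$-mod) into $A_i$-mod --- concrete matrices, verification of all relations in $I_i$, and the proof that the functor is exact, faithful and reflects isomorphism classes --- or carry out the matrix-problem/bocs reduction; either is a substantial computation that your text does not contain.
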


\section{Acknowledgment} The first author was supported by the CAPES 2013/07844-0.
The second author was partially supported by NSF grant 1303301.
Both authors express their gratitude to Viktor Bekkert, our quiver expert.


\begin{thebibliography}{9}

\bibitem{Tits} J.~Tits, {\it Une classe d'alg\'ebres de Lie en relation avec les alg\'ebres de Jordan,\/}
Indag. Math. {\bf 24}, (1962), 530-534.

\bibitem{Kantor} I.~L.~Kantor, {\it Classification of irreducible transitively differential groups, \/}
Soviet Math. Dokl., {\bf 5}, (1964), 1404-1407.

\bibitem{Koecher} M.~Koecher, {\it Imbedding of Jordan algebras into Lie algebras I,\/}
Amer. J. Math. {\bf 89}, (1967), 787-816.  

\bibitem{Kac1} V.~Kac, {\it Classification of simple $\Z$-graded Lie superalgebras and simple Jordan superagebras,\/}
Comm. Algebra, {\bf 5}, no.13, (1977), 1375-1400.

\bibitem{Kantor2} I.~L.~Kantor, {\it Jordan and Lie superalgebras determined by a Poisson algebra,\/}
Amer. Math. Soc. Transl. Ser. 2, {\bf 151}, Amer. Math. Soc., Providence, RI, 1992.

\bibitem{Kac-Zelmanov} V.~G.~Kac, C. Martinez, E.~Zelmanov, {\it Graded simple Jordan superalgebras of growth one,\/} 
Mem. Amer. Math. Soc. {\bf 150}, (2001), no. 711.

\bibitem{Shtern} A.~S.~Shtern, {\it Representations of an exceptional Jordan superalgebra,\/}
Funct. Anal. Appl., {\bf 21}, no.3, (1987), 253-254.

\bibitem{Shtern2} A.~S.~Shtern, {\it Representations of finite dimensional Jordan superalgebras of Poisson bracket,\/}
Comm. Algebra, {\bf 23}, (1995), no. 5, 1815–1823.

\bibitem{Mart-Zel} C.~Mart\'\i nez, E.~Zelmanov, {\it Representation theory of Jordan superalgebras I,\/}
Trans. AMS, {\bf 362}, (2010), no.2, 815–846.

\bibitem{Krut} S.~Krutelevich, {\it Simple Jordan superpairs,\/} Comm. Algebra, {\bf 25}, no. 8, (1997), 2635-2657.

\bibitem{Krut2} S.~Krutelevich, {\it The Tits-Kantor-Koecher Construction and Birepresentations of the Jordan Superpair $SH(1,n)$,\/}
Comm. Algebra, {\bf 32}, no. 6, (2004), 2117-2148.

\bibitem{Jac} N.~Jacobson, {\it Structure and representations of Jordan
algebras,\/}  AMS Colloq.~Publ., {\bf 39}, AMS, Providence 1968.

\bibitem{Jac2} N.~Jacobson, {\it Structure of alternative and Jordan bimodules,\/}
Osaka~Math.~J., {\bf 6}, (1954), 1-71.

\bibitem{drozd} Yu.~A.~Drozd, {\it Tame and
wild matrix problems,\/}, in: ``Representations and Quadratic
Forms'', Kiev, (1979), 39-74 (AMS Translations {\bf 128}).

\bibitem{KOSh} I.~Kashuba, S.~Ovsienko, I.~Shestakov, {\it Representation type of Jordan
algebras,\/} Adv. Math. {\bf 226}, (2011), no. 1, 385-418.

\bibitem{Kac} N.~Cantarini, V.~Kac, {\it Classification of linearly
compact simple Jordan and generalized Poisson superalgebras,\/}
J.~Algebra, {\bf 313}, (2007), 100-124.

\bibitem{KS} I.~Kashuba, V.~Serganova, {\it One-sided representation of Jordan algebras,\/},
in preparation.

\bibitem{FH} W.~Fulton, J.~Harris, {\it Representation theory, a first course,\/}
Graduate Texts in Math., {\bf 129}, (1991), Springer.

\bibitem{Vin} A.~L.~Onishchik, E.~B.~Vinberg, {\it Lie Groups and Algebraic Groups,\/} Springer-Verlag, Berlin, (1980).

\bibitem{Gelf-Manin} S.~Gelfand, Yu.~Manin, {\it Homological Algebra,\/}
Springer-Verlag, Berlin, (1999).

\bibitem{Gab} P.~Gabriel, {\it Indecomposable Representations II,\/}
Symposia Mathematica, vol. XI Academic Press, London (1973), 81–104.

\bibitem{Gab2} P.~Gabriel, {\it Auslander–Reiten sequences and representation-finite algebras,\/} in:
Representation Theory, I, Proc. Workshop, Carleton Univ., Ottawa, ON, 1979, in: Lecture Notes in Math., {\bf 831}, 
Springer, Berlin, (1980), 1–71.

\bibitem{Erdmann} K.~Erdmann, {\it Blocks of Tame Representation Type and Related Algebras,\/}
Lecture Notes in Mathematics, {\bf 1428}, 1990.

\bibitem{Bru} T.~Br\"ustle, {\it Tame tree algebras,\/} J. reine~angew.~Math., {\bf 567}, (2004), 51-98.

\bibitem{Han} Y.~Han, {\it Wild two-point algebras,\/} J.~Algebra, {\bf 247}, (2002), 57-77.

\end{thebibliography}
\end{document}